\newtheorem{theorem}{Theorem}
\newtheorem{definition}{Definition}
\newtheorem{lemma}{Lemma}
\newtheorem{proposition}[theorem]{Proposition}
\newtheorem{remark}{Remark}
 \newtheorem*{theorem*}{Rough version of the Main theorem}
\let\e=\varepsilon
\let\p=\partial
\let\O=\Omega
\let\pll = \parallel
\numberwithin{equation}{section}
\let\hide\iffalse
\let\unhide\fi
\DeclareMathAlphabet{\mathpzc}{OT1}{pzc}{m}{it}
\newcommand{\R}{\mathbb{R}}
\newcommand{\be}{\begin{equation}}
\newcommand{\bm}{\begin{multline}}
\newcommand{\ee}{\end{equation}}
\newcommand{\dd}{\mathrm{d}}
\newcommand{\fe}{f^{\epsilon}}
\newcommand{\xb}{x_{\mathbf{b}}}
\newcommand{\tb}{t_{\mathbf{b}}}
\newcommand{\vb}{v_{\mathbf{b}}}
\newcommand{\xf}{x_{\mathbf{f}}}
\newcommand{\tf}{t_{\mathbf{f}}}
\newcommand{\vf}{v_{\mathbf{f}}}
\newcommand{\vbn}{v_{\mathbf{b}, 3}}
\newcommand{\tbi}{t^i_{\mathbf{b}}}
\newcommand{\tbj}{t^j_{\mathbf{b}}}
\newcommand{\vib}{v^i_{\mathbf{b}}}
\newcommand{\xib}{x^i_{\mathbf{b}}}
\newcommand{\tib}{t^i_{\mathbf{b}}}
\newcommand{\Bes}{\begin{eqnarray*}}
\newcommand{\Ees}{\end{eqnarray*}}
\newcommand{\Be}{\begin{equation} }
\newcommand{\Ee}{\end{equation}}
\newcommand{\Bs}{\begin{split}}   
\newcommand{\N}{\mathbb{N}}
\newcommand{\Z}{\mathbb{Z}}
\newcommand{\J}{\mathfrak{J}}
\newcommand{\vertiii}[1]{{\left\vert\kern-0.25ex\left\vert\kern-0.25ex\left\vert #1 
    \right\vert\kern-0.25ex\right\vert\kern-0.25ex\right\vert}}
\def\munderbar#1{\underline{\sbox\tw@{$#1$}\dp\tw@\z@\box\tw@}}
\def\p{\partial}
\def\O{\Omega}
\def\R{\mathbb{R}}
\def\B{\begin{equation}}
\def\E{\end{equation}}
\def\BN{\begin{eqnarray*}}
\def\EN{\end{eqnarray*}}
\def\bcb{\begin{color}{blue}}
\def\ec{\end{color}}
\def\bcr{\begin{color}{red}}
\def\ec{\end{color}}
\begin{document}

\title{Exponential Mixing of Vlasov equations under the effect of Gravity and Boundary}

\author{Jiaxin Jin \& Chanwoo Kim}



 \address{Department of Mathematics, University of Wisconsin-Madison, Madison, WI, 53706, USA}\email{chanwookim.math@gmail.com}
 
 \address{Department of Mathematics, Ohio State University, Columbus, OH, 43210, USA}
\email{jin.1307@osu.edu}

\date{}

%

\begin{abstract}
In this paper, we study exponentially fast mixing induced/enhanced by gravity and stochastic boundary in the kinetic theory of Vlasov equations. We consider the Vlasov equations with and without a vertical magnetic field inside a horizontally-periodic 3D half-space equipped with a non-isothermal diffusive reflection boundary condition of bounded continuous boundary temperature at the bottom. We construct both stationary solutions and global-in-time dynamical solutions in $L^\infty$. We prove that moments of a dynamical fluctuation around the steady solutions decay exponentially fast in $L^\infty$. As a key of this proof, we establish a uniform bound of so-called residual measures independently of the bouncing number of stochastic characteristics, by constructing a continuous stationary outgoing boundary flux which is strictly positive almost everywhere. \end{abstract}

\maketitle



 \bigskip
\begin{center}
	{{I\footnotesize{NTRODUCTION}}}
\end{center}
\smallskip

Dynamics of molecules under gravity has been an important object in various mathematical studies from microscopic to macroscopic description \cite{EGKM2, LY,SSS, KTW}. In this paper, we consider a many body problem of molecules without intermoleculer interaction which are contained in a horizontally-periodic three dimensional half-space $\O = \mathbb{T}^2 \times \R_{+}$ and subjected to the gravity (positive gravitational constant $g>0$) with/without a vertical magnetic field. A governing kinetic model of the system is the Vlasov equations:
\Be \label{equation for F mag} 
\partial_t F + v \cdot \nabla_{x} F + (v \times B - \nabla \phi - g \mathbf{e}_3 ) \cdot \nabla_{v} F = 0   \ \  \text{in}  \  \ (t,x,v) \in \R_{+} \times \Omega \times \R^3.
\Ee
Such equations are used in the kinetic model of solar wind (see \cite{CK2} and the references therein).

At the bottom of domain, the molecules interact with the boundary thermodynamically via a \textit{diffusive reflection} boundary condition
\Be \label{diff_F}
F(\cdot, x, v)  
= \mu_{\Theta} (x, v) \int_{n(x) \cdot v^1 >0} F(\cdot, x, v^1) \{ n(x) \cdot v^1 \} \dd v^1 \ \ \text{for} \ \  (x,v) \in \gamma_- : = \{ x \in \p\O \ \text{and} \    v_3 > 0\},
\Ee   
such that an outgoing distribution is proportional to the thermal equilibrium of boundary temperature $\Theta(x) > 0$:
\Be\label{wall_M}
\mu_{\Theta} (x, v) = \frac{1}{2 \pi \Theta(x)^2}e^{-  \frac{|v|^2}{2 \Theta(x)}}
\ \ \ \ \ \ \ \ \ \ \ \   \text{(wall Maxwellian)}.
\Ee  
Due to the normalization  
$\int_{n(x) \cdot v^1 >0} \mu_{\Theta} (x, v^1) 
\{n(x) \cdot v^1\} \dd v^1 = 1$,
we have a null flux at the boundary and enjoy the conservation of total mass. Throughout this paper we assume the boundary temperature $\Theta(x)$ is 
\textit{continuous} on $\p\O$, so that 
\be \label{def:Theta}
0 < a \leq \Theta(x) \leq b < \infty, \ \text{for all} \ x \in \p\O.
\ee

\bigskip

\noindent\textbf{Main Theorem. }The main interest in this paper is to study a long-time behavior of solutions to the Vlasov equations for both a case without magnetic field
\Be \label{field property}
\begin{split}
B = \vec{0}, \    \ \text{and} \  \ \|\nabla_{x} \phi\|_{L^\infty(\O)} \leq g/2 , 	 \ \| e^{\varrho_1 x_3}   \nabla_{x} ^2\phi\|_{L^\infty(\O)}   \leq  \varrho_2 \ll 1 , \ 	\phi (x)|_{x_3 = 0} \equiv 0;
\end{split}
\Ee
and a case with vertical magnetic field
\be \label{field property mag}
\begin{split}
  B = (0, 0, B_3) \ \text{with a constant} \ B_3 \geq 1, 
\ \ \text{and} \ \ \nabla \phi \equiv 0.
\end{split}
\ee
First of all, it is well-known that stationary solutions to \eqref{equation for F mag}-\eqref{diff_F} are neither given by explicit formulas nor are equilibria (local Maxwellian) in general, if they exist. We refer to \cite{Sone, KLT, LY} for special cases in which a simple form of stationary solutions are available. In this paper, we construct a unique stationary solution by establishing $L^\infty$-bound using stochastic cycles and a natural $L^1$-bound from mass. This method is direct/robust and in particular does not rely on the dynamical stability. Second, we prove exponentially fast asymptotic stability of such stationary solutions. Gravity plays an important role to enhance the convergence speed as only polynomial decay \cite{Yu, Bernou,JinKim1,Lods} is possible without the gravity due to possible slow particles. For the other mixing mechanisms, we refer to \cite{CKRZ,ZY} in the active transport equations, \cite{BMM, MV, HV} for the Landau damping, and \cite{K_CPDE,CKL, KL, KL1} for the Boltzmann equation. Precise statements and remarks follow.

\begin{theorem} \label{existence on F}We consider both \eqref{field property} and \eqref{field property mag}. We assume a continuous boundary temperature $\Theta(x)$ satisfies \eqref{def:Theta}. Given any $\mathfrak{m} > 0$, there exists a unique solution $F_s (x, v)$ to the stationary Vlasov equation  
	\Be \label{equation for F_infty} 
	v \cdot \nabla_{x} F_s + (v \times B - \nabla \phi - g \mathbf{e}_3 )\cdot \nabla_{v} F_s = 0,  \ \  \text{in} \    \Omega \times \R^3,
	\Ee
	with either \eqref{field property} or \eqref{field property mag}; and the diffusive reflection boundary condition \eqref{diff_F}  
	such that
	\Be \label{initial mass on F}
	\iint_{\O \times \R^3} F_s (x, v) \dd x \dd v = \mathfrak{m}.
	\Ee
	Moreover, we have, for $b$ in \eqref{def:Theta}, 
	\Be \label{L_infty estimate on F_infty}
	\left\| e^{\frac{1}{2b}\big( \frac{|v|^2}{2} + \Phi(x)\big) } F_s(x,v) \right\|_{L^{\infty}_{x,v}} \lesssim  \mathfrak{m}.
	\Ee
\end{theorem}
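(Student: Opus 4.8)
The plan is to reduce the whole statement to a Markov-type fixed-point problem for the outgoing boundary flux, as outlined in the introduction: $L^\infty$-control via stochastic cycles, $L^1$-control from the mass, and uniqueness from uniqueness of an invariant measure. \textbf{Step 1 (characteristics and the boundary flux equation).} First I would set up the backward characteristics $(\X(s;x,v),\V(s;x,v))$ of \eqref{equation for F_infty} and note that the effective energy $E:=\tfrac12|v|^2+\Phi(x)$ is conserved (the magnetic force $v\times B$ does no work, and $\Phi=\phi+gx_3$ in case \eqref{field property}, $\Phi=gx_3$ in case \eqref{field property mag}, so $\Phi|_{\p\O}\equiv0$). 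In both regimes the vertical acceleration obeys $\ddot X_3\in[-\tfrac32 g,-\tfrac12 g]$, hence $s\mapsto X_3(s)$ is strictly concave and every interior point reaches $\p\O$ in finite backward time; this furnishes well-defined exit data $\tb(x,v)$, $\xb(x,v)\in\p\O$, $\vb(x,v)$ (and forward data $\tf,\xf,\vf$), with the free-flight time between consecutive bounces comparable to $v_3/g$. Since a weak stationary solution $F_s\in L^\infty$ is constant along characteristics, $F_s(x,v)=F_s(\xb,\vb)$; inserting this into \eqref{diff_F} closes the system in terms of the outgoing flux $\mathcal F(x):=\int_{w_3<0}F_s(x,w)|w_3|\,\dd w$, namely $F_s|_{\gamma_-}(x,v)=\mu_\Theta(x,v)\mathcal F(x)$ together with a linear identity $\mathcal F=\mathcal T\mathcal F$, where $\mathcal T$ transports the re-emitted Maxwellian flux of the previous bounce back to $\p\O$. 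The normalization in \eqref{wall_M} plus the fact that every trajectory returns to $\p\O$ makes $\mathcal T$ a Markov operator (its kernel integrates to $1$ in the output variable).

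\textbf{Step 2 (the $L^\infty$ bound from the mass).} Next I would iterate $F_s(x,v)=\mu_\Theta(\xb,\vb)\mathcal F(\xb)$ and re-expand $\mathcal F(\xb)$ through \eqref{diff_F} at the preceding bounces, producing the stochastic-cycle series: a finite sum of $k$-fold bounce integrals against a product of wall Maxwellians — a probability measure by \eqref{wall_M} — plus a residual carried by cycles with at least $k$ bounces. The decisive input is that this residual tends to $0$ as $k\to\infty$ uniformly in $(x,v)$: gravity forbids long chains of near-grazing bounces, since consecutive bounce times stay comparable to $v_3/g$ and the wall Maxwellian charges neither $v_3\downarrow0$ nor $v_3\uparrow\infty$. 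Hence $F_s$ is dominated by finitely many cycle terms, each a spatial average of $\mathcal F$, giving $\|\mathcal F\|_{L^\infty(\p\O)}\lesssim\|\mathcal F\|_{L^1(\p\O)}$ (one extra application of $\mathcal T$ absorbs the small-measure grazing set). A Kac-type identity, obtained by slicing $\O\times\R^3$ by last bounce with $\dd x\,\dd v=|w_3|\,\dd\sigma(x')\,\dd w\,\dd s$, gives $\mathfrak m=\int_{\p\O}\mathcal F(x')\tau(x')\,\dd\sigma(x')$ with mean sojourn time $\tau(x')=\int_{w_3>0}\mu_\Theta(x',w)\,w_3\,\tf(x',w)\,\dd w$ comparable to $\Theta(x')/g$, hence to $1$; so $\|\mathcal F\|_{L^1(\p\O)}$ is comparable to $\mathfrak m$. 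Finally, conservation of $E$ with $\Phi|_{\p\O}\equiv0$ yields $\tfrac12|\vb|^2=E=\tfrac12|v|^2+\Phi(x)$, so using $\Theta\le b$ and $\Phi\ge0$,
\[
\mu_\Theta(\xb,\vb)\le\frac{1}{2\pi a^2}\,e^{-\frac{1}{b}(\frac12|v|^2+\Phi(x))},\qquad
e^{\frac{1}{2b}(\frac12|v|^2+\Phi(x))}F_s(x,v)\le\frac{1}{2\pi a^2}\|\mathcal F\|_{L^\infty}\lesssim\mathfrak m,
\]
which is \eqref{L_infty estimate on F_infty}.

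\textbf{Step 3 (existence and uniqueness).} It remains to produce a unique flux with $\mathcal F=\mathcal T\mathcal F$ and $\int_{\p\O}\mathcal F\,\tau=\mathfrak m$. I would show $\mathcal T$ (or $\mathcal T^2$) satisfies a Doeblin minorization: its kernel is bounded above (Maxwellian plus the uniform residual bound) and bounded below on a full-measure set, because from any $x'\in\p\O$ the re-emitted horizontal velocity can be chosen so that the trajectory reaches a neighborhood of any prescribed $x\in\p\O$ (the torus $\mathbb{T}^2$ is fully accessible, the vertical return automatic). Perron--Frobenius/Doeblin then gives a unique invariant probability density, necessarily bounded and strictly positive a.e. — the ``continuous stationary outgoing boundary flux'' of the abstract — along with a spectral gap; rescaling it to the value $\mathfrak m$ determines $\mathcal F$ uniquely. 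Setting $F_s(x,v):=\mu_\Theta(\xb,\vb)\mathcal F(\xb)$ and verifying it is a weak solution of \eqref{equation for F_infty}--\eqref{diff_F} with mass $\mathfrak m$ (the boundary condition holding precisely because $\mathcal F=\mathcal T\mathcal F$) gives existence, with \eqref{L_infty estimate on F_infty} from Step 2. For uniqueness, any stationary solution in $L^\infty$ equals $\mu_\Theta(\xb,\vb)\mathcal F(\xb)$ by the method of characteristics with $\mathcal F$ invariant under $\mathcal T$, and its $L^1$ norm is pinned by the mass; uniqueness of the invariant measure forces it to coincide with the constructed one, hence so does $F_s$.

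\textbf{Main obstacle.} The hard part is the uniform-in-bounce-number control of the residual measure, equivalently the Doeblin minorization for $\mathcal T$: a priori, iterating diffuse reflection could let mass accumulate near the grazing set $\{w_3=0\}$, where sojourn times collapse and the per-bounce gain degenerates. One must quantify how gravity drives trajectories out of the grazing regime — exploiting that re-emission follows the wall Maxwellian (which does not charge the grazing set) and that consecutive bounce times stay comparable to $v_3/g$ away from it — to obtain both the uniform residual bound and the strictly positive continuous invariant flux. The remaining ingredients (the energy-weight computation, the boundary change of variables, the Kac identity, and the final linear normalization) I expect to be comparatively routine once this estimate is in hand.
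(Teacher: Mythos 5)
Your proposal matches the paper's overall architecture — characteristics, energy conservation $|\vb|^2=|v|^2+2\Phi(x)$, reduction to the boundary flux, and an $L^1$-to-$L^\infty$ bootstrap — and your Step~2 is close in spirit to Proposition~\ref{prop:j linfty bound}. Step~3, however, is a genuinely different route from the paper. The paper does \emph{not} invoke Doeblin/Perron--Frobenius to construct the stationary solution; it builds $F_s=\mu+f$ via a doubly penalized approximation — add $\epsilon f^j$ to the equation, damp the diffuse operator by a factor $(1-1/j)$ and add the source $r=\mu_\Theta-\mu$, then pass $l\to\infty$, $j\to\infty$, and finally $\epsilon\to 0$ (Proposition~\ref{existence on f}, Lemmas~\ref{lem: existence of f^e} and~\ref{existence of f}) — proving zero mass of $f$, positivity of $\mu+f$, and uniqueness by Green's identity plus a sign argument on $\J$. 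The Doeblin-type minorization in the paper (Propositions~\ref{prop:Doeblin 1} and~\ref{prop:Doeblin mag}) and the continuous strictly positive stationary flux (Section~\ref{sec: continuity}, Proposition~\ref{prop: est:measure}) serve the \emph{dynamical} decay, not the stationary existence. Your Markov-chain route is conceptually cleaner but buys this at the price of proving, before existence is even stated, that the boundary transition kernel is continuous and strictly positive on $\p\O\times\p\O$ (including the periodic sum over $\Z^2$ translates); that is essentially the content of Lemmas~\ref{lem: sum of mu} and~\ref{lem: j continue}, and is not shorter than the penalization.

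Two concrete gaps. First, you do not address the magnetic case \eqref{field property mag}, and this is not cosmetic: the change of variables $v\mapsto(\tb,\xb)$ carries the Jacobian $5B_3^2(2-2\cos(B_3\tb))^{-1}$, which is singular whenever $B_3\tb\in 2\pi\Z_+$, so the one-step boundary kernel is \emph{not} bounded above and the ``bounded above'' half of your Doeblin hypothesis fails. The paper handles this by the good/bad set split $\mathcal{V}^{G_\e}/\mathcal{V}^{B_\e}$ (Definition~\ref{def:BG mag}, Lemmas~\ref{lem: Be mag}, \ref{lem: sum of mu mag}, \ref{lem:bound1 mag}); its dynamical Doeblin bound further restricts one bounce time to $[\pi/(2B_3),3\pi/(2B_3)]$ (Proposition~\ref{prop:Doeblin mag}), and your $\mathcal{T}^2$ variant would need an analogous restriction. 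Second, your statement that ``this residual tends to $0$ as $k\to\infty$ uniformly'' is not quite the mechanism: because $\Theta$ is non-isothermal, each cycle measure $\dd\sigma_j=\mu_\Theta(x^{j+1},v^j)\{n(x^j)\cdot v^j\}\dd v^j$ integrates only to $\le b^2/a^2>1$, so the $k$-fold residual can a priori grow like $2^k$; the paper defeats this for a \emph{fixed} $k\sim t$ via a Stirling-formula combinatorial estimate (Lemma~\ref{lem:small_largek}), and obtains a $k$-uniform bound only later, from the strictly positive continuous flux (Proposition~\ref{prop: est:measure}). Your route would bypass Stirling, but only after the continuity and positivity of the kernel have been established.
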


\hide 
 
 \begin{theorem} \label{theorem_1}
 	Assume $\iint_{\O \times \R^3} f_0 (x,v) \dd x \dd v = 0$, 
 \end{theorem}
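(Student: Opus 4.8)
Since the potential $\phi$ and the field $B$ are prescribed, \eqref{equation for F mag} and the boundary condition \eqref{diff_F} are \emph{linear} in $F$; hence the fluctuation $f:=F-F_s$ solves the homogeneous transport equation $\pt_t f + v\cdot\nab_x f + (v\times B-\nab\phi-g\mathbf{e}_3)\cdot\nab_v f=0$ together with the diffuse-reflection condition obtained by replacing $F$ by $f$ in \eqref{diff_F}. The null-flux normalization conserves total mass, so $\iint_{\O\times\R^3} f(t)\,\dd x\,\dd v=\iint_{\O\times\R^3}f_0\,\dd x\,\dd v=0$ for all $t$. I would first construct the global-in-time solution $F(t)$, hence $f(t)$, by repeating the stochastic-cycle $L^\infty$ scheme behind Theorem~\ref{existence on F}: along characteristics the weight $e^{\frac{1}{2b}(\frac{|v|^2}{2}+\Phi(x))}$ is constant (it is a function of the conserved energy), and at each bounce the Gaussian decay of $\mu_\Theta$ together with $\Theta\le b$ keeps the weighted $L^\infty$ norm controlled on any finite interval. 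This gives $\|e^{\frac{1}{2b}(\frac{|v|^2}{2}+\Phi)}f(t)\|_{L^\infty_{x,v}}<\infty$ locally in time, and reduces the theorem to proving that this quantity decays exponentially.

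The mechanism of decay is the Markov process of successive diffuse reflections. Iterating the boundary condition along \emph{backward} stochastic characteristics, $f(t,x,v)$ is expressed as an average of $f_0$ (along trajectories that reach time $0$) against the stochastic-cycle probability measure built from the fresh-velocity laws $c\,\mu_\Theta(x_i,v')\{n(x_i)\cdot v'\}\,\dd v'$ at the bounce points $x_i$. The underlying chain lives on the outgoing boundary bundle $\{x\in\p\O,\ v_3>0\}$: from a state $(x,v)$ one flows backward under gravity to the previous boundary hit and then resamples the velocity from the wall Maxwellian there. By the construction in Theorem~\ref{existence on F} its unique stationary law is the normalization of the stationary \emph{outgoing boundary flux} of $F_s$, which is continuous and strictly positive almost everywhere.

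The crux is a minorization estimate that is \emph{uniform in the number of previous bounces}, and this is where gravity enters. Because $\Phi$ is confining, a particle leaving $\p\O$ at $x$ with velocity $v$ returns to the boundary within a time bounded by $C(1+x_3+|v|^2)$, and the energy weight renders the tails in $x_3$ and $|v|$ harmless; combining this return-time control with the a.e.-positivity and continuity of the stationary outgoing flux, I would prove that there exist $T_0>0$, $\b\in(0,1)$ and a probability measure $\nu$ on the outgoing boundary bundle such that, uniformly over states in any fixed weight-bounded set, the law of the chain after all reflections occurring within elapsed time $T_0$ dominates $\b\,\nu$. This is exactly the advertised uniform bound on "residual measures, independent of the bouncing number": the residual set of cycle-sequences on which the minorizing event has not yet occurred shrinks by a factor $1-\b$ per time window $T_0$, and this contraction does not degrade as intermediate bounces accumulate. (Without gravity the statement fails — near-grazing slow particles have unbounded return times, $\nu$ cannot be taken bounded below, and only algebraic ergodicity survives, cf.\ \cite{Yu,Bernou,JinKim1,Lods}.) This step is the main obstacle: achieving uniformity over an unbounded range of heights and speeds, and ensuring the bound does not deteriorate with the bounce count, is precisely what the continuity and strict a.e.-positivity of the stationary flux (plus its $L^1$/mass normalization, which pins down the ergodic limit) from Theorem~\ref{existence on F} are designed to supply.

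Granting the minorization, geometric ergodicity shows that the backward-cycle average of $f_0$ converges, at rate $(1-\b)^{\lfloor t/T_0\rfloor}$, to $\big(\mathfrak m^{-1}\iint_{\O\times\R^3}f_0\,\dd x\,\dd v\big)F_s$; the hypothesis $\iint_{\O\times\R^3}f_0\,\dd x\,\dd v=0$ forces this limit to vanish, so
\be
\Big\| e^{\frac{1}{2b}\big(\frac{|v|^2}{2}+\Phi(x)\big)} f(t,x,v)\Big\|_{L^\infty_{x,v}} \;\ls\; e^{-\l t}\, \Big\| e^{\frac{1}{2b}\big(\frac{|v|^2}{2}+\Phi(x)\big)} f_0\Big\|_{L^\infty_{x,v}}, \qquad \l=-T_0^{-1}\log(1-\b)>0.
\ee
The asserted exponential decay of the moments of the fluctuation is then immediate: for any $\ell\ge0$, multiplying by $\langle v\rangle^\ell$ and integrating in $v$ yields $\big\| e^{\Phi(x)/2b}\int_{\R^3}\langle v\rangle^\ell f(t,x,v)\,\dd v\big\|_{L^\infty_x}\ls e^{-\l t}\big\|e^{\frac{1}{2b}(\frac{|v|^2}{2}+\Phi)}f_0\big\|_{L^\infty_{x,v}}$, since $\int_{\R^3}\langle v\rangle^\ell e^{-|v|^2/4b}\,\dd v<\infty$. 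The magnetic case \eqref{field property mag} is identical: with $\nab\phi\equiv0$ the vertical motion is pure free fall $\dot v_3=-g$, decoupled from the cyclotron rotation in the horizontal velocity, so the return-to-boundary mechanism is unchanged, and horizontal periodicity prevents the in-plane drift from obstructing the minorization.
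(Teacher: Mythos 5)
Your proposal misidentifies both what is to be proved and where the main difficulty lies.

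\textbf{The target is wrong.} Theorem~\ref{theorem_1} asserts exponential decay of $\|f(t)\|_{L^1_{x,v}}$. Your sketch instead aims at decay of the weighted sup-norm $\big\| e^{\frac{1}{2b}(\frac{|v|^2}{2}+\Phi)} f(t)\big\|_{L^\infty_{x,v}}$, and then integrates in $v$ to get moments. That sup-norm does \emph{not} decay at all: for any $R$, a particle launched from the boundary with $v_3\sim R$ does not return for a time $\sim R/g$, and in between $f$ is pure transport, so the weighted $L^\infty$ norm is not diminished on the region $|v_3|\gtrsim gt$. This is exactly why the paper states only a uniform bound \eqref{theorem_infty_1} at weight $\theta'$ and decay \eqref{theorem_infty} only after $v$-integration and with a strictly smaller weight $\theta<\theta'$. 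If your claimed inequality held, it would strictly improve Theorem~\ref{theorem}, which should be a red flag.

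\textbf{The minorization is not uniform, and the real mechanism is missing.} Flowing backward over one time window $T_0$, the backward cycle from $(x,v)$ lands at a point $(y_*,v_*)$ at the previous checkpoint with $\tf(y_*,v_*)\le T_0/4$; states with $\tf\geq T_0/4$ simply cannot be sampled in that window, and no amount of gravity or continuity of the stationary flux removes this. This is precisely why Proposition~\ref{prop:Doeblin 1} carries the negative "unreachable defect" term $-\iint \mathbf{1}_{\tf\ge T_0/4}f$. The decisive ingredient of the paper's proof, which your sketch does not mention, is the weighted $L^1$ energy estimate (Lemmas~\ref{lemma:energy} and~\ref{lem:energy}) built on $\varphi(\tf)=e^{\delta\tf}$: because $[\p_t + v\cdot\nab_x - \nab\Phi\cdot\nab_v]\tf=-1$, the weight $\varphi(\tf)$ produces a genuine dissipation $\varphi'(\tf)|f|$, and gravity enters through $\tf\lesssim|v_3|$ (so $\tf\lesssim(|v|^2+2\Phi)^{1/2}$), which is what makes $\int\varphi(\tf)\mu_\Theta|n\cdot v|\,\dd v$ finite for exponential $\varphi$. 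This weighted $L^1$ estimate is what dominates the defect $\|\mathbf{1}_{\tf\ge T_0/4}f\|_{L^1}$ via $\varphi(T_0/4)^{-1}\|\varphi(\tf)f\|_{L^1}$ and closes the contraction for the composite norm $\vertiii{\cdot}$. Without it the Doeblin step alone gives nothing quantitative, and your "uniform over all bounces" claim is unavailable — the paper's Doeblin bound uses only $k=2$ bounces ($k=3$ in the magnetic case), and uniformity in the bounce count (Proposition~\ref{prop: est:measure}) is a separate upper-bound ingredient used only later in Section~\ref{sec: exponential moments} for the $L^\infty$ bootstrap, not for Theorem~\ref{theorem_1}. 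Finally, the magnetic case is not "identical": the Jacobian of $v\mapsto(\tb,S_{\xb})$ has a $(2-2\cos(B_3\tb))^{-1}$ degeneracy near $B_3\tb\in 2\pi\Z$, forcing an extra reflection and the good/bad split $\mathcal V^{G_\e}$, $\mathcal V^{B_\e}$.
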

 
 \begin{theorem} \label{theorem_1}
 	Assume $\iint_{\O \times \R^3} f_0 (x,v) \dd x \dd v = 0$, $\| e^{\theta^\prime (|v|^2+ 2\Phi (x))} f_0\|_{L^\infty_{x,v}}< \infty$ for $0 < \theta^\prime < \frac{1}{2b}$ with $b$ defined in \eqref{def:Theta},
 	and $\| e^{\delta (|v|^2 + 2 \Phi(x))^{1/2} } f_0 \|_{L^1_{x,v}} < \infty$ with $0 < \delta < \infty$. There exists a unique solution $f(t,x,v)$ to \eqref{eqtn_f mag}-\eqref{diff_f mag},
 	such that
 	\Be \label{cons_mass_f} 
 	\iint_{\Omega \times \R^3} f (t, x, v) \dd x \dd v 
 	= \iint_{\Omega \times \R^3} f_0 ( x, v) \dd x \dd v = 0, \ \ \text{for all } t\geq 0. 
 	\Ee
 	\Be \label{exp decay f} 
 	\| f(t) \|_{L^1_{x,v}} 
 	\lesssim  e^{- \Lambda t}
 	\{ \| e^{\theta^\prime (|v|^2+ 2\Phi (x))} f_0\|_{L^\infty_{x,v}}
 	+ \| \varphi \big( (|v|^2 + 2 \Phi(x))^{1/2} \big) f_0 \|_{L^1_{x,v}} \},
 	\Ee
 	where $\Lambda$ is the same as in Theorem \ref{theorem_1}.
 \end{theorem}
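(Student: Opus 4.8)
Since the force field in \eqref{equation for F mag} is external (there is no self-consistent Poisson coupling), the equation is \emph{linear} in the unknown, so the fluctuation $f:=F-F_s$ solves the same transport equation \eqref{equation for F mag} and the same diffusive-reflection boundary condition \eqref{diff_F}, now with a datum $f_0=F_0-F_s$ of zero total mass. Write $\Phi(x)=\phi(x)+gx_3$ (so $\Phi\equiv gx_3$ under \eqref{field property mag}); two structural facts are used throughout: the energy $\tfrac12|v|^2+\Phi(x)$ is invariant along the characteristics of \eqref{equation for F mag} in both regimes (the magnetic force does no work), and $\Phi$ vanishes on $\partial\Omega$ while $\Phi(x)\ge\tfrac{g}{2}x_3$ by $\|\nabla_x\phi\|_{L^\infty}\le g/2$, so $\Phi$ is confining. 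Accordingly I take the weight $w(x,v):=e^{\theta'(|v|^2+2\Phi(x))}$; the hypothesis $\theta'<1/(2b)$ is exactly what makes the weighted wall Maxwellian $w\,\mu_\Theta$ bounded on $\partial\Omega$ (there $w=e^{\theta'|v|^2}$ and $\mu_\Theta\lesssim e^{-|v|^2/(2b)}$), hence the weighted diffusive-reflection operator bounded.

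First I would construct $f$, prove its uniqueness, and establish a uniform weighted $L^\infty$ bound via the stochastic-characteristics (Duhamel) expansion: tracing backward characteristics from $(t,x,v)$, at each boundary hit the velocity is resampled against the \emph{probability} measure $\dd\sigma(v)=\mu_\Theta(x,v)\{n(x)\cdot v\}\,\dd v$, so that $f(t,x,v)$ is the average of $f_0$ at time $0$ over the resulting bouncing trajectories, expanded as a series in the number $k$ of bounces. As there is no source term, convergence of this series and the bound $\sup_{t\ge0}\|wf(t)\|_{L^\infty_{x,v}}\lesssim\|wf_0\|_{L^\infty_{x,v}}$ both reduce to a \emph{uniform-in-$t$, bounce-number-independent} bound on the residual measure $\int\mathbf{1}[\text{more than }k\text{ bounces occur in }[0,t]]\,\prod_j\dd\sigma(v_j)$. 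The key point is that a post-bounce vertical velocity $v_3$ forces a free-flight time $\approx 2v_3/g$ before the next hit (vertical motion $\ddot x_3=-g$, perturbed by $O(g/2)$ in the gravity case), whereas $\mu_\Theta(\{0<v_3<\delta\})\approx\delta^2/(2\Theta)$; so accumulating many bounces in a bounded window demands a super-exponentially unlikely run of small vertical speeds, while $\Phi\gtrsim gx_3$ keeps heights and energies under control meanwhile. This is precisely where one invokes the continuous, a.e.-strictly-positive stationary outgoing boundary flux produced by the stationary construction behind Theorem~\ref{existence on F}, which removes the dependence on the bouncing number. The series also yields uniqueness of the mild solution, and it is routine to verify it is a weak solution of the equation and boundary condition. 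For mass conservation, integrate the equation over $\Omega\times\R^3$: the velocity-force term drops since $\nabla_v\cdot(v\times B-\nabla\phi-g\mathbf{e}_3)=0$, and the surviving boundary flux $\int_{\partial\Omega}\!\int_{\R^3}(n(x)\cdot v)\,f\,\dd v\,\dd S$ vanishes identically by the normalization $\int_{n(x)\cdot v^1>0}\mu_\Theta\{n(x)\cdot v^1\}\,\dd v^1=1$ (incoming mass flux $=-$ outgoing mass flux), whence $\iint f(t)\equiv\iint f_0=0$ for all $t\ge0$.

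The core is the exponential $L^1$ decay. Split $f_0=f_0^+-f_0^-$ into positive and negative parts, both nonnegative of equal mass $\mathfrak m_0:=\tfrac12\|f_0\|_{L^1}$, so that $f(t)=S(t)f_0^+-S(t)f_0^-$ with $S(t)$ the Markov semigroup of the stochastic flow (positivity- and mass-preserving, hence an $L^1$-contraction). By Theorem~\ref{existence on F} the stationary solution of mass $\mathfrak m_0$ is $(\mathfrak m_0/\mathfrak m)F_s$, so it suffices to show $\|S(t)h-(\mathfrak m_0/\mathfrak m)F_s\|_{L^1}\lesssim e^{-\Lambda t}(\|wh\|_\infty+\|\varphi\,h\|_{L^1})$ for nonnegative $h$ of mass $\mathfrak m_0$, with $\varphi$ the exponential-in-$\sqrt{\text{energy}}$ weight, and then add the two pieces. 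I would decompose $S(t)h$ via the first boundary hit: a characteristic of energy $E_0$ reaches $\partial\Omega$ in time $\lesssim\sqrt{E_0}/g$, so by Liouville's theorem the mass that has \emph{not} bounced by time $t$ lives on $\{E_0\gtrsim(gt)^2\}$ and contributes $\lesssim e^{-ct}\|\varphi h\|_{L^1}$ — this is exactly why the weighted $L^1$ norm of the datum enters. The mass that \emph{has} bounced starts afresh from $\mu_\Theta$, and I control it by a Doeblin argument on the \emph{discrete} chain $Y_n=$ (boundary hit point, outgoing velocity) at the $n$-th bounce: (i) right after any bounce the energy is $\tfrac12|v_n^+|^2$ with $v_n^+\sim\mu_\Theta$, so $\mathbb{E}[w(Y_{n+1})\mid Y_n]=\mathbb{E}_{\mu_\Theta}[e^{\theta'|v|^2}]\le C_0<\infty$ uniformly in $Y_n$ (finite precisely because $\theta'<1/(2b)$) — a one-step collapse of the weight; (ii) one bounce brings the velocity to $O(1)$ with density $\ge c_0>0$ on a fixed velocity set since $a\le\Theta\le b$, and the deterministic free flights between the next few bounces spread the resampled velocity over the horizontal torus and a band of heights, so that after a bounded number $m$ of bounces the law of $Y_{n+m}$ given $Y_n$ dominates $\alpha\,\eta$ for a fixed probability measure $\eta$ and $\alpha>0$, uniformly in $Y_n$ (the a.e.-positivity of the stationary outgoing flux ensures no patch of the boundary is inaccessible). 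Doeblin's theorem then gives geometric ergodicity of $\{Y_n\}$ in the $w$-weighted total variation distance; transferring back to continuous time via $N(t)\gtrsim ct$ with overwhelming probability (post-first-bounce inter-bounce times have $O(1)$ mean) together with the fact that $f$ is frozen along characteristics between bounces — so $f(t,\cdot)$ is controlled by the boundary data at the preceding bounce — yields $\|f(t)\|_{L^1}\lesssim e^{-\Lambda t}(\|wf_0\|_{L^\infty}+\|\varphi f_0\|_{L^1})$ with an explicit $\Lambda>0$, the claimed bound. Interpolating with the uniform weighted $L^\infty$ bound of the previous step then upgrades this to exponential decay of the energy-weighted moments of $|f(t)|$, consistent with the abstract (the stated $L^1$ bound being the zeroth moment).

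The main obstacle is exactly the uniformity-in-bounce-number in the two steps above: the residual-measure bound and the Doeblin minorization must both hold with constants that do \emph{not} degenerate as the number of bounces in a fixed time window grows, which is threatened by near-grazing slow particles that — precisely because gravity returns them so quickly — can bounce arbitrarily often within a bounded time. Overcoming this requires the construction of a continuous stationary outgoing boundary flux that is strictly positive almost everywhere (carried out in the stationary theory underlying Theorem~\ref{existence on F}); with that in hand the residual measure is controlled independently of the bouncing number and the minorization becomes genuinely uniform, which is what upgrades the decay from polynomial to exponential.
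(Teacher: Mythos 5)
Your proposal takes a genuinely different route from the paper. The paper's argument is a deterministic weighted-energy iteration: it proves a fixed-time-step Doeblin-type lower bound on the phase-space density $f(NT_0,x,v)\geq \mathfrak m(x,v)\{\iint f((N-1)T_0)-\iint\mathbf 1_{\tf\geq T_0/4}f((N-1)T_0)\}$ (Proposition \ref{prop:Doeblin 1}), feeds the resulting $L^1$ recursion into a Gr\"onwall-type estimate for the weighted norm $\|\varphi(\tf)f\|_{L^1}$ with $\varphi(\tau)=e^{\delta\tau}$ (Lemmas \ref{lemma:energy}, \ref{lem:energy}), and contracts the combined norm $\vertiii{f}=\|f\|_{L^1}+C\|\varphi(\tf)f\|_{L^1}$ by a fixed factor $\mathfrak R<1$ per step $T_0$. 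You instead formulate the problem probabilistically: you split $f_0$ into positive and negative parts, reduce to convergence of the nonnegative semigroup $S(t)$ to $F_s$, and propose a Doeblin--Lyapunov argument on the \emph{discrete} boundary-trace chain $Y_n$ (weight collapse in one bounce plus minorization after $m$ bounces), followed by a transfer to continuous time. This is a legitimate alternative framing of the same mechanism, and your reading of why $\theta'<1/(2b)$ and the confining $\Phi\gtrsim gx_3$ make the weight work is correct. What the paper's deterministic formulation buys is that it bypasses the discrete-to-continuous transfer entirely, and it decomposes cleanly so that the $\varphi(\tf)$-weighted trace term can be absorbed by the boundary normalization.

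There are, however, genuine gaps. First, the discrete-to-continuous transfer is not a formality: the inter-bounce times depend on the chain state and are only sub-Gaussian, so the needed concentration of $N(t)\gtrsim ct$ is a quantitative renewal estimate, not a consequence of ``$O(1)$ mean.'' The paper avoids this altogether by working directly with the semigroup at step size $T_0$. Second, you attribute the removal of the bounce-number dependence to the continuity and a.e.\ strict positivity of the stationary outgoing flux $\J$; in fact the $L^1$ exponential decay (Theorem~\ref{theorem_1}) does not use $\J$ at all --- its Doeblin lower bound is proved directly by the change of variables $v\mapsto(\tb,S_{\xb})$ in Propositions \ref{prop:mapV}, \ref{prop:mapV mag}. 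The positivity/continuity of $\J$ and the residual-measure bound (Proposition~\ref{prop: est:measure}) enter only in Section~\ref{sec: exponential moments}, for the $L^\infty$ moment estimate. Third, your minorization ``after $m$ bounces'' does not engage with the magnetic case: there the change of variables carries the degenerate Jacobian $5B_3^2/(2-2\cos(B_3\tb))$, which blows up when $B_3\tb$ approaches $2k\pi$, and the paper handles this by splitting velocities into $\mathcal V^{G_\e}$ and $\mathcal V^{B_\e}$ (Definition~\ref{def:BG mag}, Lemmas~\ref{lem: Be mag}, \ref{lem:bound1 mag}) and by taking a three-bounce cycle with the middle flight time constrained to $[\pi/(2B_3),3\pi/(2B_3)]$ (Proposition~\ref{prop:Doeblin mag}). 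Without an analogous device your minorization constant is not uniform in the starting point for $B\neq 0$.
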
\unhide
 
 \begin{theorem} \label{theorem}We consider both \eqref{field property} and \eqref{field property mag}. Assume all conditions in Theorem \ref{existence on F}. Let $F_s(x,v)$ be a stationary solution in Theorem \ref{existence on F}. Consider the initial data $F_0 (x, v) = F_s (x,v) + f_0 (x,v)$ such that $\iint_{\O \times \R^3} f_0 (x,v) \dd x \dd v = 0$. $\| e^{\theta^\prime (|v|^2+ 2\Phi (x))} f_0\|_{L^\infty_{x,v}}< \infty$ for $0 < \theta^\prime < \frac{1}{2b}$ with $b$ defined in \eqref{def:Theta}, and $\| e^{\delta (|v|^2 + 2 \Phi(x))^{1/2} } f_0 \|_{L^1_{x,v}} < \infty$ with $0 < \delta < \infty$. There exists a unique global-in-time solution 
 \Be	\label{def:f}
 	F(t,x,v) = F_s (x,v) + f(t,x,v)\geq 0
 	\Ee
 	 to \eqref{equation for F mag} with either \eqref{field property} or \eqref{field property mag} and the boundary condition \eqref{diff_F} with the initial condition $F(t,x,v)|_{t=0} = F_0(x,v)$ in $\O \times \R^3$,
 	such that
 	\Be \label{cons_mass_f} 
 	\iint_{\Omega \times \R^3} f (t, x, v) \dd x \dd v = 0, \ \ \text{for all } t\geq 0,
 	\Ee
 		\Be \label{theorem_infty_1}
 	\sup_{t\geq0}\| e^{\theta^\prime (|v|^2+ 2\Phi (x))} f (t)\|_{L^\infty_{x,v}} \lesssim \| e^{\theta^\prime (|v|^2+ 2\Phi (x))} f_0\|_{L^\infty_{x,v}}.
 	\Ee
 	Moreover, for all $t \geq 0$ and $0 \leq \theta< \theta^\prime$,
 	\Be \label{theorem_infty}
 	\begin{split}
 		\sup_{x \in \bar{\O}}\int_{\R^3} e^{\theta  (|v|^2+ 2\Phi (x))} |f(t,x,v) |\dd v  
 		\lesssim_{\theta}  e^{- \Lambda t},
 	\end{split} 
 	\Ee
 	where $\Lambda  $ is defined in \eqref{def:M}.
 	\hide

 	\Be \label{est:theorem_1} 
 	\| f(t) \|_{L^1_{x,v}} 
 	\lesssim  e^{- \Lambda t}
 	\{ \| e^{\theta^\prime (|v|^2+ 2\Phi (x))} f_0\|_{L^\infty_{x,v}}+ \| e^{\delta (|v|^2 + 2 \Phi(x))^{1/2} } f_0 \|_{L^1_{x,v}}
 	\},
 	\Ee
 	where 
 	
 	.
 	There exists a unique solution $F (t,x,v)= f (t,x,v) + \mathfrak{M} F_s (x, v)$ 
 	for \eqref{equation for F mag} with \eqref{field property} and \eqref{diff_F} 
 	where $F_s (x, v)$ solves \eqref{equation for F_infty} and \eqref{diff_F} with 
 	$\iint_{\O \times \R^3} F_s (x, v) \dd x \dd v = 1$ and $\iint_{\O \times \R^3} f_0 (x,v) \dd x \dd v = 0$.
 	In addition, if $\| e^{\theta^\prime (|v|^2+ 2\Phi (x))} f_0\|_{L^\infty_{x,v}}<\infty$ for $0<\theta^\prime< \frac{1}{2b}$ with $b$ defined in \eqref{def:Theta}, and $\| e^{\delta (|v|^2 + 2 \Phi(x))^{1/2} } f_0 \|_{L^1_{x,v}} < \infty$ with $0 < \delta < \infty$, then we have
 \unhide
 \end{theorem}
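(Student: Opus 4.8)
The plan is the following. Since the fields in \eqref{field property} and \eqref{field property mag} are prescribed, \eqref{equation for F mag} is \emph{linear} in $F$; as $F_s$ solves the stationary problem \eqref{equation for F_infty} with the same diffusive reflection \eqref{diff_F}, the fluctuation $f = F - F_s$ solves the \emph{same} transport equation with the same boundary condition and initial datum $f_0$. Three facts about the backward characteristics $(X(s),V(s))$ are used throughout: they are globally defined since $-\nabla\phi - g\mathbf{e}_3 = -\nabla\Phi$ is Lipschitz under \eqref{field property} (resp. the constant field in \eqref{field property mag}); the energy $\tfrac12|V(s)|^2 + \Phi(X(s))$ is conserved because $v\times B\perp v$; and the vertical force $-\p_{x_3}\Phi\le -g/2$ forces every characteristic to return to $\p\O$ in finite time (at most of order $g^{-1}(|v|^2+2\Phi)^{1/2}$), so characteristics are a sequence of ballistic arcs between bounces.

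First I would construct $F$ (hence $f$) by the stochastic-cycle iteration: Duhamel along characteristics with \eqref{diff_F} substituted at every bounce. Theorem \ref{existence on F} gives $F_s\in L^\infty$ with the Gaussian-in-energy weight, so the iteration converges in the weight $e^{\theta'(|v|^2+2\Phi)}$ precisely because $0<\theta'<\tfrac1{2b}$ makes $e^{\theta'|v|^2}\mu_\Theta(x,\cdot)$ integrable on $\gamma_-$; uniqueness in this class is then immediate by linearity, positivity $F(t)=F_s+f\ge 0$ is propagated from $F_0\ge 0$ since transport with diffusive reflection is order preserving, and \eqref{cons_mass_f} follows from the null-flux normalization recorded below \eqref{wall_M}. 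For \eqref{theorem_infty_1}: on the characteristic arc after the last bounce before $t$, energy conservation gives $|f(t,x,v)| = \mu_\Theta(x^1,V^1)\,\big|\int_{n\cdot v^1>0} f(t^1,x^1,v^1)\{n\cdot v^1\}\,\dd v^1\big|$ with $|V^1|^2=|v|^2+2\Phi(x)$; bounding the incoming integral by $\mathfrak m$ (through $F_s$, Theorem \ref{existence on F}) plus the running weighted supremum of $f$ against $\int e^{-\theta'|v^1|^2}\{n\cdot v^1\}\,\dd v^1<\infty$ and iterating the bounces, the bootstrap closes because the many-bounce remainder carries a small measure — the residual-measure input described next.

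The heart of the exponential decay \eqref{theorem_infty} is a bound, \emph{uniform in the number $k$ of bounces}, on the measure of phase points whose backward trajectory bounces at least $k$ times inside a fixed window. Gravity makes the $j$-th inter-bounce time $\gtrsim v_3^{(j)}/g$ while the diffusive kernel draws $v_3^{(j)}$ with density $\propto v_3 e^{-v_3^2/2\Theta}$ near zero, so many short returns become increasingly improbable; quantitatively one compares the iterated diffusive kernel on $\p\O$ against the stationary outgoing flux $\mathcal F_s(x) = \int_{n\cdot v>0}F_s(x,v)\{n\cdot v\}\,\dd v$, which one shows is continuous and strictly positive almost everywhere on $\p\O$ — an irreducibility statement: under gravity plus diffusive reflection every boundary point is reachable. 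This spatial minorization produces the $k$-uniform residual bound. One then expands $f$ over a macroscopic window of length $T_0$: a definite fraction $\kappa>0$ of the trajectory-measure bounces at least once in the window and is thereby reset to the universal profile $\mu_\Theta$, whose contribution drops out on the invariant subspace $\{\iint f = 0\}$ — this is exactly where $\iint f_0=0$ enters — while the non-resetting, slowly-returning (hence high-energy) fraction is controlled using the hypotheses $\|e^{\theta'(|v|^2+2\Phi)}f_0\|_{L^\infty}<\infty$ and $\|e^{\delta(|v|^2+2\Phi)^{1/2}}f_0\|_{L^1}<\infty$ (the latter supplying exponential moments in the conserved speed, since high-energy particles return rarely). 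This yields a contraction by $1-\kappa$ of a suitably weighted $L^1$ functional over each window; iterating, and using that one further bounce maps such $L^1$ decay into the Gaussian-weighted $L^\infty$ moment $\sup_x\int e^{\theta(|v|^2+2\Phi)}|f|\,\dd v$ for $0\le\theta<\theta'$ (a little weight being traded for integrability), gives \eqref{theorem_infty} with $\Lambda = -T_0^{-1}\ln(1-\kappa)$, i.e. $\Lambda$ as in \eqref{def:M}.

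The main obstacle is this $k$-uniform residual-measure bound, equivalently the continuity and almost-everywhere strict positivity of $\mathcal F_s$: both the sharp geometric control of characteristics under $\|\nabla\phi\|_{L^\infty}\le g/2$, $\|e^{\varrho_1 x_3}\nabla^2\phi\|_{L^\infty}\le\varrho_2\ll1$ and the irreducibility of the diffusive dynamics enter here. This is precisely the step that fails without gravity, where slow near-boundary particles can bounce arbitrarily often in finite time and only polynomial decay survives.
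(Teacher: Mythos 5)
Your proposal follows the same architecture as the paper's proof: $L^1$--$L^\infty$ bootstrap via stochastic cycles, a Doeblin-type contraction over a window $T_0$ with an unreachable defect controlled by the exponential weight $\varphi(\tf)=e^{\delta\tf}$ (enabled by gravity through $\tf\lesssim|n(\xb)\cdot\vb|=(|v|^2+2\Phi(x))^{1/2}$), and a $k$-uniform residual-measure bound obtained from continuity and strict positivity of the stationary outgoing flux $\J$. Two points, however, are blurred in a way that matters.

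First, you collapse two distinct estimates into one ``measure of phase points that bounce at least $k$ times.'' In the paper these are separate: the \emph{residual measure} $\int_{\prod_{j<i}\mathcal V_j}\mathbf 1_{0<t_i\le t}\,\dd\sigma_{i-1}\cdots\dd\sigma_0$ is delicate not because short returns are improbable, but because $\dd\sigma_j=\mu_\Theta(x^{j+1},v^j)\{n(x^j)\cdot v^j\}\dd v^j$ is \emph{not} a probability measure when $\Theta$ varies along the boundary (the wall Maxwellian is evaluated at the landing point $x^{j+1}$, not at $x^j$), so naively the iterated kernel could grow like $(b/a)^{2i}$. The flux comparison $U\ge\J(x)\ge L\cdot(\text{residual measure})$ supplied by Proposition~\ref{prop: est:measure} yields only \emph{uniformity}, not smallness. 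Smallness for the term with $k\gtrsim t$ bounces is a separate combinatorial/Stirling estimate (Lemma~\ref{lem:small_largek}), which is what your ``many short returns become increasingly improbable'' describes, and which is elementary and holds equally in the isothermal case. Your argument needs both, and identifying them risks believing either one alone suffices. Second, your account does not address why the argument still closes under \eqref{field property mag}: the change of variables $v\mapsto(\tb,S_{\xb})$ has Jacobian $\propto(2-2\cos(B_3\tb))^{-1}$, which is degenerate near $B_3\tb\in 2\pi\Z_+$. The paper handles this by splitting velocities into $\mathcal V^{B_\e}$ (small $\sigma$-measure, Lemma~\ref{lem: Be mag}) and $\mathcal V^{G_\e}$ (Jacobian $\lesssim\e^{-2}$), and the residual-measure bound and the $L^\infty$ bootstrap must be rerun with this split (Lemmas~\ref{lem:bound1 mag}, \ref{lem:bound1_2 mag}); a direct transcription of the gravitational argument would fail at precisely the change-of-variables step. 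Finally, the passage from $L^1$ decay to the moment bound \eqref{theorem_infty} is not ``one further bounce'' but a full stochastic-cycle expansion of the time-weighted quantity $\varrho(t)w'(x,v)f$, first with $\varrho(t)=1+t$ (resp.\ $1+t^6$ in the magnetic case) to obtain \eqref{theorem_infty_1}, then with $\varrho(t)=e^{\Lambda t}$; the term $\int_0^t\varrho'(s)\|f(s)\|_{L^1}\dd s$, bounded by the already-proven $L^1$ decay, is the crucial input.
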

 
 \hide
 \begin{theorem} \label{theorem}
 	Consider the initial condition $F(t,x, v) |_{t = 0}  = F_0 (x, v)$.
 	There exists a unique solution $F (t,x,v)= f (t,x,v) + \mathfrak{M} F_s (x, v)$ for  \eqref{equation for F mag} and \eqref{diff_F} 
 	where $F_s (x, v)$ is the solution for \eqref{def:f} and \eqref{diff_F} with 
 	$\iint_{\O \times \R^3} F_s (x, v) \dd x \dd v = 1$, and
 	$\iint_{\O \times \R^3} f_0 (x,v) \dd x \dd v = 0$. In addition, if $\| e^{\theta^\prime (|v|^2+ 2\Phi (x))} f_0\|_{L^\infty_{x,v}}<\infty$ for $0<\theta^\prime< \frac{1}{2b}$ with $b$ defined in \eqref{def:Theta}, and $\| e^{\delta (|v|^2 + 2 \Phi(x))^{1/2} } f_0 \|_{L^1_{x,v}} < \infty$ with $0 < \delta < \infty$, then we have
 	\Be \label{theorem_infty_1}
 	\sup_{t\geq0}\| e^{\theta^\prime (|v|^2+ 2\Phi (x))} f (t)\|_{L^\infty_{x,v}} \lesssim \| e^{\theta^\prime (|v|^2+ 2\Phi (x))} f_0\|_{L^\infty_{x,v}}.
 	\Ee
 	Moreover, for all $t \geq 0$ and $0 \leq \theta< \theta^\prime$,
 	\Be \label{theorem_infty}
 	\begin{split}
 		\sup_{x \in \bar{\O}}\int_{\R^3} e^{\theta  (|v|^2+ 2\Phi (x))} |f(t,x,v) |\dd v  
 		\lesssim_{\theta}   e^{-\Lambda t},
 	\end{split} 
 	\Ee
 	where $\Lambda$ is the same as in Theorem \ref{theorem_1}.
 \end{theorem}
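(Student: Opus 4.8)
The plan is to work entirely with the fluctuation $f = F - F_s$. Because the fields $B$ and $\phi$ are prescribed, $f$ solves the \emph{same linear} kinetic transport equation as in \eqref{equation for F mag} together with the linear diffuse-reflection condition \eqref{diff_F} and initial datum $f_0$; thus global existence and uniqueness follow from the iteration along stochastic characteristics already used to construct $F_s$ in Theorem \ref{existence on F}, the null-flux normalization gives $\iint f(t) = \iint f_0 = 0$ (i.e.\ \eqref{cons_mass_f}), and $F = F_s + f \geq 0$ follows from the maximum principle since transport and diffuse reflection both preserve nonnegativity. Two structural facts will be used throughout: (a) the energy $E(x,v) := \tfrac{|v|^2}{2} + \Phi(x)$ is conserved along every characteristic (the magnetic force and $\nabla\Phi$ do no work), so each weight $e^{\theta(|v|^2 + 2\Phi)} = e^{2\theta E}$ commutes with the transport operator; and (b) in both field configurations the vertical acceleration satisfies $\ddot{x}_3 \leq -g/2 < 0$ (for \eqref{field property} because $\|\nabla\phi\|_{L^\infty}\le g/2$, for \eqref{field property mag} because $(v\times B)_3=0$), so $x_3(\cdot)$ is uniformly strictly concave in time; hence each characteristic is confined to $\{x_3 \lesssim E/g\}$ and, traced backward, strikes $\partial\Omega$ after a time comparable to its vertical speed there. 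This gravitational trapping is what will separate the present exponential rate from the polynomial rates available when $g=0$.

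Next I would set up the backward stochastic cycles for $f$: from $(t,x,v)$ trace to the first boundary hit $(t_1,x_1,v_1)\in\gamma_-$, apply \eqref{diff_F} to write $f(t,x,v) = \mu_\Theta(x_1,v_1)\,\sigma_f(t_1,x_1)$ with $\sigma_f(t_1,x_1) = \int_{u_3<0} f(t_1,x_1,u)|u_3|\,\dd u$, expand $\sigma_f$ over the next hit, and iterate $k$ times to get $f(t,x,v) = \sum_{j=1}^{k} \mathcal I_j + \mathcal R_k$, where $\mathcal I_j$ collects characteristics that bounce exactly $j-1$ times in $[0,t]$ and then carry $f_0$, and $\mathcal R_k$ is the residual over characteristics that bounce at least $k$ times. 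For \eqref{theorem_infty_1} I would carry the weight $e^{\theta'(|v|^2+2\Phi)}$ through the cycles: by $E$-conservation and $\Phi|_{\partial\Omega}=0$, at each boundary hit $e^{\theta'(|v|^2+2\Phi)}\mu_\Theta \leq \tfrac1{2\pi a^2}$ because $\theta' < \tfrac1{2b} \leq \tfrac1{2\Theta}$, and the remaining Gaussian $\int_{u_3<0} e^{-\theta'|u|^2}|u_3|\,\dd u = \tfrac{\pi}{2\theta'^2}$ converges for the same reason. The one per-bounce factor that is \emph{not} automatically $\leq 1$ must be absorbed using the stationary structure: comparing $\sigma_f$ with the stationary outgoing flux $\bar\sigma(x) := \int_{u_3<0} F_s(x,u)|u_3|\,\dd u$, which is continuous and strictly positive a.e., one bounds the "residual measure" $\int_{\{\ge k\text{ bounces in }[0,t]\}}\prod_i \mu_\Theta(x_i,v_i)|v_{i,3}|\,\dd v_i$ \emph{uniformly in $k$ and $t$}; together with the gravitational trapping (which forces the measure of $\ge k$ bounces in $[0,t]$ to decay like $(Cgt)^{2k}/(2k)!$), this sends $\mathcal R_k\to 0$ and makes $\sum_j \mathcal I_j$ converge to a quantity $\lesssim \|e^{\theta'(|v|^2+2\Phi)} f_0\|_{L^\infty}$, which is \eqref{theorem_infty_1}.

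For the exponential decay \eqref{theorem_infty} the point is that the bounce dynamics is a geometrically ergodic chain: each diffuse reflection redraws the post-collision velocity from $\mu_\Theta(x_i,\cdot)$, and since $\mu_\Theta>0$, the boundary $\mathbb{T}^2$ is bounded, and gravity returns every characteristic to $\partial\Omega$ within a time with an exponentially small tail, the one-step boundary-to-boundary transition kernel has a density bounded below on a fixed time scale — a Doeblin-type minorization. Hence there is $\Lambda>0$ (the constant in \eqref{def:M}, depending on $a,b,g$) with $\rho := e^{-\Lambda} < 1$ such that, over each window $[t-T,t]$ with $T$ fixed, running the cycle expansion and using the minorization produces a factor $\rho$ on the part of $f$ already thermalized at the boundary; the hypothesis $\iint f_0 = 0$, which is preserved in time, forces the thermalized profile to carry zero net mass and hence (by the uniqueness in Theorem \ref{existence on F}) to contribute nothing to the limit. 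Concretely I would close a recursion $\mathbf{M}(t) \le \rho\,\mathbf{M}(t-T) + (\text{error from }\mathcal R_k)$ for $\mathbf{M}(t):=\sup_x \int e^{\theta(|v|^2+2\Phi)}|f(t,x,v)|\,\dd v$, where the $L^1$-hypothesis $\|e^{\delta(|v|^2+2\Phi)^{1/2}}f_0\|_{L^1}<\infty$ controls the low-energy, large-time tails in the residual, and the uniform-in-time bound \eqref{theorem_infty_1} prevents the error terms from accumulating; iterating gives \eqref{theorem_infty}.

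The main obstacle is the second step. The per-bounce weighted estimate genuinely loses a factor $\tfrac1{4a^2\theta'^2}>1$ (forced by $\theta'<\tfrac1{2b}$), so no naive maximum principle closes; one must prove that summing over all bounce generations still yields a \emph{finite, time-independent} bound, and this is exactly where the continuous, a.e.-positive stationary outgoing flux $\bar\sigma$ is indispensable — it is the reference against which the residual measures are controlled uniformly in the bouncing number. The secondary difficulty is making $\Lambda$ explicit while $\Theta$ is only continuous (not constant) and $\phi$ is nonzero but small: both enter the return-time tails and the Doeblin constant and must be tracked carefully through the cycle expansion.
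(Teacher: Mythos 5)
Your plan gets most of the architecture right: fluctuation $f=F-F_s$ solves the linear transport equation with diffuse boundary, the energy $E=\tfrac{|v|^2}{2}+\Phi$ is conserved along characteristics, gravitational trapping bounds $\tb$ and $\tf$ by the vertical velocity, and the uniform-in-bouncing-number control of the residual measure hinges on the continuous, a.e.\ positive stationary outgoing flux. Your first step — carrying the weight $e^{\theta'(|v|^2+2\Phi)}$ through the stochastic cycles and using a Stirling-type bound on the number of bounces — is essentially the paper's proof of \eqref{theorem_infty_1} (there via the auxiliary weight $\varrho(t)=t+1$, or $t^6+1$ for the magnetic case, combined with Proposition \ref{prop: est:measure} and Lemmas \ref{lem:bound1_2}, \ref{lem:small_largek}).

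The gap is in your second step. You propose to close a recursion of the form
\begin{equation*}
\mathbf{M}(t) \le \rho\,\mathbf{M}(t-T) + (\text{residual error}),\qquad \mathbf{M}(t):=\sup_x\int e^{\theta(|v|^2+2\Phi)}|f(t,x,v)|\,\dd v,
\end{equation*}
by applying the Doeblin minorization directly at the level of the exponential moment and invoking \eqref{theorem_infty_1} to control the error. This does not close. First, the Doeblin minorization of Proposition \ref{prop:Doeblin 1} is inherently an $L^1$ coercivity statement: it feeds off the mass-zero condition via the decomposition $f=f_+-f_-$ into parts of equal mass and yields a contraction of $\|f(NT_0)\|_{L^1_{x,v}}$ (Proposition \ref{prop:Doeblin}), not a contraction of the $L^\infty_x\cap L^1_v$ moment $\mathbf{M}$. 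There is no analogous pointwise-in-$x$ cancellation mechanism, because the diffuse reflection mixes in $v$ at a fixed boundary point but only spreads over $x$ after a full bounce, so the contraction you'd need for $\mathbf{M}$ is a different (and harder) statement than the one the paper proves. Second, even granting a contraction factor $\rho$, controlling the residual terms merely by the uniform-in-time bound \eqref{theorem_infty_1} produces a bounded error $O(1)$, so the recursion yields $\mathbf{M}(t)\le \rho^{t/T}\mathbf{M}(0)+O(1)/(1-\rho)$: boundedness, not decay.

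What the paper actually does is interpose Theorem \ref{theorem_1}: an exponential $L^1$ decay $\|f(t)\|_{L^1_{x,v}}\lesssim e^{-\Lambda t}$ obtained from the Doeblin minorization \emph{plus} a weighted $L^1$ energy estimate with $\varphi(\tf)=e^{\delta\tf}$ (this is where the hypothesis $\|e^{\delta(|v|^2+2\Phi)^{1/2}}f_0\|_{L^1}<\infty$ enters, and where gravity is used via $\tf\lesssim(|v|^2+2\Phi)^{1/2}$). Only then do they bootstrap to \eqref{theorem_infty} by re-running the stochastic cycle expansion with $\varrho(t)=e^{\Lambda t}$: the troublesome error terms are $\int_0^t\|\varrho'(s)f(s)\|_{L^1}\dd s\lesssim\int_0^t e^{\Lambda s}e^{-\Lambda s}\dd s\lesssim t$, and after dividing by $\varrho(t)$ one gets a decaying factor $\langle t\rangle^2 e^{-\Lambda t}$ (or $\langle t\rangle^3 e^{-\Lambda t}$ in the magnetic case). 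Your plan needs this intermediate $L^1$ estimate: without it the residual contributions in your recursion do not decay, and the Doeblin contraction does not transfer to $\mathbf{M}$.
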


\unhide

\hide

We emphasis that there is no simple formula for steady state in the non-trivial field and non-isothermal diffusive boundary case, compared to no field case, pure gravitation case and symmetric domains case where they have the explicit formula in the form of Maxwellian.
Indeed, it is not a global Maxwellian or a local Maxwellian because it would not satisfy the equation or the boundary condition.
In contrast to , we do not need the symmetric assumption of the domains or the constant field.
We also remark that the priori $L^{\infty}$ estimate is the first step to understand the existence of the steady state.

\unhide


\hide
We first consider a free transport equation in a periodic domain $\O \subset \R^3$, 
with an initial condition 
$F(t, x, v) |_{t = 0}  = F_0 (x, v)$. 
$F(t, x, v)$ solves \eqref{equation for F mag}
and \eqref{diff_F} with

We denote the phase boundary 
$\gamma := \{ (x, v) \in \p \O \times \R^3\}$ in the phase space 
$\O \times \R^3$, and decompose it into the outgoing boundary $\gamma_{+}$, and the incoming boundary $\gamma_{-}$:
\Be \label{gamma_pm}
\begin{split}
& \gamma_{+} := \{ (x, v) \in \partial \Omega \times \R^3: n(x) \cdot v > 0 \},
\\& \gamma_{-} := \{ (x, v) \in \partial \Omega \times \R^3: n(x) \cdot v < 0 \}.
\end{split}
\Ee
Moreover, we use the symbol $\mu$ to denote the global Maxwellian at unit temperature:
\be \label{isothermal maxwellian}
\mu (x, v) := \frac{1}{2 \pi}e^{-  \frac{|v|^2}{2}},
\ee
for $(x, v) \in \p\O \times \R^3$. Following \eqref{mu x normal} with $\Theta (x) \equiv 1$, we have 
\be
\int_{n(x) \cdot v > 0} \mu (x, v) \{n(x) \cdot v\} \dd v = 1.
\ee

After achieving the steady state $F (x, v)$ solving 

with $\Phi (x)$ defined in \eqref{field property}, and \textit{diffusive reflection} boundary condition \eqref{diff_F}.
We are mainly interested in the asymptotic behavior of the exponential moments of the fluctuation.

\unhide

\medskip 

\noindent \textbf{Difficulties and Ideas. }The main/basic idea of to construct stationary solutions and prove its stability is our novel $L^1-L^\infty$ bootstrap, which allow us to transfer a velocity mixing of the diffusive reflection boundary condition to a spatial mixing through the characteristics of \textit{the stochastic cycles}:

\begin{definition}[Stochastic Cycles] \label{def_cycles}
	Define the backward exit time $t_{\mathbf{b}}$ and the forward exit time $\tf$, 
	\Be \label{def_tb}
	\begin{split} 
		& t_{\mathbf{b}}(x, v) := \sup \{s \geq 0: X(t - \tau; t, x, v) \in \Omega, \  \forall\tau \in [0, s) \}, \
		x_{\mathbf{b}}(x, v) := X(t - t_{\mathbf{b}}(x, v); t, x, v), 
		\\& t_{\mathbf{f}}(x, v) := \sup \{s \geq 0: X(t + \tau; t, x, v) \in \Omega, \  \forall\tau \in [0, s) \}, \
		x_{\mathbf{f}}(x, v) := X(t + t_{\mathbf{f}}(x, v); t, x, v).
	\end{split} 
	\Ee
	We define the stochastic cycles: 
	\be \notag
	t^1 (t, x, v) = t - t_{\mathbf{b}}(x, v),
	\ x^1 (x, v) = x_{\mathbf{b}}(x, v) = X(t^1, t, x, v), \ \vb(x, v) =  V(t^1, t, x, v),
	\ee
	\Be \label{def:t_k}
	\begin{split}
		& t^k (t, x, v, v^1,..., v^{k-1}) = t^{k-1}
		- t_{\mathbf{b}}(x^{k-1}, v^{k-1}), \ 
		\tb^k = \tf^{k+1} = t^k - t^{k+1},
		\\&  x^k (t, x, v, v^1,..., v^{k-1}) = X(t^{k}; t^{k-1}, x^{k-1}, v^{k-1}), \ 
		\vb^k= V(t^{k+1}; t^k, x^k, v^k),
	\end{split}
	\Ee
	where we define $v^j \in \mathcal{V}_j := \{v^j \in \R^3: n(x^j) \cdot v^j > 0 \}$ with the measure 
	$\dd \sigma_j = \dd \sigma_j (x^j)$ on $\mathcal{V}_j$ which is given by
	\Be \label{def:sigma measure}
	\dd \sigma_j := \mu_{\Theta} (x^{j+1}, \vb^{j}) \{ n(x^j) \cdot v^j \} \dd v^j.
	\Ee
	Here, $n(x)$ is the outward normal at $x \in \p\O$.
\end{definition}

We encounter a difficulty in the presence of magnetic field as the change of variable $v \mapsto (\tb, S_{\xb})$ becomes degenerate: 
\Be 
\dd v = 5 B^2_3 (2 - 2 \cos (B_3 \tbi))^{-1} \dd \tb \dd S_{\xb}.
\Ee
Note that the denominator $2 - 2 \cos (B_3 \tbi)$ can be very small when $B_3 \tbi$ approaches $2k \pi$ with $k \in \Z_{+}$.
Thus, we split velocity $v$ into the following two parts:
\Be 
\begin{split} 
	& \mathcal{V}^{G_{\mathbf{\e}}} (x) := \{v \in  \mathcal{V}: \frac{\e}{B_3} + \frac{2 k \pi}{B_3}
	\leq \tb (x, v) \leq 
	\frac{2 \pi - \e}{B_3} + \frac{2 k \pi}{B_3} \ \text{with} \ k \in \Z_{+} \},  
	\\& \mathcal{V}^{B_{\mathbf{\e}}} (x) :=  \{v \in  \mathcal{V}: - \frac{\e}{B_3} + \frac{2 k \pi}{B_3} \leq \tb (x, v) \leq 
	\frac{\e}{B_3} + \frac{2 k \pi}{B_3} \ \text{with} \ k \in \Z_{+} \}.
\end{split} 
\Ee
Although the Jacobian is large when $v \in \mathcal{V}^{B_{\mathbf{\e}}}$, we can control the integration of such part small inside the stochastic cycles (see Lemma \ref{lem: Be mag}). 
This is because when $v \in \mathcal{V}^{B_{\mathbf{\e}}}$, the size of range of $\tb (x, v)$ in each period $[\frac{2 k \pi}{B_3}, \frac{2 (k+1) \pi}{B_3}]$ with $k \in \Z_{+}$ is $\frac{2 \e}{B_3}$. 
On the other hand, we deduce the upped bound when $v \in \mathcal{V}^{G_{\mathbf{\e}}}$ since the denominator in Jacobian is away from zero (see Lemma \ref{lem:bound1 mag} and Proposition \ref{prop:j linfty bound}). We crucially use this $L^1-L^\infty$ bootstrap to construct stationary solutions based on a pointwise estimate boundary outgoing flux for $x \in \p\O$,
\be
\mathfrak{J}(x) = \int_{n(x)\cdot v >0} F_s(x, v) \{ n(x) \cdot v \} \dd v.
\ee

\hide

We start to consider the dynamical problem. 
Suppose $F (t, x, v)$ solves \eqref{equation for F mag}, \eqref{diff_F} with \eqref{field property} and the initial condition 
$F(t,x, v) |_{t = 0}  = F_0 (x, v)$.
Since the choice of $\mu_{\Theta} (x, v)$ formally guarantees the conservation of mass, this lead us to set the total mass of the initial datum for some $\mathfrak{M} \geq0$ and $F_s (x, v)$ is the steady state (see Remark \ref{positivity on F_infty}), so that
\Be\label{def:f}
f(t, x, v)=
F(t, x, v) - \mathfrak{M} F_s (x, v),
\Ee 
which solves
\begin{align} 
\partial_t f + v \cdot \nabla_{x} f + (v \times B - \nabla \phi - g \mathbf{e}_3)\cdot \nabla_{v} f = 0,& \ \  \text{for} \  \    (t, x, v) \in \R_{+} \times \Omega \times \R^3,  \label{eqtn_f} 
\\
 f (t,x,v) |_{t = 0}  = F_0 (x, v) -   F_s (x, v) = f_0 (x,v),&  \ \  \text{for} \   \   (x, v) \in \Omega \times \R^3,
\label{init_f}
 \\
 f (t, x, v)  = \mu_{\Theta} (x, v) \int_{n(x^1) \cdot v^1>0} f(t, x, v^1) \{ n(x^1) \cdot v^1 \} \dd v^1 ,& \ \  \text{for} \  \   (t,x, v) \in \R_{+} \times \gamma_-. \label{diff_f} 
\end{align}

We stress that the control of moments is a key step toward nonlinear problems such as the Vlasov-Poisson systems. In this paper, we contribute toward establishing a decay of \textit{exponential moments} of the fluctuation in $L^\infty_x$ with the exponential decaying rate $e^{-\Lambda t}$ when the domain is in 3D. 

\subsection{With magnet field}

Here we consider another \textit{free transport} equation in a periodic domain $\O \subset \R^3$, 
with an initial condition 
$F(t, x, v) |_{t = 0}  = F_0 (x, v)$. 
$F(t, x, v)$ solves \eqref{equation for F mag}
and \eqref{diff_F} with

where $g$ is the gravitational constant.

Similar as in the gravitational case, we start with the priori $L^{\infty}$ estimate on the steady state. 
Next, we prove the existence of the steady state $F_{\infty} (x, v)$ which solves
\Be \label{def:f}
v \cdot \nabla_{x} F_{\infty} + (v \times B - \nabla \Phi ) \cdot \nabla_{v} F_{\infty} = 0, \ \  \text{for} \  \    (t, x, v) \in \R_{+} \times \Omega \times \R^3,  
\Ee
with $\Phi (x)$ defined in \eqref{field property mag}, and \textit{diffusive reflection} boundary condition \eqref{diff_F}.

Then it is natural for us to study the asymptotic behaviour of the \textit{fluctuation}
\Be \label{def:f}
f(t, x, v)=
F(t, x, v) - F_{\infty} (x, v), \ \text{with} \ \iint_{\O \times \R^3} f_0 (x,v) \dd x \dd v = 0,
\Ee 
which solves
\begin{align} 
	\partial_t f + v \cdot \nabla_{x} f + (v \times B - \nabla \Phi ) \cdot \nabla_{v} f = 0, \ \  \text{for} \  \    (t, x, v) \in \R_{+} \times \Omega \times \R^3, \label{eqtn_f mag} 
\end{align}  
and satisfies the initial condition \eqref{init_f} and the diffuse boundary condition \eqref{diff_f}.

Finally, we establish the exponential decay of \textit{exponential moments} of the fluctuation in $L^\infty_x$. 


\bigskip

\textbf{Notations.} We shall clarify some notations: 
$A \lesssim B$ if $A\leq C B$ for a constant $C>0$ which is independent on $A,B$;
$A \gtrsim B$ if $A \geq C B$ for a constant $C>0$ which is independent on $A,B$;  
$\| \cdot \|_{L^1_{x,v}}$ or $\| \cdot \|_1$ for the norm of $L^1(\O \times \R^3)$; $\| \cdot \|_{L^\infty_{x,v}}$ or $\| \cdot \|_\infty$  for the norm of $L^\infty(\bar{\O} \times \R^3)$; 
$|g|_{L^1_{\gamma_\pm}}$ denotes $\int_{\gamma_\pm} |g(x,v) | |n(x) \cdot v| \dd S_x \dd v$;  
$| a |$ denotes $\sqrt{a^2_1 + a^2_2 + a^2_3}$ when $a = (a_1, a_2, a_3) \in \R^3$;
$\langle \cdot \rangle$ denotes $1 + |\cdot|$.

\bigskip



%
%
%




As the first part of $L^1-L^{\infty}$ framework, 
we first assume the steady state $F (x, v)$ is $L^1$ bounded, and define the 

The primary tool to obtain $L^\infty$-Estimates on $\J (x)$ is applying the stochastic cycles $k$ times with $t_* = 0$ (see \eqref{expand_G11}-\eqref{expand_G2} for details).
We first pick sufficiently large time $t$ to guarantee the smallness of cases when $t^{1} < 0$ and $t^{2} < 0 \leq t^{1}$.

Next, for $i = 2, \cdots , k-1$ and $t^{i+1} < 0 \leq t^{i }$, we establish the following control:
\Be \notag
  \Big|\int_{\prod_{j=0}^{i} \mathcal{V}_j}   
  \mathbf{1}_{t^{i+1} < 0 \leq t^{i}} F(X(0; t^i, x^i, v^i), V(0; t^i, x^i, v^i)) 
  \dd \tilde{\Sigma}_{i} \Big|
  \lesssim 2^{i} \times \| F(x, v) \|_{L^1_{x,v}}.
\Ee
where $\dd \tilde{\Sigma}_{i} := \frac{ \dd \sigma_{i}}{\mu_{\Theta} (x^{i+1}, v^{i})} \dd \sigma_{i-1} \cdots \dd \sigma_1 \dd \sigma_0$ with $\dd \sigma_j = \mu_{\Theta} (x^{j+1}, \vb^{j}) \{ n(x^j) \cdot v^j \} \dd v^j$ in \eqref{def:sigma measure}.
Here, we will choose and fix large enough iteration umber $i$.

For $t^{k} \geq 0$, we derive a sharper bound from Stirling's formula, such that for $k \gtrsim t$ 
\Be \notag
\sup_{(x,v) \in \bar{\O} \times \R^3}  \Big(\int_{\prod_{j=0}^{k -1} \mathcal{V}_j}   
    \mathbf{1}_{t^{k} (t,x,v,v^1,\cdots, v^{k-1}) \geq 0 } \ \dd \sigma_0 \cdots \dd \sigma_{k-1}\Big) \lesssim e^{-t}.
\Ee  
Collecting all parts together, we obtain the $L^{\infty}$ bound on $\J (x)$ in Proposition \ref{prop:j linfty bound}.


For the magnet field case, we also assume the $L^1$ boundedness on the steady state $F (x, v)$. 
Then we apply the stochastic cycles on the boundary outgoing flux, $\mathfrak{J}(x) = \int_{n(x)\cdot v >0} F(x, v) \{ n(x) \cdot v \} \dd v$ for $x \in \p\O$.


Next we show the existence of the steady state for \eqref{equation for F mag} and \eqref{diff_F}.
To construct the steady state with non-zero mass, we write $F (x, v)$ as
\be
F (x, v) = \mu (x, v) + f (x, v), \ \text{for} \ (x, v) \in \p\O \times \R^3,
\ee
where $\iint_{\O \times \R^3} f (x, v) \dd x \dd v = 0$.

Now it suffices for us to show the existence of the fluctuation $f (x, v)$.
Then we use a penalization sequence to keep the mass zero condition under the following iterative scheme:
\Be 
\begin{split}
& \epsilon f^j + v \cdot \nabla_{x} f^j + (v \times B - \nabla \Phi ) \cdot \nabla_{v} f^j = 0, 
\\& f^j (x, v)  
= (1 - \frac{1}{j}) \mu_{\Theta} (x, v) \int_{n(x) \cdot v^1>0} f^j (x, v^1) \{ n(x^1) \cdot v^1 \} \dd v^1 + \mu_{\Theta} (x, v) - \mu (x, v),
\end{split}
\Ee  
with initial setting $f^0_{j} = 0$ and $B, \Phi (x)$ are defined in \eqref{field property}.

We first consider $j \rightarrow \infty$ for $\{ f^j \}$ with fixed $\epsilon > 0$.
Since the penalization $\e$ keeps the $L^1$ bound, this allows us to use $L^1-L^{\infty}$ framework and  obtain the existence of the limit $\fe$ in Lemma \ref{lem: existence of f^e}. 
Next, we obtain the uniform $L^1$ bound on $\{\fe\}$. Applying $L^1-L^{\infty}$ framework, we derive the uniform $L^{\infty}$ bound on $\{\fe\}$. Thus we achieve a weak $*$ limit such that $\fe \stackrel{\ast}{\rightharpoonup} f \in L^{\infty}$.
Consequently, we show the existence and uniqueness of $f (x, v)$, and prove Theorem \ref{existence on F}. 

For the magnet field case, we also start with the double iteration in both $j$ and $l$ with $B$, $\Phi (x)$ defined in \eqref{field property mag}. 
Note that
$\nabla_{v} \cdot (v \times B - \nabla \Phi (x)) = 0$, thus magnet field doesn't have any influence when we take the integration over $\O \times \R^3$.
The rest of steps follows from the gravitational case.



\unhide

The proof of dynamical stability is based on 1) the Doeblin-type pointwise lower bound with a unreachable defect (Proposition \ref{prop:Doeblin 1})
\Be 
f(NT_0,x,v)\geq  
\mathfrak{m}(x,v) \Big\{
\iint_{\O \times \R^3}f((N-1)T_0,x,v) \dd v \dd x 
-  \iint_{\O \times \R^3} \mathbf{1}_{t_\mathbf{f}(x,v)\geq \frac{T_0}{4}} f((N-1)T_0,x,v)  \dd v \dd x 
\Big\};
\Ee
2) weight energy estimate to control the unreachable defect (Theorem \ref{theorem_1}). In contrast to no-gravity case of \cite{JinKim1}, we are able to employ an exponential weight $\varphi (\tau )= e^{\delta \tau  }$
 \Be  
 \vertiii{f} :=   \|f  \|_{L^1_{x,v}}
 +   \Big(1+ \frac{1}{\delta T_0}\Big) \frac{ 4 \mathfrak{m}_{T_0} }{ \varphi (\frac{T_0}{4})}
 \|  \varphi (\tf) f\|_{L^1_{x,v}}  ;
 \Ee
 3) prove a decay of an exponential moments in $L^\infty$ using $L^1-L^\infty$ bootstrap. 
 
Regarding the first component, we carefully split the characteristics in the presence of magnetic field as the crucial mixing by diffuse reflection boundary operator is degenerate. About the second ingredient, a key observation is that gravity control the vertical trajectory so that $\tb(x,v) , \tf(x,v) \leq |v_3|$, which allows $\varphi$ of an exponential growth can make 
 \be 
 \int_{n(x) \cdot v<0} \varphi(\tf) \mu_{\Theta} (x, v) |n(x) \cdot v| \dd v<\infty.
 \ee 
 We prove an exponential decay in $L^1$ using an energy estimate with an exponential weight.

 \hide

h, and we obtain a weighted energy estimate with exponential $\varphi$ under dominant gravity.
Furthermore, since $\tf (x, v) \leq \tf (\xb, \vb) \lesssim |n(\xb) \cdot \vb|$ and $|v|^2 + 2 \Phi(x) = |\vb|^2$ from Lemma \ref{conservative field}, we derive 
$| \varphi (\tf) |
\lesssim \big| \varphi \big( (|v|^2 + 2 \Phi(x))^{1/2} \big) \big|$.
Therefore, we prove the $L^1$ exponential decay in Theorem \ref{theorem_1}.

As the second part of $L^1$-$L^\infty$ framework, we continue to prove an $L^1$-decay of the fluctuation $f$ as $t\rightarrow \infty$.
We prove a key lower bound with a unreachable defect in :

Next we control the unreachable defect using the weighted $L^1$-estimates. 
From the equality of $\tf$ under Vlasov operator, we find that a weight $\varphi(\tf)$ can provide an effective dissipation, for $\varphi^\prime \geq 0$,
\be \notag
[\p_t + v\cdot \nabla_x - \nabla \Phi \cdot \nabla_{v}] \big(\varphi(\tf) |f| \big) 
= -\varphi^\prime(\tf)| f|.
\ee
Employing a function $\varphi$ with $\varphi^\prime \rightarrow  \infty$ as $\tau \rightarrow \infty$, we establish the following energies:

with $\| \mathfrak{m} \|_{L^1_{x,v}} := \mathfrak{m}_{T_0}$ (see \eqref{est:m}) and $\varphi$ in \eqref{varphis}.

In order to control these energies, we require the estimate on weight integration $\int_{\gamma_+} \varphi(\tf) |f|$. 
Apply the diffusive boundary condition, it suffices to bound

Under a dominant gravitational field, the forward and backward exit time for molecules on the boundary
mostly depends on the velocity on $x_3$-axis.
That said, for $(x, v) \in \gamma_- = \{ (x, v) \in \partial \Omega \times \R^3: n(x) \cdot v < 0 \}$, 
\be \notag
\tf (x, v) \lesssim |n(x) \cdot v| = v_3.
\ee 
For $(x, v) \in \gamma_+ = \{ (x, v) \in \partial \Omega \times \R^3: n(x) \cdot v > 0 \}$, 
\be \notag
\tb (x, v) \lesssim |n(x) \cdot v| = v_3.
\ee 
Here we emphasis that without gravity, $\tf$ can be large when $|v| \ll 1$.

From $\mu_{\Theta} (x, v) = \frac{1}{2 \pi \Theta^2(x)}e^{-  \frac{|v|^2}{2 \Theta(x)}}$ and $|n(x) \cdot v| \leq |v|$, we obtain 
\be \notag
\mu_{\Theta} (x, v) |n(x) \cdot v| \lesssim e^{-  \frac{|v_3|^2}{4 \Theta(x)}}.
\ee


For the magnet field case, we still use the stochastic cycles to show the $L^1$-decay on the fluctuation $f$.
Considering the stochastic cycle $(x, v) \in \gamma_{+}$, $(x^1, \vb) \in \gamma_{-}$, we have
\Be \notag
x^1 + \int^{t}_{t - \tb}  V(s; t, x, v) \dd s = x.
\Ee
From the characteristics trajectories under magnet field, we can compute
\be \notag
\frac{\dd}{\dd s} V_1(s; t, x, v) = B_3 V_2 (s; t, x, v), \ \ \frac{\dd}{\dd s} V_2(s; t, x, v) = - B_3 V_1 (s; t, x, v).
\ee
Then we deduce that
\Be \notag
|v| = |\vb|, \ |v_1|^2 + |v_2|^2
= \frac{B^2_3 |x - x_1 |^2 }{2 - 2 \cos (B_3 \tb)}.
\Ee
Thus we can bound $|\vb|$ by the distance between $x$ and $x_1$ if $B_3 \tb$ is away from $2k \pi$ with $k \in \Z_{+}$.
Since $f (t, x, v) = f (\tb, \xb, \vb) =
\mu_{\Theta} (\xb, \vb) \J (\xb)$, this enables us to find a lower bound on $f (t, x, v)$ when $t > \tb$.

Picking a time gap $T_0$, we compare $f(NT_0)$ with $f((N-1)T_0)$ by the following stochastic cycle:
\be 
f(NT_0, x,v) 
 \geq \mathbf{1}_{ \tb(x,v) \leq \frac{T_0}{4}} \mu_{\Theta} (x^{1}, \vb) \int_{\mathcal{V}_1} 
\int_{\mathcal{V}_2}  
\int_{\mathcal{V}_3} 
\mathbf{1}_{t^3 \geq (N-1)T_0}
 f(t^3, x^3, v^3)  \{n(x^3) \cdot v^3\} \dd v^3 \dd \sigma_2 \dd \sigma_1.
\ee 
 We restrict $v^1 \in \mathcal{V}_1$ and $v^2 \in \mathcal{V}_2$ by controlling the distance between the pair $(x^1, x^2)$ and $(x^2, x^3)$ and the duration on $\tb^1 + \tb^2$.
Moreover, we set $\tb^2$ as 
$\frac{\pi}{2B_3} \leq \tb^2 \leq \frac{3\pi}{2B_3}$ to obtain the pointwise lower bound on $|v^2|$.
Applying the change of variable $(\tb^1, \tb^2) \mapsto (\tb^1, \tb^1 + \tb^2)$, we derive the lower bound with a unreachable defect in Proposition \ref{prop:Doeblin mag}. 
At last, we use the weight function $\varphi$ to obtain the uniform energy estimate, and prove $L^1$ exponential decay in Theorem \ref{theorem_1}.\unhide



Finally, a major difficulty arises at the last ingredient when we control the $L^\infty$-norm of the fluctuation $f (t, x, v)$ via the stochastic cycles (see \eqref{expand_h1}-\eqref{expand_h3} for details). It turns out crucial to control  so-called \textit{$i$-residual measure} uniformly-in-$i$:
\be 
\int_{\prod_{j=0}^{i-1} \mathcal{V}_j}   
\mathbf{1}_{0 < t_{i } \leq t} \
\dd \sigma_{i-1} \cdots \dd \sigma_1 \dd \sigma_0.
\ee
We note that $\dd \sigma_j = \mu_{\Theta} (x^{j+1}, \vb^{j}) \{ n(x^j) \cdot v^j \} \dd v^j$ is not a probability measure in general. Therefore, we could experience an exponential growth of $i$-residual measure where $i$ is proportional to the time $t$. We overcome this difficulty by constructing a continuous outgoing boundary flux $\J (x) $ and bound the $i$-residual measure using an upper/lower bound of $\J (x) $ (see the strategy described at the beginning of Section \ref{sec: continuity}). The downward gravity is crucial to prove $\J(x)$ continuous. The continuity of $\J (x)$ closely relates to continuity/regularity of solutions to the transport equation (\cite{Kim, ChenKim, EGKM2, KL2}).

\hide

However, it is hard to directly control the following terms in stochastic cycle representation:
\Be \label{intro: expand_h2}
\mu_{\Theta} (x^1, \vb) 
\sum\limits^{k-1}_{i=1} 
\int_{\prod_{j=1}^{i} 
\mathcal{V}_j}   
\dd \sigma_{1} \cdots \dd \sigma_{i-1}
     \Big\{   \mathbf{1}_{t^{i+1} < 0 \leq t^{i }} f (0, X(0; t^i, x^i, v^i), V(0; t^i, x^i, v^i))  \Big\}
\{ n(x^i) \cdot v^i \} \dd v^i
\Ee
where
$\dd \sigma_j = \mu_{\Theta} (x^{j+1}, \vb^{j}) \{ n(x^j) \cdot v^j \} \dd v^j$ in \eqref{def:sigma measure}.

Note that under isothermal boundary condition, 
$\int_{\mathcal{V}_i} \dd \sigma_i
= \int_{\mathcal{V}_i} \mu_{\Theta} (x^{j}, v^{j}) \{ n(x^j) \cdot v^j \} \dd v^j 
= 1$, which is a probability measure.
For non-isothermal boundary condition, although we get $|\vb^{j}| = |v^j|$ from the the characteristics trajectories, the problem comes from that the wall temperature $\Theta(x)$ is varying with the boundary, i.e.  $\mu_{\Theta} (x^{j+1}, \vb^{j}) \neq \mu_{\Theta} (x^{j}, \vb^{j})$ when $x^{j+1} \neq x^j$.
This leads to $\dd \sigma_j$ not being a probability measure on $\mathcal{V}_j$.
From $a \leq \Theta(x) \leq b$, we at most deduce that, for $i =1, \cdots, k-1$, 
\Be \label{intro: sigma measure}
\int_{\mathcal{V}_i} \dd \sigma_i
= \int_{\mathcal{V}_i} \mu_{\Theta} (x^{j+1}, v^{j}) \{ n(x^j) \cdot v^j \} \dd v^j \leq \int_{\mathcal{V}_i} \frac{1}{2 \pi a^2}e^{-  \frac{|v|^2}{2 b}} \{ n(x^j) \cdot v^j \} \dd v^j \leq \frac{b^2}{a^2}.
\ee
Different from $L^\infty$-Estimates on steady state $F (x, v)$, we cannot choose and fix $t$ or $k$.
Otherwise, inputting \eqref{intro: sigma measure} in 
\eqref{intro: expand_h2} will give an exponential growth instead of bounded estimate when $t \rightarrow \infty$.

Therefore, we turn to \unhide

\hide
The rest of steps are similar as in the first case.
We can obtain a positive infimum of boundary outgoing flux $\J (x)$ for a non-negative steady state with unit mass from the continuity.
Then we consider the residual measure as the part of the stochastic cycle on $\J (x)$, and establish the uniform estimate.



Lastly, we bootstrap the $L^1$-decay 
to bound the exponential moments.
The most crucial tool is the stochastic cycle representation (see \eqref{expand_k1}-\eqref{expand_k4} for details) on weighted function $\varrho (t) w^\prime (x, v) f(t, x, v)$ where 
$\varrho (t)$ is only time dependent
and $w^\prime (x, v) := e^{\theta^\prime (|v|^2+ 2\Phi (x))}$ with $0< \theta^\prime < \frac{1}{2b}$ and $b$ defined in \eqref{def:Theta}.
Note that the expansion contains some different terms from stochastic cycles since $\varrho^\prime \neq 0$.

First, we set $\varrho (t) = t + 1$ with $\varrho^\prime = 1$. Then we use estimates on residual measures and Lemma \ref{lem:bound1} to bound
\Be \notag
\int_{\prod_{j=0}^{i} \mathcal{V}_j}      
      \mathbf{1}_{t^{i+1} < 0 \leq t^{i }}
 \int^{t^{i}}_{0} \varrho^\prime(s) f(s, X(s; t^i, x^i, v^i), V(s; t^i, x^i, v^i)) \dd s 
 \dd \Sigma_{i}
\lesssim  \int^t_0 \| \varrho^\prime (s) f(s) \|_{L^1_{x,v}} \dd s.
\Ee
On the other hand, we apply the sharper bound from Stirling's formula to control
\Be \notag
 \sup_{(x,v) \in \bar{\O} \times \R^3}  \Big(\int_{\prod_{j=1}^{k -1} \mathcal{V}_j}   
    \mathbf{1}_{t_{k }(t,x,v,v^1,\cdots, v^{k-1}) \geq 0 }
\dd \sigma_1 \cdots \dd \sigma_{k-1}\Big)
\lesssim e^{-t}.
\Ee
Collecting all parts in stochastic cycles, we deduce
\be 
\sup_{t\geq0}\| e^{\theta^\prime (|v|^2+ 2\Phi (x))} f (t)\|_{L^\infty_{x,v}} \lesssim \| e^{\theta^\prime (|v|^2+ 2\Phi (x))} f_0\|_{L^\infty_{x,v}}.
\ee

Second, we introduce $w (x, v) := e^{\theta (|v|^2+ 2\Phi (x))}$ with $0< \theta < \theta^\prime$.
Applying the stochastic cycles twice, 
we establish the time integration terms of $\int_{\R^3} w(x, v) |f(t,x,v)| \dd v$  (see \eqref{v<t/2 term in wf}-\eqref{f_exp2} for details). And it suffices to prove the decay of 
\be
\int_{\mathcal{V}_2} 
\mathbf{1}_{t^2 \geq t/2} f(t^2,x^2,v^2) \{ n(x^{2}) \cdot v^{2} \} \dd v^{2}.
\ee

Now we pick $\varrho (t) = e^{\Lambda t}$ 
where $\Lambda$ defined in \eqref{def:M}, define a new weighted function $g (t, x, v) := \varrho (t) w (x, v) f (t, x, v)$, and note that
\Be 
\frac{1}{\varrho (t^2)} \int_{\mathcal{V}_2}  \frac{|n(x^2) \cdot v^2|}{w (x^2, v^2)} g (t^2, x^2, v^2) \dd v^2 = \int_{\mathcal{V}_2} f(t^2,x^2,v^2) \{ n(x^2) \cdot v^2 \} \dd v^2.
\Ee 
Next we apply the stochastic cycles on $g (t^2, x^2, v^2)$.
From $\| f(t) \|_{L^1_{x,v}} 
\lesssim  e^{- \Lambda t}$ in Theorem \ref{theorem_1}, we obtain
\be \notag
\int^t_0 \| \varrho^\prime (s) f(s) \|_{L^1_{x,v}} \dd s \lesssim \int^t_0 \varrho^\prime (s) e^{- \Lambda s} \dd s \lesssim t.
\ee
Collecting all estimates on stochastic cycles, we derive 
\Be 
\frac{1}{\varrho (t^2)} \int_{\mathcal{V}_2} \mathbf{1}_{t^2 \geq t/2} \frac{|n(x^2) \cdot v^2|}{w (x^2, v^2)} g (t^2, x^2, v^2) \dd v^2 
 \lesssim \frac{\langle t\rangle^2}{\varrho (t)},
\Ee
and prove Theorem \ref{theorem}.


For the magnet field case, the crucial tool to show the $L^\infty$-estimate of Moments is still the stochastic cycle on a weighted function $\varrho (t) w^\prime (x, v) f(t, x, v) $. 

Since the time dependent weight $\varrho (t)$ is involved, we have the following term in stochastic cycles:
\Be \notag
\int_{\prod_{j=0}^{i} \mathcal{V}_j} \mathbf{1}_{t^{i+1} < 0 \leq t^{i }}
 \int^{t^{i}}_{0} \varrho^\prime(s) f(s, X(s; t^i, x^i, v^i), V(s; t^i, x^i, v^i)) \dd s 
 \dd \Sigma_{i}.
\Ee
Similar as in $L^\infty$-estimate, we need to overcome the Jacobian from the change of variable $v \mapsto (\tb, S_{\xb})$.
Thus, we split $v$ into $v \in \mathcal{V}^{B_{\mathbf{\e}}}$ and $v \in \mathcal{V}^{G_{\mathbf{\e}}}$.
Using estimates on residual measures, we derive its upper bound (see Lemma \ref{lem:bound1_2 mag}). 

The rest of steps are similar as in the first case.
We first choose $\varrho (t) = t^6 + 1$ and apply the stochastic cycles to obtain the $L^\infty$-estimate on $ w^\prime (x, v) f (t,x,v)$. 
Then we set $\varrho(t) = e^{\Lambda t}$, and consider the time integration terms of 
$\int_{\R^3} w(x, v) |f(t,x,v)| \dd v$.
After applying the stochastic cycles on $\varrho (t) w (x, v) f (t, x, v)$, we conclude Theorem \ref{theorem} of the case \eqref{field property mag}. \unhide

%
\hide
\subsection{plan of the paper}

In the rest of the paper, we collect some basic preliminaries in Section \ref{sec: background}; 
secondly we study priori $L^\infty$-Estimates on $\J (x)$, and prove Proposition \ref{prop:j linfty bound} and \ref{prop:j linfty bound} in Section \ref{sec: Linf estimate};
next we show the existence of the steady state, and prove Proposition \ref{existence on f}, Theorem \ref{existence on F} and \ref{existence on F mag} in Section \ref{sec: steady state};
after that we work on the weighted $L^1$-estimates,
and prove Theorem \ref{theorem_1} and \ref{theorem_1} in Section \ref{sec: L1 estimate};
then we show the continuity on $\J (x)$, uniform $L^1$ bound of residual measure, and prove Proposition \ref{prop: est:measure} in Section \ref{sec: continuity};
finally we prove key results Theorem \ref{theorem} on the $L^\infty$-estimate of moments in Section \ref{sec: exponential moments}. 



\section{Main result}
\label{sec: result}

Our main results are as follows.

An important consequence of the priori $L^{\infty}$ estimates in Proposition \ref{prop:j linfty bound} and \ref{prop:j linfty bound} are the Theorems below which specializes the existence of a steady state solution.

\hide
\begin{theorem} \label{existence on F mag}
Given $\mathfrak{m} > 0$, there exists a unique solution $F (x, v)$ to the steady problem 
\eqref{def:f} and \eqref{diff_F}
such that
\Be \label{initial mass on F mag}
\iint_{\O \times \R^3} F (x, v) \dd x \dd v = \mathfrak{m}.
\Ee
\end{theorem}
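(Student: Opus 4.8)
The plan is to run the $L^1$--$L^\infty$ bootstrap through the stochastic cycles of Definition \ref{def_cycles}. First I would construct $F_s$ as the limit of a doubly penalized stationary problem: for $\e>0$ and $j\in\N$ solve
\[
\e f^j + v\cdot\nabla_x f^j + (v\times B-\nabla\phi-g\mathbf{e}_3)\cdot\nabla_v f^j=0,\qquad f^j\big|_{\gamma_-}=\Big(1-\tfrac1j\Big)\mu_\Theta(x,v)\!\int_{n(x)\cdot v^1>0}\!\! f^j(x,v^1)\{n(x)\cdot v^1\}\,\dd v^1+\mathcal S ,
\]
with $f^0_j\equiv0$ and $\mathcal S$ a fixed boundary datum tuned to the prescribed mass. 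Solving along characteristics, the factor $e^{-\e s}$ together with the defect $(1-\tfrac1j)$ makes the boundary return map along the cycles a strict contraction, so each problem has a unique solution; since $\nabla_v\cdot(v\times B-\nabla\phi)=0$, the magnetic term is inert in the mass balance, and integrating the equation over $\O\times\R^3$ and using $\int_{n(x)\cdot v^1>0}\mu_\Theta\{n(x)\cdot v^1\}\dd v^1=1$ pins down $\iint f^j$. Letting $j\to\infty$ produces $f^\e$ with $\e\iint f^\e=0$, hence the exact mass $\mathfrak m$; letting $\e\to0$ along the uniform bounds below gives $F_s$, with $F_s\ge0$ by positivity of the scheme.

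The heart of the argument is the a priori control $\|\J\|_{L^\infty(\p\O)}\lesssim\mathfrak m$ on the outgoing flux (Proposition \ref{prop:j linfty bound}). I would obtain it by iterating the diffuse reflection condition $k$ times along the cycles with a pseudo-time $t$ (permitted because $F_s$ is stationary): the terms in which the trajectory has bounced fewer than $k$ times reduce, after the change of variables $v^i\mapsto(\tb^i,x^i_{\mathbf b})$, to volume integrals bounded by $\|F_s\|_{L^1}=\mathfrak m$ up to a constant $2^i$, while the remainder with $t^k\ge0$ is $\lesssim e^{-t}\|\J\|_\infty$ by Stirling's formula for $k\gtrsim t$; choosing $t$ large absorbs $e^{-t}\|\J\|_\infty$ into the left-hand side, and---crucially for the \emph{stationary} problem---$t$ and $k$ may then be \emph{fixed}, so the $2^i$ factors are harmless. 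The step I expect to be most delicate is the case \eqref{field property mag}, where the change of variables $v\mapsto(\tb,S_{\xb})$ has Jacobian $\propto(2-2\cos(B_3\tb))^{-1}$, degenerating when $B_3\tb$ is near a multiple of $2\pi$. I would split $v$ into $\mathcal V^{G_{\e}}$, where $\tb$ stays away from those multiples and the Jacobian is controlled (Lemma \ref{lem:bound1 mag}), and $\mathcal V^{B_{\e}}$, a neighborhood of them of $\tb$-measure $O(\e/B_3)$ per period whose contribution inside the cycles is correspondingly small (Lemma \ref{lem: Be mag}), and optimize in $\e$; in case \eqref{field property} the condition $\|e^{\varrho_1 x_3}\nabla^2_x\phi\|_\infty\le\varrho_2\ll1$ plays the analogous role of keeping the perturbed characteristics, hence these Jacobians, close to the explicit ones.

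For the weighted estimate \eqref{L_infty estimate on F_infty}, gravity dominates the vertical force: $\partial^2_s X_3\le-g/2<0$ under \eqref{field property} and $\partial^2_s X_3=-g$ under \eqref{field property mag} since $v\times B$ has no vertical component, so $X_3$ is strictly concave along every trajectory, $\tb(x,v)<\infty$ for all $(x,v)$, and $F_s(x,v)=\mu_\Theta(\xb,\vb)\,\J(\xb)$ with $(\xb,\vb)\in\gamma_-$. Energy conservation along characteristics (the magnetic force does no work, and $\Phi\equiv0$ on $\p\O$ by \eqref{field property}--\eqref{field property mag}) gives $|\vb|^2=|v|^2+2\Phi(x)$, and $\Theta(\xb)\le b$ gives $\mu_\Theta(\xb,\vb)\le\frac1{2\pi a^2}e^{-\frac1{2b}(|v|^2+2\Phi(x))}$; combined with $\|\J\|_\infty\lesssim\mathfrak m$ this yields $e^{\frac1{2b}(\frac{|v|^2}2+\Phi(x))}F_s(x,v)\lesssim e^{-\frac1{2b}(\frac{|v|^2}2+\Phi(x))}\mathfrak m\lesssim\mathfrak m$. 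Finally, uniqueness: the difference $h=F_s^{(1)}-F_s^{(2)}$ of two solutions with mass $\mathfrak m$ solves the homogeneous stationary problem, is bounded, and has $\iint h=0$; writing $h=\mu_\Theta(\xb,\vb)\J_h(\xb)$ and noting $\J_h$ is a fixed point of the positive boundary-flux operator, the a.e. strict positivity and continuity of $\J_{F_s}$ (established in Section \ref{sec: continuity}) make $1$ a simple eigenvalue via a Krein--Rutman/Doeblin argument, so $\J_h$ is a scalar multiple of $\J_{F_s}$ forced to vanish by $\iint h=0$; hence $h\equiv0$.
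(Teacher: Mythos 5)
Your proposal follows the same constructive scheme as the paper for existence and the $L^\infty$ flux bound: the double penalization in $\epsilon$ and $j$ (the paper writes the source as $r=\mu_\Theta-\mu$ with $\int_{\gamma_-}r\,|n\cdot v|=0$, matching your ``boundary datum tuned to the prescribed mass''), passage to $j\to\infty$ then $\epsilon\to0$ via uniform $L^1$ and $L^\infty$ bounds, the stochastic-cycle iteration with the change of variables $v^i\mapsto(\tb^i,x^i_{\mathbf b})$, the Stirling/large-$k$ smallness, the observation that $t$ and $k$ can be \emph{fixed} for the stationary problem so the $2^i$ is harmless, and the $\mathcal V^{G_\e}$--$\mathcal V^{B_\e}$ split for the degenerate magnetic Jacobian $(2-2\cos(B_3\tb))^{-1}$. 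The derivation of the weight $e^{\frac1{2b}(|v|^2/2+\Phi(x))}$ via energy conservation and $\Phi|_{\p\O}=0$ also matches Lemmas \ref{conservative field} and \ref{conservative field mag}.

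Where you diverge is in the uniqueness step. The paper proves it directly inside Proposition \ref{existence on f} from Green's identity: for $\delta=f-g$ it deduces $|\delta|_{1,+}=|\delta|_{1,-}$, which together with the diffuse boundary condition forces $\delta(x,\cdot)$ to have a constant sign in $v$ at each $x$, and then (propagating the sign along cycles) globally, so $\iint\delta=0$ kills it; this argument is self-contained in Section \ref{sec: steady state}. You instead pass to the outgoing-flux operator, invoke the continuity and a.e.\ strict positivity of $\J_{F_s}$ from Section \ref{sec: continuity}, and argue via Krein--Rutman/Doeblin that $1$ is a simple eigenvalue, forcing $\J_h\propto\J_{F_s}$ and hence $h\equiv0$ from zero mass. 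Your route is valid in principle (the boundary kernel is positive and compactifying, and $\J_{F_s}$ is a strictly positive continuous fixed point), and it makes explicit the connectivity that the paper's terse ``does not change sign for any $x$ or $v$'' glosses over; the tradeoff is that you need the Section \ref{sec: continuity} machinery (continuity of $\J$ and its strict positive infimum) just to prove uniqueness, whereas the paper only deploys that machinery later for the residual-measure bound in the dynamical stability argument. Both work; the paper's is more economical for this theorem, yours is more transparent about why mixing gives simplicity of the Perron eigenvalue.
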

\unhide

We have the following dynamical stability results:

The $L^1$-decay result in Theorem \ref{theorem_1} and Theorem \ref{theorem_1} also enable us to establish an exponential decay on exponential moments of the fluctuation in $L^{\infty}_x$.
\unhide

\subsubsection*{Acknowledgements}
This project is supported in part by National Science Foundation under Grant No. 1900923 and 2047681 (NSF-Career), and the Wisconsin Alumni Research Foundation. CK is supported partly by the Brain Pool program (NRF-2021H1D3A2A01039047) of the Ministry of Science and ICT in Korea. CK thanks Professor Seung Yeal Ha for the kind hospitality during his stay at the Seoul National University. In particular, CK thanks Professor Andrej Zlato\u{s} for his insightful comments about \cite{JinKim1} in CK's talk on the occasion of the Virtual Analysis and PDE Seminar (VAPS) hosted by Professor Hung Tran in May 2021. 

\tableofcontents


\section{Background}
\subsection{Characteristics and basic properties} 

For the zero-magnetic field case of  \eqref{field property}
, the characteristics of \eqref{equation for F mag} are determined by the Hamilton ODEs
\Be \label{characteristics}
\begin{cases}
	\frac{\dd}{\dd s} X(s; t, x, v) = V(s; t, x, v),
	\\
	\frac{\dd}{\dd s} V(s; t, x, v) = - \nabla \Phi (X(s; t, x, v)),
\end{cases}  
\Ee
for $- \infty < s, t < \infty$ and $\Phi (x)$ defined in \eqref{field property} with $(X(t; t, x, v), V(t; t, x, v)) = (x, v)$.

For the nonzero-magnetic field case of \eqref{field property mag}, 
the characteristics of \eqref{equation for F mag} solve
\Be \label{characteristics mag}
\begin{cases}
	\frac{\dd}{\dd s} X(s; t, x, v) = V(s; t, x, v),
	\\
	\frac{\dd}{\dd s} V(s; t, x, v) = V(s; t, x, v) \times B - \nabla \Phi (X(s; t, x, v)),    
\end{cases}  
\Ee
where $B, \Phi (x)$ take a form of \eqref{field property mag} with $(X(t; t, x, v), V(t; t, x, v)) = (x, v)$.

We list some properties that both zero-magnetic \eqref{characteristics} and nonzero-magnetic field cases \eqref{characteristics mag}.

\begin{lemma}[\cite{CKL,CKL2}] \label{lem:COV}
For any $g$ and $(X,V)$ in \eqref{characteristics} (resp. \eqref{characteristics mag}), we have   
	\begin{align}
		\int_{\gamma_{+}} \int_0^{t_{-}} g(t,X(t,t+s,x,v),V(t,t+s,x,v)) |n(x) \cdot v|\dd s \dd v \dd S_x 
		= \iint_{\O \times \R^3} g(t,y,v) \dd y \dd v, 
		\label{COV} \\
		\int_{\gamma_{-}} \int_0^{t_{+}} g(t,X(t,t-s,x,v),V(t,t-s,x,v)) |n(x) \cdot v|\dd s \dd v \dd S_x 
		= \iint_{\O \times \R^3} g(t,y,v) \dd y \dd v, 
		\label{COV+} \\
		\int_{\gamma_{\pm}}
		g(t,x_{\mp}(x, v),v_{\mp}(x, v))
		|n(x) \cdot v| \dd v \dd S_x
		= \int_{\gamma_{\mp}}
		g(t,y,v)
		|n(y) \cdot v|\dd v \dd S_y. \label{COV_bdry}
	\end{align}
	Here, for the sake of simplicity, we have abused the notations temporarily: $t_-=\tb, x_-= \xb$ and $t_+= \tf, x_+= \xf$.  
\end{lemma}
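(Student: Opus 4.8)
\smallskip
\noindent\textbf{Proof strategy.} All three identities will come from Liouville's theorem together with a co-area (tube) formula, so I would begin by recording volume preservation. Write the characteristic field on phase space as $W(x,v)=(v,\,v\times B-\nabla\Phi(x))$, which covers both \eqref{characteristics} (take $B=\vec 0$) and \eqref{characteristics mag}; then $\nabla_x\cdot v+\nabla_v\cdot(v\times B-\nabla\Phi(x))=0$, since $v\times B=B_3(v_2,-v_1,0)$ is divergence-free in $v$ and $-\nabla\Phi(x)$ does not depend on $v$. Hence the associated flow, which by autonomy I may write as $\Xi_\tau(x,v):=(X(\tau;0,x,v),V(\tau;0,x,v))$ and which satisfies $(X(t+\tau;t,x,v),V(t+\tau;t,x,v))=\Xi_\tau(x,v)$, preserves the Lebesgue measure $\dd x\,\dd v$. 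In parallel I would record that under either field regime the downward force controls the vertical trajectory — $\ddot X_3\le -g/2$ in the gravity case by \eqref{field property}, and $\ddot X_3=-g$ in the magnetic case since \eqref{field property mag} leaves the vertical force equal to $-g$ — so that $\tb,\tf$ are finite for a.e.\ $(x,v)$, the set of grazing trajectories (meeting $\p\O$ tangentially) has measure zero, and the maps $(y,u)\mapsto(\xf,\vf,\tf)(y,u)$ and $(y,u)\mapsto(\xb,\vb,\tb)(y,u)$ are measurable and a.e.\ injective.

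Next I would prove \eqref{COV+} as a change of variables. By the previous step, $\Psi_-(y,u,s'):=\Xi_{s'}(y,u)$, restricted to $\{(y,u)\in\gamma_-,\ 0<s'<\tf(y,u)\}$, is a.e.\ a bijection onto $\O\times\R^3$, with inverse $(y',u')\mapsto(\xb,\vb,\tb)(y',u')$. For its Jacobian I would use the tube formula: regard $\gamma_-=\{x_3=0\}\cap\{v_3>0\}$ as a hypersurface of $\R^3_x\times\R^3_v$ with phase-space unit normal $\nu=(0,0,1,0,0,0)$; then $W\cdot\nu=v_3=|n(y)\cdot u|$ on $\gamma_-$, and because $\Xi$ preserves $\dd x\,\dd v$ and commutes with the field $W$, the modulus of the Jacobian of $\Psi_-$ equals $|W(y,u)\cdot\nu|=|n(y)\cdot u|$, independently of $s'$. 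Pulling $g(t,\cdot,\cdot)$ back through $\Psi_-$ against $|n(y)\cdot u|\,\dd s'\,\dd u\,\dd S_y$ therefore recovers $\iint_{\O\times\R^3}g(t,y,u)\,\dd y\,\dd u$, which is \eqref{COV+}. Identity \eqref{COV} is the time-reversed version: flow $\gamma_+$ \emph{backward} via $\Xi_{-s}$, using $X(t;t+s,x,v)=\Xi_{-s}(x,v)$; the same computation gives weight $|n(x)\cdot v|$ and, again, swept region $\O\times\R^3$.

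Finally I would deduce \eqref{COV_bdry} from \eqref{COV}--\eqref{COV+} by localizing at the boundary. Fix $g$ continuous with enough decay, and for small $\tau>0$ set $A_\tau:=\{(y,u)\in\O\times\R^3:\tb(y,u)<\tau\}$. Parametrizing $A_\tau$ through \eqref{COV+} (the constraint becomes $s'\in(0,\min(\tau,\tf(y,u)))$) gives $\iint_{A_\tau}g(t,\cdot)=\int_{\gamma_-}\int_0^{\min(\tau,\tf)}g(t,\Xi_{s'}(y,u))\,|n(y)\cdot u|\,\dd s'\,\dd u\,\dd S_y$, whereas parametrizing it through \eqref{COV} (the constraint becomes $s\in(\tb(x,v)-\tau,\tb(x,v))$) gives $\iint_{A_\tau}g(t,\cdot)=\int_{\gamma_+}\int_{(\tb(x,v)-\tau)_+}^{\tb(x,v)}g(t,\Xi_{-s}(x,v))\,|n(x)\cdot v|\,\dd s\,\dd v\,\dd S_x$. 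Dividing by $\tau$ and letting $\tau\to0^+$ (dominated convergence, using a.e.\ finiteness of exit times), the left-hand sides converge to $\int_{\gamma_-}g(t,y,u)\,|n(y)\cdot u|\,\dd u\,\dd S_y$ and the right-hand sides to $\int_{\gamma_+}g(t,\xb(x,v),\vb(x,v))\,|n(x)\cdot v|\,\dd v\,\dd S_x$, since $\Xi_{-\tb(x,v)}(x,v)=(\xb,\vb)(x,v)$; this is the $\gamma_+\!\to\!\gamma_-$ instance of \eqref{COV_bdry}, and the $\gamma_-\!\to\!\gamma_+$ instance follows by time reversal. (More conceptually, \eqref{COV_bdry} just expresses the $\Xi$-invariance of the flux $5$-form $\iota_W(\dd x\wedge\dd v)$, whose restriction to $\gamma_\pm$ is $\mp(n\cdot v)\,\dd S_x\,\dd v$.)

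I expect the main obstacle to be the measure-theoretic bookkeeping of the first step rather than the algebra: one must know that a.e.\ characteristic leaves $\O$ in finite time and does so non-tangentially, so that $\Psi_\pm$ are genuine a.e.\ bijections and no mass is lost through the boundary in the count. This is exactly where the structural hypotheses \eqref{field property}/\eqref{field property mag} enter, and where the magnetic case requires the most care, since the horizontal motion is only the bounded cyclotron rotation and it is solely the gravity-driven vertical component that expels trajectories from the domain.
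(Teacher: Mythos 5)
Your proof is correct, and it splits into a part that matches the paper and a part that genuinely differs. For \eqref{COV} and \eqref{COV+} you rest on the same two pillars as the paper's sketch — Liouville's theorem ($\nabla_{x,v}\cdot W=0$) and the normal-flux weight $|n\cdot v|$ — but you obtain the Jacobian geometrically, via the tube/co-area formula for the forward parametrization $\Psi_-(y,u,s')=\Xi_{s'}(y,u)$, evaluating $|\det D\Psi_-|=|W\cdot\nu|=|n(y)\cdot u|$ at $s'=0$ and propagating by volume preservation; the paper instead inverts the map $(x,v)\mapsto(t-\tb,\xb,\vb)$, differentiates the implicit relations for $\xb,\vb$, and uses Liouville to get $|\det[\partial(t-\tb,\xb,\vb)/\partial(x,v)]|=1/|\vb\cdot n(\xb)|$. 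These are reciprocal Jacobians of mutually inverse maps, so this part is a presentational difference rather than a new idea. Where you genuinely diverge is \eqref{COV_bdry}: the paper again computes a Jacobian directly, for the map $(t,x,v)\in\R\times\gamma_+\mapsto(t-\tb,\xb,\vb)\in\R\times\gamma_-$, obtaining $|v\cdot n(x)|/|\vb\cdot n(\xb)|$, whereas you deduce \eqref{COV_bdry} from \eqref{COV}--\eqref{COV+} by parametrizing the boundary layer $A_\tau=\{\tb<\tau\}$ two ways, dividing by $\tau$, and letting $\tau\to0^+$. Your route trades the second Jacobian computation for a limit argument (which needs $g$ continuous with decay, a density step to pass to general $g$, and the a.e.\ non-grazing/finite-exit-time facts you flag), and it makes explicit that $|n\cdot v|\,\dd v\,\dd S_x$ is precisely the $\Xi$-invariant flux measure on $\gamma_\pm$. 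Both approaches are sound; the paper's is more computational and self-contained, while yours is shorter once \eqref{COV}--\eqref{COV+} are in hand but conceals the bookkeeping in the limiting step.
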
 

\begin{proof}
	The proof is similar to the proof for Lemma 3 in \cite{CKL}. For the completeness of the paper, we only sketch the proof. 

First we consider zero-magnetic case. To derive \eqref{COV} and \eqref{COV+}, we consider for fixed t, the map
\be \notag
(x, v) \in \O \times \R^3 
\mapsto 
(t - \tb (x, v), \xb(x, v), \vb(x, v)) \in \R \times \gamma_{-}.
\ee
We can check that this map is one-to-one, and we compute the determinate of the matrix from the change of variables. Applying the following the equality
\be \notag
\begin{split}
	& \nabla_{x, v} \xb (x, v) = \nabla_{x, v} \tb \vb (x, v) + \nabla_{x, v} X (t - \tb; t, x, v),
	\\& \nabla_{x, v} \vb (x, v) = - \nabla_{x, v} \tb \nabla \Phi (\xb) + \nabla_{x, v} V (t - \tb; t, x, v),
\end{split}
\ee
and the matrix transformation, the Liouville theorem that $\det [ \frac{\p (X, V)}{\p (x, v)} ] = 1$, we derive 
\Be \notag
\big| \det \Big[ 
\frac{\p (t - \tb, \xb, \vb)}{\p (x, v)} 
\Big] \big| 
= \frac{1}{|\vb \cdot n (\xb)|}.
\Ee
Then we show \eqref{COV+}. Similarly, we can derive \eqref{COV} if we consider the map 
\be \notag
(x, v) \in \O \times \R^3 
\mapsto 
(t + \tf (x, v), \xf(x, v), \vf(x, v)) \in \R \times \gamma_{+}.
\ee

Next, for \eqref{COV_bdry}, we consider the map
\be \notag
(t, x, v) \in \R \times \gamma_{+} 
\mapsto 
(t - \tb (x, v), \xb(x, v), \vb(x, v)) \in \R \times \gamma_{-}.
\ee

Again we can get the map is one-to-one, and  compute the determinate of the matrix from the change of variables. \hide For any $(x, v) \in \gamma_{+}$ and $t - \tb \leq s \leq t$,
\be \notag
\begin{split}
	& \big[\p_t + v \cdot \nabla_{x} - \nabla \Phi \cdot \nabla_{v} \big] X (s; t, x, v) = 0,
	\\& \big[\p_t + v \cdot \nabla_{x} - \nabla \Phi \cdot \nabla_{v} \big] V (s; t, x, v) = 0.
\end{split}
\ee \unhide
	From the matrix transformation and the fact $\tb$ is independent of $t$, we derive 
	\be \notag
	\big| \det \Big[ 
	\frac{\p (\xb, \vb)}{\p (x, v)} 
	\Big] \big|
	= \big| \det \Big[ 
	\frac{\p (t - \tb, \xb, \vb)}{\p (t, x, v)} 
	\Big] \big| 
	= \frac{|v \cdot n (x)|}{|\vb \cdot n (\xb)|}.
	\ee
	which implies \eqref{COV_bdry}. 
	
	The proof for the magnetic case can be done in a parallel way of zero-magnetic proof as the Liouville theorem shows $\det [ \frac{\p (X, V)}{\p (x, v)} ] = 1$ from $\nabla_{v} (V \times B - \nabla \Phi (X)) = 0$. 
\end{proof}

\hide
\begin{lemma}
	Consider $(X,V)$ in \eqref{characteristics mag}, for any 
	$g$, 
	\begin{align}
		\int_{\gamma_{+}} \int_0^{t_{-}} g(t,X(t,t+s,x,v),V(t,t+s,x,v)) |n(x) \cdot v|\dd s \dd v \dd S_x 
		= \iint_{\O \times \R^3} g(t,y,v) \dd y \dd v,
		\label{COV} \\
		\int_{\gamma_{-}} \int_0^{t_{+}} g(t,X(t,t-s,x,v),V(t,t-s,x,v)) |n(x) \cdot v|\dd s \dd v \dd S_x 
		= \iint_{\O \times \R^3} g(t,y,v) \dd y \dd v,
		\label{COV+} \\
		\int_{\gamma_{\pm}}
		g(t,x_{\mp}(x, v),v_{\mp}(x, v))
		|n(x) \cdot v| \dd v \dd S_x
		= \int_{\gamma_{\mp}}
		g(t,y,v)
		|n(y) \cdot v|\dd v \dd S_y.\label{COV_bdry}
	\end{align}
	Here, for the sake of simplicity, we have abused the notations temporarily: $t_-=\tb, x_-= \xb$ and $t_+= \tf, x_+= \xf$.  
\end{lemma}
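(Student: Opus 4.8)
The plan is to deduce all three identities from Liouville's theorem for the characteristic flow, combined with the change of variables from Eulerian coordinates $(x,v)$ to ``last boundary hit'' coordinates on $\R\times\gamma_{\mp}$. I would treat the zero-magnetic field case \eqref{characteristics} in detail; the magnetic case \eqref{characteristics mag} then follows by a verbatim argument, since the only structural input is that the generating vector field is divergence-free in $(x,v)$, and indeed $\nabla_v\cdot(v\times B-\nabla\Phi(X))=\nabla_v\cdot(v\times B)=0$, so Liouville's identity $\det\big[\frac{\p(X,V)}{\p(x,v)}\big]\equiv1$ holds in both settings.

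First I would record this Liouville identity (the elementary ODE fact that $\frac{\dd}{\dd s}\log\det\big[\frac{\p(X,V)}{\p(x,v)}\big]$ equals the $(x,v)$-divergence of the driving field, which vanishes here). For \eqref{COV+} I would then fix $t$, discard the grazing set $\{n(x)\cdot v=0\}$ and the (Lebesgue-null) set of $(x,v)$ whose backward trajectory never meets $\p\O$, and on the complement consider the map $(x,v)\mapsto(t-\tb(x,v),\xb(x,v),\vb(x,v))\in\R\times\gamma_-$. One checks it is injective, with image $\{(s,y,w):(y,w)\in\gamma_-,\ 0<t-s<t_+(y,w)\}$, because a phase point at time $t$ both determines and is determined by its last incoming boundary datum together with the elapsed time. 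Differentiating $\xb=X(t-\tb;t,x,v)$ and $\vb=V(t-\tb;t,x,v)$ via the chain rule and the characteristic ODE gives
\[
\nabla_{x,v}\xb=-\,\vb\otimes\nabla_{x,v}\tb+\nabla_{x,v}X(t-\tb;t,x,v),\qquad
\nabla_{x,v}\vb=\nabla\Phi(\xb)\otimes\nabla_{x,v}\tb+\nabla_{x,v}V(t-\tb;t,x,v),
\]
so the Jacobian matrix of the map is a rank-one column update of $\big[\frac{\p(X,V)}{\p(x,v)}\big]$ evaluated at $s=t-\tb$ (whose determinant is $1$). Performing those column operations, expanding along the $\nabla_{x,v}(t-\tb)$ row, and using that $\xb\in\p\O$ forces the normal component of the $\xb$-block to sit along the $\tb$-direction, I would obtain $\big|\det\big[\frac{\p(t-\tb,\xb,\vb)}{\p(x,v)}\big]\big|=|n(\xb)\cdot\vb|^{-1}$. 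Substituting this into $\iint_{\O\times\R^3}g(t,y,v)\,\dd y\,\dd v$ and relabeling $(t-\tb,\xb,\vb)$ as $(t-s,x,v)$ with $s\in(0,t_+(x,v))$ then yields \eqref{COV+}; identity \eqref{COV} is the mirror image via the forward map $(x,v)\mapsto(t+\tf,\xf,\vf)\in\R\times\gamma_+$, with Jacobian $|n(\xf)\cdot\vf|^{-1}$. For \eqref{COV_bdry} I would run the same computation for $(t,x,v)\in\R\times\gamma_+\mapsto(t-\tb,\xb,\vb)\in\R\times\gamma_-$; since $\tb$ does not depend on $t$, the determinant collapses to that of the spatial map $(x,v)\in\gamma_+\mapsto(\xb,\vb)\in\gamma_-$, and the identical rank-one-update argument---now producing an extra incoming normal factor because $x$ itself is pinned to $\p\O$---gives $\big|\det\big[\frac{\p(\xb,\vb)}{\p(x,v)}\big]\big|=\frac{|n(x)\cdot v|}{|n(\xb)\cdot\vb|}$, hence the $+\to-$ case; the $-\to+$ case is the same with $\tf,\xf,\vf$, or follows by taking the inverse map.

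I expect the main obstacle to be precisely this Jacobian bookkeeping: showing that the rank-one chain-rule correction coming from $\nabla_{x,v}\tb$, together with the constraint that $\xb$ (resp.\ $x$) lies on the codimension-one surface $\p\O$, collapses the $6\times6$ determinant to exactly the surface factor $|n(\xb)\cdot\vb|^{-1}$ (resp.\ $|n(x)\cdot v|/|n(\xb)\cdot\vb|$) and nothing spurious. The remaining points---a.e.\ bijectivity of the exit maps, and the irrelevance of the grazing set and of trajectories with zero or infinite exit time---are routine but should be stated. Since this is essentially Lemma~3 of \cite{CKL}, in the write-up I would cite \cite{CKL,CKL2} and only reproduce the two Jacobian identities above.
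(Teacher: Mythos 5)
Your proposal is correct and follows essentially the same route as the paper's sketch: reduce to Liouville's theorem ($\det[\p(X,V)/\p(x,v)]\equiv1$, valid in both the gravitational and magnetic cases because the driving vector field is divergence-free in $(x,v)$), then compute the Jacobian of the map to last-exit coordinates $(t-\tb,\xb,\vb)$ via the chain rule as a rank-one modification of the flow Jacobian, yielding the factor $|n(\xb)\cdot\vb|^{-1}$ for \eqref{COV}--\eqref{COV+} and the ratio $|n(x)\cdot v|/|n(\xb)\cdot\vb|$ for \eqref{COV_bdry}, citing \cite{CKL,CKL2}. Your version of the chain-rule identities $\nabla_{x,v}\xb=-\vb\otimes\nabla_{x,v}\tb+\nabla_{x,v}X$ and $\nabla_{x,v}\vb=\nabla\Phi(\xb)\otimes\nabla_{x,v}\tb+\nabla_{x,v}V$ even corrects a harmless sign slip in the paper's display, which does not affect $\big|\det\big|$.
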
 \unhide

Following Lemma \ref{sto_cycle_1}, we derive the stochastic cycles 

\begin{lemma}[\cite{CKL, CKL2}] \label{sto_cycle_1}
	Suppose $F$ solve \eqref{equation for F_infty} and \eqref{diff_F} (resp. \eqref{equation for F mag} and \eqref{diff_F}) with $0 \leq t_* \leq t$, then for $k \geq 1$,
	\begin{align}
		F (x, v) = 
		& \mathbf{1}_{t^1 < t_*}
		F (X(t_*; t, x, v), V(t_*; t, x, v)) \label{expand_F1} 
		\\&  + \mu_{\Theta} (x^1, \vb) \int_{\prod_{j=1}^{i} \mathcal{V}_j}   
		\sum\limits^{k-1}_{i=1} 
		\Big\{   \mathbf{1}_{t^{i+1} < t_* \leq t^{i }} F (X(t_*; t^i, x^i, v^i), V(t_*; t^i, x^i, v^i))  \Big\}
		\dd  \Sigma_{i}
		\label{expand_F2}
		\\& + \mu_{\Theta} (x^1, \vb) \int_{\prod_{j=1}^{k } \mathcal{V}_j}   
		\mathbf{1}_{t^{k } \geq t_* }
		F (x^{k }, v^{k })
		\dd  \Sigma_{k}
		, \label{expand_F3}
	\end{align} 
	where 
	$\dd  {\Sigma}_{i} := \frac{ \dd \sigma_{i}}{ \mu_{\Theta} (x^{i+1}, \vb^{i})} \dd \sigma_{i-1} \cdots  \dd \sigma_1$, with $\dd \sigma_j = \mu_{\Theta} (x^{j+1}, \vb^{j}) \{ n(x^j) \cdot v^j \} \dd v^j$ in \eqref{def:sigma measure}, and $\vb^{j} = \vb (x^j, v^j)$ defined in \eqref{def:t_k}.
	Here, $(X,V)$ in \eqref{characteristics} (resp. \eqref{characteristics mag}).
\end{lemma}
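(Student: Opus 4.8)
The plan is to obtain \eqref{expand_F1}--\eqref{expand_F3} as the $k$-fold unfolding of the diffuse-reflection condition \eqref{diff_F} along the backward characteristics of \eqref{characteristics} (resp.\ \eqref{characteristics mag}), by induction on $k$. Two facts drive everything. First, $F$ is invariant along the Hamiltonian flow: since $F$ solves \eqref{equation for F_infty} (resp.\ \eqref{equation for F mag}), $\frac{\dd}{\dd s}F\big(X(s;t,x,v),V(s;t,x,v)\big)=0$ wherever the characteristic stays in $\bar\O$ — this is just the mild formulation of the transport equation — so $F(x,v)=F\big(X(s;t,x,v),V(s;t,x,v)\big)$ for every $s$ in the backward lifespan $[t-\tb(x,v),t]$; in the time-dependent case one carries the time slot along, replacing $F(\cdot,\cdot)$ by $F(s,\cdot,\cdot)$. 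Second, when the backward characteristic first reaches $\p\O$ it does so at an incoming point $(x^1,\vb)\in\gamma_-$, and \eqref{diff_F} rewrites $F(x^1,\vb)=\mu_\Theta(x^1,\vb)\int_{\mathcal V_1}F(x^1,v^1)\{n(x^1)\cdot v^1\}\dd v^1$.

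For the base case $k=1$, fix $(x,v)$ for which the stochastic cycle of Definition~\ref{def_cycles} is well defined. If $t^1<t_*$, then $t_*\in(t^1,t]$ lies in the backward lifespan, and invariance gives \eqref{expand_F1}; moreover every indicator in \eqref{expand_F2}--\eqref{expand_F3} vanishes, since the bounce times are nonincreasing ($t^i\le t^1<t_*$). If $t^1\ge t_*$, then invariance evaluated at $s=t^1$ gives $F(x,v)=F(x^1,\vb)$, and \eqref{diff_F} turns this into \eqref{expand_F3}: indeed for $k=1$ the sum in \eqref{expand_F2} is empty and $\dd\Sigma_1=\dd\sigma_1/\mu_\Theta(x^2,\vb^1)=\{n(x^1)\cdot v^1\}\dd v^1$. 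Adding the two mutually exclusive cases yields the identity for $k=1$.

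For the inductive step, assume the identity at level $k\ge1$ and apply the $k=1$ identity to the factor $F(x^k,v^k)$ in \eqref{expand_F3}, with base point $(x^k,v^k)\in\gamma_+$ and base time $t^k$ in place of $(x,v),t$. If $t^{k+1}<t_*$, this replaces $F(x^k,v^k)$ by $F\big(X(t_*;t^k,x^k,v^k),V(t_*;t^k,x^k,v^k)\big)$, and since $\mathbf 1_{t^k\ge t_*}\mathbf 1_{t^{k+1}<t_*}=\mathbf 1_{t^{k+1}<t_*\le t^k}$, integrating against $\dd\Sigma_k$ produces exactly the $i=k$ summand of \eqref{expand_F2} at level $k+1$. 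If $t^{k+1}\ge t_*$, one more application of \eqref{diff_F} at $x^{k+1}$ contributes the factor $\mu_\Theta(x^{k+1},\vb^k)\{n(x^k)\cdot v^k\}\dd v^k$ — which is precisely $\dd\sigma_k$ by \eqref{def:sigma measure} — together with an outermost factor $\{n(x^{k+1})\cdot v^{k+1}\}\dd v^{k+1}=\dd\sigma_{k+1}/\mu_\Theta(x^{k+2},\vb^{k+1})$; hence $\dd\Sigma_k$ is promoted to $\dd\Sigma_{k+1}$ and \eqref{expand_F3} reappears at level $k+1$. The term \eqref{expand_F1} and the summands $i\le k-1$ of \eqref{expand_F2} are untouched, so the identity at level $k+1$ follows.

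The part needing most care — everything else being essentially bookkeeping — is the $\mu_\Theta$-normalization carried through the iteration (in particular the factor $1/\mu_\Theta(x^{i+1},\vb^i)$ built into $\dd\Sigma_i$, present precisely because no application of \eqref{diff_F} is made at $x^{i+1}$ at the $i$-th stopped term) and, on the analytic side, the well-definedness of the stochastic cycle: each $\tb(\cdot,\cdot)$ in Definition~\ref{def_cycles} must be finite and the backward characteristics must hit $\p\O$ transversally. In the half-space $\O=\mathbb T^2\times\R_+$ this is immediate in \emph{both} regimes \eqref{field property} and \eqref{field property mag}, because $\frac{\dd^2}{\dd s^2}X_3\le-g/2<0$ along characteristics in $\bar\O$ (the Lorentz force has no vertical component when $B=B_3\mathbf e_3$, and $\|\nabla\phi\|_\infty\le g/2$ in \eqref{field property}); hence $X_3(\cdot)$ is strictly concave along every characteristic, so it can neither remain in $\{x_3>0\}$ for all $s$ backward in time (forcing $\tb<\infty$) nor be tangent to $\{x_3=0\}$ from inside $\O$ (so every boundary crossing is transversal and $n(x^j)\cdot\vb^j\ne0$). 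There is no summability issue: the identity is exact for each finite $k$, and \eqref{expand_F3} simply records the mass still surviving $k$ boundary interactions strictly after time $t_*$. This parallels \cite{CKL,CKL2}.
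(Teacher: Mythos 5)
Your proof is correct and follows essentially the same inductive argument as the paper: the base case $k=1$ separating $t^1<t_*$ from $t^1\ge t_*$, and the inductive step obtained by expanding $F(x^k,v^k)$ inside \eqref{expand_F3} one more bounce via the boundary condition, with the measure check $\dd\Sigma_k\cdot\mu_\Theta(x^{k+1},\vb^k)\{n(x^{k+1})\cdot v^{k+1}\}\dd v^{k+1}=\dd\Sigma_{k+1}$. Your additional remarks on finiteness of $\tb$ and transversality of boundary crossings (using strict downward concavity of $X_3$ from the dominant gravity) are a useful sanity check on well-definedness of the stochastic cycle that the paper's proof leaves implicit, but the core bookkeeping is the same.
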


\begin{proof}
	We will prove this lemma by induction for all $n \in \Z_{+}$. 
	When $n = 1$, then it has two cases $t^{1} < t_* \leq t$ and $t_* \leq t^{1} \leq t$. For $t^{1} < t_* \leq t$, we have
	\be \notag
	F (x, v) 
	= F (X(t_*; t, x, v), V(t_*; t, x, v)).
	\ee
	For $t_* \leq t^{1} \leq t$, we get
	\be \notag
	F (x, v) 
	= F (x^1, \vb)
	= \mu_{\Theta} (x^1, \vb) \int_{\mathcal{V}_1}   
	F (x^{1}, v^{1}) \dd  \Sigma_{1}.
	\ee
	Combining the above two cases, we derive that
	\be \notag
	F (x, v) 
	= \mathbf{1}_{t^1 < t_*}
	F (X(t_*; t, x, v), V(t_*; t, x, v)) 
	+ \mu_{\Theta} (x^1, \vb) \int_{\mathcal{V}_1} 
	\mathbf{1}_{t^{1} \geq t_* }  
	F (x^{1}, v^{1}) \dd  \Sigma_{1},
	\ee
	so the equality \eqref{expand_F1}-\eqref{expand_F3} is true for $n = 1$.
	
	Let $k > 1$ and the equality \eqref{expand_F1}-\eqref{expand_F3} holds true for $n = k$. Then we obtain
	\be \label{n=k sto_cycle on F}
	\begin{split}
		F (x, v) = 
		& \mathbf{1}_{t^1 < t_*}
		F (X(t_*; t, x, v), V(t_*; t, x, v)) 
		\\&  + \mu_{\Theta} (x^1, \vb) \int_{\prod_{j=1}^{i} \mathcal{V}_j}   
		\sum\limits^{k-1}_{i=1} 
		\Big\{   \mathbf{1}_{t^{i+1} < t_* \leq t^{i }} F (X(t_*; t^i, x^i, v^i), V(t_*; t^i, x^i, v^i))  \Big\}
		\dd  \Sigma_{i}
		\\& + \mu_{\Theta} (x^1, \vb) \int_{\prod_{j=1}^{k } \mathcal{V}_j}   
		\mathbf{1}_{t^{k } \geq t_* }
		F (x^{k }, v^{k })
		\dd  \Sigma_{k}.
	\end{split}
	\ee 
	Now considering $t^{k+1}$, we have two cases $t^{k+1} < t_* \leq t^k$ and $t_* \leq t^{k+1} \leq t^k$. 
	For $t^{k+1} < t_* \leq t^k$, we have
	\be \notag
	F (x^k, v^k) 
	= F (X(t_*; t^k, x^k, v^k), V(t_*; t^k, x^k, v^k)).
	\ee
	For $t_* \leq t^{k+1}$, we get
	\be \notag
	F (x^k, v^k) 
	= F (x^{k+1}, \vb^k)
	= \mu_{\Theta} (x^{k+1}, \vb^k) \int_{\mathcal{V}_{k+1}}   
	F (x^{k+1}, v^{k+1}) \{ n(x^{k+1}) \cdot v^{k+1} \} \dd v^{k+1}.
	\ee
	Replacing $F (x^k, v^k)$ in \eqref{n=k sto_cycle on F} with the above, we derive that
	\be \notag
	\begin{split}
		F (x, v) =
		& \mathbf{1}_{t^1 < t_*}
		F (X(t_*; t, x, v), V(t_*; t, x, v)) 
		\\&  + \mu_{\Theta} (x^1, \vb) \int_{\prod_{j=1}^{i} \mathcal{V}_j}   
		\sum\limits^{k-1}_{i=1} 
		\Big\{   \mathbf{1}_{t^{i+1} < t_* \leq t^{i }} F (X(t_*; t^i, x^i, v^i), V(t_*; t^i, x^i, v^i))  \Big\}
		\dd  \Sigma_{i}
		\\& + \mu_{\Theta} (x^1, \vb) \int_{\prod_{j=1}^{k} \mathcal{V}_j}   
		\mathbf{1}_{t^{k+1} < t_* \leq t^{k}} F (X(t_*; t^k, x^k, v^k), V(t_*; t^k, x^k, v^k)) \dd  \Sigma_{k}
		\\& + \mu_{\Theta} (x^1, \vb) \int_{\prod_{j=1}^{k+1} \mathcal{V}_j}   
		\mathbf{1}_{t^{k+1} \geq t_* }
		F (x^{k+1}, v^{k+1})
		\dd  \Sigma_{k+1}.
	\end{split}
	\ee 
	Thus, the equality \eqref{expand_F1}-\eqref{expand_F3} holds for $n = k + 1$.
	By induction, we prove the Lemma.
\end{proof}

  \hide
 
 \begin{lemma} \label{sto_cycle_1 mag}
 	Suppose $F (x, v)$ solves \eqref{equation for F mag} and \eqref{diff_F} with $0 \leq t_* \leq t$, then for $k \geq 1$,
 	\begin{align}
 		F (x, v) 
 		& 	= \mathbf{1}_{t^1 < t_*}
 		F (X(t_*; t, x, v), V(t_*; t, x, v)) \label{expand_F1} 
 		\\&  + \mu_{\Theta} (x^1, \vb) \int_{\prod_{j=1}^{i} \mathcal{V}_j}   
 		\sum\limits^{k-1}_{i=1} 
 		\Big\{   \mathbf{1}_{t^{i+1} < t_* \leq t^{i }} F (X(t_*; t^i, x^i, v^i), V(t_*; t^i, x^i, v^i))  \Big\}
 		\dd  \Sigma_{i}
 		\label{expand_F2}
 		\\& + \mu_{\Theta} (x^1, \vb) \int_{\prod_{j=1}^{k } \mathcal{V}_j}   
 		\mathbf{1}_{t^{k } \geq t_* }
 		F (x^{k }, v^{k })
 		\dd  \Sigma_{k},
 		\label{expand_F3}
 	\end{align} 
 	where 
 	$\dd  {\Sigma}_{i} := \frac{ \dd \sigma_{i}}{ \mu_{\Theta} (x^{i+1}, \vb^{i})} \dd \sigma_{i-1} \cdots  \dd \sigma_1$, with $\dd \sigma_j = \mu_{\Theta} (x^{j+1}, \vb^{j}) \{ n(x^j) \cdot v^j \} \dd v^j$ in \eqref{def:sigma measure}, and 
 	$\vb^{j} = \vb (x^j, v^j)$ defined in \eqref{def:t_k}.
 	Here, $(X,V)$ in \eqref{characteristics mag}.
 \end{lemma}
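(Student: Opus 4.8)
The plan is to argue by induction on $k$, peeling off one diffuse reflection at a time from the last term. Two elementary observations do all the work. First, $F$ is constant along characteristics: differentiating $s \mapsto F(X(s;t,x,v),V(s;t,x,v))$ (with a time argument for $F$ in the dynamical case) and using \eqref{characteristics} (resp.\ \eqref{characteristics mag}) together with \eqref{equation for F_infty} (resp.\ \eqref{equation for F mag}) makes the derivative vanish. Second, a one–step dichotomy: for an interior value $F(y,w)$ viewed as the endpoint of a characteristic launched at a time $\tau_0 \ge t_*$, either the backward characteristic from $(y,w)$ stays in $\bar\Omega$ all the way down to $t_*$, in which case $F(y,w) = F(X(t_*;\tau_0,y,w),V(t_*;\tau_0,y,w))$ by the first observation; or it first meets $\p\Omega$ at a time $\ge t_*$, at a point $\xb$ with velocity $\vb \in \gamma_-$ (the re-emission direction, since the trajectory lies in $\Omega$ for times slightly larger), and then constancy followed by the boundary condition \eqref{diff_F} gives $F(y,w) = F(\xb,\vb) = \mu_\Theta(\xb,\vb) \int_{\{n(\xb)\cdot v' > 0\}} F(\xb,v')\{n(\xb)\cdot v'\}\,\dd v'$.

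For $k=1$ I apply this dichotomy to $(x,v)$ viewed from time $t$; since $t^1 = t - \tb(x,v) \le t$, the two alternatives combine through the indicators $\mathbf 1_{t^1 < t_*}$ and $\mathbf 1_{t^1 \ge t_*}$, and one notes $\dd\Sigma_1 = \dd\sigma_1 / \mu_\Theta(x^2,\vb^1) = \{n(x^1)\cdot v^1\}\,\dd v^1$, which yields \eqref{expand_F1}--\eqref{expand_F3} with the sum in \eqref{expand_F2} empty. For the step $k \to k+1$, assume the identity for $k$ and apply the dichotomy to the integrand $F(x^k,v^k)$ of \eqref{expand_F3}, with the characteristic now launched at time $t^k$ from $(x^k,v^k)$: here $v^k \in \mathcal V_k$ points toward the wall, so the backward trajectory re-enters $\Omega$ and $t^{k+1} = t^k - \tb(x^k,v^k) \le t^k$ is well defined, and on $\{t^k \ge t_*\}$ one has $\mathbf 1_{t^{k+1} < t_*} + \mathbf 1_{t^{k+1} \ge t_*} = 1$ with $\mathbf 1_{t^k \ge t_*}\mathbf 1_{t^{k+1} < t_*} = \mathbf 1_{t^{k+1} < t_* \le t^k}$. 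The first alternative contributes the $i = k$ summand of \eqref{expand_F2}; in the second, substituting $F(x^k,v^k) = \mu_\Theta(x^{k+1},\vb^k)\int_{\mathcal V_{k+1}} F(x^{k+1},v^{k+1})\{n(x^{k+1})\cdot v^{k+1}\}\,\dd v^{k+1}$ and regrouping the weights — the newly produced factor $\mu_\Theta(x^{k+1},\vb^k)$ recombines with the $\{n(x^k)\cdot v^k\}\,\dd v^k$ already inside $\dd\Sigma_k$ to rebuild $\dd\sigma_k$, while $\{n(x^{k+1})\cdot v^{k+1}\}\,\dd v^{k+1}$ plays the role of $\dd\sigma_{k+1}/\mu_\Theta(x^{k+2},\vb^{k+1})$ — turns \eqref{expand_F3} into its $(k+1)$ version, and the induction closes.

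The magnetic case is handled verbatim, since the argument uses only constancy of $F$ along trajectories and the well-posedness of the exit times and positions in Definition \ref{def_cycles}, both available for \eqref{characteristics mag}; and trajectories that never return to $\p\Omega$ going backward (e.g.\ those with $v_3 \le 0$ in the magnetic case, where $X_3$ is affine) are harmless because they force $t^1 = -\infty < t_*$ and are absorbed into \eqref{expand_F1}. I expect the only genuinely delicate point to be the measure bookkeeping: tracking, through each reflection, how the Maxwellian factor $\mu_\Theta(x^{j+1},\vb^j)$ produced by \eqref{diff_F} telescopes against the definition $\dd\Sigma_i = \frac{\dd\sigma_i}{\mu_\Theta(x^{i+1},\vb^i)}\dd\sigma_{i-1}\cdots\dd\sigma_1$, and checking at each step that the hitting velocity lies in $\gamma_-$ (so \eqref{diff_F} applies) while the new integration variable lies in $\mathcal V_j$ (so the forward flow re-enters $\Omega$ and the next exit time exists).
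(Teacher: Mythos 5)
Your argument is correct and matches the paper's own proof essentially verbatim: induction on $k$, the one-step dichotomy $t^{k+1} < t_*$ versus $t^{k+1} \ge t_*$, constancy of $F$ along characteristics, and application of the diffuse reflection \eqref{diff_F} at each boundary hit, with the weights regrouping exactly as you describe. The only slip is the parenthetical in your magnetic-case remark: under \eqref{field property mag} gravity is still present, so $X_3$ is quadratic (not affine) along characteristics and $\tb(x,v)=|v_3|/5<\infty$ always (Lemma \ref{conservative field mag}), so no never-returning backward trajectory arises, though your observation that such a trajectory would be absorbed into \eqref{expand_F1} is in any case harmless.
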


\unhide
 
 \begin{remark} \label{probability}
 	From the Lemma \ref{conservative field} (resp. Lemma \ref{conservative field mag})
 	and $\mu_{\Theta} (x, v)= \mu_{\Theta} (x, |v|)$, we obtain 
 	$\mu_{\Theta} (x^{j+1}, \vb^{j}) = \mu_{\Theta} (x^{j+1}, v^{j})$ with $\vb^{j} = \vb (x^j, v^j)$. 
 	Therefore, in the rest of paper, we write $\dd \sigma_j$ as
 	\Be \notag
 	\dd \sigma_j = \mu_{\Theta} (x^{j+1}, v^{j}) \{ n(x^j) \cdot v^j \} \dd v^j,
 	\Ee
 	where $v^j \in  \mathcal{V}_j  := \{v^j \in \R^3: n(x^j) \cdot v^j > 0 \}$ and $(X,V)$ in \eqref{characteristics} (resp. \eqref{characteristics mag}).
 \end{remark}
 
 \begin{remark} \label{sigma measure estiamte}
 	Though $F$ follows the characteristic trajectory, $F$ is time independent. Moreover, from Remark \ref{probability} and $0 < a \leq \Theta(x) \leq b$, we derive for all $i =1, \cdots, k$ and $(X,V)$ in \eqref{characteristics} (resp. \eqref{characteristics mag}), 
 	\Be \notag
 	\int_{\mathcal{V}_i} \dd \sigma_i
 	= \int_{\mathcal{V}_i} \mu_{\Theta} (x^{j+1}, v^{j}) \{ n(x^j) \cdot v^j \} \dd v^j \leq \int_{\mathcal{V}_i} \frac{1}{2 \pi a^2}e^{-  \frac{|v|^2}{2 b}} \{ n(x^j) \cdot v^j \} \dd v^j \leq \frac{b^2}{a^2}.
 	\ee
 	W.l.o.g. we assume $b^2 = 2 a^2$ (i.e. $\int_{\mathcal{V}_i} \dd \sigma_i \leq 2$) in the rest of paper.
 \end{remark}

\subsection{Gravitational case}


\begin{lemma} \label{conservative field}
Consider $(X,V)$ in \eqref{characteristics}, for $v \in  \mathcal{V}  := \{v \in \R^3: n(x) \cdot v > 0 \}$, $x \in \p\O$ and $\vb = \vb (x, v)$, we have
\Be 
|\vb| = |v|.
\Ee
\end{lemma}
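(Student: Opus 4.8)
The plan is to use conservation of the total mechanical energy along the Hamiltonian flow \eqref{characteristics}, together with the fact that the total potential $\Phi$ vanishes on $\p\O = \mathbb{T}^2\times\{0\}$. First I would fix $(t,x,v)$ and set, along the characteristic, $E(s) := \tfrac12 |V(s;t,x,v)|^2 + \Phi(X(s;t,x,v))$. Differentiating in $s$ and using \eqref{characteristics}, i.e. $\tfrac{\dd}{\dd s}X = V$ and $\tfrac{\dd}{\dd s}V = -\nabla\Phi(X)$, gives $\tfrac{\dd}{\dd s}E(s) = V\cdot(-\nabla\Phi(X)) + \nabla\Phi(X)\cdot V = 0$, so $E$ is constant on the trajectory; in particular $E(t) = E(t-\tb)$.

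Next I would evaluate $E$ at the two endpoints of the backward segment. At $s=t$ one has $(X,V)(t;t,x,v)=(x,v)$, hence $E(t) = \tfrac12|v|^2 + \Phi(x)$; at $s = t-\tb$ the definition of the backward exit time gives $(X,V)(t-\tb;t,x,v) = (\xb,\vb)$, hence $E(t-\tb) = \tfrac12|\vb|^2 + \Phi(\xb)$. Now both $x$ and $\xb$ lie on $\p\O$, so their third components vanish; since $\nabla\Phi = \nabla\phi + g\mathbf{e}_3$ and $\phi(y)|_{y_3=0}\equiv 0$ by \eqref{field property} (so that the total potential $\Phi(y) = \phi(y) + g y_3$ satisfies $\Phi(y)=0$ whenever $y_3=0$), we obtain $\Phi(x) = \Phi(\xb) = 0$. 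Combining this with the conservation identity $E(t)=E(t-\tb)$ yields $\tfrac12|v|^2 = \tfrac12|\vb|^2$, that is, $|\vb| = |v|$.

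The only point requiring a little care — rather than a genuine obstacle — is that the cancellation of the potential is special to the two boundary points: in the interior of the trajectory the speed $|V(s)|$ genuinely varies with height, so it is essential that $\tb$ is precisely the first backward time at which $X$ returns to $\p\O$, where $\Phi\equiv 0$. (The analogous magnetic statement, proved separately, needs in addition that the Lorentz force $V\times B$ is orthogonal to $V$ and hence contributes nothing to $\tfrac{\dd}{\dd s}E$.)
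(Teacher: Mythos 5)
Your proof is correct and follows exactly the paper's argument: you show $E(s)=\tfrac12|V|^2+\Phi(X)$ is constant along the Hamiltonian characteristics, evaluate at $s=t$ and $s=t-\tb$, and use $\Phi|_{x_3=0}\equiv 0$ (from $\Phi = g x_3 + \phi$ and $\phi|_{x_3=0}\equiv 0$ in \eqref{field property}) to cancel the potential at both boundary endpoints. The extra sentence unpacking why $\Phi$ vanishes on $\p\O$ is a slight elaboration over the paper's one-line appeal, but it is the same computation.
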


\begin{proof}
Following $(X,V)$ in \eqref{characteristics} with $v \in  \mathcal{V}$, we compute the following derivative:
\Be \notag
\begin{split} 
& \ \ \ \ \frac{\dd}{\dd s}
\Big( 
\frac{|V(s;t,x,v)|^2}{2} + \Phi(X(s;t,x,v)) 
\Big)
\\& = V(s;t,x,v) \cdot \frac{\dd V}{\dd s} + \nabla \Phi \cdot \frac{\dd X}{\dd s} 
\\& = - V(s;t,x,v) \cdot \nabla \Phi (X(s; t, x, v)) + \nabla \Phi \cdot V(s; t, x, v) = 0.
\end{split}
\Ee
Recall that
$(X(t; t, x, v), V(t; t, x, v)) = (x, v)$, $(X(t - t_\mathbf{b}; t, x, v), V(t - t_\mathbf{b}; t, x, v)) = (x_\mathbf{b}, \vb)$. By taking $s = t - \tb$ and $s = t$, we obtain
\Be \notag
|v|^2/2 + \Phi(x) = |\vb|^2/2 + \Phi(x_\mathbf{b}).
\Ee  
Since $\Phi (x)|_{x_3 = 0} \equiv 0$ and $x, \xb \in \p\O$, we have
\be \notag
\Phi(x) = \Phi(x_\mathbf{b}) = 0,
\ee
which implies $|\vb| = |v|$.
\end{proof}

\begin{lemma}
Consider $(X,V)$ in \eqref{characteristics}, for $v \in \mathcal{V}  := \{v \in \R^3: n(x) \cdot v > 0 \}$ and $x \in \p\O$,
there exists a unique time $t_m \in [t-\tb, t]$ such that 
\Be \notag
V_3 (t_m,t, x, v) = 0, \ X_3 (t_m,t, x, v) = \max\limits_{t-\tb \leq s \leq t} X_3 (s,t, x, v).
\Ee
Moreover, we have
\Be \notag
\frac{\tb}{3} \leq t-t_m, \ t_m - (t-\tb) \leq \frac{2 \tb}{3}.
\Ee
\end{lemma}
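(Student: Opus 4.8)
The plan is to reduce the whole statement to one–dimensional information about the vertical component $X_3(s):=X_3(s;t,x,v)$ of the characteristic \eqref{characteristics} on the interval $s\in[t-\tb,t]$, exploiting that the hypothesis \eqref{field property} makes the vertical acceleration uniformly negative. Writing $\Phi=\phi+gx_3$ (so the force is $-\nabla\Phi$ and $\Phi|_{x_3=0}\equiv0$), the third component of \eqref{characteristics} reads $\ddot X_3=\dot V_3=-\partial_{x_3}\Phi(X)=-g-\partial_{x_3}\phi(X)$, and $\|\nabla\phi\|_{L^\infty}\le g/2$ gives
\[
-\tfrac{3g}{2}\ \le\ \dot V_3(s;t,x,v)\ \le\ -\tfrac g2\ <\ 0\qquad\text{for }s\in[t-\tb,t].
\]
Hence $V_3(\cdot;t,x,v)$ is strictly decreasing and $X_3(\cdot;t,x,v)$ is strictly concave on $[t-\tb,t]$. (Note $\tb>0$: since $v\in\mathcal{V}$ means $n(x)\cdot v>0$, i.e. $v_3<0$, the backward trajectory genuinely enters $\O$.)

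For the existence and uniqueness of $t_m$: since $x,\xb\in\p\O$ we have $X_3(t-\tb)=X_3(t)=0$, so by Rolle's theorem there is $t_m\in(t-\tb,t)$ with $\dot X_3(t_m)=V_3(t_m)=0$, and strict monotonicity of $V_3$ makes this $t_m$ unique. Since then $V_3>0$ on $[t-\tb,t_m)$ and $V_3<0$ on $(t_m,t]$, the function $X_3$ strictly increases and then strictly decreases, so $t_m$ is precisely the point where $X_3$ attains its maximum on $[t-\tb,t]$.

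For the two quantitative bounds, set $\tau_1:=t_m-(t-\tb)$ and $\tau_2:=t-t_m$, so that $\tau_1+\tau_2=\tb$ and both claimed inequalities are equivalent to $\tau_1\le 2\tau_2$. I would estimate the maximal height $H:=X_3(t_m)$ from both sides. On $[t-\tb,t_m]$, integrating the bounds on $\dot V_3$ from $s$ to $t_m$ (where $V_3(t_m)=0$) gives $\tfrac g2(t_m-s)\le V_3(s)\le\tfrac{3g}2(t_m-s)$, hence $H=\int_{t-\tb}^{t_m}V_3\,\dd s\in[\tfrac g4\tau_1^2,\tfrac{3g}4\tau_1^2]$; the analogous computation on $[t_m,t]$, where $X_3(t)=0$, gives $H\in[\tfrac g4\tau_2^2,\tfrac{3g}4\tau_2^2]$. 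Comparing the lower bound in $\tau_1$ with the upper bound in $\tau_2$ yields $\tfrac g4\tau_1^2\le\tfrac{3g}4\tau_2^2$, i.e. $\tau_1\le\sqrt3\,\tau_2\le 2\tau_2$, which is exactly the desired estimate $\tau_1\le2\tau_2$.

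I do not anticipate a serious obstacle; the only delicate point is ensuring that the vertical velocity actually changes sign inside $[t-\tb,t]$ — so that $t_m$ is interior and $X_3$ genuinely has an interior maximum — which is precisely where the strict concavity $\ddot X_3<0$, hence the dominant–gravity assumption $\|\nabla\phi\|_{L^\infty}\le g/2$, is used; without it $\dot V_3$ could vanish and the argument degenerates. Degeneracies such as $\tb=0$ are already excluded above since $v\in\mathcal V$ forces $v_3<0$.
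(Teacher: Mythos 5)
Your proposal is correct and takes essentially the same approach as the paper: both establish $-g-\|\nabla\phi\|_\infty \le \dot V_3 \le -g+\|\nabla\phi\|_\infty < 0$, deduce existence/uniqueness of $t_m$ from strict monotonicity of $V_3$ between $X_3(t-\tb)=X_3(t)=0$, and compare two-sided quadratic bounds on the peak height $X_3(t_m)$ in terms of $\tau_1=t_m-(t-\tb)$ and $\tau_2=t-t_m$. Your observation that the two stated inequalities are both equivalent to $\tau_1\le 2\tau_2$ streamlines the final ratio computation (and you use the actual hypothesis $\|\nabla\phi\|_\infty\le g/2$ cleanly, where the paper writes $\varrho_3\ll 1$ but in fact only needs $\varrho_3\le 3g/5$), but the underlying argument is the paper's.
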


\begin{proof}
Recall $\|\nabla_{x} \phi\|_{\infty} \leq \varrho_3$, we have 
\Be \label{g3 estimate}
g - \varrho_3 \leq
\big(
\nabla_{x} \Phi
\big)_3
\leq g + \varrho_3.
\Ee
From \eqref{characteristics}, \eqref{g3 estimate} and $0 \leq \varrho_3 \ll 1$, we get
\Be \label{dV3 estimate}
- g - \varrho_3 \leq
\frac{\dd}{\dd s} V_3 (s; t, x, v) 
= 
- \big(
\nabla_X \Phi (X(s; t, x, v))
\big)_3 
\leq - g + \varrho_3 < 0.
\Ee
We define
\Be \label{def: tm}
t_m(t, x, v)
= \{t-\tb \leq s \leq t: X_3 (t_m,t, x, v) = \max\limits_{t-\tb \leq s \leq t} X_3 (s,t, x, v), V_3 (t_m,t, x, v) = 0 \}.
\Ee
The negativity of $\frac{\dd}{\dd s} V_3 (s; t, x, v)$ implies that $t_m$ is unique for any $(t, x, v)$. Moreover, for $s \in [t-\tb, t_m]$, $V_3 \geq 0$ implies $X_3$ is increasing and for $s \in [t_m, t]$, $V_3 \leq 0$ implies $X_3$ is decreasing.

From \eqref{dV3 estimate} and the uniqueness of $t_m$, we deduce 
\Be \notag
\begin{split}
\frac{1}{2} (g - \varrho_3) \big( t_m - (t-\tb) \big)^2 \leq 
& X_3 (t_m,t, x, v) \leq \frac{1}{2} (g + \varrho_3) \big( t_m - (t-\tb) \big)^2,
\\ \frac{1}{2} (g - \varrho_3)( t - t_m )^2 \leq 
& X_3 (t_m,t, x, v) \leq \frac{1}{2} (g + \varrho_3) ( t - t_m )^2.
\end{split}
\Ee
Thus we obtain that
\Be \label{tm first estimate}
\sqrt{\frac{2 X_3 (t_m,t, x, v)}{g + \varrho_3}} \leq t_m - (t-\tb), \ t - t_m \leq \sqrt{\frac{2 X_3 (t_m,t, x, v)}{g - \varrho_3}}.
\Ee
From \eqref{tm first estimate}, we have
\Be \label{tm rate estimate}
\sqrt{\frac{g - \varrho_3}{g + \varrho_3}} \leq \frac{t_m - (t-\tb)}{t - t_m} \leq \sqrt{\frac{g + \varrho_3}{g - \varrho_3}}.
\Ee
Since $\big(t_m - (t-\tb)\big) + \big(t - t_m\big) = \tb$ and $\varrho_3 \ll 1$, we have
\Be \label{tm estimate}
\begin{split}
& \frac{\tb}{3} \leq \Big(1 + \sqrt{\frac{g + \varrho_3}{g - \varrho_3}} \Big)^{-1} \tb \leq t-t_m \leq \Big(1 + \sqrt{\frac{g - \varrho_3}{g + \varrho_3}} \Big)^{-1} \tb \leq \frac{2 \tb}{3}, 
\\& \frac{\tb}{3} \leq \Big(1 + \sqrt{\frac{g + \varrho_3}{g - \varrho_3}} \Big)^{-1} \tb \leq t_m - (t-\tb) \leq \Big(1 + \sqrt{\frac{g - \varrho_3}{g + \varrho_3}} \Big)^{-1} \tb \leq \frac{2 \tb}{3}.
\end{split}
\Ee
\end{proof}

\begin{remark}
From \eqref{g3 estimate} and \eqref{tm estimate}, we derive that 
\Be \notag
\frac{1}{6} g \tb \leq \frac{\tb}{3} (g - \varrho_3) \leq v_3, \ v_{\mathbf{b}, 3} \leq \frac{2 \tb}{3} (g + \varrho_3) \leq \frac{4}{3} g \tb.
\Ee
For the sake of simplicity, we write:
suppose $(X,V)$ in \eqref{characteristics}, for all $(x,v) \in \gamma_+$,
\Be \label{v3t estimate}
\begin{split}
& \tb (x, v) \lesssim v_3 \lesssim \tb (x, v), \ \ \tb (x, v) \lesssim v_{\mathbf{b}, 3} (x, v) \lesssim \tb (x, v).
\end{split}
\Ee
\end{remark}

Now we consider the change of variables 
$v \in \{v \in \R^3
: n(x) \cdot v  >  0\}
\mapsto
(\xb(x,v), \tb(x,v)) \in \partial \Omega \times \R_{+}.$
Since the domain is periodic, this is a local bijective mapping. For fixed $x, \tb$ and $\xb$, we introduce the set of velocities $\{v^{m, n}\}$ with $m, n \in \mathbb{Z}$ such that
\Be \label{def:vmn}
v^{m, n}
\in \{v^{m, n} \in \R^3
: n(x) \cdot v^{m, n} > 0\}
 \mapsto
(\xb + (m, n, 0), \tb)
= (\xb, \tb)
\in \partial \Omega \times \R_{+}.
\ee

\begin{proposition} \label{prop:mapV}
Consider $(X,V)$ in \eqref{characteristics}, 
\begin{itemize}
\item For fixed $x \in \p\O$, and $m, n \in \mathbb{Z}$, we introduce the following map:
\Be \label{mapV}
v \in \{v \in \R^3
: n(x) \cdot v  >  0\}
\mapsto (\xb, \tb):=
(\xb(x,v)  + (m, n, 0), \tb(x,v)) \in \partial \Omega \times \R_{+}.
\Ee 
Then the map \eqref{mapV} is locally bijective and has the change of variable formula as
\Be \label{jacob:mapV}
    \dd v \lesssim |\tb|^{-3}|n (\xb ) \cdot \vb | \dd \tb \dd S_{\xb }.
\Ee
Therefore, for any non-negative function $\phi(v)$, there exists a constant $C$ such that
\be \notag
\int_{v \in v^{m, n}} \phi (v) \dd v \leq C \iint_{\p\O \times \R_{+}} \phi (\xb, \tb) |\tb|^{-3}|n (\xb ) \cdot \vb | \dd \tb \dd S_{\xb }.
\ee
\item Similarly we have a locally bijective map: 
\Be \notag
v \in \{v \in \R^3
: n(x) \cdot v<0\}
\mapsto (\xf , \tf ):=
(\xf(x,v) + (m, n, 0), \tf(x,v)) \in \partial \Omega \times \R_{+},
\Ee 
with 
\Be \label{mapV_f}
\dd v \lesssim |\tf|^{-3} |n (\xf) \cdot \vf | \dd \tf \dd S_{\xf}.
\Ee 
\end{itemize}
\end{proposition}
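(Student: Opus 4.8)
The plan is to realise the map $v\mapsto(\xb,\tb)$ as a restriction of the Hamiltonian flow \eqref{characteristics} to the frozen constraint ``$X_3=0$'', to compute its Jacobian in closed form, and then to reduce everything to a lower bound on a variational Jacobian that is forced by the dominant gravity. Fix $x\in\p\O$; the shift by $(m,n,0)$ only selects a lift of $\xb$ from $\mathbb{T}^2$ to $\R^2$ and affects neither the injectivity nor the Jacobian, so I take $m=n=0$ throughout.

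\emph{Step 1: differentiating $\tb$ and reducing the Jacobian.} The exit time solves $X_3(t-\tb(x,v);t,x,v)=0$, and $\p_\tau X_3(t-\tau;t,x,v)\big|_{\tau=\tb}=-V_3(t-\tb;t,x,v)=-\vb_3$. By \eqref{v3t estimate} together with Lemma \ref{conservative field} one has $\vb_3\gtrsim\tb>0$, so the exit is transversal and the implicit function theorem gives $\tb\in C^1$ near such $v$ with $\nabla_v\tb=\vb_3^{-1}\,\nabla_v X_3(t-\tb;t,x,v)$, the $v$-derivatives being taken with the time argument held fixed. Writing the horizontal part as $\xb=X_\parallel(t-\tb(x,v);t,x,v)$ and using $\p_\tau X_\parallel(t-\tau;\cdot)=-V_\parallel(t-\tau;\cdot)$, the $v$-derivative of $\xb$ equals $\nabla_vX_\parallel(t-\tb;\cdot)-\vb_\parallel\otimes\nabla_v\tb$; adding $\vb_i$ times the last row to row $i$ ($i=1,2$) eliminates the correction and yields
\[
\Big|\det\frac{\p(\xb,\tb)}{\p v}\Big|=\frac{1}{|\vb_3|}\,\big|\det\nabla_vX(t-\tb;t,x,v)\big|=\frac{1}{|n(\xb)\cdot\vb|}\,\big|\det\nabla_vX(t-\tb;t,x,v)\big|,
\]
since $|n(\xb)\cdot\vb|=|\vb_3|$ on the flat boundary $\p\O$. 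Hence it suffices to prove $\big|\det\nabla_vX(t-\tb;t,x,v)\big|\asymp|\tb|^3$; the upper bound plus non-degeneracy also yields the local bijectivity through the inverse function theorem, and the stated integral inequality for nonnegative $\phi$ then follows by partitioning $\{n(x)\cdot v>0\}$ into pieces on which the map is a diffeomorphism.

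\emph{Step 2: the variational Jacobian.} Set $Z(\tau)=\nabla_vX(t-\tau;t,x,v)$ and $W(\tau)=\nabla_vV(t-\tau;t,x,v)$, so $Z(0)=0$, $W(0)=\mathbf{I}$, $Z'=-W$, $W'=\nabla^2\Phi(X(t-\tau;t,x,v))Z$, hence $Z''=-\nabla^2\Phi\,Z$ with $Z(0)=0,\ Z'(0)=-\mathbf{I}$. For $\Phi=gx_3$ one has $\nabla^2\Phi\equiv0$, so $Z(\tau)=-\tau\mathbf{I}$ and $|\det Z(\tau)|=|\tau|^3$ exactly. For the actual $\Phi=gx_3+\phi$, variation of parameters gives $Z(\tau)=-\tau\mathbf{I}-\int_0^\tau(\tau-s)\,\nabla^2\phi(X(t-s;t,x,v))\,Z(s)\,\dd s$. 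The condition $\|\nabla_x\phi\|_\infty\le g/2$ in \eqref{field property} forces $-\p_s^2\big(X_3(t-s;t,x,v)\big)\in[g/2,3g/2]$, hence the parabolic lower bound $X_3(t-s;t,x,v)\gtrsim g\,s(\tb-s)$ for $0\le s\le\tb$; combining this with $\|\nabla^2\phi(y)\|\le\varrho_2 e^{-\varrho_1 y_3}$ one gets $\int_0^\tb s\,e^{-c\varrho_1 g\,s(\tb-s)}\,\dd s$ bounded by a constant depending only on $\varrho_1,g$, uniformly in $\tb$. A continuity (bootstrap) argument then gives $\|Z(\tau)+\tau\mathbf{I}\|\le\tfrac12|\tau|$ on $[0,\tb]$ once $\varrho_2$ is small, so the singular values of $\tau^{-1}Z(\tau)$ lie in $[\tfrac12,\tfrac32]$ and $|\det Z(\tau)|\asymp|\tau|^3$, which at $\tau=\tb$ is the required estimate. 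The forward statement \eqref{mapV_f} is proved verbatim with $t+\tf$, $\xf$, $\vf$ in place of $t-\tb$, $\xb$, $\vb$, using the forward analogue of \eqref{v3t estimate}.

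\emph{Main obstacle.} The delicate point is the uniform-in-$\tb$ control of the variational Jacobian in Step 2: since $\tb\asymp|v_3|$ is unbounded, a plain Gronwall estimate on $Z''=-\nabla^2\phi\,Z$ degenerates for large $\tb$, and it is exactly the exponential spatial decay $\|e^{\varrho_1 x_3}\nabla^2\phi\|_\infty\le\varrho_2\ll1$ of \eqref{field property}, coupled with the fact that a trajectory with large $\tb$ climbs to height $\sim g\tb^2$ where $\nabla^2\phi$ is negligible, that rescues the estimate. The remaining ingredients (the implicit function theorem, the row reduction, and the parabolic lower bound on $X_3$) are routine once this is in place.
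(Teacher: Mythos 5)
Your proof is correct and reaches the required estimate by a related but cleaner route than the paper's. The paper substitutes the $V$-equation into the $X$-equation, differentiates the resulting implicit relation for $(\xb,\tb)$ with respect to $v$ directly, and factors the Jacobian as $(\tb)^3/v_{\mathbf{b},3}$ times $\det\big[-\mathrm{Id}+O(\varrho_2/\varrho_1)\big]$, where the perturbation is controlled by a Gronwall bound on $|\nabla_v X|$ together with a two-sided splitting of the trajectory at the apex time $t_m$ where $V_3$ vanishes. You instead (i) apply the implicit function theorem to $X_3(t-\tb;t,x,v)=0$ using the transversality $\p_\tau X_3|_{\tau=\tb}=-\vb_3\neq 0$, (ii) row-reduce the full $3\times 3$ Jacobian to isolate $\det\nabla_v X(t-\tb;\cdot)$, and (iii) show $\nabla_v X(t-\tau;\cdot)\approx-\tau\,\mathrm{Id}$ through the variational ODE $Z''=-\nabla^2\phi\,Z$ with $Z(0)=0$, $Z'(0)=-\mathrm{Id}$. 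The genuinely new ingredient in your argument is the symmetric parabolic lower bound $X_3(t-s;\cdot)\gtrsim g\,s(\tb-s)$, obtained from $-\tfrac{d^2}{ds^2}X_3\in[g/2,3g/2]$ (which is where $\|\nabla\phi\|_\infty\le g/2$ enters) together with the vanishing endpoint values; this neatly replaces the paper's piecewise estimates around $t_m$. Both routes hinge on the hypothesis $\|e^{\varrho_1 x_3}\nabla^2\phi\|_\infty=\varrho_2\ll 1$ and on the observation that a long flight reaches heights $\sim g\,\tb^2$ where $\nabla^2\phi$ is exponentially negligible. I verified your intermediate integral claim: $\int_0^{\tb}s\,e^{-cs(\tb-s)}\,\dd s$ is indeed bounded uniformly in $\tb$, because the integrand's mass concentrates near $s=\tb$ in a window of width $\sim 1/(c\tb)$ where the integrand is $\sim\tb$, contributing $O(1/c)$; the contribution near $s=0$ is $O(1/(c\tb)^2)$ and the middle is exponentially small. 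The bootstrap therefore closes, $|\det Z(\tb)|\asymp|\tb|^3$, and the Jacobian, local bijectivity via the inverse function theorem, and the integral inequality for nonnegative $\phi$ all follow as you describe. The forward case is indeed verbatim.
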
 

\begin{proof}
We just need to show \eqref{jacob:mapV}, since \eqref{mapV_f} can be deduced after changing the backward variables into forward variables. For the sake of simplicity, we have abused the notations temporarily: 
\be \notag
\begin{split}
& \xb := x^1 = (x^1_1, x^1_2, x^1_3)= (x^1_\parallel, x^1_3), \ \
v = (v_1, v_2, v_3) = (v_\parallel, v_3), 
\\& \vb = (v_{\mathbf{b}, 1}, v_{\mathbf{b}, 2}, v_{\mathbf{b}, 3}), \ \
t_{\mathbf{b}} = \tb(x,v).
\end{split}
\ee
Recall that $\Phi (x) = g x_3 + \phi(x)$, we get $
\nabla \nabla \Phi= \nabla \nabla (g x_3 + \phi(x))=\nabla  ( (0, 0, g) + \nabla \phi(x))= \nabla \nabla \phi (x).$

Now we compute the determinant of the Jacobian matrix. 
Fixing $x, t$ and following the characteristics trajectory, we deduce
\Be \label{first cov first part}
 x^1 + \int^{t}_{t - \tb}  V(s;t,x, v) \dd s + (m, n, 0) = x,
\Ee
\Be \label{first cov second part}
\vb +  \int^{t}_{t - \tb}  - \nabla \Phi ( X(s;t,x, v)) \dd s = v.
\Ee
Inputting \eqref{first cov second part} into \eqref{first cov first part}, 
\Be \notag
 x^1 + \int^{t}_{t - \tb} (v - \int^{t}_{s} - \nabla \Phi(X(\alpha, t, x, v)) \dd  \alpha) \dd s + (m, n, 0) = x.
\Ee
Taking $\partial_{v}$ on $\nabla \Phi$, we have
\Be \notag
\nabla_{v} 
\big[ \nabla\Phi(X(\alpha, t, x, v) 
\big]
= \nabla_{v} X(\alpha, t, x, v) \cdot \nabla \nabla\Phi(X) =  \nabla_{v} X(\alpha, t, x, v) \cdot \nabla \nabla \phi.
\Ee
Then, we get
\Be \notag
\begin{split}
    & \frac{\partial \tb}{\partial v} = \frac{1}{v_{\mathbf{b}, 3}} \Big(
     -(0, 0, 1) \tb - \int^{
    t - \tb}_{t}
    \int^{s}_{t} - \nabla_{v} X \cdot \nabla \nabla \phi_3 (X(\alpha, t, x, v)) \dd \alpha \dd s
    \Big),
    \\& \frac{\partial x^1_\pll}{\partial v} = -
    \begin{pmatrix}
    1 & 0 & 0\\
    0 & 1 & 0
    \end{pmatrix} \tb - \int^{
   t - \tb
    }_{t}
    \int^{s}_{t} - \nabla_{v} X \cdot \nabla \nabla \phi_{\parallel} (X(
    \alpha, t, x, v)) \dd \alpha \dd s - \frac{\partial \tb}{\partial v} v_{\parallel},
\end{split}
\Ee
where $\phi = (\phi_1, \phi_2, \phi_3) = (\phi_\parallel, \phi_3)$.
Therefore, we get
\Be \notag
    \det 
    \Big(
    \frac{\partial x^1_{\pll}}{\partial v}, \frac{\partial \tb}{\partial v}
    \Big)
    = \frac{(\tb)^3}{v_{\mathbf{b}, 3}} \times \det 
    \Big[
    -{Id}_{3 \times 3} - \frac{1}{\tb} \int^{t - \tb}_{t}
    \int^{s}_{t} - \nabla_{v} X \cdot \nabla \nabla \phi \dd \alpha \dd s
    \Big].
\Ee
We set 
$d_0 = 
	\Big(
	\det 
    \big[
    -{Id}_{3 \times 3} - \frac{1}{\tb} \int^{t - \tb}_{t}
    \int^{s}_{t} - \nabla_{v} X \cdot \nabla \nabla \phi \dd \alpha \dd s
    \big]	
	 \Big)^{-1},$
and recall that 
\Be \notag
\begin{split}
 \sup_{x_3 \geq 0} e^{\varrho_1 x_3} 
  \|\nabla \nabla_{x} \phi\|_{\infty} \leq \varrho_2 \ll 1 \ \text{with} \ \varrho_1 > 1,
  \ \ \ \text{and} \  \   \|\nabla_{x} \phi\|_{\infty} \leq \varrho_3 \ll 1.
\end{split}
\Ee
From \eqref{characteristics}, we have 
\Be \notag
    \frac{\dd}{\dd s}|\nabla_v X(s;t, x, v)| \lesssim |\nabla_v V(s;t, x, v)|,
\Ee
\Be \notag
\begin{split} 
    \frac{\dd}{\dd s}|\nabla_v V(s;t, x, v)| 
    & \lesssim |\nabla \nabla_X \phi (X(s;t, x, v))||\nabla_v X(s;t, x, v)|
      \lesssim \varrho_2 e^{- \varrho_1 X_3 (s;t, x, v)} |\nabla_v X(s;t, x, v)|.
\end{split}
\Ee
Since $\nabla_v X(t;t, x, v) = \nabla_v x = 0$, $\nabla_v V(t;t, x, v) = \nabla_v v = Id_{3 \times 3}$ and $t - \tb \leq s \leq t$, we get
\Be \label{Xv first}
\begin{split} 
    |\nabla_v X(s;t, x, v)| 
    & \lesssim \int^{t}_{s} |\nabla_v V(s_1;t, x, v)| \dd s_1
    \\& \lesssim |t-s| + \int^{t}_{s} \int^{t}_{s_1}
    \varrho_2 e^{- \varrho_1 X_3 (s_2;t, x, v)} |\nabla_v X(s_2;t, x, v)| \dd s_2 \dd s_1.
\end{split}
\Ee
Using the Fubini's theorem, we derive
\Be \notag
\begin{split}
\eqref{Xv first} 
& \lesssim |t-s| + \int^{t}_{s} \int^{s_2}_{s}
    \varrho_2 e^{- \varrho_1 X_3 (s_2;t, x, v)} |\nabla_v X(s_2;t, x, v)| \dd s_1 \dd s_2
\\& \lesssim |t-s| + \int^{t}_{s} \varrho_2 
(s_2-s) e^{- \varrho_1 X_3 (s_2;t, x, v)}
|\nabla_v X(s_2;t, x, v)| \dd s_2.
\end{split}
\Ee
Using the Gronwall’s inequality, for $t-\tb \leq s \leq t$,
\Be \label{first estimate on X_v}
\begin{split} 
    |\nabla_v X(s;t, x, v)| 
    & \lesssim |t-s| \exp 
    \big(
    \int^{t}_{s} \varrho_2 
(s_2-s) e^{- \varrho_1 X_3 (s_2;t, x, v)} \dd s_2 
	\big)
	\\& \leq |t-s| \exp 
    \Big(
    \int^{t}_{t-\tb} \varrho_2 
	\big(
	s_2- (t-\tb)
	\big)
 	e^{- \varrho_1 X_3 (s_2;t, x, v)} \dd s_2 
	\Big).
\end{split}
\Ee

Now we consider $\int^{t}_{t-\tb} 
\big( s_2-(t - \tb) \big) e^{- \varrho_1 X_3 } \dd s_2$. 
From \eqref{g3 estimate}, $t_m$ in \eqref{def: tm} and \eqref{tm estimate}, in the case $s \in [t_m, t]$ where $X_3 (s,t, x, v)$ is decreasing, 
we obtain 
\Be \notag
t-t_m \gtrsim \frac{\tb}{2}, \
v_3 \geq (g - \varrho_3) (t-t_m) \gtrsim g (t-t_m) \gtrsim 5 \tb. 
\Ee
Thus we have
\Be \label{x3 estimate}
X_3 (s,t,x,v) \gtrsim \frac{1}{2} v_3 (t-s) \gtrsim \frac{5}{2} \tb (t-s).
\Ee
Using \eqref{x3 estimate}, we derive 
\begin{align}
 \int^{t}_{t_m} 
(s_2 - (t - \tb) ) e^{- \varrho_1 X_3} \dd s_2   \leq \int^{t}_{t - \tb /2} 
\big(
s_2 - (t - \tb)
\big) e^{- \varrho_1 \frac{5}{2} \tb (t-s_2)} \dd s_2. \label{second estimate on X_v}
\end{align}
Setting $a = t - s_2$, we get
\Be \notag
\begin{split}
\eqref{second estimate on X_v} 
& = \int^{\tb /2}_{0} 
(\tb - a) e^{- \varrho_1 \frac{5}{2} \tb a} \dd a
\\& = - \frac{2}{5 \varrho_1} e^{- \varrho_1 \frac{5}{2} \tb a} \Bigg|^{\tb /2}_{0} -
\bigg(
\frac{a}{- \varrho_1 \frac{5}{2} \tb}
e^{- \varrho_1 \frac{5}{2} \tb a} - \frac{e^{- \varrho_1 \frac{5}{2} \tb a}}{(- \varrho_1 \frac{5}{2} \tb)^2}
\bigg) \Bigg|^{\tb /2}_{0}
\lesssim \frac{2}{5 \varrho_1}.
\end{split}
\Ee

In the case $s \in [t- \tb, t_m]$ where $X_3 (s,t, x, v)$ is increasing. 
From \eqref{tm estimate}, 
$$t_m - (t- \tb) \gtrsim \frac{\tb}{2}, \
v_{\mathbf{b}, 3} \geq (g - \varrho_3) \big( t_m - (t- \tb) \big) \gtrsim g (t-t_m) \gtrsim 5 \tb. 
$$
From \eqref{g3 estimate}, for $x \in \p\O$,  $s \in [t-\tb, t_m]$, we have
\Be \label{x3 estimate second case}
X_3 (s,t,x,v) \gtrsim \frac{1}{2} v_{\mathbf{b}, 3} \big( s - (t- \tb) \big) \gtrsim \frac{5}{2} \tb \big( s - (t- \tb) \big).
\Ee
Using \eqref{x3 estimate second case}, we derive 
\begin{align}
 \int^{t_m}_{t- \tb} 
(s_2- (t - \tb) ) e^{- \varrho_1 X_3} \dd s_2   \leq \int^{t - \tb /2}_{t - \tb} 
\big(
s_2 - (t - \tb)
\big) e^{- \varrho_1 \frac{5}{2} \tb \big( s_2 - (t- \tb) \big)} \dd s_2. \label{second estimate on X_v second case}
\end{align}
Setting $a = s_2 - (t- \tb)$, we get
\Be \notag
\begin{split}
\eqref{second estimate on X_v second case} 
  = \int^{\tb /2}_{0} 
a e^{- \varrho_1 \frac{5}{2} \tb a} \dd a
  = 
\bigg(
\frac{a}{- \varrho_1 \frac{5}{2} \tb}
e^{- \varrho_1 \frac{5}{2} \tb a} - \frac{e^{- \varrho_1 \frac{5}{2} \tb a}}{(- \varrho_1 \frac{5}{2} \tb)^2}
\bigg) \Bigg|^{\tb /2}_{0}
\lesssim \frac{2}{5 \varrho_1}.
\end{split}
\Ee
Combining \eqref{second estimate on X_v} and \eqref{second estimate on X_v second case}, we conclude
\Be \label{estimate on X_v}
\eqref{first estimate on X_v} \leq |t-s| \exp   
(\varrho_2 / \varrho_1).
\Ee
From \eqref{estimate on X_v} and Fubini's theorem, we have 
\Be \notag
\begin{split}
    & \ \ \ \ 
    \Big|
    \int^{t - \tb}_{t}
\int^{s}_{t} - \nabla_{v} X \cdot \nabla^2 \phi (X(\alpha; t, x, v)) \dd \alpha \dd s
	\Big|
    \\& \leq \int^{t}_{t - \tb}
\int^{t}_{s} |\nabla^2 \phi(X)| |\nabla_{v} X(\alpha; t, x, v)| \dd \alpha \dd s
     \leq \int^{t}_{t - \tb}
\int^{t}_{s} \varrho_2 e^{- \varrho X_3} |t - \alpha| e^{\varrho_2/\varrho_1} \dd \alpha \dd s
    \\& \leq \int^{t}_{t - \tb} \varrho_2 
\big(
\alpha - (t - \tb)
\big)
e^{- \varrho_1 X_3} |t - \alpha| e^{\varrho_2/\varrho_1} \dd \alpha 
    \\& \leq \varrho_2 e^{\varrho_2/\varrho_1} \tb \int^{t}_{t - \tb}  
\big(
\alpha - (t - \tb)
\big)
  e^{- \varrho_1 X_3 (\alpha; t, x, v)} \dd \alpha
    \leq e^{\varrho_2/\varrho_1} \tb \frac{\varrho_2}{\varrho_1},
\end{split}
\Ee
where the last inequality holds from \eqref{estimate on X_v}. 
Since $\varrho_2 \ll 1$, and
\Be \notag
|\frac{1}{\tb} \int^{t - \tb}_{t}
    \int^{s}_{t} - \nabla_{v} \phi d \alpha ds| \leq e^{\varrho_2/\varrho_1} \frac{\varrho_2}{\varrho_1},
\Ee
thus we derive the uniform-in-time bound on $d_0$.
From
$ 
\det (
    \frac{\partial x^1_{\pll}}{\partial v}, \frac{\partial \tb}{\partial v} )
    = \frac{(\tb)^3}{v_{\mathbf{b}, 3}} \times d_0
$, we prove \eqref{jacob:mapV}.
\end{proof}

\begin{remark}
We can apply Proposition \ref{prop:mapV} on 
$v^{j} \in \mathcal{V}_{j} 
\mapsto
(x^{j+1}, \tb^{j})
:=
(x_{\mathbf{b}} (x^{j}, v^{j}), \tb (x^{j}, v^{j}))$, and this is also a local bijective mapping. For fixed $\tb^{j}$ and $x^{j+1}$, we introduce the set of velocities $\{v^{m, n}_{j}\}$ with $m, n \in \mathbb{Z}$ such that
\Be \label{def:vjmn}
v^{m, n}_{j} 
\in \mathcal{V}_{j} 
 \mapsto
(x^{j+1} + (m, n, 0), \tb^{j})
= (x^{j+1}, \tb^{j})
\in \partial \Omega \times 
[0, t_{j}],
\ee
with the change of variable formula as
\be \label{vjmn}
\dd v^{m, n}_{j} \lesssim |\tb^{j}| ^{-3} {|n (x^{j+1}) \cdot v^{m, n}_{j, \mathbf{b}}|}\dd \tb^{j} \dd S_{x^{j+1}}.
\Ee 
\end{remark}


\subsection{With Magnet field}

For the technique reason, w.l.o.g. we set 
$g = 10$ for the magnet field case in the rest of the paper.

\begin{lemma} \label{conservative field mag}
Consider $(X,V)$ in \eqref{characteristics mag}, for $v \in  \mathcal{V}  := \{v \in \R^3: n(x) \cdot v > 0 \}$, $x \in \p\O$ and $\vb = \vb (x, v)$, we have
\be \label{tb expression mag}
\tb (x, v) = \frac{|v_3|}{5}, \ \ |\vb| = |v|.
\ee
\end{lemma}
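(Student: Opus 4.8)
The plan is to exploit the explicit structure of the characteristics \eqref{characteristics mag} with the vertical magnetic field $B=(0,0,B_3)$ and $\nabla\Phi = g\mathbf{e}_3$ (recall $\phi\equiv 0$ in \eqref{field property mag}, so $\Phi(x)=gx_3$, and $g=10$ by our standing convention for the magnetic case). First I would write out the component ODEs: since $V\times B = (V_2 B_3, -V_1 B_3, 0)$, the system \eqref{characteristics mag} reads $\frac{\dd}{\dd s}V_1 = B_3 V_2$, $\frac{\dd}{\dd s}V_2 = -B_3 V_1$, $\frac{\dd}{\dd s}V_3 = -g$. The vertical equation decouples entirely from the cyclotron rotation in the horizontal plane, so $V_3(s;t,x,v)=v_3 - g(s-t)$ and hence $X_3(s;t,x,v) = x_3 + v_3(s-t) - \tfrac{g}{2}(s-t)^2$.

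To compute $\tb$, note that for $(x,v)\in\gamma$ with $x\in\p\O$ we have $x_3=0$, and $v\in\mathcal{V}$ means $n(x)\cdot v = -v_3 >0$, i.e. $v_3<0$ and $|v_3|=-v_3$. Substituting $s=t-\tau$ for $\tau\ge 0$ gives
\Be \notag
X_3(t-\tau;t,x,v) = |v_3|\,\tau - \tfrac{g}{2}\tau^2 = \tau\Big(|v_3| - \tfrac{g}{2}\tau\Big),
\Ee
which is strictly positive for $\tau\in\big(0,\tfrac{2|v_3|}{g}\big)$ and vanishes at $\tau=\tfrac{2|v_3|}{g}$. Since the horizontal components of $X$ evolve on $\mathbb{T}^2$ and the trajectory can therefore only leave $\O$ through $\{x_3=0\}$, the backward exit is governed purely by $X_3$, and the definition \eqref{def_tb} of $t_{\mathbf{b}}$ yields $\tb(x,v) = \tfrac{2|v_3|}{g} = \tfrac{|v_3|}{5}$.

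For the identity $|\vb|=|v|$ I would argue exactly as in Lemma \ref{conservative field}, via conservation of energy along the flow: differentiating $\tfrac12|V(s;t,x,v)|^2 + \Phi(X(s;t,x,v))$ in $s$ and using \eqref{characteristics mag} gives $V\cdot(V\times B - \nabla\Phi) + \nabla\Phi\cdot V = V\cdot(V\times B) = 0$, since $V\times B\perp V$. Evaluating this invariant at $s=t$ and at $s=t-\tb$, and using $\Phi(x)=\Phi(\xb)=0$ because both $x$ and $\xb$ lie on $\p\O$ where the height vanishes, we obtain $\tfrac12|v|^2 = \tfrac12|\vb|^2$, hence $|\vb|=|v|$.

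No genuine obstacle arises here: the magnetic term contributes nothing to the energy balance because it is perpendicular to $V$ (and has zero velocity-divergence), so the computation is as clean as in the purely gravitational Lemma \ref{conservative field}. The only point requiring a word of care is verifying that $X_3$ is strictly positive on the open interval $\big(0,2|v_3|/g\big)$, so that $\tb$ is genuinely the supremum in \eqref{def_tb}; this is immediate from the factorization $\tau\big(|v_3|-\tfrac g2\tau\big)$ displayed above.
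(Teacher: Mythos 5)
Your proof is correct and essentially mirrors the paper's: an explicit ballistic computation of the vertical trajectory (a concave parabola in $\tau$ with roots at $0$ and $2|v_3|/g$) yields $\tb=2|v_3|/g=|v_3|/5$, and conservation along the flow gives $|\vb|=|v|$. The only cosmetic difference is that the paper conserves $\tfrac12|V_3|^2+\Phi(X)$ and $|V_1|^2+|V_2|^2$ separately (then combines them), whereas you conserve the full energy $\tfrac12|V|^2+\Phi(X)$ in one step using $V\cdot(V\times B)=0$; the two bookkeepings are equivalent.
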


\begin{proof}
From $(X,V)$ in \eqref{characteristics mag}, we have
\be \label{V characteristics mag}
\frac{\dd}{\dd s} V(s; t, x, v) = (B_3 V_2 (s; t, x, v), - B_3 V_1 (s; t, x, v), -g).
\ee
Then we obtain the following derivative, 
\Be \label{V3 characteristics mag}
\frac{\dd}{\dd s}
\Big( 
\frac{|V_3 (s;t,x,v)|^2}{2} + \Phi(X(s;t,x,v)) 
\Big)
= V_3 \cdot \frac{\dd V_3}{\dd s} + \nabla \Phi (X) \cdot \frac{\dd X}{\dd s} 
= - g V_3 + g V_3 = 0.
\Ee
Since
$(X(t; t, x, v), V(t; t, x, v)) = (x, v)$, $(X(t - t_\mathbf{b}; t, x, v), V(t - t_\mathbf{b}; t, x, v)) = (x_\mathbf{b}, v_\mathbf{b})$, we have 
\Be \notag
|v_3|^2/2 + \Phi(x) = |\vbn|^2/2 + \Phi(\xb).
\Ee  
Since $\Phi (x)|_{x_3 = 0} = 0$, we have $\Phi(x) = \Phi(x_\mathbf{b})$ which implies $|\vbn| = |v_3|$.
Moreover, using $\frac{\dd}{\dd s} V_3 (s; t, x, v) = -g$ in \eqref{V characteristics mag}, we derive that
\be \notag 
\tb (x, v) = 2 v_3 / g = v_3 / 5.
\ee

On the other side, from \eqref{V characteristics mag}, we also have
\be \label{V12 characteristics}
\frac{\dd}{\dd s} V_1(s; t, x, v) = B_3 V_2 (s; t, x, v), \ \ \frac{\dd}{\dd s} V_2(s; t, x, v) = - B_3 V_1 (s; t, x, v).
\ee
Thus we obtain that,
\be \label{V12 conservative mag}
\frac{\dd}{\dd s}
\Big( 
|V_1 (s;t,x,v)|^2 + |V_2 (s;t,x,v)|^2) 
\Big)
= 2 V_1 B_3 V_2 - 2 V_2 B_3 V_1 = 0.
\ee
Therefore we prove $|v_\mathbf{b}| = |v|$.
\end{proof}

Similarly, we consider the change of variables 
$v \in \{v \in \R^3
: n(x) \cdot v  >  0\}
\mapsto
(\xb(x,v), \tb(x,v)) \in \partial \Omega \times \R_{+}.$
Since the domain is periodic, this is a local bijective mapping. For fixed $x, \tb$ and $\xb$, we introduce the set of velocities $\{v^{m, n}\}$ with $m, n \in \mathbb{Z}$ such that
\Be \label{def:vmn mag}
v^{m, n}
\in \{v^{m, n} \in \R^3
: n(x) \cdot v^{m, n} > 0\}
 \mapsto
(\xb + (m, n, 0), \tb)
= (\xb, \tb)
\in \partial \Omega \times \R_{+}.
\ee

\begin{proposition} \label{prop:mapV mag}
Consider $(X,V)$ in \eqref{characteristics mag},
\begin{itemize}
\item For fixed $x \in \p\O$, and $m, n \in \mathbb{Z}$, we introduce the following map:
\Be \label{mapV mag}
v \in \{v \in \R^3
: n(x) \cdot v  >  0\}
\mapsto (\xb, \tb):=
(\xb(x,v) + (m, n, 0), \tb(x,v)) \in \partial \Omega \times \R_{+}.
\Ee Then the map \eqref{mapV mag} is locally bijective and has the change of variable formula as   
\Be \label{jacob:mapV mag}
    \dd v = 5 B^2_3 (2 - 2 \cos (B_3 \tb))^{-1} \dd \tb \dd S_{\xb}.
\Ee
Therefore, for any non-negative function $\phi(v)$, there exists a constant $C$ such that
\be \notag
\int_{v \in v^{m, n}} \phi (v) \dd v 
= \iint_{\p\O \times \R_{+}} \phi (\xb, \tb) 5 B^2_3 (2 - 2 \cos (B_3 \tb))^{-1} \dd \tb \dd S_{\xb}.
\ee
\item Similarly we have a locally bijective map 
\Be \notag
v \in \{v \in \R^3
: n(x) \cdot v<0\}
\mapsto (\xf , \tf ):=
(\xf(x,v), \tf(x,v)) \in \partial \Omega \times \R_{+},
\Ee with 
\Be \label{mapV_f mag}
\dd v  = 5 B^2_3 (2 - 2 \cos (B_3 \tf))^{-1} \dd \tf \dd S_{\xf}.
\Ee 
\end{itemize}
\end{proposition}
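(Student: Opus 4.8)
The plan is to solve the characteristic system \eqref{characteristics mag} \emph{explicitly} — which is possible because in the magnetic case $B=(0,0,B_3)$ is constant and $\Phi(x)=g x_3$ (with $g=10$), so $\nabla\Phi\equiv(0,0,g)$ is constant — and then read off the Jacobian directly. Tracing \eqref{characteristics mag} backward from $(X(t;t,x,v),V(t;t,x,v))=(x,v)$ with $x\in\p\O$ and $n(x)\cdot v>0$, the vertical component decouples: $\tfrac{\dd}{\dd s}V_3=-g$ gives $V_3(s)=v_3+g(t-s)$, hence $X_3(s)=|v_3|(t-s)-\tfrac g2(t-s)^2$, whose first positive return to $X_3=0$ occurs at $t-s=2|v_3|/g=|v_3|/5=:\tb$, recovering Lemma \ref{conservative field mag}. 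In particular $\tb$ depends only on $v_3$, with $\partial_{v_3}\tb=-\tfrac15$ and $\partial_{v_1}\tb=\partial_{v_2}\tb=0$.

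For the horizontal part, $\tfrac{\dd}{\dd s}(V_1,V_2)=(B_3V_2,-B_3V_1)$ is a rigid rotation, $(V_1(s),V_2(s))^\top=R_{B_3(t-s)}(v_1,v_2)^\top$ with $R_\theta=\left(\begin{smallmatrix}\cos\theta&-\sin\theta\\\sin\theta&\cos\theta\end{smallmatrix}\right)$; integrating $\tfrac{\dd}{\dd s}(X_1,X_2)=(V_1,V_2)$ from $s=t$ down to $s=t-\tb$ yields $\xb_\parallel-x_\parallel=M(\tb)v_\parallel$ with $M(\tb)=-B_3^{-1}\int_0^{B_3\tb}R_u\,\dd u$, and a direct computation gives $\det M(\tb)=B_3^{-2}\big(\sin^2(B_3\tb)+(1-\cos(B_3\tb))^2\big)=B_3^{-2}\big(2-2\cos(B_3\tb)\big)$. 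Since $\xb$ lies on the flat boundary $\{x_3=0\}$ its third component vanishes, so the relevant map is $v=(v_\parallel,v_3)\mapsto(\xb_\parallel,\tb)$.

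I would then compute the Jacobian of $v\mapsto(\xb_\parallel,\tb)$: because $\tb$ is independent of $v_\parallel$, the $3\times3$ differential (in the ordered variables $(v_1,v_2,v_3)$) is block lower-triangular, with upper-left $2\times2$ block $M(\tb)$ and lower-right entry $\partial_{v_3}\tb=-\tfrac15$, so its determinant is $-\tfrac15\det M(\tb)=-\tfrac{2-2\cos(B_3\tb)}{5B_3^2}$. Taking absolute values gives $\dd\tb\,\dd S_{\xb}=\tfrac{2-2\cos(B_3\tb)}{5B_3^2}\,\dd v$, i.e. \eqref{jacob:mapV mag}. Away from the locus $B_3\tb\in2\pi\Z$ this determinant is nonzero, so the inverse function theorem gives a local diffeomorphism; restricting to one sheet of the horizontal torus (equivalently, using the translated velocities $\{v^{m,n}\}$ of \eqref{def:vmn mag}) upgrades this to local bijectivity, and the change-of-variable formula then transfers $\dd v$–integrals of nonnegative functions into $\dd\tb\,\dd S_{\xb}$–integrals as claimed. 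The forward statement \eqref{mapV_f mag} follows from the identical computation run forward from $(x,v)\in\gamma_-$, where $\tf=v_3/5$, $\partial_{v_3}\tf=\tfrac15$, and $\xf_\parallel-x_\parallel=-B_3^{-1}\int_0^{-B_3\tf}R_u\,\dd u$ has determinant again $B_3^{-2}(2-2\cos(B_3\tf))$ by the evenness of $2-2\cos$ (alternatively, from the backward case via the time reversal $s\mapsto2t-s$, which replaces $B$ by $-B$ but leaves $2-2\cos(B_3\tf)$ unchanged).

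The computation is essentially mechanical; the only genuine subtlety is the \textbf{degeneracy of the map at $B_3\tb\in2\pi\Z$}, where $M(\tb)=0$ collapses every $v_\parallel$ onto the single point $\xb_\parallel=x_\parallel$ and the Jacobian vanishes. This is precisely why one can only assert \emph{local} bijectivity (on the complement of that locus), and why the subsequent $L^1$ estimates must separate $\mathcal V^{G_\e}$ from $\mathcal V^{B_\e}$; here we supply only the change-of-variable identity, and the smallness of the bad set near $B_3\tb\in2\pi\Z$ is handled afterward (Lemma \ref{lem: Be mag}).
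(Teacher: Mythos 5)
Your proof is correct and takes essentially the same route as the paper: explicitly integrate the constant-coefficient characteristic ODEs (rigid rotation in $v_\parallel$, free fall in $v_3$) and read off the Jacobian of $v\mapsto(\xb_\parallel,\tb)$, using that $\tb$ depends only on $v_3$ so the determinant factors into $\det M(\tb)=B_3^{-2}\big(2-2\cos(B_3\tb)\big)$ times $\partial_{v_3}\tb=\pm\tfrac15$, exactly as the paper's explicit expansion of $\xb_1,\xb_2$ yields. Two small remarks: with rows $(\xb_\parallel,\tb)$ and columns $(v_\parallel,v_3)$ the differential is block \emph{upper}-triangular, not lower (the zero block $\partial_{v_\parallel}\tb$ sits in the lower left), which does not affect the computation; and your explicit caveat that the map degenerates where $B_3\tb\in2\pi\Z$ is a useful observation that the paper leaves implicit until the later $\mathcal V^{G_\e}/\mathcal V^{B_\e}$ splitting.
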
 

\begin{proof}
We just need to show \eqref{jacob:mapV mag} since \eqref{mapV_f mag} can be deduced after changing the backward variables into forward variables.
Here, we write
$x = (x_1, x_2, x_3), v = (v_1, v_2, v_3)$.

From \eqref{V12 characteristics}, for $t - \tb \leq s \leq t$, we assume for $0 \leq a$ and $0 \leq \theta_0 < 2 \pi$,
\be \label{V12 expression mag}
V_1(s; t, x, v) = a \sin (B_3 s + \theta_0), 
\ \ V_2(s; t, x, v) = a \cos (B_3 s + \theta_0),
\ee
where we set
\be \label{v12 expression mag}
v_1 = V_1 (t; t, x, v) = a \sin (B_3 t + \theta_0), 
\ \ v_2 = V_2 (t; t, x, v) = a \cos (B_3 t + \theta_0).
\ee
Given fixed $t$ and $x$, we can compute $\xb$, 
\Be \notag
\xb + \int^{t}_{t - \tb}  V(s; t, x, v) \dd s = x.
\Ee
From \eqref{V12 expression mag} and \eqref{v12 expression mag}, we get
\be \label{xb1 expression mag}
\begin{split}
x_{\mathbf{b}, 1} & = x_{1} - \int^{t}_{t - \tb}  a \sin (B_3 s + \theta_0) \dd s 
= x_{1} + \frac{1}{B_3}  a \cos (B_3 s + \theta_0) \Big|^{t}_{t - \tb} 
\\& = x_{1} + \big( \frac{1}{B_3} a \cos (B_3 t + \theta_0) - \frac{1}{B_3} a \cos (B_3 (t - \tb) + \theta_0) \big)
\\& = x_{1} + \frac{1}{B_3} v_2 - \frac{1}{B_3} v_2 \cos (B_3 \tb) - \frac{1}{B_3} v_1 \sin (B_3 \tb),
\end{split}
\ee
\be \label{xb2 expression mag}
\begin{split}
x_{\mathbf{b}, 2} & = x_{2} - \int^{t}_{t - \tb}  a \cos (B_3 s + \theta_0) \dd s 
= x_{2} - \frac{1}{B_3}  a \sin (B_3 s + \theta_0) \Big|^{t}_{t - \tb} 
\\& = x_{2} - \big( \frac{1}{B_3} a \sin (B_3 t + \theta_0) - \frac{1}{B_3} a \sin (B_3 (t - \tb) + \theta_0) \big)
\\& = x_{2} - \frac{1}{B_3} v_1 + \frac{1}{B_3} v_1 \cos (B_3 \tb) - \frac{1}{B_3} v_2 \sin (B_3 \tb).
\end{split}
\ee
Thus we can compute out the matrix,
\be \notag
\Big| \det \big[ 
\frac{\p (\xb, \tb)}{\p v} 
\big] \Big| 
= \frac{2 - 2 \cos (B_3 \tb)}{5 B^2_3}.
\ee
Therefore, we prove \eqref{jacob:mapV mag}.
\end{proof}

\begin{remark}
We can apply Proposition \ref{prop:mapV mag} on 
$v^{j} \in \mathcal{V}_{j} 
\mapsto
(x^{j+1}, \tb^{j})
:=
(x_{\mathbf{b}} (x^{j}, v^{j}), \tb (x^{j}, v^{j}))$, and this is also a local bijective mapping. For fixed $\tb^{j}$ and $x^{j+1}$, we introduce the set of velocities $\{v^{m, n}_{j}\}$ with $m, n \in \mathbb{Z}$ such that
\Be \notag
v^{m, n}_{j} 
\in \mathcal{V}_{j} 
 \mapsto
(x^{j+1} + (m, n, 0), \tb^{j})
= (x^{j+1}, \tb^{j})
\in \partial \Omega \times 
[0, t_{j}],
\ee
with the change of variable formula as   
\be \label{vjmn mag}
\dd v^{m, n}_{j} = 5 B^2_3 (2 - 2 \cos (B_3 \tbj))^{-1} \dd \tbj \dd S_{x^{j+1}}.
\Ee 
\end{remark}


\section{Pointwise control of the Outgoing Flux \texorpdfstring{$\mathfrak{J}(x)$}{J(x)}}
\label{sec: Linf estimate}

The main purpose of this section is to prove the following pointwise bound of the stationary solution.

\begin{proposition} \label{prop:j linfty bound}
	Suppose $F (x, v)$ solves \eqref{equation for F_infty} with \eqref{field property} and \eqref{diff_F} (resp. \eqref{equation for F_infty} with \eqref{field property mag} and \eqref{diff_F}). Recall $b>0$ defined in \eqref{def:Theta}. Then there exists constant $k > 0$ (see \eqref{choice:k} and \eqref{choice:t} for the precise choice) such that
	\Be  \label{est: F mag}
	\| e^{ \frac{1}{2 b} (|v|^2/2 + \Phi(x))} F (x, v)\|_{L^\infty_{x,v}} 
	\lesssim 2^{k+1} \times \| F(x, v) \|_{L^1_{x,v}}.
	\Ee
\end{proposition}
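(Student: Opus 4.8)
The plan is to run the stochastic cycle expansion of Lemma \ref{sto_cycle_1} with $t_*=0$, at a large time $t$ and with a number of bounces $k$ proportional to $t$, and to close an estimate of the form $M\le \frac12 M+C\,2^{k}\|F\|_{L^1}$ for the weighted sup-norm $M:=\|e^{\frac1{2b}(|v|^2/2+\Phi(x))}F\|_{L^\infty_{x,v}}$, which we take a priori finite (automatic in the applications, where $F$ is a bounded penalized iterate). One may replace $F$ by $|F|$, which still satisfies the transport equation along characteristics and the diffuse reflection identity with $\le$, so Lemma \ref{sto_cycle_1} applies. Two facts make the weight behave: along characteristics the energy $\frac{|v|^2}2+\Phi(x)$ is conserved (Lemma \ref{conservative field}, Lemma \ref{conservative field mag}), so $\frac{|v|^2}2+\Phi(x)=\frac{|\vb|^2}2$ on $\p\O$; and since $\Theta(x^1)\le b$ this gives $e^{\frac1{2b}(|v|^2/2+\Phi(x))}\mu_\Theta(x^1,\vb)\le \frac1{2\pi a^2}$, so every boundary factor $\mu_\Theta(x^1,\vb)$ absorbs the weight and leaves an extra Gaussian. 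A preliminary reduction removes the zero-bounce term: if $\tb(x,v)>\frac12$, follow the (stationary, hence $F$-constant) characteristic forward a time $\frac14$ past the first backward bounce point to $(y,w)$ with $\tb(y,w)=\frac14$; by energy conservation and $F(x,v)=F(y,w)$ it suffices to bound $e^{\frac1{2b}(|v|^2/2+\Phi(x))}|F(x,v)|$ over $\{\tb(x,v)\le \frac12\}$, on which the term $\mathbf 1_{t^1<0}$ of Lemma \ref{sto_cycle_1} vanishes once $t>\frac12$.

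On $\{\tb(x,v)\le\frac12\}$ we estimate the remaining groups. \textit{One-bounce term:} $\mathbf 1_{t^2<0\le t^1}$ forces $\tb(x^1,v^1)>t^1=t-\tb(x,v)\ge t-\frac12$, and the gravity/boundary bound $\tb\lesssim v_3$ (from \eqref{v3t estimate}, resp. $\tb=v_3/5$ in Lemma \ref{conservative field mag}) forces $v^1_3\gtrsim t$; bounding $|F|$ by $Me^{-\frac1{2b}|v^1|^2/2}$ (energy conservation, $\Phi(x^1)=0$) and the outer factor by $\frac1{2\pi a^2}$, this term is $\lesssim M\int_{v^1_3\gtrsim t}e^{-\frac1{2b}|v^1|^2/2}\{n(x^1)\cdot v^1\}\dd v^1\lesssim e^{-ct^2}M$. \textit{Intermediate terms $i=2,\dots,k-1$:} use Remark \ref{sigma measure estiamte} ($\int_{\mathcal V_j}\dd\sigma_j\le 2$) to bound the $v^1,\dots,v^{i-2}$ integrations by $2^{i-2}$, then change variables $v^{i-1}\mapsto(x^i,\tb^{i-1})$ by Proposition \ref{prop:mapV} (resp. Proposition \ref{prop:mapV mag}); the singular Jacobian $|\tb^{i-1}|^{-3}$ (resp. $(2-2\cos(B_3\tb^{i-1}))^{-1}$) is controlled after summing over the periodic images $\{v^{m,n}_{i-1}\}$ against the Gaussian of $\mu_\Theta$ and using $\tb^{i-1}\sim v^{i-1}_3$ (in the magnetic case one first splits $v^{i-1}\in\mathcal V^{G_\e}\cup\mathcal V^{B_\e}$ and invokes Lemmas \ref{lem: Be mag} and \ref{lem:bound1 mag} for the small bad set near $\tb^{i-1}\in\frac{2\pi}{B_3}\mathbb Z$). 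Setting $s=t^i=t^{i-1}-\tb^{i-1}$ and extending the $s$-range to $(0,\tb(x^i,v^i))$, the change-of-variables identity \eqref{COV} of Lemma \ref{lem:COV} turns the remaining $\int_{\p\O}\int_0^{\tb}\int_{\mathcal V_i}$ integral of $|F|$ into $\|F\|_{L^1}$, up to a factor $\tb^{i-1}\le t$; hence this term is $\lesssim t\,2^{i}\|F\|_{L^1}$, and summing gives $\lesssim t\,2^{k}\|F\|_{L^1}$.

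\textit{$k$-bounce remainder:} $\int_{\mathcal V_k}|F(x^k,v^k)|\{n(x^k)\cdot v^k\}\dd v^k\le CM$, while on $\{t^k\ge0\}$ one has $\sum_{j=1}^{k-1}\tb^j\le t$; since the $\tb^j$-marginal of $\dd\sigma_j$ has bounded density, the $(k-1)$-fold measure is $\le C_0^{\,k-1}\frac{t^{k-1}}{(k-1)!}$, which by Stirling is $\lesssim e^{-t}$ once $k$ is a large enough multiple of $t$ (this fixes the choices \eqref{choice:k}, \eqref{choice:t}), so the remainder is $\lesssim e^{-t}M$. Collecting the three bounds, $e^{\frac1{2b}(|v|^2/2+\Phi(x))}|F(x,v)|\lesssim (e^{-ct^2}+e^{-t})M+t\,2^{k}\|F\|_{L^1}$ uniformly on $\{\tb(x,v)\le\frac12\}$, hence $M\lesssim (e^{-ct^2}+e^{-t})M+t\,2^{k}\|F\|_{L^1}$; taking $t$ large absorbs the first term and yields $\|e^{\frac1{2b}(|v|^2/2+\Phi(x))}F\|_{L^\infty_{x,v}}\lesssim 2^{k+1}\|F\|_{L^1}$.

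The main obstacle is the intermediate-terms estimate: extracting $\|F\|_{L^1}$ rather than an exponentially growing constant from the iterated measures $\dd\sigma_j$, which are \emph{not} probability measures because $\Theta$ varies along $\p\O$. This is what forces the conversion of velocity mixing into spatial mixing through Proposition \ref{prop:mapV} and \eqref{COV}, the summation over periodic images to tame the singular Jacobian, and — in the magnetic case — the separate treatment of the degenerate directions $\mathcal V^{B_\e}$ where the change of variables $v\mapsto(\tb,S_{\xb})$ collapses. The Stirling estimate on the $k$-fold residual measure, which is precisely what converts the naive exponential growth into a usable $e^{-t}$ gain so that the $2^{k}$ only ends up in the final constant, is the other delicate point.
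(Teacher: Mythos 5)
Your overall architecture matches the paper's: stochastic-cycle expansion at a large fixed time $t$ with $k\sim t$ bounces, energy conservation $\frac{|v|^2}2+\Phi=\frac{|\vb|^2}2$ to convert the weight into the reciprocal of $\mu_\Theta(\xb,\vb)$, a one-bounce term handled by a Gaussian tail in $v_3\sim\tb$, a Stirling-type bound for the $k$-bounce remainder, and an $L^1$ bound for the intermediate terms via the change of variables $v\mapsto(\xb,\tb)$ and \eqref{COV}. Working directly with $M=\|e^{\frac1{2b}(|v|^2/2+\Phi)}F\|_\infty$ rather than the flux $\mathfrak J$, and the reduction to $\tb(x,v)\le\frac12$, are cosmetic rewrites of the same argument; the Stirling step via the simplex volume of $\{\sum\tb^j\le t\}$ is a clean variant of Lemma~\ref{lem:small_largek}. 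None of that is the problem.

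The gap is in the intermediate-term estimate, gravitational case. You bound the $v^1,\dots,v^{i-2}$ integrations by $2^{i-2}$ (i.e.\ take a supremum over $x^{i-1}$), then change variables on $v^{i-1}$ alone. The resulting Jacobian factor, after using $|n(x^{i-1})\cdot v^{m,n}_{i-1}|\lesssim\tb^{i-1}$ and $|n(x^i)\cdot v^{m,n}_{i-1,\mathbf b}|\lesssim\tb^{i-1}$, is of order
\[
\frac{1}{\tb^{i-1}}\sum_{m,n\in\Z}\mu_\Theta\big(x^i,v^{m,n}_{i-1,\mathbf b}\big)\,\dd\tb^{i-1}\dd S_{x^i},
\]
and the claim that this is ``controlled after summing over the periodic images against the Gaussian of $\mu_\Theta$'' is only true for $\tb^{i-1}\gtrsim1$, where Lemma~\ref{lem: sum of mu} gives $(\tb^{i-1})^4 e^{-(\tb^{i-1})^2/2b}$. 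For $\tb^{i-1}\ll1$ the near-field images $|m|,|n|<2$ contribute $\sum_{|m|,|n|<2}\mu_\Theta(x^{i},v^{m,n}_{i-1,\mathbf b})$, which — once you have taken the supremum over $x^{i-1}$, so the worst case $x^{i-1}\approx x^i$ is allowed — stays of order $1$ (the image velocity shrinks to zero, so $\mu_\Theta\to\frac1{2\pi\Theta^2}$). The prefactor $1/\tb^{i-1}$ is therefore genuinely nonintegrable near $\tb^{i-1}=0$, and pushing the remaining $(x^i,\tb^{i-1},v^i)$ integral through \eqref{COV} leaves you with $\int|F(y,w)|/\tb^{i-1}(y,w)\,\dd y\dd w$, which is not bounded by $\|F\|_{L^1}$.

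This is precisely why the paper's Lemma~\ref{lem:bound1} performs a \emph{second} change of variables $v^{i-2}\mapsto(x^{i-1},\tb^{i-2})$ and then integrates the near-field contribution over $x^{i-1}$: in \eqref{bound3_1and5_1}, restricting $|x^i+(a,b)-x^{i-1}|\lesssim(\tb^{i-1})^2$ produces a surface-area factor $\sim(\tb^{i-1})^4$, and in \eqref{bound3_1and5_2} the rescaling $x^{i-1}\mapsto z=(x^{i-1}+(a,b)-x^i)/\tb^{i-1}$ gives $\dd S_{x^{i-1}}\lesssim|\tb^{i-1}|^2\dd S_z$; either way the $1/\tb^{i-1}$ is cancelled and the combined two-variable kernel satisfies \eqref{est3:forcing}, $\eqref{est2:forcing}_*\lesssim\mathbf 1_{\tb^{i-1}\le\tb^{i-2}}\langle\tb^{i-2}\rangle^{-5}+\mathbf 1_{\tb^{i-1}\ge\tb^{i-2}}\langle\tb^{i-1}\rangle^{-5}$. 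The point is that you must keep $v^{i-2}$ (hence $x^{i-1}$) as a live integration variable until after you have exploited the smallness of the near-field region in $x^{i-1}$; discarding it by $\int\dd\sigma_{i-2}\le2$ before the COV loses the mechanism that tames the small-$\tb^{i-1}$ singularity. Your magnetic-case outline does not suffer the same defect, because there the singularity of $(2-2\cos(B_3\tb))^{-1}$ is confined to the measure-$O(\e)$ bad set $\mathcal V^{B_\e}$ and on the good set the Jacobian is $\lesssim\e^{-2}$; but even there the paper's Lemma~\ref{lem:bound1 mag} pairs two adjacent $\tb$-integrations (cf.\ $\eqref{est2:forcing mag}^*$) to get a finite bound, so your one-COV sketch would need the same pairing.
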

\hide
\begin{proposition} \label{prop:j linfty bound}
	Suppose $F (x, v)$ solves \eqref{def:f} and \eqref{diff_F}  with $b$ defined in \eqref{def:Theta}.
	There exists constant $k > 0$ (see \eqref{choice:k} and  for the precise choice) such that
	\be \label{est: F mag}
	\| e^{ \frac{1}{2 b} (|v|^2/2 + \Phi(x))} F (x, v)\|_{L^\infty_{x,v}} 
	\lesssim 2^{k+1} \times \| F(x, v) \|_{L^1_{x,v}}. 
	\ee
\end{proposition}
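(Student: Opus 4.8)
The plan is to reduce the weighted pointwise bound to a bound on the outgoing flux $\J(x):=\int_{n(x)\cdot v>0}F(x,v)\{n(x)\cdot v\}\dd v$ on $\p\O$, and then to iterate the stochastic cycle representation of Lemma~\ref{sto_cycle_1}. For the reduction, first note that along any arc of the characteristics \eqref{characteristics} (resp. \eqref{characteristics mag}) the quantity $\frac12|v|^2+\Phi(x)$ is conserved — exactly the computation in the proof of Lemma~\ref{conservative field} (resp. Lemma~\ref{conservative field mag}) — so the weight $w(x,v):=e^{\frac{1}{2b}(\frac12|v|^2+\Phi(x))}$ is constant on arcs. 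Since gravity gives $\tb(x,v)<\infty$ for all $(x,v)$, I trace each point back to its first boundary hit and write $F(x,v)=F(\xb,\vb)=\mu_\Theta(\xb,\vb)\J(\xb)$, with $\Phi(\xb)=0$ and $w(x,v)=w(\xb,\vb)$; the bound $a\le\Theta\le b$ then gives $w(\xb,\vb)\mu_\Theta(\xb,\vb)=\tfrac{1}{2\pi\Theta(\xb)^2}e^{\frac{|\vb|^2}{4b}-\frac{|\vb|^2}{2\Theta(\xb)}}\le\tfrac{1}{2\pi a^2}$, hence $\|wF\|_{L^\infty_{x,v}}\le\tfrac{1}{2\pi a^2}\sup_{\p\O}\J$ (and, conversely, $F(y,v)\le\|wF\|_\infty e^{-|v|^2/4b}$ on $\gamma_+$ gives $\sup_{\p\O}\J\lesssim\|wF\|_\infty$). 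So it suffices to bound $\J$.

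Fix a large horizon $T$ and then $k$ of order $T$ (the precise choices are \eqref{choice:k}, \eqref{choice:t}); apply Lemma~\ref{sto_cycle_1} with $t_*=0$, $t=T$ and integrate in $v=v^0$ against $\{n(x)\cdot v^0\}\dd v^0$. This writes $\J(x)$ as the sum of: the $\mathbf 1_{t^1<0}$ term; the terms $\mathbf 1_{t^{i+1}<0\le t^i}$ for $i=1,\dots,k-1$, with $F$ evaluated at an interior point at time $0$ against $\{n(x^i)\cdot v^i\}\dd v^i\,\dd\sigma_{i-1}\cdots\dd\sigma_0$; and the survival term $\mathbf 1_{t^k\ge0}F(x^k,v^k)\,\dd\sigma_{k-1}\cdots\dd\sigma_0$. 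For $i=0$ and $i=1$ the constraint forces some $\tb^{\,\cdot}\gtrsim T$, i.e. a velocity with third component $\gtrsim T$ by \eqref{v3t estimate} (resp. $\tb=v_3/5$, Lemma~\ref{conservative field mag}); bounding the interior value of $F$ by $\|wF\|_\infty e^{-|\cdot|^2/4b}$ (energy conservation) and, when the \emph{outer} velocity $v^0$ is the large one, using $\mu_\Theta(x^1,\vb^0)\le\tfrac{1}{2\pi a^2}e^{-|v^0|^2/2b}$ via $|\vb^0|=|v^0|$, one gets $\lesssim e^{-cT^2}\|wF\|_\infty$. For the survival term, $t^k\ge0$ forces $\tb^0+\cdots+\tb^{k-1}\le T$; bounding $F(x^k,v^k)\le\|wF\|_\infty$, using $\mathbf 1_{\tb^j\le s}\le\mathbf 1_{v^j_3\le Cs}$ together with $\int_{v^j_3\le Cs}\dd\sigma_j\lesssim s^2$ (recall $\int\dd\sigma_j\le2$, Remark~\ref{sigma measure estiamte}, and $\mu_\Theta(x^{j+1},v^j)=\mu_\Theta(x^{j+1},\vb^j)$, Remark~\ref{probability}), and integrating over the simplex $\{\sum_j\tb^j\le T\}$, Stirling's formula gives $\lesssim C^kT^{2k}/(2k)!\lesssim e^{-T}$. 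Taking $T$ large relative to $a$, each of these three contributions is $\le\tfrac14\cdot\tfrac{1}{2\pi a^2}\sup_{\p\O}\J$ and is absorbed on the left after $\sup_x$.

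The main work is the intermediate terms $2\le i\le k-1$: in the $i$-th term split $\mathcal V_{i-1}$ at $\tb^{i-1}=\delta$ for a small $\delta$ (to be chosen last). On $\{\tb^{i-1}\le\delta\}$ do not change variables: bound the interior value of $F$ by $\|wF\|_\infty e^{-|v^i|^2/4b}$, integrate $v^i$, use $\int_{\mathcal V_{i-1}}\mathbf 1_{\tb^{i-1}\le\delta}\dd\sigma_{i-1}\lesssim\delta^2$ and $\int\dd\sigma_j\le2$ for the rest; this gives $\lesssim\delta^2 2^{i}\|wF\|_\infty$. On $\{\tb^{i-1}>\delta\}$ change variables $v^{i-1}\mapsto(x^i,\tb^{i-1})$ by Proposition~\ref{prop:mapV} and \eqref{vjmn}, so the Jacobian $|\tb^{i-1}|^{-3}\le\delta^{-3}$ is controlled; since $t^i=t^{i-1}-\tb^{i-1}$ then ranges over a subinterval of $(0,\tb(x^i,v^i))$, the joint $(v^{i-1},v^i)$-integration becomes an integration over $(x^i,s,v^i)\in\p\O\times(0,\tb(x^i,v^i))\times\mathcal V_i$ with $s=t^i$, which is the left-hand side of \eqref{COV}; that identity bounds it by $\iint_{\O\times\R^3}F=\|F\|_1$, up to the controlled factor $\mu_\Theta(x^i,\vb^{i-1})|n(x^{i-1})\cdot v^{i-1}||n(x^i)\cdot\vb^{i-1}||\tb^{i-1}|^{-3}\lesssim\delta^{-1}$ (using $v^{i-1}_3,v^{i-1}_{\mathbf b,3}\asymp\tb^{i-1}$, \eqref{v3t estimate}) and the remaining $\le2^{i-1}$. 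Summing over $i$, choosing $\delta$ small enough that $\delta^2 2^{k}\lesssim a^2$ absorbs the $\|wF\|_\infty$-part, and keeping track of the constants leaves $\lesssim 2^{k+1}\|F\|_1$; together with the first paragraph this is \eqref{est: F mag}.

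For the magnetic case \eqref{field property mag} the only genuinely new point is that the change of variables of Proposition~\ref{prop:mapV mag} carries the Jacobian $5B_3^2(2-2\cos(B_3\tb))^{-1}$, which is degenerate when $B_3\tb\in2\pi\Z$. One therefore further decomposes $\mathcal V_j=\mathcal V^{G_\e}(x^j)\sqcup\mathcal V^{B_\e}(x^j)$: on $\mathcal V^{G_\e}$ the Jacobian is $\lesssim B_3^2/\e^2$ and the previous step goes through, while the contribution of $\mathcal V^{B_\e}$ is controlled by the $\lesssim\e/B_3$ smallness of that set in each period (Lemma~\ref{lem: Be mag}), with $\e$ fixed before $T$ and $k$; everything else is identical, using Lemma~\ref{conservative field mag} in place of Lemma~\ref{conservative field}. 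The delicate point throughout is this intermediate step: because $\dd\sigma_j$ is not a probability measure, the natural estimate produces a factor $\sim2^{k}$ that is exponential in the horizon $T$, and this has to be reconciled with the degeneracy of the Jacobian as $\tb\to0$ (resp. $B_3\tb\to2\pi\Z$) — the reason the cutoff $\delta$ is forced to be exponentially small in $k$ and the final constant is the (harmless but genuine) $2^{k+1}$ rather than an absolute one.
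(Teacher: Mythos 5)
Your overall architecture is the same as the paper's: reduce the weighted pointwise bound of $F$ to a bound on the outgoing flux $\J$ via the conservation of $\tfrac12|v|^2+\Phi$; expand $\J$ through the stochastic cycles with a fixed horizon $T$; bound the $i=0,1$ terms and the survival term by small multiples of $\sup\J$ (via Gaussian decay in $\tb\gtrsim T$ and a Stirling-type combinatorial estimate, as in Lemma~\ref{lem:small_largek}); bound the intermediate terms $2\le i\le k-1$ by $\|F\|_{L^1}$; and close by absorption. Where you genuinely diverge from the paper is precisely at the intermediate terms --- the heart of the proof, i.e.\ Lemma~\ref{lem:bound1} (resp.\ \ref{lem:bound1 mag}). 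The paper changes \emph{two} consecutive cycle variables $v^{i-2},v^{i-1}\mapsto(\tb^{i-2},x^{i-1},\tb^{i-2},x^{i})$, then uses the $\dd S_{x^{i-1}}$-integral of $\sum_{m,n}\mu_\Theta$ (Lemma~\ref{lem: sum of mu}, especially the $e^{-1/(2b\tb^2)}$ small-$\tb$ tail and the change of variables $x^{i-1}\mapsto z$) to cancel both Jacobian singularities and produce a uniform $\langle\tb\rangle^{-5}$ weight; the outer $\tb$-integral is then finite and \eqref{COV} converts the inner block into $\|F\|_1$. You instead change only $v^{i-1}\mapsto(\tb^{i-1},x^i)$, split $\mathcal V_{i-1}$ at $\tb^{i-1}=\delta$, absorb the $\{\tb^{i-1}\le\delta\}$ piece (a $\delta^2$-small measure times $\|wF\|_\infty\lesssim a^{-2}\sup\J$) into the left-hand side, and on $\{\tb^{i-1}>\delta\}$ pay $\delta^{-1}$ on the Jacobian and go straight to $\|F\|_1$ through \eqref{COV} applied to $(x^i,\,t^i,\,v^i)$. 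This is structurally simpler and a valid alternative to Lemma~\ref{lem:bound1}: the paper's delicate power-counting $\mathbf{1}_{\tb^{i-1}\le\tb^{i-2}}\langle\tb^{i-2}\rangle^{-5}+\mathbf{1}_{\tb^{i-1}\ge\tb^{i-2}}\langle\tb^{i-1}\rangle^{-5}$ is traded for a hard $\delta$-cutoff and an extra absorption, unifying the intermediate-term and survival-term treatments (both become ``small measure $\Rightarrow$ into $\sup\J$, the rest via a change of variables'').

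Two caveats you should make explicit. First, because the change of variables $v^{i-1}\mapsto(\tb^{i-1},x^i)$ of Proposition~\ref{prop:mapV} is only locally bijective, the differential form carries a sum $\sum_{m,n\in\Z}\mu_\Theta(x^i,v^{m,n}_{i-1})$ over the periodic translates of the backward hit; for your ``controlled factor $\lesssim\delta^{-1}$'' to be uniform in $(x^{i-1},x^i,\tb^{i-1})$ you need $\sum_{m,n}\mu_\Theta\lesssim1$, which is exactly what Lemma~\ref{lem: sum of mu} (resp.\ Lemma~\ref{lem: sum of mu mag}) provides --- that lemma is not avoidable and should be cited, not absorbed silently into ``$\mu_\Theta\lesssim1$''. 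Second, your absorption step forces $\delta\lesssim a\,2^{-k/2}$, so the $\{\tb^{i-1}>\delta\}$ pieces really give $\delta^{-1}\sum_{i}2^{i-1}\sim2^{3k/2}/a$, not the paper's stated $2^{k+1}$; since $k=k(\O,\Theta,\delta)$ is a fixed constant this is harmless for the statement, but the specific form ``$\lesssim 2^{k+1}\|F\|_1$'' in \eqref{est: F mag} is not what your argument produces, and you should either state your own constant or note that any finite constant suffices.
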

\unhide

\subsection{Gravitational case}

\begin{definition}
Suppose $F (x, v)$ solve \eqref{equation for F_infty} and \eqref{diff_F}, define the boundary outgoing flux on $F$, denoted by $\mathfrak{J}(x)$:
\Be \label{diff_J}
\J (x) = \int_{n(x)\cdot v >0} F(x, v) \{ n(x) \cdot v \} \dd v, \ \text{for} \ x \in \p\O.
\Ee
\end{definition}
From \eqref{diff_F}, we have for $(x, v) \in \gamma_-$,
\Be \label{FJ relation}
F(x, v) = \mu_{\Theta} (x, v) \J (x).
\Ee
An immediate consequence of Lemma \ref{sto_cycle_1} and Remark \ref{probability} follows:
\begin{lemma} \label{sto_cycle J}
Suppose $F$ solves \eqref{equation for F_infty} and \eqref{diff_F}, and $\J (x)$ defined in \eqref{diff_J}, then for $t \geq 0$ and $k \geq 1$,
\begin{align}
	\J (x)
    & = \int_{\mathcal{V}_0}   
    \mathbf{1}_{t^{1} < 0 }  F (X(0; t, x, v^0), V(0; t, x, v^0)) \{ n(x) \cdot v^0 \} \dd v^0
\label{expand_G11}
	\\& + \int_{\prod_{j=0}^{1} \mathcal{V}_j}   
    \mathbf{1}_{t^{2} < 0 \leq t^{1}} F (X(0; t^1, x^1, v^1), V(0; t^1, x^1, v^1)) \dd \tilde{\Sigma}_{1}
\label{expand_G12}
    \\& 	+ \int_{\prod_{j=0}^{i} \mathcal{V}_j}   
     \sum\limits^{k-1}_{i=2} 
     \Big\{   \mathbf{1}_{t^{i+1} < 0 \leq t^{i }} 		F (X(0; t^i, x^i, v^i), V(0; t^i, x^i, v^i))  \Big\} \dd \tilde{\Sigma}_{i}
\label{expand_G1}
    \\& + \int_{\prod_{j=0}^{k } \mathcal{V}_j}   
    \mathbf{1}_{t^{k } \geq 0 } F (x^{k }, v^{k }) \dd \tilde{\Sigma}_{k},
\label{expand_G2}
\end{align} 
where 
$\dd \tilde{\Sigma}_{i} := \frac{ \dd \sigma_{i}}{\mu_{\Theta} (x^{i+1}, v^{i})} \dd \sigma_{i-1} \cdots \dd \sigma_1 \dd \sigma_0$, with $\dd \sigma_j = \mu_{\Theta} (x^{j+1}, v^{j}) \{ n(x^j) \cdot v^j \} \dd v^j$ in \eqref{def:sigma measure}.
Here, $(X,V)$ in \eqref{characteristics}.
\end{lemma}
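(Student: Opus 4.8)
The plan is to obtain this identity directly from the stochastic cycle expansion of Lemma~\ref{sto_cycle_1} by integrating in the outgoing velocity and then re-packaging the measures via Remark~\ref{probability}. First I would relabel the integration variable in the definition \eqref{diff_J}: writing $x^0 := x$ and $v^0$ for the outgoing velocity, so that $\mathcal{V}_0 = \{v^0 : n(x^0)\cdot v^0 > 0\}$, one has $\J(x) = \int_{\mathcal{V}_0} F(x^0,v^0)\,\{n(x^0)\cdot v^0\}\,\dd v^0$. For any fixed $t \ge 0$ I would then apply Lemma~\ref{sto_cycle_1} to $F(x^0,v^0)$ with $t_* = 0$ (admissible since $0 \le t_* \le t$); this writes each $F(x^0,v^0)$ as the sum of the three groups \eqref{expand_F1}--\eqref{expand_F3}, where the stochastic cycle $\{t^j,x^j,v^j\}$ emanates from $(x^0,v^0)$ as in Definition~\ref{def_cycles} --- its first bounce point is $x^1 = \xb(x^0,v^0)$ with incoming velocity $\vb^0 = V(t^1;t,x^0,v^0)$, and the prefactor appearing in \eqref{expand_F2}--\eqref{expand_F3} is $\mu_\Theta(x^1,\vb^0)$. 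Here $F$ is time-independent, so $F(X(0;t^i,x^i,v^i),V(0;t^i,x^i,v^i))$ simply denotes $F$ evaluated at the phase point reached by flowing the characteristic backward.

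The next step is to multiply this identity by $\{n(x^0)\cdot v^0\}$ and integrate $\dd v^0$ over $\mathcal{V}_0$, interchanging the finitely many iterated integrals by Tonelli's theorem (legitimate as $F \ge 0$). The one algebraic point is the identification of measures: by Lemma~\ref{conservative field} one has $|\vb^0| = |v^0|$, and since $\mu_\Theta(x,v) = \mu_\Theta(x,|v|)$ this gives $\mu_\Theta(x^1,\vb^0) = \mu_\Theta(x^1,v^0)$, hence $\mu_\Theta(x^1,\vb^0)\,\{n(x^0)\cdot v^0\}\,\dd v^0 = \dd\sigma_0$. Performing the same replacement $\mu_\Theta(x^{j+1},\vb^j) \mapsto \mu_\Theta(x^{j+1},v^j)$ for the factors occurring inside $\dd\Sigma_i$ and $\dd\Sigma_k$ (again via Remark~\ref{probability}), the product $\dd\sigma_0\,\dd\Sigma_i$ collapses exactly to $\dd\tilde\Sigma_i = \frac{\dd\sigma_i}{\mu_\Theta(x^{i+1},v^i)}\dd\sigma_{i-1}\cdots\dd\sigma_1\,\dd\sigma_0$, and likewise $\dd\sigma_0\,\dd\Sigma_k = \dd\tilde\Sigma_k$, after which $\int_{\mathcal{V}_0}\int_{\prod_{j=1}^{i}\mathcal{V}_j} = \int_{\prod_{j=0}^{i}\mathcal{V}_j}$.

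It then remains to match the resulting terms with \eqref{expand_G11}--\eqref{expand_G2}: the integral of \eqref{expand_F1} at $t_* = 0$ produces \eqref{expand_G11}; the $i=1$ summand of \eqref{expand_F2} produces \eqref{expand_G12} and its $i = 2,\dots,k-1$ summands produce \eqref{expand_G1}; and \eqref{expand_F3} produces \eqref{expand_G2}. The indicator functions $\mathbf{1}_{t^1<0}$, $\mathbf{1}_{t^{i+1}<0\le t^i}$, $\mathbf{1}_{t^k\ge 0}$ and the base points $X(0;\cdot),V(0;\cdot)$ carry over verbatim from the $t_*=0$ specialization of Lemma~\ref{sto_cycle_1}. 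I do not expect a genuine obstacle here, since the argument is essentially bookkeeping; the only point deserving care is verifying $\dd\sigma_0\,\dd\Sigma_i = \dd\tilde\Sigma_i$, which is precisely where conservation of speed along characteristics (Lemma~\ref{conservative field}, via Remark~\ref{probability}) enters. One should also note that, although each individual term on the right-hand side depends on the auxiliary parameter $t$, the full sum is the $t$-independent quantity $\J(x)$.
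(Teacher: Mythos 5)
Your proposal is correct and is precisely the argument the paper intends: apply Lemma~\ref{sto_cycle_1} with $t_*=0$ inside the definition \eqref{diff_J}, integrate $\{n(x)\cdot v^0\}\,\dd v^0$ over $\mathcal V_0$, and use Remark~\ref{probability} (via Lemma~\ref{conservative field}) to replace $\mu_\Theta(x^{j+1},\vb^j)$ by $\mu_\Theta(x^{j+1},v^j)$ so that $\mu_\Theta(x^1,\vb^0)\{n(x)\cdot v^0\}\,\dd v^0\,\dd\Sigma_i = \dd\tilde\Sigma_i$. Indeed, the paper itself presents this lemma only with the remark ``An immediate consequence of Lemma~\ref{sto_cycle_1} and Remark~\ref{probability} follows'' and gives no further proof, so your bookkeeping is exactly the filled-in version.
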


For the proof of Proposition \ref{prop:j linfty bound}, we start with first two terms in $\J (x)$ stochastic cycles.

\subsubsection{Estimate on first two terms in $\J (x)$}

We first do estimate on \eqref{expand_G11}. From Lemma \ref{conservative field}, Remark \ref{probability} and \eqref{FJ relation}, we have
\Be \label{est1: G11}
\begin{split}
\eqref{expand_G11}
& = \int_{n(x) \cdot v^0 > 0} \mathbf{1}_{t^{1} < 0 } F (x^1, \vb^{0}) \{ n(x) \cdot v^0 \} \dd v^0
\\& = \int_{n(x) \cdot v^0 > 0} \mathbf{1}_{t^{1} < 0 } \ \mu_{\Theta} (x^1, v^{0}) \J (x^1) \{ n(x) \cdot v^0 \} \dd v^0
\\& \leq \| \J (x)\|_{L^{\infty}_x} \int_{n(x) \cdot v^0 > 0} \mathbf{1}_{t^{1} < 0 } \ \mu_{\Theta} (x^1, v^{0})  \{ n(x) \cdot v^0 \} \dd v^0.
\end{split}
\Ee
Since $t^1 < 0$, $|n(x) \cdot v^0| \gtrsim \tb (x, v^0) = t - t^1 > t \gg 1$
and using the notation $v^0 = (v^0_1, v^0_2, v^0_3)$, we deduce
\Be \label{est2: G11}
\begin{split}
  \int_{n(x) \cdot v^0 > 0} \mathbf{1}_{t^{1} < 0} \ \mu_{\Theta} (x^1, v^{0})  \{ n(x) \cdot v^0 \} \dd v^0
  \lesssim \int_{v^0_{3} \gtrsim t} \ e^{- \frac{|v^0_{3}|^2}{2b}} \{ v^0_{3} \} \dd v^0_{3} \lesssim e^{- \frac{t^2}{2b}}.
\end{split}
\Ee
From \eqref{est1: G11} and \eqref{est2: G11}, we derive
\Be \label{est: G11}
\int_{\mathcal{V}_0} \mathbf{1}_{t^{1} < 0 }  F (X(0; t, x, v^0), V(0; t, x, v^0)) \{ n(x) \cdot v^0 \} \dd v^0 
\lesssim \| \J (x)\|_{L^{\infty}_x} \times e^{- \frac{t^2}{2b}}.
\Ee

Next we do estimate on \eqref{expand_G12}. Again from Lemma \ref{conservative field} and \eqref{FJ relation}, we have 	
\Be \label{est1: G12}
\begin{split}
\eqref{expand_G12} 
& = \int_{\mathcal{V}_0} \int_{\mathcal{V}_1} \mathbf{1}_{t^{2} < 0 \leq t^{1}} F (x^2, \vb^{1}) \{ n(x^1) \cdot v^1 \} \dd v^1 \dd \sigma_0
\\& = \int_{\mathcal{V}_0} \int_{\mathcal{V}_1} \mathbf{1}_{t^{2} < 0 \leq t^{1}} \ \mu_{\Theta} (x^2, v^{1}) \J (x^2) \{ n(x^1) \cdot v^1 \} \dd v^1 \dd \sigma_0
\\& \lesssim \| \J (x)\|_{L^{\infty}_x} \int_{\mathcal{V}_0} \int_{\mathcal{V}_1} \mathbf{1}_{t^{2} < 0 \leq t^{1}} \ \mu_{\Theta} (x^2, v^{1}) \{ n(x^1) \cdot v^1 \} \dd v^1 \dd \sigma_0.
\end{split}
\Ee
Since $t^2 < 0$, $t - t^2 = \tb (x, v^0) + \tb (x^1, v^1)$ and $|n(x) \cdot v^0| \gtrsim \tb (x, v^0)$, $|n(x^1) \cdot v^1| \gtrsim \tb (x^1, v^1)$, we get
\Be \notag
|n(x) \cdot v^0| + |n(x^1) \cdot v^1| \gtrsim t - t^2 > t.
\Ee
After setting $v^0_{3} = n(x) \cdot v^0$ and $v^1_{3} = n(x^1) \cdot v^1$, we have 
\Be \label{est2: G12}
\begin{split}
& \ \ \ \ \int_{\mathcal{V}_0} \int_{\mathcal{V}_1} \mathbf{1}_{t^{2} < 0 \leq t^{1}} \ \mu_{\Theta} (x^2, v^{1}) \{ n(x^1) \cdot v^1 \} \dd v^1 \dd \sigma_0
\\& \lesssim \int_{v^0_{3} \gtrsim t/2} \ e^{- \frac{|v^0_{3}|^2}{2b}} \{ v^0_{3} \} \dd v^0_{3} \int_{\mathcal{V}_1} e^{- \frac{|v^1_{3}|^2}{2b}} \{ v^1_{3} \} \dd v^1_{3} + \int_{\mathcal{V}_0} \ e^{- \frac{|v^0_{3}|^2}{2b}} \{ v^0_{3} \} \dd v^0_{3} \int_{v^1_{3} \gtrsim t/2} e^{- \frac{|v^1_{3}|^2}{2b}} \{ v^1_{3} \} \dd v^1_{3}, 
\\& \lesssim \int_{v^0_{3} \gtrsim t/2} \ e^{- \frac{|v^0_{3}|^2}{2b}} \{ v^0_{3} \} \dd v^0_{3} + \int_{v^1_{3} \gtrsim t/2} e^{- \frac{|v^1_{3}|^2}{2b}} \{ v^1_{3} \} \dd v^1_{3} \lesssim e^{- \frac{t^2}{8b}}.
\end{split}
\Ee
From \eqref{est1: G12} and \eqref{est2: G12}, we derive
\Be \label{est: G12}
\int_{\prod_{j=0}^{1} \mathcal{V}_j} \mathbf{1}_{t^{2} < 0 \leq t^{1}} F (X(0; t^1, x^1, v^1), V(0; t^1, x^1, v^1)) \dd \tilde{\Sigma}_{1}
\lesssim \| \J (x)\|_{L^{\infty}_x} \times e^{- \frac{t^2}{8b}}.
\Ee


\subsubsection{Estimate on last two terms in $\J (x)$}

To estimate \eqref{expand_G1}, we first apply Proposition \ref{prop:mapV} on $\mathcal{V}_{j}$ for $j = i-2, i-1$ and using $|n(x^{j}) \cdot v^{j}| \lesssim \tb^{j}$, we derive that
\begin{align}
& \ \ \ \ \int_{\mathcal{V}_{i-2}} \int_{\mathcal{V}_{i-1}} 
\int_{\mathcal{V}_i} 
\mathbf{1}_{t^{i+1} < 0 \leq t^i}
|F (X(0; t^i, x^i, v^i), V(0; t^i, x^i, v^i)) |  \{n(x^i) \cdot v^i\} \dd v^i \dd \sigma_{i-1} \dd \sigma_{i-2} \notag
\\& \lesssim \int_0^{t^{i-2}} \dd \tb^{i-1}  \int_{\p\O}  
\frac{\dd S_{x^{i-1}}}{\tb^{i-2}} 
\sum\limits_{m, n \in \mathbb{Z}} \mu_{\Theta} (x^{i-1}, v^{m, n}_{i-2, \mathbf{b}})
\int_0^{t^{i-2}- \tb^{i-1}} \dd \tb^{i-2}  \int_{\p\O}   
\frac{\dd S_{x^i}}{\tb^{i-1}} \sum\limits_{m, n \in \mathbb{Z}} \mu_{\Theta} (x^{i}, v^{m, n}_{i-1, \mathbf{b}}) \label{integrand}
\\& \ \ \ \ \times \bigg( 
\int_{\mathcal{V}_i} 
\mathbf{1}_{t^{i} - \tb(x^i,v^i) < 0 } \
|F (X(0; t^i, x^i, v^i), V(0; t^i, x^i, v^i))|  \{n(x^i) \cdot v^i\} \dd v^i\bigg), \label{est1:forcing}
\end{align}
with $t^{i-1} = t^{i-2} - \tb^{i-2}$, $t^{i}= t^{i-1} - \tb^{i-1}$ and $v^{m, n}_{i-1, \mathbf{b}} = \vb (x^{i}, v^{m, n}_{i-1})$, $v^{m, n}_{i-2, \mathbf{b}} = \vb (x^{i-1}, v^{m, n}_{i-2})$.

\begin{lemma} \label{lem: sum of mu}
Consider $(X,V)$ in \eqref{characteristics} with $x^{i-1} \in \p\O$, for $\tb^{i-2} \geq 1$,
\Be \label{t>1,i-2}
\sum\limits_{m, n \in \mathbb{Z}} \mu_{\Theta} (x^{i-1}, v^{m, n}_{i-2, \mathbf{b}}) \lesssim (\tb^{i-2})^4 e^{- (\tb^{i-2})^2 / 2b}.
\Ee
For $0 \leq \tb^{i-2} < 1$,
\Be \label{t<1,i-2}
\sum\limits_{m, n \in \mathbb{Z}} \mu_{\Theta} (x^{i-1}, v^{m, n}_{i-2, \mathbf{b}})
\lesssim 
\sum\limits_{|m| < 2, |n| < 2} \mu_{\Theta} (x^{i-1}, v^{m, n}_{i-2, \mathbf{b}}) +
e^{-\frac{1}{2b (\tb^{i-2})^2}},
\Ee
where $v^{m, n}_{i-2, \mathbf{b}} = \vb (x^{i-1}, v^{m, n}_{i-2})$, which was defined in \eqref{def:t_k} and \eqref{def:vmn}.
\end{lemma}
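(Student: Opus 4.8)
The plan is to dominate $\mu_\Theta(x^{i-1},v^{m,n}_{i-2,\mathbf{b}})$ by a single Gaussian in $|v^{m,n}_{i-2}|$ and then sum the series over the sheets $(m,n)\in\mathbb{Z}^2$, separating the vertical velocity — which is pinned, \emph{uniformly in $(m,n)$}, by the common backward time $\tb^{i-2}$ through the dominant gravity — from the horizontal velocity, which is forced to be of size $\gtrsim|(m,n)|/\tb^{i-2}$ once $|(m,n)|$ overtakes the ballistic horizontal displacement.

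By Lemma \ref{conservative field}, $|v^{m,n}_{i-2,\mathbf{b}}|=|v^{m,n}_{i-2}|$, so Remark \ref{probability} together with $a\le\Theta\le b$ (Remark \ref{sigma measure estiamte}) gives
\[
\mu_\Theta(x^{i-1},v^{m,n}_{i-2,\mathbf{b}})=\mu_\Theta(x^{i-1},v^{m,n}_{i-2})\le\frac{1}{2\pi a^2}\exp\!\Big(-\tfrac{1}{2b}\big(|v^{m,n}_{i-2,3}|^2+|v^{m,n}_{i-2,\parallel}|^2\big)\Big).
\]
By construction \eqref{def:vjmn}, all the $v^{m,n}_{i-2}$ share the same exit time $\tb(x^{i-2},\cdot)=\tb^{i-2}$, so \eqref{v3t estimate} (through \eqref{g3 estimate}, \eqref{tm estimate}) yields $|v^{m,n}_{i-2,3}|\gtrsim\tb^{i-2}$ uniformly in $(m,n)$. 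Integrating $\dot X_\parallel=V_\parallel$ and $\dot V_\parallel=-\nabla_\parallel\phi$ from \eqref{characteristics} along the trajectory that lands at $x^{i-1}+(m,n,0)$, and using $\|\nabla_x\phi\|_\infty\le\varrho_3$,
\[
x^{i-1}_\parallel+(m,n)=x^{i-2}_\parallel-\tb^{i-2}\,v^{m,n}_{i-2,\parallel}+E^{m,n},\qquad |E^{m,n}|\le\varrho_3(\tb^{i-2})^2,
\]
hence, writing $C_0:=|x^{i-1}_\parallel-x^{i-2}_\parallel|\le 1/\sqrt2$ for the nearest lift on $\mathbb{T}^2$, $\tb^{i-2}\,|v^{m,n}_{i-2,\parallel}|\ge|(m,n)|-C_0-\varrho_3(\tb^{i-2})^2$.

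Now fix $\tau:=\tb^{i-2}$. If $\tau\ge1$, set $L:=C_0+\varrho_3\tau^2\lesssim\tau^2$ and split $\sum_{m,n}$ into $|(m,n)|\le L$ and $|(m,n)|>L$: in the first part each summand is $\lesssim e^{-\tau^2/2b}$ and there are $\lesssim L^2\lesssim\tau^4$ of them, contributing $\lesssim\tau^4 e^{-\tau^2/2b}$; in the second part $|v^{m,n}_{i-2,\parallel}|\ge(|(m,n)|-L)/\tau$, so the contribution is $\lesssim e^{-\tau^2/2b}\sum_{|(m,n)|>L}e^{-\frac{(|(m,n)|-L)^2}{2b\tau^2}}\lesssim\tau^4 e^{-\tau^2/2b}$; together this gives \eqref{t>1,i-2}. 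If $0\le\tau<1$, the vertical factor is harmless but $C_0+\varrho_3\tau^2<2$, so we keep the nine terms with $|m|<2,\,|n|<2$ and, for the remaining sheets ($k:=\max(|m|,|n|)\ge2$), we have $\tau\,|v^{m,n}_{i-2,\parallel}|\ge(k-2)+\kappa$ with $\kappa:=2-C_0-\varrho_3>1$ for $\varrho_3\ll1$; since $(k-2+\kappa)^2\ge(k-2)^2+\kappa^2\ge(k-2)^2+1$,
\[
\mu_\Theta(x^{i-1},v^{m,n}_{i-2})\lesssim e^{-\frac{1}{2b\tau^2}}\,e^{-\frac{(k-2)^2}{2b}},
\]
and summing the convergent series $\sum_{k\ge2}k\,e^{-(k-2)^2/2b}$ gives \eqref{t<1,i-2}.

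The step I expect to be the main obstacle is producing the two uniform-in-$(m,n)$ facts above: the pinning $|v^{m,n}_{i-2,3}|\gtrsim\tb^{i-2}$, which uses both that all sheets carry the same backward time and that gravity dominates (condition \eqref{field property}), and the drift bound $|E^{m,n}|\le\varrho_3(\tb^{i-2})^2$. Together these confine the sheets on which $|v^{m,n}_{i-2,\parallel}|$ fails to be large to a disc of radius $\lesssim(\tb^{i-2})^2$ — of radius $<2$ as soon as $\tb^{i-2}<1$ — which is exactly what makes both series summable with the stated right-hand sides; absent the downward gravity, $|v^{m,n}_{i-2,3}|$ could be arbitrarily small and this summation would fail.
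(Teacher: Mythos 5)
Your proof is correct and follows essentially the same strategy as the paper's: a Gaussian bound on $\mu_\Theta$ at temperature $b$, the vertical pinning $|v^{m,n}_{i-2,3}|\gtrsim \tb^{i-2}$ from \eqref{v3t estimate}, and the ballistic bound $\tb^{i-2}|v^{m,n}_{i-2,\parallel}|\gtrsim|(m,n)|-O((\tb^{i-2})^2)$ coming from integrating \eqref{characteristics} and $\|\nabla_x\phi\|_\infty\ll1$, followed by the dichotomy at $\tb^{i-2}\gtrless1$. The only cosmetic differences are that you use the conservation $|v^{m,n}_{i-2,\mathbf{b}}|=|v^{m,n}_{i-2}|$ to work with the single radial quantity $|v^{m,n}_{i-2,\parallel}|$ and split the $(m,n)$-sum radially at $L\lesssim(\tb^{i-2})^2$, whereas the paper bounds the two backward components separately and splits the sum into four rectangular sub-cases; both yield the stated $(\tb^{i-2})^4 e^{-(\tb^{i-2})^2/2b}$ and $e^{-1/(2b(\tb^{i-2})^2)}$ bounds.
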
 

\begin{proof}

Here, for the sake of simplicity, we have abused the notations temporarily: 
$$
x^i = (x^i_1, x^i_2, x^i_3)= (x^i_\parallel, x^i_3), \ 
v^{m, n}_{i, \mathbf{b}} = (v^{m, n}_{i, \mathbf{b}_1}, v^{m, n}_{i, \mathbf{b}_2}, v^{m, n}_{i, \mathbf{b}_3}) = (v^{m, n}_{i, \mathbf{b}_\parallel}, v^{m, n}_{i, \mathbf{b}_3}).
$$
To estimate $v^{m, n}_{i-2, \mathbf{b}_\parallel}$, we recall that
$\|\nabla_{x} \phi\|_{\infty} \leq \varrho_3 \ll 1.$ Thus, we have
\Be \label{vmnb12 first estimate}
\begin{split}
& |v^{m, n}_{i-2, \mathbf{b}_1}| \geq \min \big\{ \frac{|x^{i-1}_1 + m - x^{i-2}_1|}{\tb^{i-2}} - \varrho_3 \tb^{i-2}, 0 \big\},
\\& |v^{m, n}_{i-2, \mathbf{b}_2}| \geq \min \big\{ \frac{|x^{i-1}_2 + n - x^{i-2}_2|}{\tb^{i-2}} - \varrho_3 \tb^{i-2}, 0 \big\}.
\end{split}
\Ee
Now we split the length of $\tb^{i-2}$ into two cases: 

\textbf{Case 1:} $\tb^{i-2} \geq 1$. From \eqref{vmnb12 first estimate} and $\varrho_3 \ll 1$, for $|m| \geq (\tb^{i-2})^2$, we bound 
\Be \notag
\frac{|x^{i-1}_1 + m - x^{i-2}_1|}{\tb^{i-2}} - \varrho_3 \tb^{i-2} \gtrsim \frac{|x^{i-1}_1 + m - x^{i-2}_1|}{2 \tb^{i-2}} \gtrsim \frac{|m|}{2 \tb^{i-2}}.
\Ee
Similarly, for $|n| \geq (\tb^{i-2})^2$, we bound
\Be \notag
\frac{|x^{i-1}_2 + n - x^{i-2}_2|}{\tb^{i-2}} - \varrho_3 \tb^{i-2} \gtrsim \frac{|x^{i-1}_2 + n - x^{i-2}_2|}{2 \tb^{i-2}} \gtrsim \frac{|n|}{2 \tb^{i-2}}.
\Ee
For $|m| < (\tb^{i-2})^2$, we bound $|v^{m, n}_{i-2, \mathbf{b}_1}| \geq 0$, and for $|n| < (\tb^{i-2})^2$, we bound $|v^{m, n}_{i-2, \mathbf{b}_2}| \geq 0$. Note that from \eqref{v3t estimate}, we know
$\tb^{i-2} \lesssim v^{m, n}_{i-2, \mathbf{b}_3} \lesssim \tb^{i-2}$. 
To derive \eqref{t>1,i-2}, we divide $\{v^{m, n}_{i-2, \mathbf{b}}\}_{m, n \in \mathbb{Z}}$ into four parts.

For $|m| < (\tb^{i-2})^2$ and $|n| < (\tb^{i-2})^2$, we bound
$|v^{m, n}_{i-2, \mathbf{b}}| \geq |v^{m, n}_{i-2, \mathbf{b}_3}| \gtrsim \tb^{i-2}$. Therefore, we have
\Be \label{t>1m<n<}
\sum\limits_{|m| < (\tb^{i-2})^2, |n| < (\tb^{i-2})^2} \mu_{\Theta} (x^{i-1}, v^{m, n}_{i-2, \mathbf{b}}) \lesssim (\tb^{i-2})^4 e^{- (\tb^{i-2})^2 / 2b}.
\Ee

For $|m| < (\tb^{i-2})^2$ and $|n| \geq (\tb^{i-2})^2$, we bound
$|v^{m, n}_{i-2, \mathbf{b}_2}| \gtrsim \frac{|n|}{2 \tb^{i-2}}$. Thus, we have 
\Be \label{t>1m<n>}
\begin{split}
 \sum\limits_{|m| < (\tb^{i-2})^2, |n| \geq (\tb^{i-2})^2} \mu_{\Theta} (x^{i-1}, v^{m, n}_{i-2, \mathbf{b}})
& \lesssim \sum\limits_{|m| < (\tb^{i-2})^2} \mu_{\Theta} (x^{i-1}, \frac{|n|}{2 \tb^{i-2}} + \tb^{i-2})
\\& \lesssim (\tb^{i-2})^2 e^{- (\tb^{i-2})^2 / 2b} \sum\limits^{\infty}_{n=0} \mu_{\Theta} (x^{i-1}, \frac{|n|}{2 \tb^{i-2}})
\\& \leq (\tb^{i-2})^2 e^{- (\tb^{i-2})^2 / 2b} 
(1 - e^{-\frac{1}{8b (\tb^{i-2})^2}})^{-1} 
\lesssim (\tb^{i-2})^4 e^{- (\tb^{i-2})^2 / 2b},
\end{split}
\Ee
where the last inequality holds from the Taylor expansion. 

For $|m| \geq (\tb^{i-2})^2$ and $|n| < (\tb^{i-2})^2$ case, we bound
$|v^{m, n}_{i-2, \mathbf{b}_1}| \gtrsim \frac{|m|}{2 \tb^{i-2}}$. Similar as in \eqref{t>1m<n>}, we get
\Be \label{t>1m>n<}
\sum\limits_{|m| \geq (\tb^{i-2})^2, |n| < (\tb^{i-2})^2} \mu_{\Theta} (x^{i-1}, v^{m, n}_{i-2, \mathbf{b}}) \lesssim (\tb^{i-2})^4 e^{- (\tb^{i-2})^2 / 2b}.
\Ee

For $|m| \geq (\tb^{i-2})^2$ and $|n| \geq (\tb^{i-2})^2$, we need to use two lower bounds
$|v^{m, n}_{i-2, \mathbf{b}_2}| \gtrsim \frac{|n|}{2 \tb^{i-2}}$, $|v^{m, n}_{i-2, \mathbf{b}_1}| \gtrsim \frac{|m|}{2 \tb^{i-2}}$. Then, we derive
\Be \label{t>1m>n>}
\begin{split}
\sum\limits_{|m| \geq (\tb^{i-2})^2, |n| \geq (\tb^{i-2})^2} \mu_{\Theta} (x^{i-1}, v^{m, n}_{i-2, \mathbf{b}}) & \lesssim \sum\limits^{\infty}_{m, n = 0} \mu_{\Theta} (x^{i-1}, \frac{|m|}{2 \tb^{i-2}} + \frac{|n|}{2 \tb^{i-2}} + \tb^{i-2})
\\& \lesssim e^{- (\tb^{i-2})^2 / 2b} \sum\limits^{\infty}_{n=0} \mu_{\Theta} (x^{i-1}, \frac{|n|}{2 \tb^{i-2}})
(1 - e^{-\frac{1}{8 (\tb^{i-2})^2}})^{-1} 
\\& \lesssim e^{- (\tb^{i-2})^2 / 2b} 
(1 - e^{-\frac{1}{8 (\tb^{i-2})^2}})^{-2} 
\lesssim (\tb^{i-2})^4 e^{- (\tb^{i-2})^2 / 2b}.
\end{split}
\Ee
From \eqref{t>1m<n<}, \eqref{t>1m<n>}, \eqref{t>1m>n<} and \eqref{t>1m>n>}, we conclude that, for $\tb^{i-2} \geq 1$,
\Be \notag
\sum\limits_{m, n \in \mathbb{Z}} \mu_{\Theta} (x^{i-1}, v^{m, n}_{i-2, \mathbf{b}}) \lesssim (\tb^{i-2})^4 e^{- (\tb^{i-2})^2 / 2b}.
\Ee

\textbf{Case 2:} $0 \leq \tb^{i-2} < 1$.
In this case $\tb^{i-2}$ is small, thus from \eqref{vmnb12 first estimate} and $\varrho_3 \ll 1$, for $|m| \geq 2$ and $|n| \geq 2$, we bound 
\Be \notag
\begin{split}
& \frac{|x^{i-1}_1 + m - x^{i-2}_1|}{\tb^{i-2}} - \varrho_3 \tb^{i-2} \gtrsim \frac{|x^{i-1}_1 + m - x^{i-2}_1|}{2 \tb^{i-2}} \gtrsim \frac{|m|}{2 \tb^{i-2}},
\\& \frac{|x^{i-1}_2 + n - x^{i-2}_2|}{\tb^{i-2}} - \varrho_3 \tb^{i-2} \gtrsim \frac{|x^{i-1}_2 + n - x^{i-2}_2|}{2 \tb^{i-2}} \gtrsim \frac{|n|}{2 \tb^{i-2}}.
\end{split}
\Ee
For $|m| < 2$, we bound $|v^{m, n}_{i-2, \mathbf{b}_1}| \geq 0$, and for $|n| < 2$, we bound $|v^{m, n}_{i-2, \mathbf{b}_2}| \geq 0$. Note that from \eqref{v3t estimate}, we know
$\tb^{i-2} \lesssim v^{m, n}_{i-2, \mathbf{b}_3} \lesssim \tb^{i-2}$. 
To obtain \eqref{t<1,i-2}, we again divide $\{v^{m, n}_{i-2, \mathbf{b}}\}_{m, n \in \mathbb{Z}}$ into four parts.

For $|m| < 2$ and $|n| < 2$, we keep the following five terms summation: 
\Be \label{t<1m<n<}
\sum\limits_{|m| < 2, |n| < 2} \mu_{\Theta} (x^{i-1}, v^{m, n}_{i-2, \mathbf{b}}).
\Ee

For $|m| < 2$ and $|n| \geq 2$, we bound
$|v^{m, n}_{i-2, \mathbf{b}_2}| \gtrsim \frac{|n|}{2 \tb^{i-2}}$. Thus, we have 
\Be \label{t<1m<n>}
\begin{split}
\sum\limits_{|m| < 2, |n| \geq 2} \mu_{\Theta} (x^{i-1}, v^{m, n}_{i-2, \mathbf{b}})
& \lesssim \sum\limits_{|m| < 2, |n| \geq 2} \mu_{\Theta} (x^{i-1}, \frac{|n|}{2 \tb^{i-2}} + \tb^{i-2})
\\& \lesssim 3 e^{- (\tb^{i-2})^2 / 2b} \sum\limits^{\infty}_{n=2} \mu_{\Theta} (x^{i-1}, \frac{|n|}{2 \tb^{i-2}})
\lesssim \sum\limits^{\infty}_{n = 2} e^{-\frac{n^2}{8b (\tb^{i-2})^2}}
\\& \leq e^{-\frac{1}{2b (\tb^{i-2})^2}}
(1 - e^{-\frac{1}{8b (\tb^{i-2})^2}})^{-1} 
\lesssim e^{-\frac{1}{2b (\tb^{i-2})^2}},
\end{split}
\Ee
where the last inequality holds from $0 \leq \tb^{i-2} < 1$. 

For $|m| \geq 2$ and $|n| < 2$ case, we bound
$|v^{m, n}_{i-2, \mathbf{b}_1}| \gtrsim \frac{|m|}{2 \tb^{i-2}}$. Similar as in \eqref{t<1m<n>}, we get
\Be \label{t<1m>n<}
\sum\limits_{|m| \geq 2, |n| < 2} \mu_{\Theta} (x^{i-1}, v^{m, n}_{i-2, \mathbf{b}}) \lesssim e^{-\frac{1}{2b (\tb^{i-2})^2}}.
\Ee

For $|m| \geq 2$ and $|n| \geq 2$, we bound
$|v^{m, n}_{i-2, \mathbf{b}_2}| \gtrsim \frac{|n|}{2 \tb^{i-2}}$, $|v^{m, n}_{i-2, \mathbf{b}_1}| \gtrsim \frac{|m|}{2 \tb^{i-2}}$ and we derive
\Be \label{t<1m>n>}
\begin{split}
\sum\limits_{|m| \geq 2, |n| \geq 2} \mu_{\Theta} (x^{i-1}, v^{m, n}_{i-2, \mathbf{b}}) & \lesssim \sum\limits^{\infty}_{m, n = 2} \mu_{\Theta} (x^{i-1}, \frac{|m|}{2 \tb^{i-2}} + \frac{|n|}{2 \tb^{i-2}} + \tb^{i-2})
\\& \lesssim e^{- (\tb^{i-2})^2 / 2b} \sum\limits^{\infty}_{n=2} \mu_{\Theta} (x^{i-1}, \frac{|n|}{2 \tb^{i-2}})
e^{-\frac{1}{2b (\tb^{i-2})^2}}
\lesssim e^{-\frac{1}{2b (\tb^{i-2})^2}}.
\end{split}
\Ee
From \eqref{t<1m<n<}, \eqref{t<1m<n>}, \eqref{t<1m>n<} and \eqref{t<1m>n>}, we conclude that, for $0 \leq \tb^{i-2} < 1$,
\Be \notag
\sum\limits_{m, n \in \mathbb{Z}} \mu_{\Theta} (x^{i-1}, v^{m, n}_{i-2, \mathbf{b}})
\lesssim 
\sum\limits_{|m| < 2, |n| < 2} \mu_{\Theta} (x^{i-1}, v^{m, n}_{i-2, \mathbf{b}}) +
e^{-\frac{1}{2b (\tb^{i-2})^2}},
\Ee
so we prove \eqref{t>1,i-2} and \eqref{t<1,i-2}.
\end{proof}

Now we can control \eqref{expand_G1} via the following lemma:

\begin{lemma}  \label{lem:bound1}
For $i = 2, \cdots , k-1$ and  $(X,V)$ in \eqref{characteristics}, we have
\Be \label{bound1:expand_h}
  \Big|\int_{\prod_{j=0}^{i} \mathcal{V}_j}   
  \mathbf{1}_{t^{i+1} < 0 \leq t^{i}} F(X(0; t^i, x^i, v^i), V(0; t^i, x^i, v^i)) 
  \dd \tilde{\Sigma}_{i} \Big|
  \lesssim 2^{i} \times \| F(x, v) \|_{L^1_{x,v}}.
\Ee
where 
$\dd \tilde{\Sigma}_{i} := \frac{ \dd \sigma_{i}}{\mu_{\Theta} (x^{i+1}, v^{i})} \dd \sigma_{i-1} \cdots \dd \sigma_1 \dd \sigma_0$ with $\dd \sigma_j = \mu_{\Theta} (x^{j+1}, v^{j}) \{ n(x^j) \cdot v^j \} \dd v^j$ in \eqref{def:sigma measure}.
\end{lemma}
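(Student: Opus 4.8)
The plan is to reduce \eqref{bound1:expand_h} to the single transport change-of-variables identity \eqref{COV} (with $g=|F|$), paying for the passage with the sub-probability bound $\int_{\mathcal{V}_j}\dd\sigma_j\leq2$ of Remark \ref{sigma measure estiamte} and the boundary changes of variables of Proposition \ref{prop:mapV}. Writing $\dd\tilde\Sigma_i=\{n(x^i)\cdot v^i\}\,\dd v^i\,\dd\sigma_{i-1}\cdots\dd\sigma_0$, I would first take absolute values and integrate out the outermost $i-2$ factors $\dd\sigma_0,\dots,\dd\sigma_{i-3}$; each of these integrals is $\leq2$ uniformly in the relevant boundary point, so the left side of \eqref{bound1:expand_h} is at most $2^{i-2}\leq 2^i$ times
\[
\sup_{x^{i-2}\in\p\O,\ t^{i-2}\geq0}\ \int_{\mathcal{V}_{i-2}}\!\dd\sigma_{i-2}\int_{\mathcal{V}_{i-1}}\!\dd\sigma_{i-1}\int_{\mathcal{V}_i}\{n(x^i)\cdot v^i\}\,\dd v^i\ \mathbf{1}_{t^{i+1}<0\leq t^i}\,\big|F(X(0;t^i,x^i,v^i),V(0;t^i,x^i,v^i))\big| ,
\]
with $t^i=t^{i-2}-\tb^{i-2}-\tb^{i-1}$, and it remains to bound this supremum by $\|F\|_{L^1_{x,v}}$.

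I would apply Proposition \ref{prop:mapV} to the two boundary velocities $v^{i-2}$ and $v^{i-1}$; since $\{n(x^j)\cdot v^j\}$ and $|n(x^{j+1})\cdot v^j_{\mathbf{b}}|$ are both comparable to $\tb^j$ by \eqref{v3t estimate}, the Jacobians collapse and $\dd\sigma_{i-2}\,\dd\sigma_{i-1}$ becomes
\[
\frac{1}{\tb^{i-2}}\sum_{m,n}\mu_\Theta(x^{i-1},v^{m,n}_{i-2,\mathbf{b}})\,\dd\tb^{i-2}\,\dd S_{x^{i-1}}\ \cdot\ \frac{1}{\tb^{i-1}}\sum_{m,n}\mu_\Theta(x^{i},v^{m,n}_{i-1,\mathbf{b}})\,\dd\tb^{i-1}\,\dd S_{x^{i}} .
\]
The crucial step is to integrate the now free surface variable $x^{i-1}$ against the \emph{product} of the two $\mu_\Theta$-weights: since $\mu_\Theta$ forces the horizontal part of $v^{m,n}_{i-2,\mathbf{b}}$ (resp.\ of $v^{m,n}_{i-1,\mathbf{b}}$) to be $\gtrsim|x^{i-1}_\parallel-x^{i-2}_\parallel+(m,n)|/\tb^{i-2}$ (resp.\ $\gtrsim|x^{i}_\parallel-x^{i-1}_\parallel+(m,n)|/\tb^{i-1}$), this $x^{i-1}$-integral is a Gaussian convolution of widths $\sim\tb^{i-2}$ and $\sim\tb^{i-1}$; together with the large-$\tb$ Gaussian bound of Lemma \ref{lem: sum of mu} it is $\lesssim \psi(\tb^{i-2})\,\psi(\tb^{i-1})$ for some bounded $\psi$ with $\int_0^\infty\psi<\infty$, uniformly in $x^{i-2}$ and $x^i$. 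This cancels both reciprocal Jacobians near $\tb=0$ and decouples the remaining $v^i$-integral from $x^{i-1}$.

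What is left, after substituting $\tb^{i-2}\mapsto t^i=t^{i-2}-\tb^{i-2}-\tb^{i-1}$ with $\tb^{i-1}$ fixed, bounding $\psi(\tb^{i-2})\leq\sup\psi$, and integrating $\psi(\tb^{i-1})$ over $\tb^{i-1}$, is a universal constant times
\[
\int_0^{\infty}\dd t^i\int_{\p\O}\dd S_{x^i}\int_{\mathcal{V}_i}\mathbf{1}_{0\leq t^i<\tb(x^i,v^i)}\,\big|F(X(0;t^i,x^i,v^i),V(0;t^i,x^i,v^i))\big|\,|n(x^i)\cdot v^i|\,\dd v^i ,
\]
which equals $\|F\|_{L^1_{x,v}}$ by \eqref{COV} with $t=0$, $g=|F|$, $s=t^i$; multiplying back by $2^{i-2}$ gives \eqref{bound1:expand_h}. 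The main obstacle, and the reason the second paragraph cannot be shortcut, is the grazing regime $\tb^{i-1}\to0$ (and symmetrically $\tb^{i-2}\to0$): there the factor $(\tb^{i-1})^{-1}$ produced by Proposition \ref{prop:mapV} is not integrable if one bounds $\sum_{m,n}\mu_\Theta(x^i,v^{m,n}_{i-1,\mathbf{b}})$ by its supremum over $x^{i-1}$, which is only $O(1)$; one is forced to integrate $x^{i-1}$ against both weights simultaneously and exploit that the Gaussian mass scales like $(\tb)^2$, which precisely absorbs the singular Jacobian. The remaining steps — the sub-probability peeling, the application of Proposition \ref{prop:mapV}, and the final appeal to \eqref{COV} — are routine.
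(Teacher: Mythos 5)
Your argument is correct and follows essentially the same route as the paper's proof: peel off the outer $\dd\sigma_j$ factors for a factor $2^{i-2}$, change variables on $v^{i-2}, v^{i-1}$ via Proposition \ref{prop:mapV}, integrate the free surface variable $x^{i-1}$ against both boundary weights to neutralize the two singular Jacobians $(\tb^{i-2}\tb^{i-1})^{-1}$, and close with the change of variables \eqref{COV}. The only cosmetic difference is in the middle step: the paper splits into the cases $\tb^{i-1}\leq\tb^{i-2}$ and $\tb^{i-1}\geq\tb^{i-2}$, bounds one $\mu_\Theta$-sum crudely by $O(1)$ and integrates $x^{i-1}$ against the narrower Gaussian only (gaining $\min(\tb^{i-1},\tb^{i-2})^2$), whereas you package the same gain symmetrically as a Gaussian convolution yielding a product bound $\psi(\tb^{i-2})\psi(\tb^{i-1})$ with $\psi$ bounded and integrable; both formulations reduce the remaining estimate to the same application of \eqref{COV}, and indeed the paper's resulting bound $\langle\max(\tb^{i-1},\tb^{i-2})\rangle^{-5}$ is dominated by the product $\langle\tb^{i-2}\rangle^{-5/2}\langle\tb^{i-1}\rangle^{-5/2}$, so the two descriptions are interchangeable.
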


\begin{proof}
For \eqref{bound1:expand_h}, it suffices to prove this upper bound for $i = 2,...,k-1$, 
\Be \label{forcing}
\int_{\mathcal{V}_0} \cdots \int_{\mathcal{V}_{i-1}} 
\int_{\mathcal{V}_i} 
\mathbf{1}_{t^{i+1} < 0 \leq t^i}
|F (X(0; t^i, x^i, v^i), V(0; t^i, x^i, v^i))|  \{n(x^i) \cdot v^i\} \dd v^i \dd \sigma_{i-1}  \cdots \dd \sigma_0.
\Ee  

\textbf{Step 1.} 
Applying Proposition \ref{prop:mapV} , \eqref{integrand} and \eqref{est1:forcing}, 
we bound the above integration as  
\Be \label{est2:forcing}
\begin{split}
\eqref{forcing} \lesssim 
&\int_{\mathcal{V}_0} \dd \sigma_0 \cdots \int_{\mathcal{V}_{i-3}} \dd \sigma_{i-3}
\int^{t^{i-2}}_0 \dd \tb^{i-1}
 \int_0^{t^{i-2}- \tb^{i-1}} \dd \tb^{i-2} \int_{\p\O} \dd S_{x^i}
\\& \times
\underbrace{\bigg(\int_{\p\O} 
\frac{\sum\limits_{m, n \in \mathbb{Z}} \mu_{\Theta} (x^{i-1}, v^{m, n}_{i-2, \mathbf{b}})}{|\tb^{i-2}|} 
\times
\frac{\sum\limits_{m, n \in \mathbb{Z}} \mu_{\Theta} (x^{i}, v^{m, n}_{i-1, \mathbf{b}})}{|\tb^{i-1}|}   \dd S_{x^{i-1}} \bigg)}_{\eqref{est2:forcing}_*}
\eqref{est1:forcing}.
\end{split}\Ee

\textbf{Step 2.} We claim that 
\Be \label{est3:forcing}
\eqref{est2:forcing}_* \lesssim \mathbf{1}_{\tb^{i-1} \leq \tb^{i-2}}
\langle \tb^{i-2}\rangle^{-5}
+  \mathbf{1}_{\tb^{i-1} \geq \tb^{i-2}}
\langle \tb^{i-1}\rangle^{-5}.
\Ee

We split the cases: 
\textit{Case 1:} $\tb^{i-1} \leq \tb^{i-2}$.
Using \eqref{t>1,i-2} and \eqref{t<1,i-2} in Lemma \ref{lem: sum of mu}, we bound 
\Be \label{bound1_1and5}
\begin{split}
\frac{\sum\limits_{m, n \in \mathbb{Z}} \mu_{\Theta} (x^{i-1}, v^{m, n}_{i-2, \mathbf{b}})}{|\tb^{i-2}|}  
\lesssim  
\mathbf{1}_{\tb^{i-2} \leq 1} \frac{1}{
|\tb^{i-2}| 
}+ \mathbf{1}_{\tb^{i-2} \geq 1}
\frac{1}{|\tb^{i-2}|^5}, 
\end{split}
\Ee
Replacing $i$ with $i+1$ in \eqref{t>1,i-2} and \eqref{t<1,i-2}, we bound
\Be \label{bound2_1and5}
\begin{split}
\frac{\sum\limits_{m, n \in \mathbb{Z}} \mu_{\Theta} (x^{i}, v^{m, n}_{i-1, \mathbf{b}})}{|\tb^{i-1}|}  
& \lesssim 
\underbrace{\sum\limits_{|a| < 2, |b| < 2} \mathbf{1}_{\tb^{i-1} \leq 1} \mathbf{1}_{|x^i + (a, b) - x^{i-1}| \leq 2 \varrho_3 (\tb^{i-1})^2} \frac{1}{|\tb^{i-1}|}}_{\eqref{bound2_1and5}_1}
\\& + \underbrace{\sum\limits_{|a| < 2, |b| < 2} \mathbf{1}_{\tb^{i-1} \leq 1} \mathbf{1}_{|x^i + (a, b) - x^{i-1}| \geq 2 \varrho_3 (\tb^{i-1})^2} \frac{1}{|\tb^{i-1}|} \mu \big(x^{i}, \frac{|x^i + (a, b) - x^{i-1}|}{2|\tb^{i-1}|} \big)}_{\eqref{bound2_1and5}_2} 
\\& + \underbrace{\mathbf{1}_{\tb^{i-1} \leq 1} \frac{1}{|\tb^{i-1}|} e^{-\frac{1}{2b (\tb^{i-1})^2}}}_{\eqref{bound2_1and5}_3} 
+ \underbrace{
\mathbf{1}_{\tb^{i-1} \geq 1} 
\mu_{\Theta} (x^{i}, |\tb^{i-1}|)}_{\eqref{bound2_1and5}_4}.
\end{split}
\Ee

For $\eqref{bound2_1and5}_1$, we have 
\Be \label{bound3_1and5_1}
\begin{split}
& \ \ \ \ \mathbf{1}_{\tb^{i-1} \leq \tb^{i-2}} \int_{\p\O} 
\frac{\sum\limits_{m, n \in \mathbb{Z}} \mu_{\Theta} (x^{i-1}, v^{m, n}_{i-2, \mathbf{b}})}{|\tb^{i-2}|} 
\times \eqref{bound2_1and5}_1 \ \dd S_{x^{i-1}} 
\\& \lesssim \sum\limits_{|a| < 2, |b| < 2} \mathbf{1}_{\tb^{i-1} \leq \tb^{i-2}} \eqref{bound1_1and5} \bigg(\int_{\tb^{i-1} \leq 1, |x^i + (a, b) - x^{i-1}| \leq 2 \varrho_3 (\tb^{i-1})^2}  
\frac{1}{|\tb^{i-1}|}  \dd S_{x^{i-1}} \bigg)
\\& \lesssim \mathbf{1}_{\tb^{i-1} \leq \tb^{i-2}}
 \Big\{
 \mathbf{1}_{\tb^{i-2} \leq 1} \frac{1}{
|\tb^{i-2}| 
}+ \mathbf{1}_{\tb^{i-2} \geq 1}
\frac{1}{|\tb^{i-2}|^5} \Big\}
\mathbf{1}_{\tb^{i-1} \leq 1} \frac{(\varrho_3 (\tb^{i-1})^2)^2}{|\tb^{i-1}|}
\\& \leq \mathbf{1}_{\tb^{i-1} \leq \tb^{i-2}} \frac{(\tb^{i-1})^3}{|\tb^{i-2}|}
\mathbf{1}_{\tb^{i-2} \leq 1} 
+ \mathbf{1}_{\tb^{i-2} \geq 1}
\frac{1}{|\tb^{i-2}|^5}
\\& \leq \mathbf{1}_{\tb^{i-2} \leq 1} + \mathbf{1}_{\tb^{i-2} \geq 1}
\frac{1}{|\tb^{i-2}|^5}. 
\end{split}
\Ee

For $\eqref{bound2_1and5}_2$, we employ a change of variables, for $x^{i} \in \p\O$, $|a| < 2$, $|b| < 2$ and $\tb^{i-1}\geq0$, $x^{i-1} \in  \p\O
\mapsto z := \frac{1}{\tb^{i-1}} (x^{i-1} + (a, b) - x^{i}) \in \mathfrak{S}^{a, b}_{x^{i}, \tb^{i-1}},$ where the image $\mathfrak{S}^{a, b}_{x^{i}, \tb^{i-1}}$ of the map is a two dimensional smooth plane. Using the local chart of $\p\O$ we have 
$\dd S_{x^{i-1}} \lesssim |\tb^{i-1}|^2 \dd S_z.$
From this change of variables and \eqref{bound1_1and5}, we conclude that
\Be \label{bound3_1and5_2}
\begin{split}
& \ \ \ \ \mathbf{1}_{\tb^{i-1} \leq \tb^{i-2}} \mathbf{1}_{|x^i + (a, b) - x^{i-1}| \geq 2 \epsilon (\tb^{i-1})^2} \int_{\p\O} 
\frac{\sum\limits_{m, n \in \mathbb{Z}} \mu_{\Theta} (x^{i-1}, v^{m, n}_{i-2, \mathbf{b}})}{|\tb^{i-2}|} 
\times \eqref{bound2_1and5}_2 \ \dd S_{x^{i-1}}
\\& \lesssim \mathbf{1}_{\tb^{i-1} \leq \tb^{i-2}}
\eqref{bound1_1and5}
\sum\limits_{|a| < 2, |b| < 2}
\int_{ \mathfrak{S}^{a, b}_{x^{i}, \tb^{i-1}} } \mathbf{1}_{\tb^{i-1} \leq 1} e^{- z^2 / 8b} |\tb^{i-1}| \dd S_{z} 
\\& \lesssim  \mathbf{1}_{\tb^{i-1} \leq \tb^{i-2}}
 \Big\{
 \mathbf{1}_{\tb^{i-2} \leq 1} \frac{1}{
|\tb^{i-2}|} + \mathbf{1}_{\tb^{i-2} \geq 1}
\frac{1}{|\tb^{i-2}|^5} \Big\}
\mathbf{1}_{\tb^{i-1} \leq 1}|\tb^{i-1}|
\\& \lesssim \mathbf{1}_{\tb^{i-1} \leq \tb^{i-2}}\Big\{
 \mathbf{1}_{\tb^{i-2} \leq 1} \frac{|\tb^{i-1}|}{|\tb^{i-2}|} + \mathbf{1}_{\tb^{i-2} \geq 1}
\frac{1}{|\tb^{i-2}|^5}
 \Big\} 
\lesssim \mathbf{1}_{\tb^{i-2} \leq 1} + \mathbf{1}_{\tb^{i-2} \geq 1}
\frac{1}{|\tb^{i-2}|^5}.
\end{split}
\Ee
 
For $\eqref{bound2_1and5}_3$, since $e^{-\frac{1}{2b t^2}} \lesssim t^2$ for $0 < t \leq 1$, and we have that 
\Be \label{bound3_1and5_3}
\begin{split}
& \ \ \ \ \mathbf{1}_{\tb^{i-1} \leq \tb^{i-2}} \int_{\p\O} 
\frac{\sum\limits_{m, n \in \mathbb{Z}} \mu_{\Theta} (x^{i-1}, v^{m, n}_{i-2, \mathbf{b}})}{|\tb^{i-2}|} 
\times \eqref{bound2_1and5}_3 \ \dd S_{x^{i-1}}
\\& \lesssim \mathbf{1}_{\tb^{i-1} \leq \tb^{i-2}} \eqref{bound1_1and5}
\mathbf{1}_{\tb^{i-1} \leq 1} \frac{1}{|\tb^{i-1}|} e^{-\frac{1}{2b (\tb^{i-1})^2}}
\int_{\p\O}  
 \dd S_{x^{i-1}} 
\\& \lesssim \mathbf{1}_{\tb^{i-1} \leq \tb^{i-2}}
 \Big\{
 \mathbf{1}_{\tb^{i-2} \leq 1} \frac{1}{
|\tb^{i-2}| 
} + \mathbf{1}_{\tb^{i-2} \geq 1}
\frac{1}{|\tb^{i-2}|^5} \Big\}
\mathbf{1}_{\tb^{i-1} \leq 1} |\tb^{i-1}|
\\& \leq \mathbf{1}_{\tb^{i-1} \leq \tb^{i-2}} \frac{|\tb^{i-1}|}{|\tb^{i-2}|}
\mathbf{1}_{\tb^{i-2} \leq 1} 
+ \mathbf{1}_{\tb^{i-2} \geq 1}
\frac{1}{|\tb^{i-2}|^5}
\\& \leq \mathbf{1}_{\tb^{i-2} \leq 1} + \mathbf{1}_{\tb^{i-2} \geq 1}
\frac{1}{|\tb^{i-2}|^5}. 
\end{split}
\Ee

For $\eqref{bound2_1and5}_4$, from
$\mathbf{1}_{\tb^{i-1} \geq 1} 
\mu_{\Theta} (x^{i}, |\tb^{i-1}|) \lesssim \mathbf{1}_{\tb^{i-1} \geq 1}$, we can derive that 
\Be \label{bound3_1and5_4}
\begin{split}
& \ \ \ \ \mathbf{1}_{\tb^{i-1} \leq \tb^{i-2}} \int_{\p\O} 
\frac{\sum\limits_{m, n \in \mathbb{Z}} \mu_{\Theta} (x^{i-1}, v^{m, n}_{i-2, \mathbf{b}})}{|\tb^{i-2}|} 
\times \eqref{bound2_1and5}_4 \ \dd S_{x^{i-1}}
\\& \lesssim \mathbf{1}_{\tb^{i-1} \leq \tb^{i-2}} \eqref{bound1_1and5}
\mathbf{1}_{\tb^{i-1} > 1} \mu_{\Theta} (x^{i}, |\tb^{i-1}|) \int_{\p\O} \dd S_{x^{i-1}} 
\\& \lesssim  \mathbf{1}_{\tb^{i-1} \leq \tb^{i-2}}
 \Big\{
 \mathbf{1}_{\tb^{i-2} \leq 1} \frac{1}{
|\tb^{i-2}| 
}+ \mathbf{1}_{\tb^{i-2} \geq 1}
\frac{1}{|\tb^{i-2}|^5} \Big\} \mathbf{1}_{\tb^{i-1} \geq 1}
\\& \lesssim \mathbf{1}_{\tb^{i-2} \geq 1} \frac{1}{|\tb^{i-2}|^5}.
\end{split}
\Ee

Collecting estimate from \eqref{bound3_1and5_1}-\eqref{bound3_1and5_4}, we deduce that
\Be \label{bound3_1and5}
\mathbf{1}_{\tb^{i-1} \leq \tb^{i-2}} \eqref{est2:forcing}_*  \lesssim 
 \mathbf{1}_{\tb^{i-2} \leq 1} 
+ \mathbf{1}_{\tb^{i-2} \geq 1}
 {|\tb^{i-2}|^{-5}}.
\Ee

\textit{Case 2:} $\tb^{i-1} \geq \tb^{i-2}$.
We change the role of $i-1$ and $i-2$ and follow the argument of the previous case.   We employ a change of variables, for $x^{i-1} \in \p\O$ $|a| < 2$, $|b| < 2$  and $\tb^{i-2}\geq0$, $
x^{i-2} \in  \p\O
\mapsto z := \frac{1}{\tb^{i-2}} (x^{i-2}-x^{i-1}) \in \mathfrak{S}_{x^{i-1}, \tb^{i-2}} ,
$ with $\dd S^{a, b}_{x^{i-2}} \lesssim |\tb^{i-2}|^2 \dd S_z.$ 
Then we can conclude that 
\Be \label{bound4_1and5}
\begin{split}
\mathbf{1}_{\tb^{i-1} \geq \tb^{i-2}} \eqref{est2:forcing}_*  \lesssim 
 \mathbf{1}_{\tb^{i-1} \leq 1} 
+ \mathbf{1}_{\tb^{i-1} \geq 1}
 {|\tb^{i-1}|^{-5}}.
\end{split}
\Ee
Clearly the above bound and \eqref{bound3_1and5} imply \eqref{est3:forcing}. 
 
\textbf{Step 3.} Now we apply \eqref{est3:forcing} on \eqref{est2:forcing}. Then we have 
\begin{align}
\eqref{forcing}
& \lesssim \int_{\mathcal{V}_0} \dd \sigma_0 \cdots \int_{\mathcal{V}_{i-3}} \dd \sigma_{i-3}  
\int^{t^{i-2}}_0 
\frac{\dd \tb^{i-1}}{\langle \tb^{i-1} \rangle^{5}}
\int_0^{\min\{t^{i-2}- \tb^{i-1},  \tb^{i-1}  \}} \dd \tb^{i-2} \int_{\p\O} \dd S_{x^i} \eqref{est1:forcing} \label{est_a:forcing}
\\& + \int_{\mathcal{V}_0} \dd \sigma_0 \cdots \int_{\mathcal{V}_{i-3}} \dd \sigma_{i-3} 
\int^{t^{i-2}}_0 \frac{\dd \tb^{i-2}}{\langle  \tb^{i-2}\rangle^{5}}
\int_0^{ \min\{t^{i-2}- \tb^{i-2},\tb^{i-2}  \}} \dd \tb^{i-1} \int_{\p\O} \dd S_{x^i} \eqref{est1:forcing}.\label{est_b:forcing}
\end{align} 

For \eqref{est_a:forcing}, we employ the change of variables 
\be \notag
(x^{i}, \tb^{i-2}, v^{i}) 
\mapsto (y, w) = (X(0; t^{i-2} -\tb^{i-2} - \tb^{i-1}, x^{i}, v^{i}), V(0; t^{i-2} -\tb^{i-2} - \tb^{i-1}, x^{i}, v^{i})) \in \O \times\R^3. 
\ee
From \eqref{COV}, we derive
$|n(x^i) \cdot v^i| \dd S_{x^i} \dd \tb^{i-2} \dd v^i \lesssim \dd y \dd w$. 
Thus, we bound \eqref{est_a:forcing} as
\Be \notag
\begin{split}
\eqref{est_a:forcing}
& \leq \int_{\mathcal{V}_0} \dd \sigma_0 \cdots \int_{\mathcal{V}_{i-3}} \dd \sigma_{i-3}
\int^{t^{i-2}}_0 \dd \tb^{i-1}\langle \tb^{i-1} \rangle^{-5} 
 \iint_{\O \times\R^3 } 
|F (y, w)|  \dd y \dd w
\\& \lesssim 2^{i} \times \| F(x, v) \|_{L^1_{x,v}}. 
\end{split}
\Ee

A bound of \eqref{est_b:forcing} can be derived similarly, by using the change of variables 
\be \notag
(x^{i}, \tb^{i-1}, v^{i}) 
\mapsto (y, w) = (X(0; t^{i-2} -\tb^{i-2} - \tb^{i-1}, x^{i}, v^{i}), V(0; t_{i-2} -\tb^{i-2} - \tb^{i-1}, x^{i}, v^{i})) \in \O \times\R^3, 
\ee
with $
|n(x^i) \cdot v^i| \dd S_{x^i} \dd \tb^{i-1} \dd v^i \lesssim \dd y \dd w$.
\end{proof}

Finally, we control \eqref{expand_G2} by establishing the following estimate:

\begin{lemma} \label{lem:small_largek}
Consider $(X,V)$ in \eqref{characteristics}, there exists $\mathfrak{C}= \mathfrak{C}(\O)>0$ (see \eqref{choice:k} for the precise choice),
such that  
\Be \label{small_largek}
\text{if }  \  k \geq \mathfrak{C}t,  \text{ then } 
\sup_{(x,v) \in \bar{\O} \times \R^3}  \Big(\int_{\prod_{j=0}^{k -1} \mathcal{V}_j}   
    \mathbf{1}_{t^{k} (t,x,v,v^1,\cdots, v^{k-1}) \geq 0 } \ \dd \sigma_0 \cdots \dd \sigma_{k-1}\Big) \lesssim e^{-t},
\Ee  
where $\dd \sigma_j = \mu_{\Theta} (x^{j+1}, v^{j}) \{ n(x^j) \cdot v^j \} \dd v^j$ in \eqref{def:sigma measure}.
\end{lemma}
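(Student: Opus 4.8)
The plan is to turn this estimate of an (a priori non-probability) measure into a volume bound on a simplex, and then to apply Stirling's formula. First I would exploit the structure of the stochastic cycle: by Definition~\ref{def_cycles} one has $t^k = t - \sum_{j=0}^{k-1}\tb^j$ with $\tb^j := \tb(x^j,v^j)$ and $x^0 = x$, so $\mathbf{1}_{t^k\geq 0} = \mathbf{1}_{\sum_{j=0}^{k-1}\tb^j\leq t}$. Since each $(x^j,v^j)$ lies in $\gamma_+$, \eqref{v3t estimate} provides a constant $c_0 = c_0(\O)>0$ with $\tb^j \geq c_0\,(n(x^j)\cdot v^j)$; hence on the support of the indicator $\sum_{j=0}^{k-1}(n(x^j)\cdot v^j) \leq C_1 t$ with $C_1 := 1/c_0$. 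The content of this step is structural: $k$ successive bounces, each charging a positive normal speed, cannot have total normal speed below $C_1 t$ once $k$ is large relative to $t$, and this constraint becomes ever costlier as $k$ grows, compensating the factor $2^k$ coming from Remark~\ref{sigma measure estiamte}.

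Next I would estimate each factor $\dd\sigma_j = \mu_\Theta(x^{j+1},v^j)\{n(x^j)\cdot v^j\}\dd v^j$ by $\mu_\Theta(x^{j+1},v^j)\leq \tfrac{1}{2\pi a^2}e^{-|v^j|^2/(2b)}$, which is uniform in $x^{j+1}$ by \eqref{def:Theta}. Writing $v^j = (v^j_\parallel, s_j)$ with $s_j := n(x^j)\cdot v^j > 0$ and $v^j_\parallel\in\R^2$ the tangential part (so $|v^j|^2 = |v^j_\parallel|^2 + s_j^2$), and observing that the constraint $\sum_j s_j \leq C_1 t$ involves only the $s_j$, I can integrate out every $v^j_\parallel$ freely, each Gaussian integral contributing just a fixed constant. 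This collapses the iterated integral to
\[
\int_{\prod_{j=0}^{k-1}\mathcal{V}_j}\mathbf{1}_{t^k\geq 0}\,\dd\sigma_0\cdots\dd\sigma_{k-1}
\ \leq\ C^{k}\!\!\int_{\{s_j>0,\ \sum_j s_j\leq C_1 t\}}\ \prod_{j=0}^{k-1} s_j\, e^{-s_j^2/(2b)}\,\dd s_j
\ \leq\ C^{k}\,\frac{(C_1 t)^{2k}}{(2k)!},
\]
using $e^{-s_j^2/(2b)}\leq 1$ and the Dirichlet/simplex identity $\int_{\{s_j>0,\,\sum_j s_j\leq R\}}\prod_{j=1}^k s_j\,\dd s_j = R^{2k}/(2k)!$. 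Here $C, C_1$ depend only on $a,b,g,\O$, and every bound so far is independent of $(x,v)$.

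Finally, Stirling's inequality $(2k)!\geq (2k/e)^{2k}$ turns the right-hand side into $\big(\tfrac{C(C_1 e)^2 t^2}{4k^2}\big)^{k}$. Taking $\mathfrak{C} = \mathfrak{C}(\O) := \max\{1,\ \tfrac{1}{2}e^{3/2}C_1\sqrt{C}\}$ forces the base to be at most $e^{-1}$ whenever $k\geq \mathfrak{C}t$, so the whole quantity is $\leq e^{-k}\leq e^{-\mathfrak{C}t}\leq e^{-t}$, uniformly in $(x,v)$; this is \eqref{small_largek}. The one place where I expect to have to be careful is the bookkeeping of constants: I must check that the constant in \eqref{v3t estimate} and the Gaussian constants are uniform in the cycle index $j$ and in $x^{j+1}$ — they are, since they depend only on $g$, $\varrho_3$, $a$, $b$ and on the flat geometry of $\p\O = \mathbb{T}^2\times\{0\}$ — so that $C$ and $C_1$ do not deteriorate as $j$ increases. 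Beyond that there is no analytic obstacle. A variant that skips the simplex computation — applying a Chernoff/exponential-moment bound directly to the measure $s\,e^{-s^2/(2b)}\,\dd s$ instead of discarding the Gaussians — yields the same conclusion; I would nevertheless prefer the Stirling argument because it makes $\mathfrak{C}$ explicit.
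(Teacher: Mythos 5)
Your proof is correct, but it takes a genuinely different route from the paper's. The paper introduces a threshold $\delta$, splits each $\mathcal{V}_j$ into the set $\mathcal{V}_j^\delta$ where the normal speed is $<\delta$ (whose $\sigma_j$-measure is $O(\delta^2)$) and its complement (where $\tb^j\geq C_\O\delta$), observes that the constraint $t^k\geq 0$ permits at most $[t/(C_\O\delta)]+1$ bounces outside the thresholded sets, and then sums over all such subsets using $\binom{k}{m}$, invoking Stirling to control the binomial coefficients. Your argument dispenses with the dichotomy entirely: you use $\tb^j\gtrsim n(x^j)\cdot v^j$ once to replace $\mathbf{1}_{t^k\geq 0}$ by the weaker simplex constraint $\sum_j s_j\leq C_1 t$, upper-bound $\mu_\Theta(x^{j+1},v^j)$ by $\frac{1}{2\pi a^2}e^{-|v^j|^2/(2b)}$ uniformly in $x^{j+1}$ (which decouples the iterated integral), integrate out the tangential components, and then apply the Dirichlet/simplex identity $\int_{\{s_j>0,\ \sum s_j\leq R\}}\prod_j s_j\,\dd s_j = R^{2k}/(2k)!$ followed by Stirling on the single factorial. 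Both proofs are correct and ultimately rest on the same physics (the normal speed controls the dwell time from below, so many bounces in a fixed time window forces most normal speeds to be small, which is exponentially unlikely under the Gaussian boundary kernel), but yours is shorter, avoids the combinatorial bookkeeping of choosing which bounces are ``large,'' and — as you note — produces an explicit $\mathfrak{C}$. One point worth flagging in writing it up: you should state that the decoupling step is legitimate precisely because the only dependence of the integrand on $x^{j+1}$ (and hence on $v^j_\parallel$ via the trajectory) is through $\mu_\Theta(x^{j+1},v^j)$, which you bound uniformly before integrating in $v^j_\parallel$; once this is said the simplex reduction is watertight. The paper's more elaborate splitting is not wasted effort, however — the authors reuse exactly the same $\mathcal{V}^{\delta}$/combinatorial machinery to define the good/bad sets $\mathcal{V}^{G_\e},\mathcal{V}^{B_\e}$ in the magnetic case, whereas your approach would there use the exact identity $\tb=v_3/5$ and work equally cleanly, so the two strategies really are interchangeable here.
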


\begin{proof}

\textbf{Step 1.} 
Since $|n(x) \cdot v| \lesssim \tb (x, v)$ for $(x,v) \in \gamma_{+}$, we get 
\be \notag
\mathbf{1}_{ |n(x) \cdot v| \lesssim \delta} \geq \mathbf{1} _{\delta> \tb (x,v)}.
\ee
Setting $\vartheta$ as the angle between $v $ and $n(x)$ and $r = |v|$, we have 
\Be \label{estimate on delta}
\begin{split} 
& \ \ \ \ \int_{n(x) \cdot v > 0}
\mathbf{1} _{\delta> \tb (x, v)} \mu_{\Theta} (\xb, v) |n(x) \cdot v|\dd v
\\& \lesssim \int_{ |n(x) \cdot v| \lesssim \delta} \mu_{\Theta} (\xb, v) | n(x) \cdot v | \dd v  \\& \leq \int_{ |n(x) \cdot v| \lesssim \delta} \mu_{\Theta} (\xb, v) \delta \dd v
    \leq C \int^{\infty}_{0} \delta e^{- \frac{r^2}{2b}} r^2 \dd r \int_{\cos \vartheta < \delta / r} \sin \vartheta \dd \vartheta
\\& \leq C \int^{\infty}_{0} \delta e^{- \frac{r^2}{2b}} \delta r \dd r \lesssim C \delta^2.
\end{split}
\Ee
Now we define $\mathcal{V}^{\delta}_i := \{ v^{i} \in \mathcal{V}_i:  {| n(x^{i}) \cdot v^{i} |} < \delta \}$. Recall \eqref{estimate on delta}, we have
\Be \notag
\int_{\mathcal{V}^{\delta}_j} \dd \sigma_j \leq C \delta^2.
\Ee
On the other hand, we have $t_{\mathbf{b}} (x^{i}, v^{i}) \gtrsim {| n(x^{i}) \cdot v^{i} |}$. Therefore, if $v^{i} \in \mathcal{V}_i \backslash \mathcal{V}^{\delta}_i$, we derive that 
\Be \notag
t_{\mathbf{b}} (x^{i}, v^{i}) \geq C_{\Omega} \delta.
\Ee

 If $t_{k}(t,x,v^0,v^1,\cdots, v^{k-1}) \geq 0$, we conclude such $v^{i} \in \mathcal{V}_i \backslash \mathcal{V}^{\delta}_i$ can exist at most $[\frac{t  }{C_{\Omega} \delta}] + 1$ times. Denote the combination 
$\begin{pmatrix}
M \\ N
\end{pmatrix}= \frac{M(M-1) \cdots (M-N+1)}{N(N-1) \cdots 1}= \frac{M!}{N! (M-N)!}$ for $M,N \in \mathbb{N}$ and $M\geq N$. From Remark \ref{sigma measure estiamte} and $0<\delta \ll 1$, we have
\Be \label{sum_bound}
\begin{split} 
& \ \ \ \ \int_{\prod_{j=0}^{k-1} \mathcal{V}_j}   \mathbf{1}_{t_{k} (t,x,v^0,v^1,\cdots, v^{k-1}) \geq 0} \ \dd \sigma_{k-1} \cdots \dd \sigma_0
\\& \leq 2^k \sum\limits^{[\frac{t}{C_{\Omega} \delta}] + 1}_{m=0} 
\begin{pmatrix}
k
 \\
m
\end{pmatrix} 
\big( \int_{\mathcal{V}_i^\delta} \dd \sigma_i \big)^{k-m}
 \leq 2^k (C\delta^2)^{k - [\frac{t  }{C_{\Omega} \delta}]}
  \underbrace{ \sum\limits^{[\frac{t  }{C_{\Omega} \delta}] + 1}_{m=0} 
\begin{pmatrix}
k
 \\
m
\end{pmatrix}}_{\eqref{sum_bound}_*}.
\end{split}
\Ee

\textbf{Step 2.} 
Recall the Stirling's formula,
\Be \label{Stirling}
\sqrt{2 \pi} k^{k+\frac{1}{2}} e^{-k} \leq k ! \leq k^{k+\frac{1}{2}} e^{-k+1}.
\Ee
Using $(1 + \frac{1}{\mathfrak{a}-1})^{\mathfrak{a}-1} \leq e$ and \eqref{Stirling}, we have for $\mathfrak{a} \geq 2$,
\Be \notag
\begin{split}
\begin{pmatrix}
k \\
\frac{k}{\mathfrak{a}}
\end{pmatrix} 
  = \frac{k !}{ (k - \frac{k}{\mathfrak{a}}) ! \frac{k}{ \mathfrak{a}} !} 
& \leq 
 (\frac{\mathfrak{a}}{\mathfrak{a}-1})^{\frac{\mathfrak{a}}{\mathfrak{a}-1} k} \mathfrak{a}^{\frac{k}{\mathfrak{a}}} \sqrt{\frac{\mathfrak{a}^2}{k (\mathfrak{a}-1)}} 
\\&  = 
 \frac{1}{\sqrt{k}} \bigg( \mathfrak{a}^{\frac{1}{\mathfrak{a}}} \big( \frac{\mathfrak{a}}{\mathfrak{a}-1} \big)^{\frac{\mathfrak{a}}{\mathfrak{a}-1}} \bigg)^k \sqrt{\frac{ \mathfrak{a}^2}{\mathfrak{a}-1}}
  \leq
  \frac{1}{\sqrt{k}} (e \mathfrak{a})^{\frac{k}{\mathfrak{a}}} \sqrt{\frac{\mathfrak{a}^2}{ \mathfrak{a}-1}}.
\end{split}
\Ee
Hence, we derive that
\Be \label{est:sum_com}
\begin{split}
\sum\limits^{[\frac{k}{\mathfrak{a}}]}_{i=1} 
\begin{pmatrix}
k \\
i
\end{pmatrix} \leq 
\frac{k}{\mathfrak{a}} 
\begin{pmatrix}
k \\
\frac{k}{\mathfrak{a}}
\end{pmatrix} \leq \frac{e}{2 \pi} \sqrt{\frac{k}{\mathfrak{a}}} (e \mathfrak{a})^{\frac{k}{\mathfrak{a}}}.
\end{split}
\Ee

\textbf{Step 3.} 
Now we estimate $\eqref{sum_bound}_*$. 
For fixed $0< \delta \ll 1/2$ which is independent of $t$, we choose  
\Be \label{choice:k}
\mathfrak{a} \in \mathbb{N} \ \text{ such that }
(4^{\mathfrak{a}} \delta^{2 \mathfrak{a}} e \mathfrak{a})^{\frac{1}{C_\O \delta}} \leq e^{-2}, \ \text{ and set } 
k :=  \mathfrak{a} \Big(\big[\frac{t  }{C_{\Omega} \delta}\big] + 1\Big).
\Ee 
Using \eqref{est:sum_com}, we have
\Be \notag
\eqref{sum_bound}_* \lesssim   \sqrt{\big[\frac{t  }{C_{\Omega} \delta}\big] + 1} \Big( e \frac{k}{[\frac{t  }{C_{\Omega} \delta}] + 1} \Big)^{[\frac{t  }{C_{\Omega} \delta}] + 1} 
\lesssim \sqrt{\big[\frac{t  }{C_{\Omega} \delta}\big] + 1} (e \mathfrak{a})^{[\frac{t  }{C_{\Omega} \delta}] +1 }.
\Ee
Hence, we bound \eqref{sum_bound} by
\Be \notag
2^k (\delta^{2\mathfrak{a}} e \mathfrak{a} )^{[\frac{t  }{C_{\Omega} \delta}] + 1} \sqrt{\big[\frac{t  }{C_{\Omega} \delta}\big] + 1} 
 \lesssim e^{-t}.
\Ee
This completes the proof and we can conclude
\Be \label{est: small_largek}
\begin{split}
\eqref{expand_G2} 
& = \int_{\prod_{j=0}^{k } \mathcal{V}_j} \mathbf{1}_{t^{k } \geq 0 } F (x^{k }, v^{k }) \dd \tilde{\Sigma}_{k} 
\\& = \int_{\prod_{j=0}^{k -1} \mathcal{V}_j}  \mathbf{1}_{t_{k }(t,x,v,v^1,\cdots, v^{k-1}) \geq 0 } \ \dd \sigma_0 \cdots \dd \sigma_{k-1} \int_{\mathcal{V}_k} \mu_{\Theta} (x_{k+1}, v_{k}) \J (x_{k+1}) \{ n(x_k) \cdot v_k \} \dd v_k
\\& \lesssim \| \J (x)\|_{L^{\infty}_x} \times e^{-t}.
\end{split}
\Ee
\end{proof}

\begin{proof}[\textbf{Proof of Proposition \ref{prop:j linfty bound}}]
Using \eqref{est: G11}, \eqref{est: G12}, \eqref{est: small_largek} and Lemma \ref{lem:bound1}, we have
\Be \notag
\J (x) \leq \| \J (x)\|_{L^{\infty}_x} \times e^{- \frac{t^2}{2b}} + \| \J (x)\|_{L^{\infty}_x} \times e^{- \frac{t^2}{8b}} + \sum^{k}_{i = 2} 2^{i} \times \| F(x, v) \|_{L^1_{x,v}} + \| \J (x)\|_{L^{\infty}_x} \times e^{-t}.
\Ee
We can pick sufficiently large but fixed $t$ such that 
\be \label{choice:t}
e^{- \frac{t^2}{2b}} + e^{- \frac{t^2}{8b}} + e^{-t} \leq 1/2,
\ee 
then we derive
\Be \notag
(1 - e^{- \frac{t^2}{2b}} - e^{- \frac{t^2}{8b}} - e^{-t}) \| \J (x)\|_{L^{\infty}_x} \leq \sum^{k}_{i = 2} 2^{i} \times \| F(x, v) \|_{L^1_{x,v}}.
\Ee
Since $t$ and $k = \mathfrak{a} \Big(\big[\frac{t  }{C_{\Omega} \delta}\big] + 1\Big)$ are fixed, we conclude that
\Be \label{est: j}
\| \J (x)\|_{L^{\infty}_x} 
\leq (1 - e^{- \frac{t^2}{2b}} - e^{- \frac{t^2}{8b}} - e^{-t})^{-1} \sum^{k}_{i = 2} 2^{i} \times \| F(x, v) \|_{L^1_{x,v}}
\lesssim 2^{k+1} \times \| F(x, v) \|_{L^1_{x,v}}.
\Ee

Moreover, $F (x ,v) = F (x_\mathbf{b}, \vb) = \mu_{\Theta} (x_\mathbf{b}, \vb) \J (x_\mathbf{b})$, the $L^{\infty}_x$-estimate in \eqref{est: j} implies
\Be \notag
\|\mu^{-1}_{\Theta} (x_\mathbf{b}, \vb) F (x, v)\|_{L^{\infty}_{x,v}} 
= \| \J (x)\|_{L^{\infty}_x}
\lesssim 2^{k+1} \times \| F(x, v) \|_{L^1_{x,v}}.
\Ee

Using $|v|^2/2 + \Phi(x) = |\vb|^2/2$ in Lemma \ref{conservative field} and \eqref{def:Theta}, we get
\be
\mu^{-1}_{\Theta} (\xb, \vb) = 2 \pi \Theta^2(\xb) e^{ \frac{|\vb|^2}{2 \Theta(\xb)}}
\gtrsim e^{ \frac{|v|^2/2 + \Phi(x)}{2 b}}.
\ee
Therefore, we prove the proposition.
\end{proof}

\subsection{With Magnet field} 

Similar as Lemma \ref{sto_cycle J}, the stochastic cycles for the boundary outgoing flux $\J (x)$ with magnet field follows:
\begin{lemma}
Suppose $F (x, v)$ solves \eqref{def:f} and \eqref{diff_F}, with $\J (x)$ defined in \eqref{diff_J}, then for $t \geq 0$ and $k \geq 1$,
\begin{align}
	\J (x)
    & = \int_{\mathcal{V}_0}   
    \mathbf{1}_{t^{1} < 0 }  F (X(0; t, x, v^0), V(0; t, x, v^0)) \{ n(x) \cdot v^0 \} \dd v^0
\label{expand_G11 mag}
	\\& + \int_{\prod_{j=0}^{1} \mathcal{V}_j}   
    \mathbf{1}_{t^{2} < 0 \leq t^{1}} F (X(0; t^1, x^1, v^1), V(0; t^1, x^1, v^1)) \dd \tilde{\Sigma}_{1}
\label{expand_G12 mag}
    \\& 	+ \int_{\prod_{j=0}^{i} \mathcal{V}_j}   
     \sum\limits^{k-1}_{i=2} 
     \Big\{   \mathbf{1}_{t^{i+1} < 0 \leq t^{i }} 		F (X(0; t^i, x^i, v^i), V(0; t^i, x^i, v^i))  \Big\} \dd \tilde{\Sigma}_{i}
\label{expand_G1 mag}
    \\& + \int_{\prod_{j=0}^{k } \mathcal{V}_j}   
    \mathbf{1}_{t^{k } \geq 0 } F (x^{k }, v^{k }) \dd \tilde{\Sigma}_{k},
\label{expand_G2 mag}
\end{align} 
where 
$\dd \tilde{\Sigma}_{i} := \frac{ \dd \sigma_{i}}{\mu_{\Theta} (x^{i+1}, v^{i})} \dd \sigma_{i-1} \cdots \dd \sigma_1 \dd \sigma_0$ with $\dd \sigma_j = \mu_{\Theta} (x^{j+1}, v^{j}) \{ n(x^j) \cdot v^j \} \dd v^j$ in \eqref{def:sigma measure}.
Here, $(X,V)$ in \eqref{characteristics mag}.
\end{lemma}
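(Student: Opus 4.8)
The plan is to read this expansion off directly from the stochastic-cycle representation of the stationary solution, Lemma \ref{sto_cycle_1}, specialized to the magnetic characteristics \eqref{characteristics mag} and evaluated at $t_* = 0$. By the definition \eqref{diff_J}, $\J(x) = \int_{\mathcal{V}_0} F(x,v^0)\{n(x)\cdot v^0\}\dd v^0$ with $\mathcal{V}_0 := \{v^0 : n(x)\cdot v^0 > 0\}$, so the first step is to insert into this integral the identity of Lemma \ref{sto_cycle_1} applied to $F(x,v^0)$, with $t_* = 0$ and $(X,V)$ the magnetic characteristics. Writing $v^0$ for the ``$v$'' of that lemma, the chain produced is $x^1 = \xb(x,v^0)$, $\vb = \vb(x,v^0) = V(t^1;t,x,v^0)$, and then the $t^i, x^i, \vb^i$ of Definition \ref{def_cycles}; in particular the first reflection set is $\mathcal{V}_1$, not $\mathcal{V}_0$.

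The second step is to recognize the outermost measure. For $v^0 \in \mathcal{V}_0$, Lemma \ref{conservative field mag} gives $|\vb| = |v^0|$, and since $\mu_\Theta(x,v) = \mu_\Theta(x,|v|)$ (Remark \ref{probability}) we obtain $\mu_\Theta(x^1,\vb) = \mu_\Theta(x^1,v^0)$. Hence the prefactor $\mu_\Theta(x^1,\vb)$ that multiplies \eqref{expand_F2}--\eqref{expand_F3} combines with $\{n(x)\cdot v^0\}\dd v^0$ to form exactly $\dd\sigma_0 = \mu_\Theta(x^1,v^0)\{n(x)\cdot v^0\}\dd v^0$ as in \eqref{def:sigma measure}, so that $\dd\sigma_0\,\dd\Sigma_i$ becomes $\dd\tilde\Sigma_i = \frac{\dd\sigma_i}{\mu_\Theta(x^{i+1},v^i)}\dd\sigma_{i-1}\cdots\dd\sigma_1\,\dd\sigma_0$. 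The remaining step is bookkeeping over the three pieces of Lemma \ref{sto_cycle_1}: the term \eqref{expand_F1} with $t_* = 0$, integrated over $\mathcal{V}_0$, gives \eqref{expand_G11 mag}; peeling the index $i = 1$ off the sum in \eqref{expand_F2} (where the product $\dd\sigma_{i-1}\cdots\dd\sigma_1$ is empty, so $\dd\Sigma_1 = \dd\sigma_1/\mu_\Theta(x^2,\vb^1)$) gives \eqref{expand_G12 mag}; the terms $i = 2,\dots,k-1$ give \eqref{expand_G1 mag}; and \eqref{expand_F3} gives \eqref{expand_G2 mag}, where the fact that the stationary $F$ is constant along characteristics lets us keep the integrand as $F(x^k,v^k)$ with no explicit time.

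This is the exact analogue of Lemma \ref{sto_cycle J} in the gravitational case, and follows by the same argument: the sole substantive change is that the speed-conservation identity $|\vb| = |v^0|$ needed to collapse $\mu_\Theta(x^1,\vb)\{n(x)\cdot v^0\}\dd v^0$ into $\dd\sigma_0$ is supplied by Lemma \ref{conservative field mag} (valid for \eqref{characteristics mag}) in place of Lemma \ref{conservative field}, and all of the $t^i,x^i,\vb^i$ are now built from \eqref{characteristics mag}. Accordingly there is no genuine obstacle here; the only points that require care are the index shift between the ``$v$'' of Lemma \ref{sto_cycle_1} and the integration variable $v^0$ of \eqref{diff_J} (so that $x^1=\xb(x,v^0)$ and the first nontrivial reflection set is $\mathcal{V}_1$), and the empty-product convention at $i = 1$.
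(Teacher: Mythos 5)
Your proof is correct and takes essentially the same route the paper intends: the paper states this lemma (and its gravitational twin, Lemma \ref{sto_cycle J}) as an immediate consequence of Lemma \ref{sto_cycle_1} and Remark \ref{probability}, and your argument simply supplies the bookkeeping — integrate the expansion of Lemma \ref{sto_cycle_1} at $t_*=0$ against $\{n(x)\cdot v^0\}\dd v^0$ over $\mathcal{V}_0$, use $|\vb|=|v^0|$ from Lemma \ref{conservative field mag} (via Remark \ref{probability}) to fold $\mu_\Theta(x^1,\vb)\{n(x)\cdot v^0\}\dd v^0$ into $\dd\sigma_0$, and split off the $i=1$ term. The points you flag as needing care (the index shift $v\mapsto v^0$ and the empty-product convention at $i=1$) are exactly the ones worth flagging.
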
 

To prove Proposition \ref{prop:j linfty bound} for the magnetic case, we start with the estimate on the first two terms in stochastic cycles on $\J (x)$.
Then we use lemma \ref{lem:bound1 mag} and \ref{lem:small_largek mag} to control the rest of terms.


For the first two terms \eqref{expand_G11 mag} and \eqref{expand_G12 mag}. 
Following \eqref{est1: G11}-\eqref{est: G11} and $\tb (x, v) = v_3 / 5$, we derive
\Be \label{est: G11 mag}
\eqref{expand_G11 mag} = \int_{\mathcal{V}_0} \mathbf{1}_{t^{1} < 0 }  F (X(0; t, x, v^0), V(0; t, x, v^0)) \{ n(x) \cdot v^0 \} \dd v^0 
\lesssim \| \J (x)\|_{L^{\infty}_x} \times e^{- \frac{t^2}{2b}}.
\Ee
Similarly, from \eqref{est1: G12}-\eqref{est: G12} and using $\tb (x, v) = v_3 / 5$, we get
\Be \label{est: G12 mag}
\eqref{expand_G12 mag} = \int_{\prod_{j=0}^{1} \mathcal{V}_j} \mathbf{1}_{t^{2} < 0 \leq t^{1}} F (X(0; t^1, x^1, v^1), V(0; t^1, x^1, v^1)) \dd \tilde{\Sigma}_{1}
\lesssim \| \J (x)\|_{L^{\infty}_x} \times e^{- \frac{t^2}{8b}}.
\Ee


To estimate \eqref{expand_G1 mag}, we need to apply Proposition \ref{prop:mapV mag} on $\mathcal{V}_{i-1}$ and derive
\begin{align}
& \ \ \ \ \int_{\mathcal{V}_{i-1}} 
\int_{\mathcal{V}_i} 
\mathbf{1}_{t^{i+1} < 0 \leq t^i}
|F (X(0; t^i, x^i, v^i), V(0; t^i, x^i, v^i)) |  \{n(x^i) \cdot v^i\} \dd v^i \dd \sigma_{i-1} \notag
\\& \lesssim \int^{t^{i-1}}_0 \int_{\p\O}  
\frac{5 B^2_3 \sum\limits_{m, n \in \mathbb{Z}} \mu_{\Theta} (x^{i}, v^{m, n}_{i-1, \mathbf{b}})}{2 - 2 \cos (B_3 \tb^{i-1})} \{ n(x^{i-1}) \cdot v^{i-1} \} \dd \tb^{i-1} \dd S_{x^{i}} \notag
\\& \ \ \ \ \times
\int_{\mathcal{V}_i} 
\mathbf{1}_{t^{i+1} < 0 \leq t^i} \
|F (X(0; t^i, x^i, v^i), V(0; t^i, x^i, v^i))|  \{n(x^i) \cdot v^i\} \dd v^i. \label{est0:forcing mag}
\end{align}

To control the integrand term $\frac{\sum\limits_{m, n \in \mathbb{Z}} \mu_{\Theta} (x^{i}, v^{m, n}_{i-1, \mathbf{b}})}{2 - 2 \cos (B_3 \tbi)}$ in \eqref{est0:forcing mag}, we introduce the following:
\begin{definition} \label{def:BG mag}
For fixed $x \in \O$, we categorize $v \in  \mathcal{V} := \{v \in \R^3: n(x) \cdot v > 0 \}$ into two sets:
\Be \notag
\begin{split} 
& \mathcal{V}^{G_{\mathbf{\e}}} (x) := \{v \in  \mathcal{V}: \frac{\e}{B_3} + \frac{2 k \pi}{B_3}
  \leq \tb (x, v) \leq 
 \frac{2 \pi - \e}{B_3} + \frac{2 k \pi}{B_3} \ \text{with} \ k \in \Z_{+} \},  
\\& \mathcal{V}^{B_{\mathbf{\e}}} (x) :=  \{v \in  \mathcal{V}: - \frac{\e}{B_3} + \frac{2 k \pi}{B_3} \leq \tb (x, v) \leq 
 \frac{\e}{B_3} + \frac{2 k \pi}{B_3} \ \text{with} \ k \in \Z_{+} \}.
\end{split} 
\Ee
In the rest of the paper, we denote
$\mathcal{V}^{B_{\e}}_i (x^i) := \{ v^{i} \in \mathcal{V}^{B_{\mathbf{\e}}} (x^i) \subset \mathcal{V}_{i} \}$ and $\mathcal{V}^{G_{\e}}_i (x^i) := \{ v^{i} \in \mathcal{V}^{G_{\mathbf{\e}}} (x^i) \subset \mathcal{V}_{i} \}$.
\end{definition}

\begin{lemma} \label{lem: Be mag}
For $0 \leq \e \ll 1$ and $(X,V)$ in \eqref{characteristics mag}, we have
\be \label{estimate on Be mag}
\int_{\mathcal{V}^{B_{\e}}} \dd \sigma \lesssim \e.
\ee
\end{lemma}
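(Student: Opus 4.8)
The plan is to integrate $\dd\sigma$ over the "bad" velocity set $\mathcal{V}^{B_\e}(x)$ using the change of variables $v \mapsto (\xb, \tb)$ from Proposition \ref{prop:mapV mag}, whose Jacobian is $5 B_3^2(2 - 2\cos(B_3 \tb))^{-1}$. The key point is that $\mathcal{V}^{B_\e}$ is precisely the set where $\tb$ lies in small neighborhoods $\big[\frac{2k\pi - \e}{B_3}, \frac{2k\pi + \e}{B_3}\big]$ of the degeneracy points, and on each such neighborhood the Jacobian $2 - 2\cos(B_3 \tb)$ is \emph{small} (not large), so the large-Jacobian regions are exactly excluded. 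More precisely, writing $\tau = B_3 \tb - 2k\pi \in [-\e, \e]$, we have $2 - 2\cos(B_3 \tb) = 2 - 2\cos\tau = \tau^2 + O(\tau^4) \asymp \tau^2$ for $|\tau| \leq \e \ll 1$, hence $(2 - 2\cos(B_3\tb))^{-1} \asymp \tau^{-2}$, which is integrable: $\int_{-\e}^{\e} \tau^{-2} \dd\tau$ diverges — so this naive bound is not quite enough and I must keep the $|v|^2$ Gaussian decay in $\dd\sigma$.

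So the actual estimate proceeds as follows. First I recall $\dd\sigma = \mu_\Theta(x^1, v)\{n(x)\cdot v\}\dd v$ with $\mu_\Theta(x^1,v) = \frac{1}{2\pi\Theta(x^1)^2}e^{-|v|^2/2\Theta(x^1)}$, and from Lemma \ref{conservative field mag} we have $\tb(x,v) = v_3/5$ and $|v_\mathbf{b}| = |v|$. The crucial observation is that $v \in \mathcal{V}^{B_\e}$ forces $v_3 = 5\tb \in \bigcup_{k} \big[\frac{10k\pi - 5\e}{B_3}, \frac{10k\pi + 5\e}{B_3}\big]$, i.e. $v_3$ itself lies in a union of small intervals of total length $\sum_k \frac{10\e}{B_3} \cdot (\text{weighted by Gaussian})$. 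Therefore I bound directly in the $v$ variable without invoking the singular Jacobian at all:
\[
\int_{\mathcal{V}^{B_\e}} \dd\sigma \lesssim \int_{v_3 \in \bigcup_k [\frac{10k\pi - 5\e}{B_3}, \frac{10k\pi + 5\e}{B_3}]} \Big(\int_{\R^2} e^{-|v_\parallel|^2/2b}\dd v_\parallel\Big) e^{-v_3^2/2b}\, v_3 \,\dd v_3 \lesssim \sum_{k=0}^\infty \frac{10\e}{B_3}\Big(\frac{10k\pi}{B_3}+5\e\Big)e^{-(10k\pi/B_3 - 5\e)^2/2b} \lesssim \e,
\]
where the sum over $k$ converges (uniformly in $\e \leq 1$) because of the Gaussian factor $e^{-(10k\pi/B_3)^2/2b}$, and $B_3 \geq 1$. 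The $\int_{\R^2}$ integral over $v_\parallel$ is a fixed finite constant.

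The main obstacle — really the only subtlety — is making sure one does \emph{not} try to use the degenerate Jacobian formula $(2-2\cos(B_3\tb))^{-1}$ on $\mathcal{V}^{B_\e}$, since there the $\tb$-integral of the Jacobian alone diverges; instead one must work in the original velocity variable $v$ where the set $\mathcal{V}^{B_\e}$ simply corresponds to $v_3$ (equivalently $\tb$) lying near the multiples of $2\pi/B_3$, a set of small one-dimensional measure in each period, and let the Gaussian weight $e^{-v_3^2/2b}$ both make the series in $k$ summable and absorb the factor $v_3$. Everything else is a routine one-dimensional Gaussian estimate, using $a \leq \Theta(x^1) \leq b$ from \eqref{def:Theta} to replace $\mu_\Theta$ by $\frac{1}{2\pi a^2}e^{-|v|^2/2b}$ as in Remark \ref{sigma measure estiamte}.
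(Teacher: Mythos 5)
Your proof is correct and takes essentially the same route as the paper's. Both proofs bound $\mu_\Theta$ by a Gaussian, integrate out $v_\parallel$ to leave a one-dimensional integral, and exploit that $\mathcal{V}^{B_\e}$ corresponds to $\tb$ (equivalently $|v_3|=5\tb$) lying in $\e$-width intervals around $2k\pi/B_3$; the $k=0$ piece gives $O(\e^2)$ and the $k\ge 1$ pieces sum to $O(\e)$ by the Gaussian decay. Your opening observation that one must \emph{not} invoke the Jacobian $5B_3^2(2-2\cos(B_3\tb))^{-1}$ on the bad set is a sensible remark but is not actually part of the paper's argument; the paper, like you, simply never uses that change of variables here and instead works directly in $v_3$ (respectively $\tb$), so it is harmless.
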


\begin{proof}
Since $|n(x) \cdot v| = | v_3 |  = 5 \tb (x, v)$ for $(x,v) \in \gamma_{+}$ and $0 \leq \e \ll 1$, we get  
\Be \notag 
\begin{split} 
& \ \ \int_{v \in \mathcal{V}^{B_{\mathbf{\e}}} (x)}
\mu_{\Theta} (\xb, v) |n(x) \cdot v|\dd v 
\\& \lesssim \int_{v \in \mathcal{V}^{B_{\mathbf{\e}}} (x)} e^{- \frac{| v_3 |^2}{2b}} | v_3 | \dd v_3 
= \int_{v \in \mathcal{V}^{B_{\mathbf{\e}}} (x)} e^{- \frac{25 \tb^2}{2b}} \times 25 \tb \dd \tb 
\\& \lesssim - e^{- \tb^2 / b} \Bigg|^{\frac{\e}{B_3}}_{0} + \sum^{\infty}_{k = 1} 
\int_{- \frac{\e}{B_3} \leq \tb - \frac{2 k \pi}{B_3} \leq 
 \frac{\e}{B_3} } e^{- \tb^2 / b} \dd \tb
\\& \lesssim 
\e^2 + \e \times \sum^{\infty}_{k = 1} e^{- (\frac{2 k \pi}{B_3})^2 / b}
\lesssim \e,
\end{split}
\Ee
where the last line follows from the Taylor expansion.
Therefore, we conclude \eqref{estimate on Be mag}.
\end{proof}

\begin{lemma} \label{lem: sum of mu mag}
Consider $(X,V)$ in \eqref{characteristics mag} with $0 \leq \e \ll 1$ and $x^{i+1} \in \p\O$, for $v^{m, n}_{i} \in \mathcal{V}^{G_{\mathbf{\e}}} (x^{i+1})$,
\Be \label{t> i mag}
\sum\limits_{m, n \in \mathbb{Z}} \mu_{\Theta} (x^{i+1}, v^{m, n}_{i, \mathbf{b}}) 
\lesssim e^{- 25 (\tbi)^2 / 2b}.
\Ee
For $v^{m, n}_{i} \in \mathcal{V}^{B_{\mathbf{\e}}} (x^{i+1})$,
\Be \label{t< i mag}
\sum\limits_{m, n \in \mathbb{Z}} \mu_{\Theta} (x^{i+1}, v^{m, n}_{i, \mathbf{b}})
\lesssim 
\sum\limits_{|m| < 2, |n| < 2} \mu_{\Theta} (x^{i+1}, v^{m, n}_{i, \mathbf{b}}) +
e^{-\frac{B^2_3 }{b (1 - \cos (B_3 \tbi))}} e^{- 25 (\tbi)^2 / 2b},
\Ee
where $v^{m, n}_{i, \mathbf{b}} = \vb (x^{i+1}, v^{m, n}_{i})$, which was defined in \eqref{def:t_k} and \eqref{def:vmn mag}.
\end{lemma}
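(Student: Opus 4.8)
The plan is to follow the scheme of the proof of Lemma~\ref{lem: sum of mu}, but with the helical characteristics \eqref{characteristics mag} replacing the gravitational ones, so that the degenerate factor $2-2\cos(B_3\tbi)$ rather than a power of $\tbi$ governs the splitting. The first step is to compute the magnitude of the re‑emission velocities: solving the linear system \eqref{xb1 expression mag}--\eqref{xb2 expression mag} for $(v_1,v_2)$ — whose coefficient matrix $M$ satisfies $M^\top M=(2-2\cos(B_3\tbi))\,\mathrm{Id}_{2\times2}$ — together with $|v^{m,n}_{i,3}|=5\tbi$ and $|v^{m,n}_{i,\mathbf b}|=|v^{m,n}_i|$ from Lemma~\ref{conservative field mag}, yields
\[
|v^{m,n}_{i,\mathbf b}|^2=\frac{B_3^2\,D_{m,n}^2}{2-2\cos(B_3\tbi)}+25(\tbi)^2,
\qquad
D_{m,n}^2:=|x^{i+1}_1+m-x^{i}_1|^2+|x^{i+1}_2+n-x^{i}_2|^2 .
\]
Since consecutive bounce points $x^{i}_\parallel,x^{i+1}_\parallel$ lie in a single period of $\mathbb{T}^2$, one has $D_{m,n}^2\geq\max(|m|-1,0)^2+\max(|n|-1,0)^2$, so any lattice sum $\sum_{m,n}e^{-cD_{m,n}^2}$ converges to a constant depending only on $c$; moreover $\Theta\leq b$ and $\mu_\Theta(x,v)=\mu_\Theta(x,|v|)$ (Remark~\ref{probability}) give $\mu_\Theta(x^{i+1},v^{m,n}_{i,\mathbf b})\leq(2\pi a^2)^{-1}e^{-|v^{m,n}_i|^2/2b}$.

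For \eqref{t> i mag} it suffices to use $2-2\cos(B_3\tbi)\leq4$, which forces $|v^{m,n}_i|^2\geq\tfrac14B_3^2D_{m,n}^2+25(\tbi)^2$; inserting this into the Gaussian, pulling out the common factor $e^{-25(\tbi)^2/2b}$, and summing $\sum_{m,n}e^{-B_3^2D_{m,n}^2/8b}\lesssim1$ gives the claim. (The restriction $v^{m,n}_i\in\mathcal V^{G_{\mathbf{\e}}}(x^{i+1})$ is not actually used for this bound; it is only the form in which the estimate is applied later.)

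For \eqref{t< i mag}, when $v^{m,n}_i\in\mathcal V^{B_{\mathbf{\e}}}(x^{i+1})$ the angle $B_3\tbi$ lies within $\e$ of some $2k\pi$, so $2-2\cos(B_3\tbi)=2(1-\cos(B_3\tbi))$ is small and $B_3^2D_{m,n}^2/(2-2\cos(B_3\tbi))$ is large whenever $D_{m,n}\neq0$. I would split the lattice sum into the nine terms with $|m|<2,|n|<2$ — kept as they stand, since there $D_{m,n}$ may vanish and $\mu_\Theta$ may be of order one — and the remaining terms, for which $D_{m,n}\geq1$. For the latter I split the exponent, writing $B_3^2D_{m,n}^2/(2(1-\cos(B_3\tbi)))$ as one piece that is $\gtrsim B_3^2/(1-\cos(B_3\tbi))$ (using $D_{m,n}\geq1$) plus one piece that is $\gtrsim B_3^2D_{m,n}^2$ (using $1-\cos(B_3\tbi)\leq2$), so that $e^{-|v^{m,n}_i|^2/2b}$ factors as $e^{-25(\tbi)^2/2b}$ times the super‑exponentially small factor $e^{-B_3^2/(b(1-\cos(B_3\tbi)))}$ times a summable Gaussian in $D_{m,n}$; summing the last factor over $|m|\geq2$ or $|n|\geq2$ produces a constant and leaves precisely the second term of \eqref{t< i mag}.

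The only delicate point — bookkeeping rather than anything deep — is arranging the exponent split in the $\mathcal V^{B_{\mathbf{\e}}}$ case so that the full factor $e^{-B_3^2/(b(1-\cos(B_3\tbi)))}$ is extracted while enough Gaussian decay in $D_{m,n}$ remains to sum over the lattice; this factor is exactly what is needed to absorb the degeneracy $(2-2\cos(B_3\tbi))^{-1}$ of the Jacobian \eqref{jacob:mapV mag} when one subsequently integrates in $\tbi$, which is the whole reason for isolating the $\mathcal V^{B_{\mathbf{\e}}}$/$\mathcal V^{G_{\mathbf{\e}}}$ dichotomy here rather than a large-/small-$\tbi$ dichotomy as in Lemma~\ref{lem: sum of mu}.
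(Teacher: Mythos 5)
Your proof is correct and follows essentially the same strategy as the paper's: start from the identity $|v^{m,n}_{i,\mathbf b}|^2=\frac{B_3^2 D_{m,n}^2}{2-2\cos(B_3\tbi)}+25(\tbi)^2$ (which is \eqref{vmnb12 first estimate mag} together with $|v^{m,n}_{i,\mathbf b_3}|^2=25(\tbi)^2$), bound the Gaussian, and control the lattice sum. The only difference is cosmetic: where the paper breaks the sum into the four quadrant cases $|m|\lessgtr 2$, $|n|\lessgtr 2$, you use $D_{m,n}^2\geq\max(|m|-1,0)^2+\max(|n|-1,0)^2$ to handle the lattice sum in one stroke; this is a harmless streamlining. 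Your side remarks are also accurate — \eqref{t> i mag} indeed uses only the crude bound $2-2\cos(B_3\tbi)\leq 4$ and does not require $v^{m,n}_i\in\mathcal V^{G_{\mathbf{\e}}}$, and the $G_\e$/$B_\e$ dichotomy exists precisely to balance the Jacobian singularity in Proposition~\ref{prop:mapV mag}. The one thing to be aware of is that your exponent split for the $\mathcal V^{B_{\mathbf\e}}$ case yields the factor $e^{-cB_3^2/(b(1-\cos(B_3\tbi)))}$ with a constant $c<1$ in the exponent rather than $c=1$ as written in \eqref{t< i mag}; the paper's own case-by-case computation is similarly imprecise on this constant (it uses $|x^{i+1}_2-x^i_2+n|\geq|n|$, which only holds up to a loss of $1$), and the exact constant is irrelevant to every downstream application, so this is not a substantive gap.
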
 

\begin{proof}
Here, for the sake of simplicity, we have abused the notations temporarily: 
\Be \notag
x^i = (x^i_1, x^i_2, x^i_3)= (x^i_\parallel, x^i_3), \ 
v^{m, n}_{i, \mathbf{b}} = (v^{m, n}_{i, \mathbf{b}_1}, v^{m, n}_{i, \mathbf{b}_2}, v^{m, n}_{i, \mathbf{b}_3}) = (v^{m, n}_{i, \mathbf{b}_\parallel}, v^{m, n}_{i, \mathbf{b}_3}).
\ee
Note that from \eqref{tb expression mag}, we know $|v^{m, n}_{i, \mathbf{b}_3}|^2 = 25 ( \tb^{i})^2$.
From \eqref{xb1 expression mag} and \eqref{xb2 expression mag}, we have
\Be \notag
\begin{split}
& \ \ \ \ B^2_3 |x_{\mathbf{b}, 1} - x_1|^2 + B^2_3 |x_{\mathbf{b}, 2} - x_2|^2
\\& = (v_2 - v_2 \cos (B_3 \tb) - v_1 \sin (B_3 \tb))^2 + (- v_1 + v_1 \cos (B_3 \tb) - v_2 \sin (B_3 \tb))^2
\\& = (v^2_1 + v^2_2) \times (2 - 2 \cos (B_3 \tb)).
\end{split}
\ee
Using \eqref{V12 conservative mag} and the above equality, we obtain
\Be \label{vmnb12 first estimate mag}
\big| v^{m, n}_{i, \mathbf{b}_\parallel} \big|^2 
= |v^{m, n}_{i, \mathbf{b}_1}|^2 + |v^{m, n}_{i, \mathbf{b}_2}|^2
= |v^{m, n}_{i_1}|^2 + |v^{m, n}_{i_2}|^2 = \frac{B^2_3 \big( |x^{i+1}_1 - x^i_1 + m|^2 + |x^{i+1}_2 - x^i_2 + n|^2 \big)}{2 - 2 \cos (B_3 \tbi)}.
\Ee
Now we split the length of $\tbi$ into two cases: 

\textbf{Case 1:} $v^{m, n}_{i} \in \mathcal{V}^{G_{\mathbf{\e}}} (x^{i+1})$.
From \eqref{vmnb12 first estimate mag}, for $|m| \geq 2$ and $|n| \geq 2$, we bound 
\Be \label{m>n> first estimate mag}
\big| v^{m, n}_{i, \mathbf{b}_\parallel} \big|^2 
\gtrsim \frac{B^2_3}{4} ( m^2 + n^2 )
\Ee
For $|m| < 2$, we bound $|v^{m, n}_{i-2, \mathbf{b}_1}| \geq 0$, and for $|n| < 2$, we bound $|v^{m, n}_{i-2, \mathbf{b}_2}| \geq 0$.  
To derive \eqref{t> i mag}, we divide $\{v^{m, n}_{i, \mathbf{b}}\}_{m, n \in \mathbb{Z}}$ into four parts.

For $|m| < 2$ and $|n| < 2$, we collect and bound all terms as
\be \label{t> m<n< mag}
\sum\limits_{|m| < 2, |n| < 2} \mu_{\Theta} (x^{i+1}, v^{m, n}_{i, \mathbf{b}}) 
\lesssim e^{- 25 (\tbi)^2 / 2b}.
\ee

For $|m| < 2$ and $|n| \geq 2$, we bound
$|v^{m, n}_{i-2, \mathbf{b}_2}| \gtrsim \frac{B_3}{2} |n|$. Thus, we have 
\Be \label{t> m<n> mag}
\begin{split}
 \sum\limits_{|m| < 2, |n| \geq 2} \mu_{\Theta} (x^{i+1}, v^{m, n}_{i, \mathbf{b}})
& \lesssim \sum\limits_{|m| < 2} \mu_{\Theta} (x^{i+1}, \frac{B_3}{2} |n| + 5 \tbi)
\\& \lesssim e^{- 25 (\tbi)^2 / 2b} \sum\limits^{\infty}_{n=0} \mu_{\Theta} (x^{i+1}, \frac{B_3}{2} |n|)
\lesssim e^{- 25 (\tbi)^2 / 2b},
\end{split}
\Ee
where the last inequality holds from $\sum\limits^{\infty}_{n=0} \frac{1}{n^2} \leq \infty$. Similarly, for $|m| \geq 2$ and $|n| < 2$ case, we bound
$|v^{m, n}_{i-2, \mathbf{b}_1}| \gtrsim \frac{B_3}{2} |m|$, and get
\Be \label{t> m>n< mag}
\sum\limits_{|m| \geq 2, |n| < 2} \mu_{\Theta} (x^{i-1}, v^{m, n}_{i-2, \mathbf{b}}) \lesssim e^{- 25 (\tbi)^2 / 2b}.
\Ee

For $|m| \geq 2$ and $|n| \geq 2$, from \eqref{m>n> first estimate mag} and $\sum\limits_{m, n} \frac{1}{m^2 + n^2} \leq \infty$, we obtain
\Be \label{t> m>n> mag}
\sum\limits_{|m| \geq 2, |n| \geq 2} \mu_{\Theta} (x^{i-1}, v^{m, n}_{i-2, \mathbf{b}}) \lesssim e^{- 25 (\tbi)^2 / 2b}.
\Ee
Using \eqref{t> m<n< mag}, \eqref{t> m<n> mag}, \eqref{t> m>n< mag} and \eqref{t> m>n> mag}, we conclude \eqref{t> i mag}.

\textbf{Case 2:} $v^{m, n}_{i} \in \mathcal{V}^{B_{\mathbf{\e}}} (x^{i+1})$.
Since in this case $\tbi$ is small, we bound 
$|v^{m, n}_{i-2, \mathbf{b}_3}| \geq 0$.
Again, for $|m| < 2$, we bound $|v^{m, n}_{i-2, \mathbf{b}_1}| \geq 0$, and for $|n| < 2$, we bound $|v^{m, n}_{i, \mathbf{b}_2}| \geq 0$. 
To obtain \eqref{t< i mag}, again we divide $\{v^{m, n}_{i, \mathbf{b}}\}_{m, n \in \mathbb{Z}}$ into four parts.

For $|m| < 2$ and $|n| < 2$, we collect and keep all terms 
\Be \label{t< m<n< mag}
\sum\limits_{|m| < 2, |n| < 2} \mu_{\Theta} (x^{i+1}, v^{m, n}_{i, \mathbf{b}}).
\Ee

For $|m| < 2$ and $|n| \geq 2$, we bound
$|v^{m, n}_{i, \mathbf{b}_2}|^2 \geq \frac{B^2_3 n^2}{2 - 2 \cos (B_3 \tbi)}$, and have 
\Be \label{t< m<n> mag}
\begin{split}
 \sum\limits_{|m| < 2, |n| \geq 2} \mu_{\Theta} (x^{i+1}, v^{m, n}_{i, \mathbf{b}})
  \lesssim e^{- 25 (\tbi)^2 / 2b} \sum\limits^{\infty}_{n = 2} e^{-\frac{B^2_3 n^2}{4b (1 - \cos (B_3 \tbi))}}
  \lesssim e^{-\frac{B^2_3 }{b (1 - \cos (B_3 \tbi))}} e^{- 25 (\tbi)^2 / 2b},
\end{split}
\Ee

For $|m| \geq 2$ and $|n| < 2$, we bound
$|v^{m, n}_{i, \mathbf{b}_1}|^2 \geq \frac{B^2_3 m^2}{2 - 2 \cos (B_3 \tbi)}$, and get
\Be \label{t< m>n< mag}
\begin{split}
\sum\limits_{|m| \geq 2, |n| < 2} \mu_{\Theta} (x^{i+1}, v^{m, n}_{i, \mathbf{b}})
& \lesssim e^{- 25 (\tbi)^2 / 2b} \sum\limits^{\infty}_{m = 2} e^{-\frac{B^2_3 m^2}{4b (1 - \cos (B_3 \tbi))}}
\\& \lesssim e^{-\frac{B^2_3 }{b (1 - \cos (B_3 \tbi))}} e^{- 25 (\tbi)^2 / 2b},
\end{split}
\Ee

For $|m| \geq 2$ and $|n| \geq 2$, we bound
$\big| v^{m, n}_{i, \mathbf{b}_\parallel} \big|^2 
\geq \frac{B^2_3 ( m^2 +  n^2 )}{2 - 2 \cos (B_3 \tbi)}$, and derive
\Be \label{t< m>n> mag}
\begin{split}
\sum\limits_{|m| \geq 2, |n| \geq 2} \mu_{\Theta} (x^{i+1}, v^{m, n}_{i, \mathbf{b}}) 
& \lesssim \sum\limits^{\infty}_{m, n = 2} e^{-\frac{B^2_3 ( m^2 +  n^2 )}{4b (1 - \cos (B_3 \tbi))}}
\\& \lesssim e^{- \frac{2 B^2_3 }{b (1 - \cos (B_3 \tbi))}} e^{- 25 (\tbi)^2 / 2b}.
\end{split}
\Ee
From \eqref{t< m<n< mag}, \eqref{t< m<n> mag}, \eqref{t< m>n< mag} and \eqref{t< m>n> mag}, we conclude \eqref{t< i mag}.
\end{proof} 

Now we are ready to control \eqref{expand_G1 mag} via the following lemma:

\begin{lemma} \label{lem:bound1 mag}
For $i = 2, \cdots, k-1$, $0 \leq \e \ll 1$ and $(X,V)$ in \eqref{characteristics mag}, we have
\Be \label{bound1:expand_h mag}
  \Big|\int_{\prod_{j=0}^{i} \mathcal{V}_j}   
  \mathbf{1}_{t^{i+1} < 0 \leq t^{i}} F(X(0; t^i, x^i, v^i), V(0; t^i, x^i, v^i)) 
  \dd \tilde{\Sigma}_{i} \Big|
  \lesssim 2^{i} \e^{-2} \| F(x, v) \|_{L^1_{x,v}} + \e^{i-1} \| \J (x)\|_{L^{\infty}_x}.
\Ee
where 
$\dd \tilde{\Sigma}_{i} := \frac{ \dd \sigma_{i}}{\mu_{\Theta} (x^{i+1}, v^{i})} \dd \sigma_{i-1} \cdots \dd \sigma_1 \dd \sigma_0$ with $\dd \sigma_j = \mu_{\Theta} (x^{j+1}, v^{j}) \{ n(x^j) \cdot v^j \} \dd v^j$ in \eqref{def:sigma measure}.
\end{lemma}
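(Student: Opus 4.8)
The plan is to mirror the proof of Lemma \ref{lem:bound1} in the gravitational case, the only genuinely new feature being that the change of variables $v^j \mapsto (\tb^j,x^{j+1})$ of Proposition \ref{prop:mapV mag} now carries the \emph{degenerate} Jacobian $5B_3^2\big(2-2\cos(B_3\tb^j)\big)^{-1}$ in place of the harmless weight $|\tb^j|^{-3}$. First I would record, as in the derivation of \eqref{est0:forcing mag}, that since $F$ is constant along the characteristics \eqref{characteristics mag} and $t^{i+1}<0\le t^i$ forces the segment to stay in $\O$ over $[0,t^i]$, one has $F(X(0;t^i,x^i,v^i),V(0;t^i,x^i,v^i))=F(x^i,v^i)=\mu_\Theta(x^{i+1},v^i)\J(x^{i+1})$; hence the integrand is controlled \emph{both} by $|F(x^i,v^i)|$ (useful for an $\|F\|_{L^1_{x,v}}$ unfolding) and, via $\mu_\Theta(x^{i+1},v^i)\le\frac{1}{2\pi a^2}e^{-|v^i|^2/2b}$, by $e^{-|v^i|^2/2b}\|\J\|_{L^\infty_x}$. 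Then I would split each of the slots $v^1,\dots,v^{i-1}$ using $\mathcal{V}_j=\mathcal{V}^{G_\e}_j(x^j)\cup\mathcal{V}^{B_\e}_j(x^j)$ from Definition \ref{def:BG mag}, invoking Lemma \ref{lem: Be mag} for $\int_{\mathcal{V}^{B_\e}_j}\dd\sigma_j\lesssim\e$ whenever a slot is ``bad'' and Remark \ref{sigma measure estiamte} for $\int_{\mathcal{V}_j}\dd\sigma_j\le2$ otherwise.

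The organizing principle is to \textbf{unfold the innermost good slot.} Partition the domain according to the largest $\ell\ge1$ for which $v^{i-1},\dots,v^{i-\ell+1}$ are all bad while $v^{i-\ell}$ is good (the remaining possibility, $v^1,\dots,v^{i-1}$ all bad, being treated separately). On such a region I would change variables $v^{i-\ell}\mapsto(\tb^{i-\ell},x^{i-\ell+1})$ by Proposition \ref{prop:mapV mag}: on $\mathcal{V}^{G_\e}_{i-\ell}$ the Jacobian is $\lesssim\e^{-2}$, and by \eqref{t> i mag} of Lemma \ref{lem: sum of mu mag} the accompanying periodic sum $\sum_{m,n}\mu_\Theta(x^{i-\ell+1},v^{m,n}_{i-\ell,\mathbf{b}})\lesssim e^{-25(\tb^{i-\ell})^2/2b}$, so that the kernel $5B_3^2\big(2-2\cos(B_3\tb^{i-\ell})\big)^{-1}\{n(x^{i-\ell})\cdot v^{i-\ell}\}\sum_{m,n}\mu_\Theta$ is $\lesssim\e^{-2}$ uniformly in $\tb^{i-\ell}$. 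Since $\nabla\Phi\equiv g\mathbf{e}_3$ is constant and $\tb(x,v)=v_3/5$ is $x$-independent, the backward flow along the $\ell-1$ frozen bad segments sends the boundary slice $x^{i-\ell+1}$ to $x^i$ with unit Jacobian, while $t^i=t^{i-\ell+1}-\frac{1}{5}\sum_{j=i-\ell+1}^{i-1}v^j_3$ is a fixed shift of $t^{i-\ell+1}=t^{i-\ell}-\tb^{i-\ell}$; hence $(x^i,t^i,v^i)$ together with $\mathbf{1}_{t^{i+1}<0\le t^i}$ realizes the change of variables \eqref{COV}, and the inner integral is $\lesssim\e^{-2}\|F\|_{L^1_{x,v}}$. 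The $\ell-1$ frozen bad slots then contribute $\e^{\ell-1}$ by Lemma \ref{lem: Be mag}, and the remaining $\dd\sigma_0\cdots\dd\sigma_{i-\ell-1}$ at most $2^{i-\ell}$, so this region is $\lesssim 2^{i-\ell}\e^{\ell-1}\e^{-2}\|F\|_{L^1_{x,v}}$; the geometric sum $\sum_{\ell\ge1}2^{i-\ell}\e^{\ell-3}\lesssim 2^i\e^{-2}$ yields the first term of \eqref{bound1:expand_h mag}.

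It remains to handle the region where $v^1,\dots,v^{i-1}$ are all bad, where no good slot is available to unfold. There Lemma \ref{lem: Be mag} gives $\prod_{j=1}^{i-1}\int_{\mathcal{V}^{B_\e}_j}\dd\sigma_j\lesssim\e^{i-1}$, and the inner integrals are bounded crudely by $\int_{\mathcal{V}_0}\dd\sigma_0\le2$ and $\int_{\mathcal{V}_i}e^{-|v^i|^2/2b}\{n(x^i)\cdot v^i\}\dd v^i\lesssim1$ together with $|\J(x^{i+1})|\le\|\J\|_{L^\infty_x}$, giving the second term $\lesssim\e^{i-1}\|\J\|_{L^\infty_x}$. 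I expect the \textbf{main obstacle} to be precisely the uniform-in-$i$ bookkeeping of these last two paragraphs: one must ensure that the unavoidable factor $2^i$ — produced by the $i$ non-probability measures $\dd\sigma_j$ — is never multiplied by a positive power of $\e$, which is exactly what forces the ``innermost good slot'' scheme and the careful count of how many bad slots are frozen versus the depth of the unfolded slot. Once this is set up the degeneracy is harmless, since the bad Jacobian is invoked exactly once (at the unfolded slot) and on a good set, where it is $\lesssim\e^{-2}$; a minor technical point is to check, as above, that the periodic-copy sum attached to the unfolded slot combines with the Jacobian and the factor $\{n(x^{i-\ell})\cdot v^{i-\ell}\}\simeq\tb^{i-\ell}$ into a quantity bounded uniformly in the time variable, so that it can be pulled out before invoking \eqref{COV}.
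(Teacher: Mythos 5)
Your proposal is correct, and its overall strategy — good/bad decomposition of each velocity slot via Definition~\ref{def:BG mag}, $\e$-smallness of the bad sets from Lemma~\ref{lem: Be mag}, the uniform $\e^{-2}$ Jacobian bound on the good sets together with the Gaussian periodic-sum of Lemma~\ref{lem: sum of mu mag}, a single change of variables into $\O\times\R^3$ via \eqref{COV} to produce $\|F\|_{L^1_{x,v}}$, and the crude $\|\J\|_{L^\infty_x}$ bound on the remaining all-bad region — matches the paper's. The one genuine difference is in how the slots \emph{between} the unfolded good slot and the $F$-slot are handled. The paper fixes one $j\le i-2$ with $v^j$ good, changes variables $v^m\mapsto(\tb^m,x^{m+1})$ at \emph{every} intermediate slot $m=j+1,\dots,i-1$, and bounds each such integral by $1$ through the re-folding identity $\eqref{est2:forcing mag}^{**}=\int_{n(x^i)\cdot v<0}\mu_\Theta(x^i,v)|n(x^i)\cdot v|\dd v=1$ (so no good/bad distinction is needed for these slots); the pair $(j,j+1)$ alone absorbs the $\e^{-2}$ via $\eqref{est3:forcing mag}$; the final sum over $j$ is an over-counting union bound giving $\sum_{j\le i-2}2^j\e^{-2}$. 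You instead carve out the \emph{innermost} good slot $i-\ell$, force the $\ell-1$ slots between it and $i$ to be bad so that they supply $\e^{\ell-1}$ directly from Lemma~\ref{lem: Be mag}, and change variables only once (at slot $i-\ell$); the geometric sum $\sum_\ell 2^{i-\ell}\e^{\ell-3}$ then recovers $2^i\e^{-2}$. This avoids the intermediate-slot unfolding but needs the extra (correct, and true here because $B=B_3\mathbf e_3$ and $-\nabla\Phi=-g\mathbf e_3$ are constant so $\xb$ is an $x$-translation by \eqref{xb1 expression mag}--\eqref{xb2 expression mag}) observation that the boundary-to-boundary map along frozen bad segments has unit Jacobian, and that $\tb(x,v)=v_3/5$ is $x$-independent so that $\mathcal{V}^{B_\e}$ is position-independent and Tonelli rearrangement is painless. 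Both bookkeepings give exactly the bound in \eqref{bound1:expand_h mag}; yours is a valid alternative that trades the paper's probability-measure re-folding for one extra geometric fact about the flow.
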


\begin{proof}
For \eqref{bound1:expand_h mag}, it suffices to prove this upper bound for $i = 2,...,k-1$, 
\Be \label{forcing mag}
\int_{\mathcal{V}_0} \cdots \int_{\mathcal{V}_{i-1}} 
\int_{\mathcal{V}_i} 
\mathbf{1}_{t^{i+1} < 0 \leq t^i}
|F (X(0; t^i, x^i, v^i), V(0; t^i, x^i, v^i))|  \{n(x^i) \cdot v^i\} \dd v^i \dd \sigma_{i-1}  \cdots \dd \sigma_0.
\Ee  

\textbf{Step 1.} Recall from Definition \ref{def:BG mag}, we obtain for $j = 0, \cdots, i-1$,
\be \notag
v^{j} \in \mathcal{V}^{B_{\e}}_j, \ \text{or} \ v^{j} \in \mathcal{V}^{G_{\e}}_j.
\ee
Now we split $\{v^{j}\}^{i-1}_{j=0}$ into two cases: 

\textit{Case 1:} For all $0 \leq j \leq i-2$, $v^{j} \in \mathcal{V}^{B_{\e}}_j$.
From Lemma \ref{lem: Be mag}, we derive that
\Be \notag
\begin{split}
\eqref{forcing mag}
& = \int_{\mathcal{V}^{B_{\e}}_0} \cdots \int_{\mathcal{V}^{B_{\e}}_{i-2}} 
\int_{\mathcal{V}_{i-1}}
\int_{\mathcal{V}_i} 
\mathbf{1}_{t^{i+1} < 0 \leq t^i}
|F (x^{i+1}, \vb^i)|  \{n(x^i) \cdot v^i\} \dd v^i \dd \sigma_{i-1}  \cdots \dd \sigma_0
\\& \lesssim \int_{\mathcal{V}^{B_{\e}}_0} \cdots \int_{\mathcal{V}^{B_{\e}}_{i-2}} \int_{\mathcal{V}_i} \mathbf{1}_{t^{i+1} < 0 \leq t^{i}} \ \mu_{\Theta} (x^{i+1}, v^{i}) \J (x^{i+1}) \{ n(x^i) \cdot v^i \} \dd v^i \dd \sigma_{i-2} \cdots \dd \sigma_0
\\& \lesssim \| \J (x)\|_{L^{\infty}_x} \int_{\mathcal{V}^{B_{\e}}_0} \dd \sigma_0 \cdots \int_{\mathcal{V}^{B_{\e}}_{i-2}} \dd \sigma_{i-2} \lesssim \| \J (x)\|_{L^{\infty}_x} \times \e^{i-1}.
\end{split}
\ee

\textit{Case 2:} There exists $0 \leq j \leq i-2$ such that $v^{j} \in \mathcal{V}^{G_{\e}}_j$.
Using Proposition \ref{prop:mapV mag}, we get 
\begin{align}
\eqref{forcing mag} 
& \lesssim \int_{\mathcal{V}_0} \dd \sigma_0 \cdots \int_{\mathcal{V}_{j-1}} \dd \sigma_{j-1} \times 
\int_{v^j \in \mathcal{V}^{G_{\e}}_j} \frac{5 B^2_3 \sum\limits_{m, n \in \mathbb{Z}} \mu_{\Theta} (x^{j+1}, v^{m, n}_{j, \mathbf{b}})}{2 - 2 \cos (B_3 \tb^{j})} \{ n(x^j) \cdot v^j \} \dd S_{x^{j+1}} \dd \tb^{j}  \notag
\\& \ \ \ \ \times \cdots \times \int_0^{t^{i-2}} \int_{\p\O} \frac{5 B^2_3 \sum\limits_{m, n \in \mathbb{Z}} \mu_{\Theta} (x^{i-1}, v^{m, n}_{i-2, \mathbf{b}})}{2 - 2 \cos (B_3 \tb^{i-2})} \{ n(x^{i-2}) \cdot v^{i-2} \}  \dd \tb^{i-2} \dd S_{x^{i-1}} \notag
\\& \ \ \ \ \times \int^{t^{i-1}}_0 \int_{\p\O}  
\frac{5 B^2_3 \sum\limits_{m, n \in \mathbb{Z}} \mu_{\Theta} (x^{i}, v^{m, n}_{i-1, \mathbf{b}})}{2 - 2 \cos (B_3 \tb^{i-1})} \{ n(x^{i-1}) \cdot v^{i-1} \} \dd \tb^{i-1} \dd S_{x^{i}} \notag
\\& \ \ \ \ \times
\int_{\mathcal{V}_i} 
\mathbf{1}_{t^{i+1} < 0 \leq t^i} \
|F (X(0; t^i, x^i, v^i), V(0; t^i, x^i, v^i))|  \{n(x^i) \cdot v^i\} \dd v^i. \label{est1:forcing mag}
\end{align}
Using the Tonelli's theorem, we deduce that
\Be \label{est2:forcing mag}
\begin{split}
\eqref{forcing mag} 
\lesssim 
& \int_{\mathcal{V}_0} \dd \sigma_0 \cdots \int_{\mathcal{V}_{j-1}} \dd \sigma_{j-1} \times \int_{v^j \in \mathcal{V}^{G_{\e}}_j} \dd \tb^{j} \int_{\p\O} \dd S_{x^{i}}
\\& \times 
\underbrace{ \int_0^{t^{j+1}} \int_{\p\O} \frac{\tb^{j+1} \sum\limits_{m, n \in \mathbb{Z}} \mu_{\Theta} (x^{j+2}, v^{m, n}_{j+1, \mathbf{b}})}{2 - 2 \cos (B_3 \tb^{j+1})} \times \frac{\tb^{j} \sum\limits_{m, n \in \mathbb{Z}} \mu_{\Theta} (x^{j+1}, v^{m, n}_{j, \mathbf{b}})}{2 - 2 \cos (B_3 \tb^{j})} \dd S_{x^{j+1}} \dd \tb^{j+1} }_{\eqref{est2:forcing mag}^*}
\\& \times \int_0^{t^{j+2}} \int_{\p\O} \frac{5 B^2_3 \sum\limits_{m, n \in \mathbb{Z}} \mu_{\Theta} (x^{j+3}, v^{m, n}_{j+2, \mathbf{b}})}{2 - 2 \cos (B_3 \tb^{j+2})} \{ n(x^{j+2}) \cdot v^{j+2} \} \dd S_{x^{j+2}} \dd \tb^{j+2} 
\\& \times \cdots \times \int_0^{t^{i-2}} \int_{\p\O} \frac{5 B^2_3 \sum\limits_{m, n \in \mathbb{Z}} \mu_{\Theta} (x^{i-1}, v^{m, n}_{i-2, \mathbf{b}})}{2 - 2 \cos (B_3 \tb^{i-2})} \{ n(x^{i-2}) \cdot v^{i-2} \} \dd S_{x^{i-2}} \dd \tb^{i-2}
\\& \times \underbrace{ \int^{t^{i-1}}_0 \int_{\p\O}
\frac{5 B^2_3 \sum\limits_{m, n \in \mathbb{Z}} \mu_{\Theta} (x^{i}, v^{m, n}_{i-1, \mathbf{b}})}{2 - 2 \cos (B_3 \tb^{i-1})} \{ n(x^{i-1}) \cdot v^{i-1} \} \dd S_{x^{i-1}}   \dd \tb^{i-1}}_{\eqref{est2:forcing mag}^{**}}
\times \eqref{est1:forcing mag}.
\end{split}
\Ee

\textbf{Step 2.} First we claim that
\be \label{bound3_6 mag}
\begin{split}
\eqref{est2:forcing mag}^{**} 
& = \int^{t^{i-1}}_0 \int_{\p\O}
\frac{5 B^2_3 \sum\limits_{m, n \in \mathbb{Z}} \mu_{\Theta} (x^{i}, v^{m, n}_{i-1, \mathbf{b}})}{2 - 2 \cos (B_3 \tb^{i-1})} \{ n(x^{i-1}) \cdot v^{i-1} \} \dd S_{x^{i-1}}   \dd \tb^{i-1}
\leq 1.
\end{split}
\ee
From Lemma \ref{conservative field mag}, we deduce
\be \label{bound3_6_1 mag}
\begin{split}
\eqref{est2:forcing mag}^{**} 
& = \int^{t^{i-1}}_0 \int_{\p\O}
\frac{5 B^2_3 \sum\limits_{m, n \in \mathbb{Z}} \mu_{\Theta} (x^{i}, v^{m, n}_{i-1, \mathbf{b}})}{2 - 2 \cos (B_3 \tb^{i-1})} \times | n(x^{i}) \cdot v^{m, n}_{i-1, \mathbf{b}} | \dd S_{x^{i-1}}   \dd \tb^{i-1}.
\end{split}
\ee
Using Proposition \ref{prop:mapV mag} and the fact $\tf (x^i, v^{m, n}_{i-1, \mathbf{b}}) = \tb (x^{i-1}, v^{m, n}_{i-1})$, we get
\be \label{bound3_6_2 mag}
\begin{split}
\eqref{bound3_6_1 mag}
& \leq \int^{\infty}_0 \int_{\p\O}
\frac{5 B^2_3 \sum\limits_{m, n \in \mathbb{Z}} \mu_{\Theta} (x^{i}, v^{m, n}_{i-1, \mathbf{b}})}{2 - 2 \cos (B_3 \tf^{i})} | n(x^{i}) \cdot v^{m, n}_{i-1, \mathbf{b}} | \dd S_{x^{i-1}}   \dd \tf^{i}
\\& = \int_{n(x^i) \cdot v < 0}
 \mu_{\Theta} (x^{i}, v) | n(x^{i}) \cdot v | \dd v = 1.
\end{split}
\ee

\textbf{Step 3.} Next we claim 
\Be \label{est3:forcing mag}
\eqref{est2:forcing mag}_* 
= \int_0^{t^{j+1}} \int_{\p\O} \frac{\tb^{j+1} \sum\limits_{m, n \in \mathbb{Z}} \mu_{\Theta} (x^{j+2}, v^{m, n}_{j+1, \mathbf{b}})}{2 - 2 \cos (B_3 \tb^{j+1})} \times \frac{\tb^{j} \sum\limits_{m, n \in \mathbb{Z}} \mu_{\Theta} (x^{j+1}, v^{m, n}_{j, \mathbf{b}})}{2 - 2 \cos (B_3 \tb^{j})} \dd S_{x^{j+1}} \dd \tb^{j+1}
\lesssim \e^{-2}.
\Ee
From the setting $v^{j} \in \mathcal{V}^{G_{\e}}_j$ with $\frac{\e}{B_3} + \frac{2 k \pi}{B_3}
  \leq \tb^j (x, v) \leq 
 \frac{2 \pi - \e}{B_3} + \frac{2 k \pi}{B_3}$, we derive that
\Be \notag
\frac{1}{2 - 2 \cos (B_3 \tb^{j})} \lesssim \e^{-2}.
\ee
Then using Lemma \ref{lem: sum of mu mag} and $t e^{- t^2} \lesssim 1$, we bound
\Be \label{bound1_1and5 mag}
\begin{split}
\frac{\tb^{j} \sum\limits_{m, n \in \mathbb{Z}} \mu_{\Theta} (x^{j+1}, v^{m, n}_{j, \mathbf{b}})}{2 - 2 \cos (B_3 \tb^{j})} 
\lesssim \tb^{j} e^{- (\tb^{j})^2} \e^{-2} 
\lesssim \e^{-2}.
\end{split}
\Ee
Applying \eqref{bound1_1and5 mag} on $\eqref{est2:forcing mag}_*$, we derive that
\Be \label{bound2_1and5 mag}
\eqref{est2:forcing mag}_*
\lesssim \e^{-2} \int_0^{t^{j+1}} \int_{\p\O} \frac{\tb^{j+1} \sum\limits_{m, n \in \mathbb{Z}} \mu_{\Theta} (x^{j+2}, v^{m, n}_{j+1, \mathbf{b}})}{2 - 2 \cos (B_3 \tb^{j+1})} \dd S_{x^{j+1}} \dd \tb^{j+1} \lesssim \e^{-2},
\Ee
where the last inequality holds from 
\eqref{bound3_6 mag}, and we prove \eqref{est3:forcing mag}. 

\textbf{Step 4.}
Now applying \eqref{bound3_6 mag} on $\{ v^k \}^{i-1}_{k = j+2}$ and \eqref{est3:forcing mag}, we derive that
\be \label{est_1:forcing mag}
\begin{split} 
\eqref{est2:forcing mag} 
\lesssim 
& \int_{\mathcal{V}_0} \dd \sigma_0 \cdots \int_{\mathcal{V}_{j-1}} \dd \sigma_{j-1} \times 
\e^{-2} 
 \times \int_{v^j \in \mathcal{V}^{G_{\e}}_j} \dd \tb^{j} \int_{\p\O} \dd S_{x^{i}} \eqref{est1:forcing mag}. 
\end{split}
\ee

For \eqref{est_1:forcing mag}, we employ the change of variables 
\be
(x^{i}, \tb^{j}, v^{i}) 
\mapsto (y, w) = (X(0; t^{j} -\tb^{j} \cdots - \tb^{i-1}, x^{i}, v^{i}), V(0; t^{j} -\tb^{j} \cdots - \tb^{i-1}, x^{i}, v^{i})) \in \O \times\R^3.
\ee
From \eqref{COV}, we derive
$|n(x^i) \cdot v^i| \dd S_{x^i} \dd \tb^{j} \dd v^i \lesssim \dd y \dd w$. 
Thus, we bound \eqref{est_1:forcing mag} as
\Be \notag
\begin{split}
\eqref{est_1:forcing mag}
  \leq \int_{\mathcal{V}_0} \dd \sigma_0 \cdots \int_{\mathcal{V}_{j-1}} \dd \sigma_{j-1}
\times \e^{-2} \times 
 \iint_{\O \times\R^3 } 
|F (y, w)|  \dd w \dd y
  \lesssim  2^{j} \e^{-2} \times \| F(x, v) \|_{L^1_{x,v}}. 
\end{split}
\Ee

Applying the above on each situation when $j = 0, \cdots, i-2$ in second cases, we deduce
\Be \notag
\eqref{forcing mag} 
\lesssim \sum\limits^{i-2}_{j = 0} 2^{j} \e^{-2 } \times \| F(x, v) \|_{L^1_{x,v}}
\leq 2^{i} \e^{-2 } \| F(x, v) \|_{L^1_{x,v}}.
\ee
Collecting the estimates from two cases, we prove \eqref{bound1:expand_h mag}.
\end{proof}

Following Lemma \ref{lem:small_largek} and Remark \ref{sigma measure estiamte}, we control the last term \eqref{expand_G2 mag}.

\begin{lemma} \label{lem:small_largek mag}
Consider $(X,V)$ in \eqref{characteristics mag}, there exists $\mathfrak{C}= \mathfrak{C}(\O)>0$ (see \eqref{choice:k} for the precise choice), such that  
\Be \label{small_largek mag}
\text{if }  \  k \geq \mathfrak{C}t,  \text{ then } 
\sup_{(x,v) \in \bar{\O} \times \R^3}  \Big(\int_{\prod_{j=0}^{k -1} \mathcal{V}_j}   
    \mathbf{1}_{t^{k} (t,x,v,v^1,\cdots, v^{k-1}) \geq 0 } \ \dd \sigma_0 \cdots \dd \sigma_{k-1}\Big) \lesssim e^{-t},
\Ee  
where $\dd \sigma_j = \mu_{\Theta} (x^{j+1}, v^{j}) \{ n(x^j) \cdot v^j \} \dd v^j$ in \eqref{def:sigma measure}.
\end{lemma}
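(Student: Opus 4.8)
\medskip

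The plan is to adapt the proof of Lemma \ref{lem:small_largek} to the magnetic setting, the only change being which velocities we must exclude in order to force the backward exit time to stay bounded below. In the gravitational case the relevant ``small'' set was $\mathcal{V}_j^\delta = \{v^j \in \mathcal{V}_j : |n(x^j)\cdot v^j| < \delta\}$, and the key facts were that $\int_{\mathcal{V}_j^\delta}\dd\sigma_j \leq C\delta^2$ while on the complement $t_{\mathbf{b}}(x^j,v^j) \gtrsim \delta$. In the magnetic case we keep exactly the same set, since by Lemma \ref{conservative field mag} we still have $|n(x^j)\cdot v^j| = |v^j_3| = 5\,\tb(x^j,v^j)$; hence $v^j \in \mathcal{V}_j \setminus \mathcal{V}_j^\delta$ still implies $\tb(x^j,v^j) \geq c_\Omega\,\delta$ for a dimensional constant. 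The bound $\int_{\mathcal{V}_j^\delta}\dd\sigma_j \lesssim \delta^2$ follows verbatim from the computation in \eqref{estimate on delta}, because the estimate there used only $|n(x)\cdot v|\leq |v|$, the Gaussian tail of $\mu_{\Theta}$, and spherical coordinates, none of which sees the magnetic field (and note $\tb = |v_3|/5$ only strengthens the implication $\mathbf{1}_{|n(x)\cdot v|\lesssim \delta} \geq \mathbf{1}_{\delta > \tb}$).

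\medskip

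With these two ingredients in place, Step 1 of the gravitational proof goes through unchanged: if $t^k(t,x,v,v^1,\dots,v^{k-1}) \geq 0$ then the condition $\sum_{j} \tb(x^j,v^j) \leq t$ forces at most $[t/(c_\Omega\delta)]+1$ of the indices $j$ to have $v^j \notin \mathcal{V}_j^\delta$, so we may expand
\Be \notag
\int_{\prod_{j=0}^{k-1}\mathcal{V}_j}\mathbf{1}_{t^k\geq 0}\,\dd\sigma_{k-1}\cdots\dd\sigma_0 \leq 2^k\sum_{m=0}^{[t/(c_\Omega\delta)]+1}\binom{k}{m}\big(C\delta^2\big)^{k-m},
\Ee
where we used $\int_{\mathcal{V}_j}\dd\sigma_j \leq 2$ from Remark \ref{sigma measure estiamte}. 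Steps 2 and 3 — the Stirling estimate \eqref{Stirling}, the binomial sum bound \eqref{est:sum_com}, and the choice \eqref{choice:k} of $\mathfrak{a}$ and $k = \mathfrak{a}([t/(c_\Omega\delta)]+1)$ making $(4^{\mathfrak{a}}\delta^{2\mathfrak{a}}e\mathfrak{a})^{1/(c_\Omega\delta)} \leq e^{-2}$ — are purely combinatorial and do not involve the dynamics at all, so they carry over word for word, yielding the claimed $\lesssim e^{-t}$.

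\medskip

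I do not expect a genuine obstacle here: the magnetic field enters the characteristics only through the rotation in the $(v_1,v_2)$-plane, which preserves $|v|$, $|v_3|$ and hence $\tb$, so the whole argument of Lemma \ref{lem:small_largek} is field-agnostic once one observes that the ``bad velocity'' set $\mathcal{V}_j^\delta$ and its measure bound are defined purely through $n(x)\cdot v$ and the wall Maxwellian. The one point to state carefully is the analogue of the inequality $\tb(x^j,v^j) \gtrsim |n(x^j)\cdot v^j|$ on $\mathcal{V}_j\setminus\mathcal{V}_j^\delta$; in the magnetic case this is the exact identity $\tb = |v_3|/5 = |n(x^j)\cdot v^j|/5$ from Lemma \ref{conservative field mag}, which is if anything cleaner. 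Finally, as in \eqref{est: small_largek}, combining this with the boundary relation $F(x^k,v^k) = \mu_{\Theta}(x^{k+1},v^k)\mathfrak{J}(x^{k+1})$ gives $\eqref{expand_G2 mag} \lesssim \|\mathfrak{J}\|_{L^\infty_x}\,e^{-t}$, which is what is needed to close the proof of Proposition \ref{prop:j linfty bound} in the magnetic case.
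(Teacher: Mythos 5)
Your proposal is correct and follows exactly the paper's own proof: the paper's argument for the magnetic case consists of precisely the two observations you make, namely that $|n(x)\cdot v| = 5\,\tb(x,v)$ (from Lemma \ref{conservative field mag}) converts $\mathbf{1}_{\delta>\tb}$ into $\mathbf{1}_{|n(x)\cdot v|\leq 5\delta}$ so that \eqref{estimate on delta} yields $\int_{\mathcal{V}^\delta_j}\dd\sigma_j\lesssim\delta^2$, and that the combinatorial Steps 1--3 of Lemma \ref{lem:small_largek} are field-agnostic and carry over verbatim. Your remark that the magnetic relation is an exact identity rather than two-sided comparability is accurate and, as you note, only simplifies matters.
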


\begin{proof}

For the magnet field case, from \eqref{estimate on delta} and $|n(x) \cdot v| = 5 \tb (x, v)$ for $(x,v) \in \gamma_{+}$, we have 
\Be \notag
\begin{split} 
& \ \ \ \ \int_{n(x) \cdot v > 0}
\mathbf{1} _{\delta> \tb (x, v)} \mu_{\Theta} (\xb, v) |n(x) \cdot v|\dd v
\leq \int_{ |n(x) \cdot v| \leq 5\delta} \mu_{\Theta} (\xb, v) | n(x) \cdot v | \dd v   \lesssim \delta^2.
\end{split}
\Ee
The rest of proof follows from Lemma \ref{lem:small_largek}.
\end{proof}

\begin{proof}[\textbf{Proof of Proposition \ref{prop:j linfty bound} of magnetic case \eqref{field property mag} }]
Using \eqref{est: G11 mag}, \eqref{est: G12 mag}, Lemma \ref{lem:bound1 mag} and \ref{lem:small_largek mag}, we have
\Be \notag
\begin{split}
\J (x) 
& \leq \| \J (x)\|_{L^{\infty}_x} \times e^{- \frac{t^2}{2b}} + \| \J (x)\|_{L^{\infty}_x} \times e^{- \frac{t^2}{8b}} 
\\ & \ \ \ \ + \sum^{k-1}_{i = 2} \big( 2^i \e^{-2} \| F(x, v) \|_{L^1_{x,v}} 
+ \e^{i-1} \| \J (x)\|_{L^{\infty}_x} \big) + \| \J (x)\|_{L^{\infty}_x} \times e^{-t}
\\& \leq \big( e^{- \frac{t^2}{2b}} + e^{- \frac{t^2}{8b}} +  e^{-t} + 2 \e \big) \times \| \J (x)\|_{L^{\infty}_x} + 2^k \e^{-2} \times \| F(x, v) \|_{L^1_{x,v}}.
\end{split}
\Ee
We can pick sufficiently large but fixed $t$ and sufficiently small $\e$ such that
\be \label{choice:t mag}
e^{- \frac{t^2}{2b}} + e^{- \frac{t^2}{8b}} + e^{-t} + 2 \e \leq 1/2.
\ee
Then we derive
\Be \notag
(1 - e^{- \frac{t^2}{2b}} - e^{- \frac{t^2}{8b}} - e^{-t} - 2 \e) \| \J (x)\|_{L^{\infty}_x} 
\leq 2^k \e^{-2} \times \| F(x, v) \|_{L^1_{x,v}}.
\Ee
Choosing $k = \mathfrak{a} \Big(\big[\frac{t  }{C_{\Omega} \delta}\big] + 1\Big)$ and $t$ in \eqref{choice:t mag}, we conclude that
\Be \label{est: j mag}
\| \J (x)\|_{L^{\infty}_x} 
\lesssim 2^{k+1} \times \| F(x, v) \|_{L^1_{x,v}}.
\Ee

Moreover, $F (x ,v) = F (x_\mathbf{b}, \vb) = \mu_{\Theta} (x_\mathbf{b}, \vb) \J (x_\mathbf{b})$, the $L^{\infty}_x$-estimate in \eqref{est: j mag} implies
\Be \notag
\|\mu^{-1}_{\Theta} (x_\mathbf{b}, \vb) F (x, v)\|_{L^{\infty}_{x,v}} 
= \| \J (x)\|_{L^{\infty}_x}
\lesssim 2^{k+1} \times \| F(x, v) \|_{L^1_{x,v}}.
\Ee
From Lemma \ref{conservative field mag}
and \eqref{def:Theta}, we get
\be
\mu^{-1}_{\Theta} (\xb, \vb) = 2 \pi \Theta^2(\xb) e^{ \frac{|\vb|^2}{2 \Theta(\xb)}}
\gtrsim e^{ \frac{|v|^2/2 + \Phi(x)}{2 b}}.
\ee
Therefore, we prove the proposition.
\end{proof}


\section{The existence of the steady state}
\label{sec: steady state}

In this section, we show the existence of steady state under the non-isothermal diffusive boundary condition.
We first use sequential argument on the penalized transport equation, then apply $L^1$-$L^{\infty}$ bootstrap to derive the solution. 
The main purpose of this section is to prove Proposition \ref{existence on f}, Theorem \ref{existence on F}.

Define
\be \label{isothermal maxwellian}
\mu (v) := \frac{1}{2 \pi}e^{-  \frac{|v|^2}{2}}
\ee 
Now we extend this Maxwellian into the whole phase space, such that for $(x, v) \in \O \times \R^3$,
\be \label{mu extend}
\mu (x, v) := \frac{1}{2 \pi}e^{-  \frac{|v|^2}{2 } - \Phi(x)},
\ee
where $\mu$ satisfies \eqref{equation for F_infty}
(resp. \eqref{def:f}). Moreover, we set $\| \mu \|_{L^1_{x,v}} := \mathfrak{m}_{\mu} > 0$.

\begin{proposition} \label{existence on f}
Recall 
$\mu_{\Theta} (x, v) =   \frac{1}{2 \pi \Theta^2(x)}e^{-  \frac{|v|^2}{2 \Theta(x)}}$ for $(x, v) \in \p\O \times \R^3$ and $0 < a \leq \Theta(x) \leq b < \infty$ with $b^2 = 2 a^2$ for all $x \in \O$. Now we define the following: 
\Be \notag
r (x, v) = \mu_{\Theta} (x, v) - \mu (x, v),
\ee
where $(x, v) \in \p\O \times \R^3$, with
\Be \notag
\int_{\gamma_{-}} r (x, v) |n \cdot v| \dd S_x \dd v = 0.
\Ee
Then there exists a unique solution $f (t, x, v)$ to
\eqref{equation for F_infty} (resp. \eqref{def:f}) and the boundary condition:
\Be \label{diff_f_infty}
f (x, v)  
= \mu_{\Theta} (x, v) \int_{n(x)\cdot v^1>0} f(x, v^1) \{ n(x) \cdot v^1 \} \dd v^1 + r (x, v), \ \ \text{for} \ (x,v) \in \gamma_-,
\Ee 
such that 
\Be \label{zero mass on f_infty}
\iint_{\O \times \R^3} f (x, v) \dd x \dd v = 0.
\Ee
\end{proposition}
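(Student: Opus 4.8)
The plan is to construct $f$ as a limit of a double penalization scheme, decoupling the mass-zero constraint from the existence argument. First I would fix $\epsilon>0$ and introduce the damped, damped-reflection iterates
\begin{equation} \notag
\epsilon f^{\epsilon,l} + v\cdot\nabla_x f^{\epsilon,l} + (v\times B - \nabla\Phi)\cdot\nabla_v f^{\epsilon,l} = 0, \quad
f^{\epsilon,l}|_{\gamma_-} = \Big(1-\tfrac1l\Big)\mu_\Theta(x,v)\!\!\int_{n\cdot v^1>0}\!\! f^{\epsilon,l-1}(x,v^1)\{n\cdot v^1\}\,\dd v^1 + r(x,v),
\end{equation}
with $f^{\epsilon,0}=0$. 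Each iterate is solved along the characteristics of \eqref{characteristics} (resp. \eqref{characteristics mag}) via the absolutely convergent series obtained from the stochastic cycles of Lemma \ref{sto_cycle_1}; the factor $(1-\tfrac1l)$ together with the $\epsilon$-damping $e^{-\epsilon(t-t^k)}$ makes the cycle expansion a geometric-type series with ratio bounded by $2(1-\tfrac1l)e^{-\epsilon t^k}<1$ (using Remark \ref{sigma measure estiamte}, $\int_{\mathcal V_i}\dd\sigma_i\le 2$), so $f^{\epsilon,l}$ converges in $L^\infty$ as $l\to\infty$ to some $f^\epsilon$ solving the problem with boundary datum $\mu_\Theta\int f^\epsilon\{n\cdot v^1\}\dd v^1 + r$ and with the $\epsilon$-damping still present. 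The key point here is that the $L^\infty$ bound on $f^\epsilon$ is controlled, but we do not yet know its mass.

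Next I would establish a uniform-in-$\epsilon$ $L^1$ bound. Because $\int_{\gamma_-} r\,|n\cdot v|\,\dd S_x\dd v=0$ and the diffuse-reflection operator conserves mass, the $\epsilon$-penalized equation gives $\epsilon\|f^\epsilon\|_{L^1_{x,v}}$ controlled by boundary flux terms; more carefully, testing the equation against $\mathrm{sgn}(f^\epsilon)$ (or running the transport estimate along characteristics and using \eqref{COV}--\eqref{COV_bdry}) yields $\epsilon \iint f^\epsilon \,\dd x\,\dd v = 0$ exactly, hence $\iint f^\epsilon=0$, while an absorption estimate of the form $\|f^\epsilon\|_{L^1}\lesssim \|f^\epsilon\|_{L^1_{\gamma_+}} + |r|_{L^1_{\gamma_-}}$ combined with the stochastic-cycle contraction gives $\|f^\epsilon\|_{L^1_{x,v}}\lesssim 1$ uniformly in $\epsilon$. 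With this uniform $L^1$ bound in hand, Proposition \ref{prop:j linfty bound} (applied to $\mu_\Theta + f^\epsilon$ or directly in its $\epsilon$-damped form, which the proof accommodates since the extra factor $e^{-\epsilon(\cdot)}\le 1$ only helps) yields a uniform $L^\infty$ bound $\|e^{\frac{1}{2b}(|v|^2/2+\Phi)}f^\epsilon\|_{L^\infty_{x,v}}\lesssim 1$.

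Then I would pass to the limit $\epsilon\to 0$: extract a weak-$*$ limit $f^\epsilon\stackrel{\ast}{\rightharpoonup} f$ in $L^\infty$, which automatically satisfies the transport equation \eqref{equation for F_infty} (resp. \eqref{def:f}) in the distributional/mild sense, the boundary condition \eqref{diff_f_infty} (the trace passing to the limit via the change-of-variables identities of Lemma \ref{lem:COV} and the $L^\infty$ weighted bound controlling the boundary integrals), and the mass condition $\iint f\,\dd x\,\dd v=0$ (preserved since $\iint f^\epsilon=0$ for every $\epsilon$ and the weight gives enough decay to pass the limit through the unbounded $v$-integral). For uniqueness, suppose $f_1,f_2$ are two solutions; then $g=f_1-f_2$ solves the homogeneous problem \eqref{equation for F_infty} with the pure diffuse-reflection boundary condition \eqref{diff_F} and $\iint g=0$. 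Applying Proposition \ref{prop:j linfty bound} to $g$ bounds $\|e^{\frac{1}{2b}(\cdots)}g\|_{L^\infty}\lesssim \|g\|_{L^1}$; an energy/$L^1$ estimate must then force $\|g\|_{L^1}=0$ — this is where the mass-zero normalization is essential, since without it constant multiples of the stationary solution $F_s$ would obstruct uniqueness. The main obstacle I anticipate is precisely this last step: showing the homogeneous solution with zero mass vanishes requires more than the a priori $L^\infty$-from-$L^1$ bound — one needs either a strict contraction in $L^1$ (coming from the fact that the diffuse operator is a genuine contraction once mass is removed, together with the downward gravity forcing return to the boundary) or the dynamical convergence result; I would argue it via the observation that $\mathfrak J_g(x)=\int_{n\cdot v>0} g\{n\cdot v\}\dd v$ must integrate to zero over $\partial\Omega$ against the outgoing flux while the stochastic-cycle representation of $\mathfrak J_g$ expresses it as an average, so a strong maximum principle type argument (or iterating the cycle representation and using that the residual measures are genuinely sub-probability after accounting for the escaping mass) pins $\mathfrak J_g\equiv 0$, hence $g\equiv 0$.
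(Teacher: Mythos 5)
Your overall scaffold matches the paper's: a double penalization in a damping parameter and a reflection parameter, $L^1$-$L^\infty$ bootstrap via the stochastic cycles and Proposition~\ref{prop:j linfty bound}, weak-$*$ passage to the limit. But there are two genuine gaps where the paper uses ideas you do not supply.

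First, your claimed uniform-in-$\epsilon$ $L^1$ bound does not follow from the argument you give. The $\epsilon$-penalized Green's identity applied to $f^\epsilon$ (with the full diffuse boundary operator, after $l\to\infty$) yields only $\epsilon\|f^\epsilon\|_{L^1_{x,v}}\le |r|_{L^1_{\gamma_-}}$, i.e.\ $\|f^\epsilon\|_{L^1}\lesssim \epsilon^{-1}$, which degenerates. There is no ``stochastic-cycle contraction'' once the factor $(1-\tfrac1l)$ is gone: the diffuse operator exactly conserves mass and Remark~\ref{sigma measure estiamte} gives $\int_{\mathcal V_i}\dd\sigma_i\le 2>1$, so iterating the cycle expansion alone never closes an $L^1$ bound uniformly in $\epsilon$. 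The paper's route is different: it first proves by induction on the iterates that $\mu(x,v)+f^{l}_j(x,v)\ge 0$ (because $r=\mu_\Theta-\mu$ and the boundary operator is positivity-preserving), passes this to $f^\epsilon$, and then uses the \emph{exact} identity $\iint f^\epsilon = 0$ (from Green's identity with the null flux of $r$) to conclude $\|\mu+f^\epsilon\|_{L^1}=\|\mu\|_{L^1}$ and hence $\|f^\epsilon\|_{L^1}\le 2\|\mu\|_{L^1}$ uniformly. Without the positivity step the uniform $L^1$ bound, and therefore the uniform $L^\infty$ bound via Proposition~\ref{prop:j linfty bound}, is not available.

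Second, you correctly flag uniqueness as the main obstacle but do not actually close it; the sketches you offer (``strong maximum principle type argument,'' ``residual measures genuinely sub-probability'') are not arguments. The paper's uniqueness proof is short and elementary and worth internalizing: for $\delta=f-g$, Green's identity gives $|\delta|_{L^1_{\gamma_+}}=|\delta|_{L^1_{\gamma_-}}$, while Jensen applied to the diffuse operator gives $|\delta|_{L^1_{\gamma_-}}\le|\delta|_{L^1_{\gamma_+}}$; equality therefore holds, which forces
\[
\int_{n(x)\cdot v>0}|\delta(x,v)|\{n(x)\cdot v\}\,\dd v \;=\; \Big|\int_{n(x)\cdot v>0}\delta(x,v)\{n(x)\cdot v\}\,\dd v\Big|
\quad\text{for a.e.\ } x\in\p\O,
\]
i.e.\ $\delta(x,\cdot)$ is single-signed on the outgoing cone for each $x$. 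Plugging this into the boundary condition, $\delta|_{\gamma_-}$ is $\mu_\Theta(x,v)$ times a single-signed number, so $\delta$ has one global sign along all characteristics; since $\iint\delta=0$, this gives $\delta\equiv 0$. The mass-zero normalization is exactly what rules out the one-dimensional kernel, as you anticipated, but the mechanism is this sign-rigidity from equality in Jensen, not a maximum principle or a measure-defect estimate.

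A smaller point: for passing the zero-mass constraint to the weak-$*$ limit, ``the weight gives enough decay'' is the right heuristic, but one should say why the tail $\iint_{|x|>k\text{ or }|v|>k}f^\epsilon$ is small uniformly in $\epsilon$; the paper runs the characteristics back to the boundary via \eqref{COV+}, uses $\tb\lesssim|v_3|$ and $|v|^2+2\Phi(x)=|\vb|^2$, and the uniform $L^\infty$-weighted bound to get a Gaussian tail. That requires having the uniform $L^\infty$ bound in hand, which circles back to the positivity gap above.
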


For the proof of Proposition \ref{existence of f}, we start with the penalized transport equation with a reduced diffuse reflection boundary condition, with the purpose of setting up a contracting map argument.

\begin{lemma} \label{lem: existence of f^e}
For any $\epsilon > 0$ and for any integer $j > 0$, there exists a unique solution $f (t,x,v)$ to
\Be \label{equation for f_j} 
\epsilon f^j + v \cdot \nabla_{x} f^j + (v \times B - \nabla \Phi ) \cdot \nabla_{v} f^j = 0,
\Ee
\Be \label{diff_f_j}
f^j (x, v)  
= (1 - \frac{1}{j}) \mu_{\Theta} (x, v) \int_{n(x)\cdot v^1 >0} f^j (x, v^1) \{ n(x) \cdot v^1 \} \dd v^1 + r (x, v)
, \ \ \text{for} \ (x,v) \in \gamma_-,
\Ee   
where $B$ and $\Phi (x)$ are defined in \eqref{field property} (resp. \eqref{field property mag}).

Moreover, uniformly in $j$, we have
\Be \label{f_j L1}
\| f^j \|_1 \lesssim_{\epsilon, \O} |r|_1
\Ee
Finally the limit $\fe$ as $j \rightarrow \infty$ of the sequence $\{ f^j \}$ exists and solves uniquely
\Be \label{equation for f_e} 
\epsilon \fe + v \cdot \nabla_{x} \fe + (v \times B - \nabla \Phi ) \cdot \nabla_{v} \fe = 0,
\Ee
\Be \label{diff_f_e}
\fe (x, v)  
= \mu_{\Theta} (x, v) \int_{n(x) \cdot v^1 > 0} \fe (x, v^1) \{ n(x) \cdot v^1 \} \dd v^1 + r (x, v)
, \ \ \text{for} \ (x,v) \in \gamma_-.
\Ee  
where $B$ and $\Phi (x)$ are defined in \eqref{field property} (resp. \eqref{field property mag}).
\end{lemma}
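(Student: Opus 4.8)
\textbf{Proof proposal for Lemma \ref{lem: existence of f^e}.}

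The plan is to fix $\epsilon>0$ and build $f^j$ for each $j$ by a contracting map, then pass $j\to\infty$. For the existence of $f^j$ I would set up an iteration on the boundary data: given an incoming profile, solve the penalized transport equation $\epsilon f + v\cdot\nabla_x f + (v\times B - \nabla\Phi)\cdot\nabla_v f = 0$ along the characteristics \eqref{characteristics} (resp. \eqref{characteristics mag}) backward from $(x,v)$. Writing the solution via the stochastic cycles of Lemma \ref{sto_cycle_1}, but now with the extra factor $e^{-\epsilon(t-t^k)}$ accumulated along each leg and with the diffuse operator damped by $(1-\tfrac1j)$, one gets a representation
\be \notag
f^j(x,v) = r(x,v)\text{-terms} + (1-\tfrac1j)\,\mu_\Theta(x,v)\!\int\!\cdots,
\ee
and the associated solution operator is a contraction on $L^\infty$ (weighted by a Maxwellian) because: (i) each reflection contributes the factor $(1-\tfrac1j)<1$ together with a $\sigma$-measure of mass $\leq 2$ by Remark \ref{sigma measure estiamte}, so a fixed finite number of bounces already shrinks the norm once $j$ is moderately large — more robustly, the exponential damping $e^{-\epsilon\tb}$ over a bounce of length $\tb$ together with Lemma \ref{lem:small_largek} (resp. \ref{lem:small_largek mag}) forces the $k$-fold iterate to have small operator norm; (ii) the source $r$ is bounded by a Maxwellian since $0<a\le\Theta\le b$. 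A standard Banach fixed point argument then yields a unique $f^j\in L^\infty_{x,v}$, and because $r$ and the boundary kernel are smooth off the grazing set, $f^j$ is a genuine (mild/renormalized) solution of \eqref{equation for f_j}–\eqref{diff_f_j}.

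Next I would prove the uniform $L^1$ bound \eqref{f_j L1}. Integrate \eqref{equation for f_j} over $\O\times\R^3$: since $\nabla_v\cdot(v\times B-\nabla\Phi)=0$ and $v\cdot\nabla_x f^j$ integrates to the net boundary flux, one obtains
\be \notag
\epsilon \iint_{\O\times\R^3} f^j \,\dd x\dd v = \int_{\gamma_-} f^j |n\cdot v|\dd S_x\dd v - \int_{\gamma_+} f^j |n\cdot v|\dd S_x\dd v .
\ee
Plug in the boundary condition \eqref{diff_f_j}. The normalization $\int_{n\cdot v>0}\mu_\Theta|n\cdot v|\dd v=1$ makes the diffuse part contribute exactly $(1-\tfrac1j)\int_{\gamma_+}f^j|n\cdot v|$ on $\gamma_-$, so the right side telescopes to $-\tfrac1j\int_{\gamma_+}f^j|n\cdot v| + \int_{\gamma_-}r|n\cdot v|$; using $\int_{\gamma_-}r|n\cdot v|=0$ one controls $\epsilon\|f^j\|_1$ in terms of the boundary flux, and then a Green's identity / trace estimate together with the $L^\infty$ bound already obtained closes the loop to give $\|f^j\|_1 \lesssim_{\epsilon,\O}|r|_1$ uniformly in $j$. (Alternatively, test with $\mathrm{sgn}(f^j)$ to get the $L^1$ contraction directly, which also gives uniqueness.)

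Finally, to pass $j\to\infty$: the uniform $L^1$ bound plus the $L^1$–$L^\infty$ bootstrap (Proposition \ref{prop:j linfty bound} applied to $\epsilon$-damped cycles, exactly as the lemma's representation allows) gives a uniform weighted $L^\infty$ bound on $\{f^j\}$; extract a weak-$*$ limit $\fe$ in $L^\infty$. The boundary terms converge because $(1-\tfrac1j)\to1$ and the traces are controlled; hence $\fe$ solves \eqref{equation for f_e}–\eqref{diff_f_e}. Uniqueness of $\fe$ follows from the $L^1$ contraction estimate applied to the difference of two solutions of the $\epsilon$-problem with the full (undamped) diffuse boundary condition: the difference has zero source, and the strict dissipation $\epsilon\|\cdot\|_1$ beats the (now mass-preserving) diffuse reflection, forcing it to vanish.

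The main obstacle I expect is making the contraction for $f^j$ genuinely uniform enough to also survive the $j\to\infty$ limit with the \emph{undamped} boundary operator: when $j=\infty$ the factor $(1-\tfrac1j)$ is gone, so one must extract the decay purely from the penalization $e^{-\epsilon\tb}$ per bounce, which requires the Stirling-type smallness of the $k$-fold residual measure from Lemma \ref{lem:small_largek} (resp. \ref{lem:small_largek mag}); in the magnetic case one additionally has to handle the degenerate Jacobian by the $\mathcal V^{B_\e}/\mathcal V^{G_\e}$ splitting of Definition \ref{def:BG mag} and Lemmas \ref{lem: Be mag}, \ref{lem:bound1 mag}, exactly as in the proof of Proposition \ref{prop:j linfty bound}. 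Everything else is bookkeeping along characteristics and Green's-identity estimates.
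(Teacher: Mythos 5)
Your overall plan matches the paper's architecture — a two-stage limiting procedure (fixed-point/iteration for each $j$, then $j\to\infty$), a uniform $L^1$ bound via Green's identity, an $L^1$–$L^\infty$ bootstrap for a uniform $L^\infty$ bound, and a weak-$*$ extraction for the limit — but the existence step for $f^j$ is done differently in the paper, and your stated justification (i) for a contraction in Maxwellian-weighted $L^\infty$ is flawed. In that norm each reflection brings in a $\sigma$-measure of total mass up to $2$ (Remark~\ref{sigma measure estiamte}), so the per-bounce factor is $(1-\tfrac1j)\cdot 2$, which is $\geq 1$ precisely for $j\geq 2$; "once $j$ is moderately large" makes matters worse, not better, since $(1-\tfrac1j)\to 1$. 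The paper avoids this entirely by iterating in the $L^1$ boundary trace norm: there the diffuse operator is an honest probability measure, so $|P_\Theta g|_{1,-}\leq |g|_{1,+}$ with constant exactly $1$, and the factor $(1-\tfrac1j)$ becomes a genuine contraction rate. Concretely, Green's identity on the difference gives $\epsilon\|f^{l+1}_j - f^l_j\|_1 + |f^{l+1}_j-f^l_j|_{1,+} \leq (1-\tfrac1j)|f^l_j-f^{l-1}_j|_{1,+}$, so $\{f^l_j\}_l$ is Cauchy in $L^1$; this simultaneously yields existence of $f^j$, its uniqueness, and (with $|f^j|$ in place of the difference) the bound \eqref{f_j L1}, without invoking the Stirling machinery at this stage.

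Your backup route (ii) — $e^{-\epsilon\tb}$-damping combined with Lemma~\ref{lem:small_largek} (resp. \ref{lem:small_largek mag}) to make the $k$-fold iterate small — is not the paper's existence argument, but it is essentially what the paper deploys afterwards to upgrade the uniform $L^1$ bound to a uniform $L^\infty$ bound on $f^j$; it could be turned into a fixed-point argument but it is considerably heavier than the $L^1$ contraction. Two smaller points: integrating $f^j$ over $\O\times\R^3$ controls the signed mass $\iint f^j$, not $\|f^j\|_1$, and the bound on the right-hand side you need is $|r|_{1,-}=\int_{\gamma_-}|r||n\cdot v|$, which is not zero (only $\int_{\gamma_-}r|n\cdot v|$ is); your parenthetical "test with $\operatorname{sgn}(f^j)$" is exactly the correct fix and is the paper's argument, giving $\epsilon\|f^j\|_1 + |f^j|_{1,+} \leq (1-\tfrac1j)|f^j|_{1,+}+|r|_{1,-}$ and hence $\|f^j\|_1\leq \epsilon^{-1}|r|_{1,-}$. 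Finally, "the traces are controlled" when passing $j\to\infty$ needs to be fleshed out: the paper sets $\delta^j=f^j-\fe$, notes $\delta^j$ solves the penalized equation with boundary data $(1-\tfrac1j)P_\Theta\delta^j - \tfrac1j P_\Theta \fe$, and uses Green's identity together with the uniform $L^\infty$ bound on $\fe$ to get $\|\delta^j\|_1\leq \tfrac{1}{\epsilon j}(\|\fe\|_\infty + 2)\to 0$, which gives strong $L^1$ convergence and hence convergence of the boundary condition. Your closing worry about making the contraction "uniform in $j$" is a red herring: the $j\to\infty$ passage uses compactness from the uniform bounds, not any uniform contraction rate.
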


\begin{proof}
For any $\e$, we first consider $B$, $\Phi (x)$ defined in \eqref{field property}, and use the following double iteration in both $j$ and $l$,
\Be \label{equation for f_l} 
\epsilon f^{l+1}_{j} + v \cdot \nabla_{x} f^{l+1}_{j} - \nabla \Phi \cdot \nabla_{v} f^{l+1}_{j} = 0,
\Ee
\Be \label{diff_f_l}
f^{l+1}_{j} (x, v)  
= (1 - \frac{1}{j}) \mu_{\Theta} (x, v) \int_{n(x)\cdot v^1>0} f^{l}_{j} (x, v^1) \{ n(x) \cdot v^1 \} \dd v^1 + r (x, v)
, \ \ \text{for} \ (x,v) \in \gamma_-.
\Ee 
with initial setting $f^0_{j} = 0$.

\textbf{Step 1.}
We first fix $j$ and take $l \rightarrow \infty$. 
Here we claim that $\{ f^l_{j} \}_{l}$ is a Cauchy sequence in $L^1$.
Taking the difference of $f^{l+1}_{j} - f^l_{j}$, and using Green's identity on \eqref{equation for f_l} and \eqref{diff_f_l}, we have

\Be \notag
\begin{split}
\epsilon \| f^{l+1}_{j} - f^l_{j} \|_1 + |f^{l+1}_{j} - f^l_{j} |_{1, +} 
 = |f^{l+1}_{j} - f^l_{j} |_{1, -}
  = |(1 - \frac{1}{j}) P_{\Theta} (f^{l}_{j} - f^{l-1}_{j} ) |_{1, -},
\end{split}
\Ee
where $P_{\Theta} f^l_{j} := \mu_{\Theta} (x, v) \int_{n(x)\cdot v^1>0} f^l_{j} (x, v^1) \{ n(x) \cdot v^1 \} \dd v^1$ with $\mu_{\Theta} (x, v) =   \frac{1}{2 \pi \Theta^2(x)}e^{-  \frac{|v|^2}{2 \Theta(x)}}$.
By direct computation and $| P_{\Theta} f^l_{j} |_{1, -} \leq | f^l_{j} |_{1, +}$, we get
\Be \notag 
\begin{split}
|(1 - \frac{1}{j}) P_{\Theta} ( f^{l}_{j} - f^{l-1}_{j} ) |_{1, -}
\leq (1 - \frac{1}{j}) | f^{l}_{j} - f^{l-1}_{j} |_{1, +}.
\end{split}
\Ee
Thus, we deduce that
\Be \label{gamma-+ estimate}
\epsilon \| f^{l+1}_{j} - f^l_{j} \|_1 + |f^{l+1}_{j} - f^l_{j} |_{1, +}
\leq (1 - \frac{1}{j}) | f^{l}_{j} - f^{l-1}_{j} |_{1, +}.
\Ee
Since $1 - \frac{1}{j} < 1$,  by iteration over $l$, we deduce
\Be \notag
| f^{l+1}_{j} - f^l_{j} |_{1, +} 
\leq (1 - \frac{1}{j})^{l} | f^{1}_{j} - f^{0}_{j} |_{1, +}.
\Ee
Applying this on \eqref{gamma-+ estimate}, we have
\Be \notag
\epsilon \| f^{l+1}_{j} - f^l_{j} \|_1 + |f^{l+1}_{j} - f^l_{j} |_{1, +}
\leq (1 - \frac{1}{j}) | f^{l}_{j} - f^{l-1}_{j} |_{1, +}
\leq (1 - \frac{1}{j})^{l} | f^{1}_{j} - f^{0}_{j} |_{1, +}.
\Ee
Then, for any $l, k \in \Z_{+}$ with $l > k > 0$, we derive
\Be \notag
\epsilon \| f^{l}_{j} - f^{k}_{j} \|_1
\leq \epsilon \sum\limits^{l-1}_{p = k} \| f^{p+1}_{j} - f^{p}_{j} \|_1
\leq \sum\limits^{l-1}_{p = k} (1 - \frac{1}{j})^{p} | f^{1}_{j} - f^{0}_{j} |_{1, +}
 < j (1 - \frac{1}{j})^{k} | f^{1}_{j} - f^{0}_{j} |_{1, +}.
\Ee	
Since $f^0_{j} = 0$ and $f^1_{j} (x, v) = r (x, v)$ for $(x, v) \in \gamma_{-}$, we get $| f^{1}_{j} - f^{0}_{j} |_{1, +} < \infty$. Therefore, we prove the claim.

Now we can take 
$l \rightarrow \infty$ to obtain the $L^1$ limit such that 
$f^l_{j} \rightarrow f^j \in L^1$.
Since \eqref{equation for f_l} is linear and $f^l_{j} \rightarrow f^j$ in $L^1$, we can get $f^j$ is the weak solution for \eqref{equation for f_j}. 
Note that on the boundary, we have $| f^{l+1}_{j} - f^l_{j} |_{1, +} 
\leq (1 - \frac{1}{j})^{l} | f^{1}_{j} - f^{0}_{j} |_{1, +}$. This shows $f^l_{j}$ is also a Cauchy sequence in $L^1_{\gamma_+}$, thus the limit $f^j$ satisfies \eqref{diff_f_j}.

Suppose $g^j$ is also the solution for $j > 0$, and set $\delta^j := f^j - g^j$. From Green's identity, we have,
\Be \notag
\epsilon \| \delta^j \|_1 + |\delta^j|_{1, +} 
\leq (1 - \frac{1}{j}) |P_{\Theta} \delta^j|_{1, -}
\leq (1 - \frac{1}{j}) |\delta^j|_{1, +}.
\Ee
Since $\epsilon > 0$ and $j > 0$, we obtain,
\Be \notag
\| \delta^j \|_1
\leq 0.
\Ee
which leads to the uniqueness on the solution to \eqref{equation for f_j} and \eqref{diff_f_j}.

\textbf{Step 2.}
Now we take $j \rightarrow \infty$ for $f^j$ with $\epsilon > 0$. Using Green's identity and $|P_{\Theta} f^l|_{1, -} \leq |f^l|_{1, +}$, we obtain that,
\Be \label{fj l_1 estimate}
\epsilon \| f^j \|_1 + |f^j|_{1, +} 
\leq (1 - \frac{1}{j}) |f^j|_{1, +} + |r|_{1, -}.
\Ee
Then we derive uniformly in $j$,
\Be \notag
\| f^{j} \|_1
\leq \frac{1}{\epsilon} |r|_{1, -} < \infty,
\Ee
which shows \eqref{f_j L1}.
Then following the characteristic line on \eqref{equation for f_j}, we get
\Be \notag
\| f^j (x, v) \|_{L^{\infty}} 
= \| e^{- \e \tb} f^j (\xb, \vb) \|_{L^{\infty}}  
\leq \| f^j (\xb, \vb) \|_{L^{\infty}}.
\ee
Here we define the boundary outgoing flux for $f^j (x, v)$, denoted by $\J^j (x)$,
\Be \label{diff_j}
\J^j (x) = \int_{n(x)\cdot v >0} f^j (x, v) \{ n(x) \cdot v \} \dd v, \ \text{for} \ x \in \p\O.
\Ee
From \eqref{diff_f_j}, we can immediately get
\Be \label{Fj relation}
f^j (x, v) = (1 - \frac{1}{j}) \mu_{\Theta} (x, v) \J^j (x) + r (x, v), \ \text{for} \ (x, v) \in \gamma_-.
\Ee
From \eqref{equation for f_j}, \eqref{diff_f_j} and the stochastic cycle on $f^j$, we have
\begin{align}
	\J^j (x) \leq 
    & \int_{\mathcal{V}_0}   
    \mathbf{1}_{t^{1} < 0 } \big| f^j (X(0; t, x, v^0), V(0; t, x, v^0)) \{ n(x) \cdot v^0 \} \big| \dd v^0
\label{expand_fj11}
	\\& + \int_{\prod_{j=0}^{1} \mathcal{V}_j}   
    \mathbf{1}_{t^{2} < 0 \leq t^{1}} \Big( \big| f^j (X(0; t^1, x^1, v^1), V(0; t^1, x^1, v^1)) \big| \dd \tilde{\Sigma}_{1} + |r (x^1, \vb^0)|  \dd \tilde{\Sigma}_{0} \Big)
\label{expand_fj12}
    \\& 	+ \sum\limits^{k-1}_{i=2}  \int_{\prod_{j=0}^{i} \mathcal{V}_j}  \mathbf{1}_{t^{i+1} < 0 \leq t^{i }} 
   \Big( \big| f^j (X(0; t^i, x^i, v^i), V(0; t^i, x^i, v^i)) \big| \dd \tilde{\Sigma}_{i}
\notag
\\& \qquad \qquad
+ |r (x^i, \vb^{i-1})| \dd \tilde{\Sigma}_{i-1} + |r (x^{i-1}, \vb^{i-2})| \dd \tilde{\Sigma}_{i-2} + \dots  + |r (x^1, \vb^0)| \dd \tilde{\Sigma}_{0} \Big)
\label{expand_fj1}
    \\& + \int_{\prod_{j=0}^{k } \mathcal{V}_j}  
  \mathbf{1}_{t^{k } \geq 0 } 
    \Big( \big| f^j (x^{k }, v^{k }) \big|  \dd \tilde{\Sigma}_{k}
     + |r (x^k, \vb^{k-1}|) \dd \tilde{\Sigma}_{k-1} + \dots  + |r (x^1, \vb^0)| \dd \tilde{\Sigma}_{0} \Big),
\label{expand_fj2}
\end{align} 
where 
$\dd \tilde{\Sigma}_{i} := \frac{ \dd \sigma_{i}}{\mu_{\Theta} (x^{i+1}, v^{i})} \dd \sigma_{i-1} \cdots \dd \sigma_1 \dd \sigma_0$ and
$\dd \sigma_j = \mu_{\Theta} (x^{j+1}, v^{j}) \{ n(x^j) \cdot v^j \} \dd v^j$ in \eqref{def:sigma measure}.

From Lemma \ref{conservative field}, we know that $|\vb| = |v|$. Moreover, it is easy to check that $r (x, v) = r (x, |v|)$, thus we deduce that,
\be \notag
\begin{split}
\eqref{expand_fj12} 
& \leq \int_{\prod_{j=0}^{1} \mathcal{V}_j}   
    \mathbf{1}_{t^{2} < 0 \leq t^{1}} \big| f^j (X(0; t^1, x^1, v^1), V(0; t^1, x^1, v^1)) \big| \dd \tilde{\Sigma}_{1} 
+ \int_{\mathcal{V}_0}   
    \mathbf{1}_{t^{2} < 0 \leq t^{1}} |r (x^1, v^0)| \dd \tilde{\Sigma}_{0}.
\end{split}
\ee
For \eqref{expand_fj1}, we have,
\be \notag
\begin{split}
\eqref{expand_fj1} 
& \leq \int_{\prod_{j=0}^{i} \mathcal{V}_j} 
\sum\limits^{k-1}_{i=2} \Big\{
 \mathbf{1}_{t^{i+1} < 0 \leq t^{i }} 
   \big| f^j (X(0; t^i, x^i, v^i), V(0; t^i, x^i, v^i)) \big| \Big\} \dd \tilde{\Sigma}_{i}
\\& + \sum\limits^{k-1}_{i=2} \int_{\prod_{j=0}^{i} \mathcal{V}_j}  \mathbf{1}_{t^{i+1} < 0 \leq t^{i }} 
\Big( |r (x^i, v^{i-1})| \dd \tilde{\Sigma}_{i-1} + |r (x^{i-1}, v^{i-2})| \dd \tilde{\Sigma}_{i-2} + \dots  + |r (x^1, v^0)| \dd \tilde{\Sigma}_{0} \Big).
\end{split}
\ee
For the last term \eqref{expand_fj2}, we obtain,
\be \notag
\begin{split}
\eqref{expand_fj2} \leq
\int_{\prod_{j=0}^{k } \mathcal{V}_j}  
  \mathbf{1}_{t^{k } \geq 0 } 
    \Big( \big| f^j (x^{k }, v^{k }) \big|  \dd \tilde{\Sigma}_{k}
     + |r (x^k, v^{k-1}|) \dd \tilde{\Sigma}_{k-1} + \dots  + |r (x^1, v^0)| \dd \tilde{\Sigma}_{0} \Big).
\end{split}
\ee
By direct computation, we get
\be \notag
\begin{split}
\int_{\mathcal{V}_{i-1}} r (x^i, v^{i-1}) \{ n(x^{i-1}) \cdot v^{i-1} \} \dd v^{i-1}
& \leq \int_{\mathcal{V}_{i-1}} \big( \mu_{\Theta} (x^i, v^{i-1}) + \mu (x^i, v^{i-1}) ) \{ n(x^{i-1}) \cdot v^{i-1} \} \dd v^{i-1}
\lesssim 2.
\end{split}
\ee
Then for any $i = 2, \cdots, k$,
\be \label{r measure estimate}
\int_{\prod_{j=0}^{i} \mathcal{V}_j}  
r (x^i, v^{i-1}) \dd \tilde{\Sigma}_{i-1}
\lesssim 2 \prod_{j=0}^{i-2} \int_{\mathcal{V}_i} \dd \sigma_i.
\ee
Using the above estimate, we derive
\begin{align}
\J^j (x) \leq 
& \int_{\mathcal{V}_0}   
    \mathbf{1}_{t^{1} < 0 } \big| f^j (X(0; t, x, v^0), V(0; t, x, v^0)) \{ n(x) \cdot v^0 \} \big| \dd v^0
\notag
	\\& + \int_{\prod_{j=0}^{1} \mathcal{V}_j}   
    \mathbf{1}_{t^{2} < 0 \leq t^{1}} \big| f^j (X(0; t^1, x^1, v^1), V(0; t^1, x^1, v^1)) \big| \dd \tilde{\Sigma}_{1} 
+ 2
\notag
    \\& 	+ \int_{\prod_{j=0}^{i} \mathcal{V}_j} 
\sum\limits^{k-1}_{i=2} \Big\{
 \mathbf{1}_{t^{i+1} < 0 \leq t^{i }} 
   \big| f^j (X(0; t^i, x^i, v^i), V(0; t^i, x^i, v^i)) \big| \Big\} \dd \tilde{\Sigma}_{i}
\notag
\\& + \sum\limits^{k-1}_{i=2} 
\Big( 2 \prod_{j=0}^{i-2} \int_{\mathcal{V}_i} \dd \sigma_i + 2 \prod_{j=0}^{i-3} \int_{\mathcal{V}_i} \dd \sigma_i + \dots  + 2 \Big)
\notag
    \\& + \int_{\prod_{j=0}^{k } \mathcal{V}_j}  
  \mathbf{1}_{t^{k } \geq 0 } 
   \big| f^j (x^{k }, v^{k }) \big|  \dd \tilde{\Sigma}_{k} 
   + 2 \prod_{j=0}^{k-2} \int_{\mathcal{V}_i} \dd \sigma_i + 2 \prod_{j=0}^{k-3} \int_{\mathcal{V}_i} \dd \sigma_i + \dots  + 2.
\notag
\end{align} 
From Remark \ref{sigma measure estiamte}, $\int_{\mathcal{V}_i} \dd \sigma_i$ is uniformly bounded for all $i$ and $x \in \O$. Now we apply the same idea in Lemma \ref{lem:bound1}, \ref{lem:small_largek} and Proposition \ref{prop:j linfty bound}, we can derive
\be \label{Jj l_infty estimate}
\| \J^j \|_{L^{\infty}} 
\lesssim \| f^{j} \|_1 + 2 \leq \frac{1}{\epsilon} |r|_{1} + 2.
\ee
Using $f^j (x ,v) = e^{- \epsilon \tb} f^j (x_\mathbf{b}, \vb)$, $r (x, v) \in L^{\infty}(\O \times \R^3)$ and \eqref{fj l_1 estimate}, \eqref{Fj relation}, then for any $(x, v) \in \O \times \R^3$ and $j$,
\Be \label{fj l_infty estimate}
\begin{split}
| f^j (x, v) |
 \leq \mu_{\Theta} (x_\mathbf{b}, \vb) \| \J^j \|_{L^{\infty}} + | r (\xb, \vb) | \lesssim \Big( \frac{1}{\epsilon} |r|_{1} + 2 \Big) e^{ - \frac{1}{2 b} (|v|^2/2 + \Phi(x))},
\end{split}
\Ee
where the last inequality follows from Lemma \ref{conservative field} and \eqref{def:Theta}.

From the uniform boundness on $\{ f^j\}$ in \eqref{fj l_infty estimate}, then up to a subsequence $\{ f^{j_k} \}$ we obtain the weak $*$ limit in $L^{\infty}$ such that $f^{j_k} \stackrel{\ast}{\rightharpoonup} \fe \in L^{\infty}$. 
Since \eqref{equation for f_j} is linear, we obtain that $f^j$ is the weak solution for \eqref{equation for f_e}.
Note from \eqref{Jj l_infty estimate}, we have uniform $L^\infty (\p\O)$ bound on $\{ \J^j \}$. 
Again up to a subsequence, we derive the weak $*$ limit in $L^\infty (\p\O)$ such that $J^{j_k} \stackrel{\ast}{\rightharpoonup} J^{\e}$.
Since $f^j (x, v) = (1 - \frac{1}{j}) \mu_{\Theta} (\xb, \vb) \J^j (\xb, \vb) + r (\xb, \vb)$, we let $\fe (x, v) = \mu_{\Theta} (\xb, \vb) J^{\e} (\xb, \vb) + r (\xb, \vb)$ for $(x,v) \in \gamma_-$, and it satisfies \eqref{diff_f_e}.

\textbf{Step 3.} Moreover, we set $\delta^j := f^j - \fe$ which solves the following: 
\Be \label{equation for d_j} 
\epsilon \delta^j + v \cdot \nabla_{x} \delta^j - \nabla \Phi \cdot \nabla_{v} \delta^j = 0,
\Ee
\Be \label{diff_d_j}
\delta^j (x, v)  
= (1 - \frac{1}{j}) P_{\Theta} \delta^j - \frac{1}{j} P_{\Theta} \fe.
\Ee 
Again from Green's identity and \eqref{gamma-+ estimate}, we obtain,
\Be \notag
\epsilon \| \delta^j \|_1 + |\delta^j|_{1, +} 
\leq | (1 - \frac{1}{j}) P_{\Theta} \delta^j|_{1, -} + | \frac{1}{j} P_{\Theta} \fe|_{1, -}.
\Ee
From
$|P_{\Theta} \fe|_{1, -} \leq |\fe|_{1, +}$ and $\fe \in L^{\infty}$, using \eqref{fj l_infty estimate}, we get that
\Be \notag
\epsilon \| \delta^j \|_1 + |\delta^j|_{1, +} 
\leq |\delta^j |_{1, +} + \frac{1}{j} ( \| \fe \|_{L^{\infty}} + \| r \|_{L^{\infty}} ).
\Ee
Thus we derive
\Be \notag
\| \delta^j \|_1 
\leq \frac{1}{\epsilon \times j} ( \| \fe \|_{L^{\infty}} + 2) \rightarrow 0 \ \text{as} \ j \rightarrow \infty,
\Ee
which implies
$f^j \rightarrow \fe$ as $j \rightarrow \infty$ in $L^1$ norm with any $\epsilon > 0$.

Suppose $g^{\e}$ is also the solution with the same $\e > 0$, and set $\delta^{\e} := \fe - g^{\e}$. From Green's identity, we have,
\Be \notag
\epsilon \| \delta^{\e} \|_1 + |\delta^{\e}|_{1, +} 
\leq |P_{\Theta} \delta^{\e}|_{1, -}
\leq |\delta^{\e}|_{1, +}.
\Ee
Since $\epsilon > 0$, we obtain,
\Be \notag
\| \delta^{\e} \|_1
\leq 0.
\Ee
which gives the uniqueness on the solution to \eqref{equation for f_e} and \eqref{diff_f_e}.

For the magnet field case, we consider $B$, $\Phi (x)$ defined in \eqref{field property mag}. Similarly, we use the following double iteration in both $j$ and $l$,
\Be \label{equation for f_l mag}  
\epsilon f^{l+1}_{j} + v \cdot \nabla_{x} f^{l+1}_{j} + (v \times B - \nabla \Phi (x)) \cdot \nabla_{v} f^{l+1}_{j} = 0,
\Ee
\Be \label{diff_f_l mag}
f^{l+1}_{j} (x, v)  
= (1 - \frac{1}{j}) \mu_{\Theta} (x, v) \int_{n(x)\cdot v^1>0} f^{l}_{j} (x, v^1) \{ n(x) \cdot v^1 \} \dd v^1 + r (x, v)
, \ \ \text{for} \ (x,v) \in \gamma_-.
\Ee 
with initial setting $f^0_{j} = 0$.

Since $\nabla_{v} \cdot (v \times B - \nabla \Phi (x)) = 0$, the magnet field doesn't influence the integration when we use Green's identity.
Thus, we still first fix $j$ and take $l \rightarrow \infty$, then take $j \rightarrow \infty$ to obtain $\fe$.
The rest of proof follows from the gravitational case.
\end{proof}

Next Lemma states the uniform in $L^1$ bound for $\fe$. It will allow us to take the limit as $\epsilon \rightarrow 0$.

\begin{lemma} \label{existence of f}
For any $\epsilon > 0$ and $\fe$ solving \eqref{equation for f_e}-\eqref{diff_f_e}
with $B$, $\Phi (x)$ defined in \eqref{field property} (resp. \eqref{field property mag}), then uniformly in $\epsilon$ and $(x, v) \in \O \times \R^3$, 
\Be \label{positive on fe}
\mu (x, v) + \fe (x, v) \geq 0.
\Ee
Moreover, uniformly in $\epsilon$, we have
\Be \label{L1 estimate on fe}
\iint_{\O \times \R^3} \fe (x, v) \dd x \dd v = 0 \ \text{and} \ \| \fe \|_1 \leq 2 \|\mu \|_{L^1}.
\Ee
Finally the limit $f$ as $\epsilon \rightarrow 0$ of the sequence $\{ \fe \}$ exists, and it solves \eqref{equation for F_infty} and \eqref{diff_f_infty}
(resp. \eqref{def:f} and \eqref{diff_f_infty}).
Moreover, we obtain for $(x, v) \in \O \times \R^3$,
\Be \label{positivity on f+mu}
\mu (x, v) + f (x, v) \geq 0.
\Ee
\end{lemma}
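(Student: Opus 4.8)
The plan is to pass to the limit $\epsilon \to 0$ in the family $\{\fe\}$ produced by Lemma \ref{lem: existence of f^e}, using an $L^1$-$L^\infty$ bootstrap analogous to the one already established for the outgoing flux. First I would prove the positivity \eqref{positive on fe}. Set $G^\epsilon := \mu + \fe$. From \eqref{mu extend} the global Maxwellian $\mu$ solves the transport equation with zero penalization, but to match the penalized equation \eqref{equation for f_e} I would instead argue directly along characteristics: since $\fe(x,v) = \mu_\Theta(\xb,\vb)J^\epsilon(\xb) + r(\xb,\vb)$ on $\gamma_-$ and $r = \mu_\Theta - \mu$, one has $G^\epsilon(x,v) = \mu(x,v) + e^{-\epsilon\tb}[\mu_\Theta(\xb,\vb)(1-\tfrac1j\text{-limit})J^\epsilon(\xb) + \mu_\Theta(\xb,\vb) - \mu(\xb,\vb)]$ at the boundary; since $\mu(x,v) = e^{-\epsilon\tb}\mu(\xb,\vb)\cdot(\text{correction})$ is awkward, the cleaner route is to observe that $\mu_{j}^\epsilon := (1-\tfrac1j)^{-1}$-rescalings keep $\mu + f_j^\epsilon \geq 0$ by the strong maximum principle for the penalized transport semigroup (the solution operator $f \mapsto e^{-\epsilon\tb} f(\xb,\vb)$ is positivity preserving, the diffuse-reflection operator $P_\Theta$ is positivity preserving, and $\mu + r = \mu_\Theta \geq 0$), then pass this inequality to the limit $j \to \infty$ and $\epsilon \to 0$. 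I expect this maximum-principle argument to be the main obstacle: one must set up the iteration $f_j^{l+1}$ of Lemma \ref{lem: existence of f^e} carefully so that $\mu + f_j^l \geq 0$ is propagated at each step, which requires checking $(1-\tfrac1j)P_\Theta(\mu + f_j^l) + (\mu - (1-\tfrac1j)P_\Theta\mu + r) \geq 0$ on $\gamma_-$, i.e. that the "source" $\mu - (1-\tfrac1j)P_\Theta\mu + r = \mu_\Theta - (1-\tfrac1j)P_\Theta\mu \geq 0$, which holds since $P_\Theta\mu \leq$ a multiple of $\mu_\Theta$ by the temperature bounds \eqref{def:Theta} — here the normalization $b^2 = 2a^2$ in Remark \ref{sigma measure estiamte} and the precise form of $\mu_\Theta$ must be used.

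Second, I would establish the uniform $L^1$ bounds \eqref{L1 estimate on fe}. The mass identity $\iint \fe \,\dd x\,\dd v = 0$ follows by integrating \eqref{equation for f_e} over $\O \times \R^3$: the transport and magnetic terms integrate to boundary flux terms because $\nabla_v \cdot(v\times B - \nabla\Phi) = 0$, and the diffuse-reflection condition \eqref{diff_f_e} together with the null-flux property of $\mu_\Theta$ and $\int_{\gamma_-} r\,|n\cdot v| = 0$ gives $\epsilon \iint \fe = 0$, hence the claim for every $\epsilon > 0$. For the norm bound, I would combine positivity with the mass identity: since $\mu + \fe \geq 0$, we get $\|\fe\|_1 = \iint|\fe| \leq \iint(\mu + \fe) + \iint \mu \cdot \mathbf 1_{\fe < 0}\cdot 2 \leq \cdots$; more precisely, $\iint |\fe| = \iint(2(\mu+\fe)^- + \fe + \ldots)$ — the standard trick $|\fe| = (\mu+\fe) - \mu + 2(\mu - (\mu+\fe))^+ \le (\mu+\fe) + \mu$ combined with $\iint \fe = 0$ yields $\|\fe\|_1 \leq 2\iint(\mu+\fe) = 2\iint\mu = 2\|\mu\|_{L^1}$, using $\iint(\mu+\fe) = \iint\mu + 0$. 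This is the clean argument and should go through with no difficulty.

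Third, with $\|\fe\|_1 \leq 2\|\mu\|_{L^1}$ uniform in $\epsilon$, I would run the $L^1$-$L^\infty$ bootstrap exactly as in the proof of Proposition \ref{prop:j linfty bound} (invoking Lemma \ref{lem:bound1}, Lemma \ref{lem:small_largek}, and the continuity/boundary-flux structure, resp. Lemma \ref{lem:bound1 mag}, Lemma \ref{lem:small_largek mag} in the magnetic case) applied to the equation \eqref{equation for f_e}: the extra factor $e^{-\epsilon\tb} \leq 1$ along characteristics only helps, and the source term $r \in L^\infty$ is handled by the residual-measure estimates as in Step 2 of Lemma \ref{lem: existence of f^e}. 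This produces a bound $\|e^{\frac{1}{2b}(|v|^2/2 + \Phi(x))}\fe\|_{L^\infty_{x,v}} \lesssim \|\fe\|_1 + \|r\|_\infty \lesssim \|\mu\|_{L^1} + 1$, uniform in $\epsilon$. By weak-$*$ compactness in $L^\infty$ (with the Gaussian weight) there is a subsequence $\fe \stackrel{*}{\rightharpoonup} f$ and correspondingly $J^\epsilon \stackrel{*}{\rightharpoonup} J$ in $L^\infty(\p\O)$; linearity of \eqref{equation for f_e} and of the boundary condition \eqref{diff_f_e} lets $f$ inherit \eqref{equation for F_infty} (resp. \eqref{def:f}) with $\epsilon = 0$ and \eqref{diff_f_infty}, using $\epsilon\fe \to 0$ strongly. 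Finally, the positivity $\mu + \fe \geq 0$ is preserved under weak-$*$ limits against nonnegative test functions, giving \eqref{positivity on f+mu}; and $\iint f = 0$ follows by testing against $\mathbf 1_{\O\times\R^3}$ (legitimate after the Gaussian weight controls the tail). The only real subtlety in this last step is justifying that the weak-$*$ limit of the boundary traces $J^\epsilon$ is compatible with the weak-$*$ limit of $\fe$ in the interior, which is handled as in Step 2 of Lemma \ref{lem: existence of f^e} via the explicit representation $\fe(x,v) = e^{-\epsilon\tb}[\mu_\Theta(\xb,\vb)J^\epsilon(\xb) + r(\xb,\vb)]$ and dominated convergence.
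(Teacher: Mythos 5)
Your proposal is correct and follows essentially the same route as the paper: induction on the iteration $f^l_j$ to propagate $\mu + f^l_j \geq 0$, then zero mass plus pointwise positivity to get $\|\fe\|_1 \leq 2\|\mu\|_1$, then the $L^1$--$L^\infty$ bootstrap and weak-$*$ compactness. One small simplification worth noting: at the boundary $\p\O$ one has $\Phi \equiv 0$, so the normalization of $\mu$ gives $P_\Theta\mu = \mu_\Theta$ \emph{exactly}, and the source term is $\mu - (1-\tfrac1j)P_\Theta\mu + r = \tfrac1j\mu_\Theta \geq 0$ --- no temperature-ratio bound or the assumption $b^2 = 2a^2$ is needed for this step.
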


\begin{proof}

\textbf{Step 1.}
We first show that for any $l, j \in \mathbb{N}$ and $(x, v) \in \p\O \times \R^3$,
\Be \label{mu+fl positive}
\mu (x, v) + f^l_{j} (x, v) \geq 0.
\Ee
where $f^l_{j} (x, v)$ is the solution to \eqref{equation for f_l} and \eqref{diff_f_l} with $\Phi (x)$ defined in \eqref{field property}.

To prove this, we use the induction. For $l = 0$, $f^0_{j} = 0$ and it is easy to check $\mu + f^0_{j} \geq 0$.
Next we assume for $0 < k \in \mathbb{N}$ and $l \leq k$, and we have $\mu + f^l_{j} \geq 0$. 
Now for $l = k+1$, and we recall the sequential setting on $f^l_{j}$.
From \eqref{equation for f_l} and the characteristic line on $e^{\epsilon t} f^l_{j} (x, v)$, for $(x, v) \in \O \times \R^3$, 
\Be \label{fl characteristic}
f^l_{j} (x, v) = e^{- \epsilon \tb} f^l_{j} (\xb, \vb),
\Ee
So we just need to show $\mu (x, v) + f^{k+1}_{j} (x, v) \geq 0$ for $(x, v) \in \p\O \times \R^3$.
Using the boundary condition \eqref{diff_f_l} and the assumption that $f^k_{j} \geq - \mu (x, v)$, for $(x, v) \in \p\O \times \R^3$,
\Be \notag 
\begin{split}
f^{k+1}_{j} (x, v) 
& = (1 - \frac{1}{j}) \mu_{\Theta} (x, v) \int_{n(x)\cdot v^1>0} f^k_{j} (x, v^1) \{ n(x) \cdot v^1 \} \dd v^1 + r (x, v)
\\& \geq \mu_{\Theta} (x, v) \int_{n(x)\cdot v^1>0} - \mu (x, v^1) \{ n(x) \cdot v^1 \} \dd v^1 + r (x, v)
\\& \geq - \mu_{\Theta} (x, v) + r (x, v) = - \mu (x, v),
\end{split}
\Ee
which implies \eqref{mu+fl positive}.

Since $f^l_{j} \rightarrow f^j$ as $l \rightarrow \infty$ in $L^1$ and $f^j (x, v)$ solves \eqref{equation for f_j}-\eqref{diff_f_j}, we deduce  for $(x, v) \in \p\O \times \R^3$,
\Be \notag
\mu (x, v) + f^j (x, v) \geq 0
\Ee
Similarly, since $f^j \rightarrow \fe$ as $j \rightarrow \infty$ in $L^1$ and $\fe (x, v)$ solves \eqref{equation for f_e}-\eqref{diff_f_e}, for $(x, v) \in \p\O \times \R^3$,
\Be \notag
\mu (x, v) + \fe (x, v) \geq 0.
\ee 
By following the characteristic line, we conclude \eqref{positive on fe} for $(x, v) \in \O \times \R^3$. 

\textbf{Step 2.}
Using Green's identity on \eqref{equation for f_e} and \eqref{diff_f_e}, we get,
\Be \notag 
\epsilon \iint_{\O \times \R^3} \fe (x, v) \dd x \dd v + \iint_{\gamma} \fe (x, v) \{ n(x) \cdot v \} \dd v \dd S_x = 0.
\Ee
Using the fact that $\int_{\gamma_{-}} r (x, v) \dd S_x \dd v = 0$, we obtain, for any $\epsilon > 0$,
\Be \notag 
\epsilon \iint_{\O \times \R^3} \fe (x, v) \dd x \dd v = 0.
\Ee
Applying \eqref{positive on fe} and the zero mass on $\fe$, we derive that 
\Be \notag
\|\mu + \fe\|_{L^1} 
 = \iint_{\O \times \R^3} (\mu + \fe) \dd x \dd v = \|\mu\|_{L^1}.
\Ee
Thus we have, uniformly in $\epsilon$,
\Be \notag
\|\fe\|_{L^1} \leq 2 \|\mu\|_{L^1}.
\Ee

Next, we use the same idea in \eqref{Jj l_infty estimate}, \eqref{fj l_infty estimate},  then for any $(x, v) \in \O \times \R^3$ and $\epsilon$, 
\Be \label{fe l_infty estimate}
\begin{split}
| \fe (x, v) |
\lesssim \mu_{\Theta} (x_\mathbf{b}, \vb) \| \fe \|_1 + \| r (\xb, \vb) \|_{L^{\infty}} 
\leq \Big( 2 \|\mu\|_{L^1} + 2 \Big) e^{ - \frac{1}{2 b} (|v|^2/2 + \Phi(x))}.
\end{split}
\Ee
Therefore, we derive the weak $*$ limit on $\{\fe\}$ in $L^{\infty}$ norm up to a subsequence, such that 
\be \notag
\fe \stackrel{\ast}{\rightharpoonup} f \in L^{\infty}.
\ee
Since \eqref{equation for f_e} is linear, we derive that $f$ is the weak solution for \eqref{equation for F_infty}.
Note from \eqref{fe l_infty estimate}, we have uniform $L^\infty (\p\O)$ bound on $\{ \J^{\e} \}$ where $\fe (x, v) = \mu_{\Theta} (\xb, \vb) J^{\e} (\xb, \vb) + r (\xb, \vb)$.
Again up to a subsequence, we derive the weak $*$ limit on $\{ \J^{\e} \}$ in $L^\infty (\p\O)$ such that $J^{\e} \stackrel{\ast}{\rightharpoonup} J$.
We let $f (x, v) = \mu_{\Theta} (\xb, \vb) J (\xb, \vb) + r (\xb, \vb)$ for $(x,v) \in \gamma_-$, and it satisfies \eqref{diff_f_infty}.

Lastly, for every Borel set $A$ in $\O \times \R^3$, the weak $*$ convergence in $L^{\infty}$ norm implies that
\be \notag
\iint_{\O \times \R^3} \fe \times \mathbf{I}_{A}
\rightarrow \iint_{\O \times \R^3} f \times \mathbf{I}_{A},
\ee
where $\mathbf{I}_{A}$ is the sign function on $A$. Since $\mu (x, v) + \fe (x, v) \geq 0$, we derive \eqref{positivity on f+mu}.

For the magnet field case, since the magnet field doesn't influence the integration when we use Green's identity, the proof follows from the gravitational case.
\end{proof}

Now we are ready to prove Proposition \ref{existence on f}.

\begin{proof}[\textbf{Proof of Proposition \ref{existence on f}}]

Since we have shown the existence of the solution $f$ in Lemma \ref{existence of f}, we only need to prove $\iint_{\O \times \R^3} f (x, v) \dd x \dd v = 0$ and the uniqueness.

For any $k < \infty$, it is easy to check that $\mathbf{1}_{ \{|x|, |v| \leq k\} } \in L^1 (x. v)$. Applying $\fe \stackrel{\ast}{\rightharpoonup} f \in L^{\infty}$, we have
\Be \notag
\iint_{\O \times \R^3} \fe \times \mathbf{1}_{\{|x|, |v| \leq k\}} \dd x \dd v \rightarrow \iint_{\O \times \R^3} f \times \mathbf{1}_{\{|x|, |v| \leq k\}} \dd x \dd v.
\Ee
From \eqref{equation for f_e} and the characteristic line on $e^{\epsilon t} \fe (x, v)$, for $(x, v) \in \O \times \R^3$, we obtain,
\Be \label{fe characteristic}
\fe (x, v) = e^{- \epsilon \tb} \fe (\xb, \vb),
\Ee
Using \eqref{COV+} 
and \eqref{fe characteristic}, we have
\Be \label{first fe xv>k estimate}
\begin{split}
& \ \ \ \ \iint_{\O \times \R^3} \fe (x, v) \times \mathbf{1}_{\{|x| > k \ \text{or} \ |v| > k\}} \dd x \dd v
\\& = \int_{\gamma_{-}} \int_0^{t_{+}} \fe  (X(t,t-s,x,v),V(t,t-s,x,v)) \times \mathbf{1}_{\{|X| > k \ \text{or} \ |V| > k\}} |n(x) \cdot v|\dd s \dd v \dd S_x
\\& = \int_{\gamma_{-}} \int_0^{t_{+}} e^{- \epsilon s} \fe (x,v) \times \mathbf{1}_{\{|X| > k \ \text{or} \ |V| > k\}} \ |n(x) \cdot v|\dd s \dd v \dd S_x.
\end{split}
\Ee
From Lemma \ref{conservative field}, we have 
\be \notag
|v|^2 + 2\Phi (x) = |\vb (x, v)|^2.
\ee 
Using \eqref{v3t estimate}, \eqref{fj l_infty estimate} and the uniform $L^{\infty}$ bound on $\fe$, we deduce
\Be \label{second fe xv>k estimate}
\begin{split}
\eqref{first fe xv>k estimate} 
& \lesssim \int_{\gamma_{-}} \mu_{\Theta} (x, v) 
\big( \|\fe\|_{\infty} + 2
\big) \times \mathbf{1}_{\{|v|\gtrsim k \}} \ |n(x) \cdot v|^2 \dd v \dd S_x
\\& \lesssim \int_{n(x) \cdot v < 0} \mu^{1/b} (x, v) \times \mathbf{1}_{\{|v|\gtrsim k \}} \ |n(x) \cdot v|^2 \dd v.
\end{split}
\Ee
Note that we can derive the same estimate on $f$ since $f$ and $\fe$ share the same characteristics trajectories. 
Therefore, for any $\epsilon > 0$, we can find sufficient large $k$ such that,
\Be \notag 
\begin{split}
& \Big| \iint_{\O \times \R^3} \fe \times \mathbf{1}_{\{|x| > k \ \text{or} \ |v| > k\}} \dd x \dd v \Big| < \frac{\epsilon}{3},
\\& \Big| \iint_{\O \times \R^3} f \times \mathbf{1}_{\{|x| > k \ \text{or} \ |v| > k\}} \dd x \dd v \Big| < \frac{\epsilon}{3}.
\end{split}
\Ee
Next, we can find sufficient small $l$ such that, for any $\epsilon < l$,
\Be \notag
\Big| \iint_{\O \times \R^3} \fe \times \mathbf{1}_{\{|x|, |v| \leq k\}} \dd x \dd v - \iint_{\O \times \R^3} f \times \mathbf{1}_{\{|x|, |v| \leq k\}} \dd x \dd v 
\Big| \leq \frac{\epsilon}{3}.
\Ee
Using the above and the zero mass on $\fe$, we derive
\Be \notag
\begin{split}
\big| \iint_{\O \times \R^3} f \dd x \dd v \big|
& = \Big| \iint_{\O \times \R^3} f \dd x \dd v - \iint_{\O \times \R^3} \fe \dd x \dd v \Big|
\\& \leq \Big| \iint_{\O \times \R^3} \fe \times \mathbf{1}_{\{|x|, |v| \leq k\}} \dd x \dd v - \iint_{\O \times \R^3} f \times \mathbf{1}_{\{|x|, |v| \leq k\}} \dd x \dd v 
\Big| 
\\& \ \ \ \ + \Big| \iint_{\O \times \R^3} f \times \mathbf{1}_{\{|x| > k \ \text{or} \ |v| > k\}} \dd x \dd v \Big| + \Big| \iint_{\O \times \R^3} \fe \times \mathbf{1}_{\{|x| > k \ \text{or} \ |v| > k\}} \dd x \dd v \Big| 
\\& \leq \frac{\epsilon}{3} + \frac{\epsilon}{3} + \frac{\epsilon}{3} = \epsilon,
\end{split}
\Ee
which concludes \eqref{zero mass on f_infty}.

Suppose $g$ is also the solution, and set $\delta := f - g$. From Green's identity, we have,
\Be \notag 
|\delta^j|_{1, +} 
= |\delta^j|_{1, -}
\leq |\delta^j|_{1, +}.
\Ee
Moreover, we have, for any $x \in \p\O$,
\Be \notag
\int_{n(x)\cdot v^1>0} | \delta | \{ n(x) \cdot v^1 \} \dd v^1 
= \Big| \int_{n(x)\cdot v^1>0} \delta \{ n(x) \cdot v^1 \} \dd v^1 \Big|.
\Ee
which implies $\delta (x, \cdot)$ keeps non-positivity/non-negativity for any fixed $x$ and 
$v \in \mathcal{V}  := \{v \in \R^3: n(x) \cdot v > 0 \}$. Then we deduce, for $(x, v) \in \gamma_{+}$,
\Be \notag
\delta (x, v) = \mu_{\Theta} (\xb, \vb) \int_{n(\xb)\cdot v^1>0} \delta (\xb, v^1) \{ n(\xb) \cdot v^1 \} \dd v^1 \ \text{with} \ \mu > 0.
\Ee
This shows that $\delta$ doesn't change non-positivity/non-negativity for any $x$ or $v$. Using the fact that both $f$ and $g$ have zero mass, we derive 
\Be \notag
\delta = 0 \ a.e. \ ,
\Ee
which gives the uniqueness on the solution.

For the magnet field case, we can deduce \eqref{first fe xv>k estimate} by \eqref{COV+}. Moreover, using Lemma \ref{conservative field mag}, we have $|v|^2 + 2\Phi (x) = |\vb|^2$ and $\tb (x, v) = |v_3|/5$.
The rest of proof is similar as the gravitational case and so the proof is omitted.
\end{proof}

Now we are well equipped to prove Theorem \ref{existence on F}.

\begin{proof}[\textbf{Proof of Theorem \ref{existence on F}}] 

Recall \eqref{mu extend} and $\| \mu \|_{L^1_{x,v}} = \mathfrak{m}_{\mu} > 0$.
For $(x, v) \in \p\O \times \R^3$, we write 
\Be \notag
F (x, v) = \frac{\mathfrak{m}}{\mathfrak{m}_{\mu}} \big( \mu (x, v) + f (x, v) \big)
\ee
W.l.o.g. we suppose $\mathfrak{m} = \mathfrak{m}_{\mu}$, i.e. 
$F (x, v) = \mu (x, v) + f (x, v)$.
By direct computation, for $(x, v) \in \gamma_{-}$, 
\Be \notag
\begin{split}
F (x, v) 
& = \mu (x, v) + \mu_{\Theta} (x, v) \int_{n(x)\cdot v^1>0} f(x, v^1) \{ n(x) \cdot v^1 \} \dd v^1 + r (x, v)
\\& = \mu (x, v) + \mu_{\Theta} (x, v) \int_{n(x)\cdot v^1>0} F(x, v^1) \{ n(x) \cdot v^1 \} \dd v^1 - \mu_{\Theta} (x, v) + r (x, v)
\\& = \mu_{\Theta} (x, v) \int_{n(x)\cdot v^1>0} F(x, v^1) \{ n(x) \cdot v^1 \} \dd v^1,
\end{split}
\Ee
this shows $F (x, v)$ solves \eqref{equation for F_infty} and \eqref{diff_F}. Moreover, since $\iint_{\O \times \R^3} f (x, v) \dd x \dd v = 0$, we derive
\Be \notag
\iint_{\O \times \R^3} F (x, v) \dd x \dd v  = \mathfrak{m},
\ee
thus we show the existence of the solution.

Next, it is easy to check that for any steady state $F (x, v)$ with unit mass,
$F (x, v) - \mu (x, v)$ need to satisfy \eqref{equation for F_infty} and \eqref{zero mass on f_infty}.
Applying the uniqueness on $f (x, v)$ from Proposition \ref{existence on f}, we conclude the uniqueness on $F (x, v)$.
Finally, we derive \eqref{L_infty estimate on F_infty} from Proposition \ref{prop:j linfty bound}.
\end{proof}

\begin{proof}[\textbf{Proof of Theorem \ref{existence on F} for magnetic case}] 

For the magnet field case, we also set
$F (x, v) = \frac{\mathfrak{m}}{\mathfrak{m}_{\mu}} \big( \mu (x, v) + f (x, v) \big)$, and then for $(x, v) \in \gamma_{-}$, 
\Be \notag
\begin{split}
F (x, v) 
= \mu_{\Theta} (x, v) \int_{n(x)\cdot v^1>0} F(x, v^1) \{ n(x) \cdot v^1 \} \dd v^1,
\end{split}
\Ee
this shows $F (x, v)$ solves \eqref{def:f} and \eqref{diff_F}. 
The rest of proof is similar to the proof of Theorem \ref{existence on F}.
\end{proof}

\begin{remark} 
\label{positivity on F_infty}
From \eqref{positivity on f+mu}, we derive that the steady state is non-negative with positive mass. 
In the rest of paper, we use $F_s (x, v)$ to represent the non-negative solution to
\eqref{equation for F_infty} and \eqref{diff_F} 
(resp. \eqref{def:f} and \eqref{diff_F})
with $\iint_{\O \times \R^3} F_s (x, v) \dd x \dd v = 1$.
\end{remark}


\section{Weighted \texorpdfstring{$L^1$}{L1}-Estimates}
\label{sec: L1 estimate}
Recall 
$f(t, x, v)=
F(t, x, v) - F_s (x, v),$
which solves
\begin{align} 
	\partial_t f + v \cdot \nabla_{x} f + (v \times B - \nabla \phi - g \mathbf{e}_3)\cdot \nabla_{v} f = 0,& \ \  \text{for} \  \    (t, x, v) \in \R_{+} \times \Omega \times \R^3,  \label{eqtn_f} 
	\\
	f (t,x,v) |_{t = 0}  = F_0 (x, v) -   F_s (x, v) = f_0 (x,v),&  \ \  \text{for} \   \   (x, v) \in \Omega \times \R^3,
	\label{init_f}
	\\
	f (t, x, v)  = \mu_{\Theta} (x, v) \int_{n(x^1) \cdot v^1>0} f(t, x, v^1) \{ n(x^1) \cdot v^1 \} \dd v^1 ,& \ \  \text{for} \  \   (t,x, v) \in \R_{+} \times \gamma_-. \label{diff_f} 
\end{align}

The main purpose of this section is to prove Theorem \ref{theorem_1} which concerns $L^1$-estimates on fluctuations.
 \begin{theorem} \label{theorem_1}
 	Assume the same conditions in Theorem \ref{theorem} for both cases of \eqref{field property} and \eqref{field property mag}. Then a unique solution $F(t,x,v) = F_s(x,v) + f(t,x,v)$ satisfies \hide
	Let $F_s(x,v)$ be a stationary solution in Theorem \ref{existence on F}. Consider the initial data $F_0 (x, v) = F_s (x,v) + f_0 (x,v)$ such that $\iint_{\O \times \R^3} f_0 (x,v) \dd x \dd v = 0$. $\| e^{\theta^\prime (|v|^2+ 2\Phi (x))} f_0\|_{L^\infty_{x,v}}< \infty$ for $0 < \theta^\prime < \frac{1}{2b}$ with $b$ defined in \eqref{def:Theta}, and $\| e^{\delta (|v|^2 + 2 \Phi(x))^{1/2} } f_0 \|_{L^1_{x,v}} < \infty$ with $0 < \delta < \infty$. There exists a unique global-in-time solution $F(t,x,v) = F_s (x,v) + f(t,x,v)$ to \eqref{equation for F mag}-\eqref{diff_F} with $F(t,x,v)|_{t=0} = F_0(x,v)$ in $\O \times \R^3$,
	such that
	\Be \label{cons_mass_f} 
	\iint_{\Omega \times \R^3} f (t, x, v) \dd x \dd v = 0, \ \ \text{for all } t\geq 0,
	\Ee
	\Be \label{theorem_infty_1}
	\sup_{t\geq0}\| e^{\theta^\prime (|v|^2+ 2\Phi (x))} f (t)\|_{L^\infty_{x,v}} \lesssim \| e^{\theta^\prime (|v|^2+ 2\Phi (x))} f_0\|_{L^\infty_{x,v}}.
	\Ee
	Moreover, for all $t \geq 0$ and $0 \leq \theta< \theta^\prime$,
	\Be \label{theorem_infty}
	\begin{split}
		\sup_{x \in \bar{\O}}\int_{\R^3} e^{\theta  (|v|^2+ 2\Phi (x))} |f(t,x,v) |\dd v  
		\lesssim_{\theta}  e^{- \Lambda t},
	\end{split} 
	\Ee
	where $\Lambda = \Lambda (\O, \delta, T_0)$ is defined in \eqref{def:M} and $T_0$ in \eqref{cond:T0}.

	\unhide
	\Be \label{est:theorem_1} 
	\| f(t) \|_{L^1_{x,v}} 
	\lesssim  e^{- \Lambda t}
	\{ \| e^{\theta^\prime (|v|^2+ 2\Phi (x))} f_0\|_{L^\infty_{x,v}}+ \| e^{\delta (|v|^2 + 2 \Phi(x))^{1/2} } f_0 \|_{L^1_{x,v}}
	\},
	\Ee
	where $\Lambda=\Lambda (\O, \delta, T_0)$ is defined in \eqref{def:M} and $T_0$ in \eqref{cond:T0}.
	\hide
	.
	There exists a unique solution $F (t,x,v)= f (t,x,v) + \mathfrak{M} F_s (x, v)$ 
	for \eqref{equation for F mag} with \eqref{field property} and \eqref{diff_F} 
	where $F_s (x, v)$ solves \eqref{equation for F_infty} and \eqref{diff_F} with 
	$\iint_{\O \times \R^3} F_s (x, v) \dd x \dd v = 1$ and $\iint_{\O \times \R^3} f_0 (x,v) \dd x \dd v = 0$.
	In addition, if $\| e^{\theta^\prime (|v|^2+ 2\Phi (x))} f_0\|_{L^\infty_{x,v}}<\infty$ for $0<\theta^\prime< \frac{1}{2b}$ with $b$ defined in \eqref{def:Theta}, and $\| e^{\delta (|v|^2 + 2 \Phi(x))^{1/2} } f_0 \|_{L^1_{x,v}} < \infty$ with $0 < \delta < \infty$, then we have
	\unhide
\end{theorem}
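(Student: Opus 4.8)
The plan is to prove the exponential $L^1$-decay \eqref{est:theorem_1} by a geometric-contraction argument over the fixed time window $T_0$, using the weighted quantity $\vertiii{\cdot}$ introduced in the introduction. First I would establish, via Green's identity for \eqref{eqtn_f}--\eqref{diff_f}, that $\|f(t)\|_{L^1_{x,v}}$ is non-increasing and that mass conservation \eqref{cons_mass_f} holds (the latter being available from the construction of $F$ and $F_s$). The heart of the matter is the \emph{Doeblin-type lower bound} (Proposition \ref{prop:Doeblin 1} in the gravitational case, Proposition \ref{prop:Doeblin mag} in the magnetic case): after a fixed number of bounces within a window of length $T_0$, the solution $f(NT_0,x,v)$ is bounded below by $\mathfrak{m}(x,v)$ times the total mass of $f((N-1)T_0,\cdot)$ minus the ``unreachable defect'' $\iint \mathbf{1}_{t_\mathbf{f}(x,v)\geq T_0/4}|f((N-1)T_0,x,v)|\,\dd v\,\dd x$. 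Applying this to both $f_+$ and $f_-$ and using $\iint f = 0$, a standard argument (splitting $\O\times\R^3$ into the set where $f$ and $\mathfrak{m}$ have opposite signs) yields $\|f(NT_0)\|_{L^1}\leq (1-\kappa)\|f((N-1)T_0)\|_{L^1}+ C\cdot(\text{defect at }(N-1)T_0)$ for some $\kappa>0$ depending on $\|\mathfrak{m}\|_{L^1}=\mathfrak{m}_{T_0}$.

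Next I would control the unreachable defect by the weighted $L^1$-energy estimate. The key algebraic identity is that under the Vlasov operator, $[\partial_t + v\cdot\nabla_x - \nabla\Phi\cdot\nabla_v](\varphi(t_\mathbf{f})|f|) = -\varphi'(t_\mathbf{f})|f|$, so that with $\varphi(\tau)=e^{\delta\tau}$ one gains a genuinely dissipative term. Integrating over $\O\times\R^3$ and handling the boundary terms via the diffuse reflection operator, one reduces to estimating $\int_{n(x)\cdot v<0}\varphi(t_\mathbf{f})\mu_\Theta(x,v)|n(x)\cdot v|\,\dd v$; here the crucial geometric input is that gravity forces $t_\mathbf{f}(x,v), t_\mathbf{b}(x,v)\lesssim |v_3|$ (from \eqref{v3t estimate}, or $t_\mathbf{b}=|v_3|/5$ in the magnetic case from Lemma \ref{conservative field mag}), so the Gaussian tail of $\mu_\Theta$ beats the exponential growth of $\varphi$ and this integral is finite. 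Combined with the sharp-observation that $\varphi(t_\mathbf{f}(x,v))\lesssim\varphi\big((|v|^2+2\Phi(x))^{1/2}\big)$ (using $|v|^2+2\Phi(x)=|v_\mathbf{b}|^2$ and $t_\mathbf{f}\lesssim|n(x_\mathbf{b})\cdot v_\mathbf{b}|$), this lets me close a differential inequality for $\|\varphi(t_\mathbf{f})f(t)\|_{L^1_{x,v}}$, showing it stays controlled by the initial data $\|\varphi((|v|^2+2\Phi)^{1/2})f_0\|_{L^1}+\|e^{\theta'(|v|^2+2\Phi)}f_0\|_{L^\infty}$ and that the defect $\int\mathbf{1}_{t_\mathbf{f}\geq T_0/4}|f|$ is a small (fixed) multiple of $\|\varphi(t_\mathbf{f})f\|_{L^1}/\varphi(T_0/4)$.

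With both pieces in hand, the composite norm $\vertiii{f}$ is designed precisely so that the Doeblin contraction and the defect control combine into a single clean inequality $\vertiii{f(NT_0)}\leq e^{-\Lambda T_0}\vertiii{f((N-1)T_0)}$, where $\Lambda=\Lambda(\O,\delta,T_0)$ as in \eqref{def:M} and $T_0$ is chosen as in \eqref{cond:T0} large enough that the smallness requirements on the defect coefficient $4\mathfrak{m}_{T_0}/\varphi(T_0/4)$ are met. Iterating over $N$ gives $\vertiii{f(NT_0)}\lesssim e^{-\Lambda N T_0}\vertiii{f_0}$, and monotonicity of $\|f(t)\|_{L^1}$ interpolates to arbitrary $t\geq 0$, yielding \eqref{est:theorem_1} after bounding $\vertiii{f_0}$ by the stated norms of $f_0$; uniqueness and \eqref{theorem_infty_1} follow from the $L^1$-$L^\infty$ bootstrap with the stochastic cycle representation of Lemma \ref{sto_cycle_1}.

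The main obstacle is the magnetic case: the change of variables $v\mapsto(t_\mathbf{b},S_{x_\mathbf{b}})$ has Jacobian $5B_3^2(2-2\cos(B_3 t_\mathbf{b}))^{-1}$, which blows up when $B_3 t_\mathbf{b}\to 2k\pi$, so the diffuse-reflection mixing degenerates exactly on the velocity set $\mathcal{V}^{B_\varepsilon}$. Handling this requires the splitting $\mathcal{V}=\mathcal{V}^{G_\varepsilon}\cup\mathcal{V}^{B_\varepsilon}$, using Lemma \ref{lem: Be mag} to show the bad part has $\sigma$-measure $\lesssim\varepsilon$ and hence is harmless, while on $\mathcal{V}^{G_\varepsilon}$ the Jacobian is bounded and the argument proceeds as in the gravitational case; one must also carefully track in the Doeblin lower bound that $t_\mathbf{b}^2$ is pinned in $[\pi/(2B_3),3\pi/(2B_3)]$ so that $|v^2|$ admits a pointwise lower bound, and apply the change of variables $(t_\mathbf{b}^1,t_\mathbf{b}^2)\mapsto(t_\mathbf{b}^1,t_\mathbf{b}^1+t_\mathbf{b}^2)$ to keep the spatial mixing nondegenerate. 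Verifying that all the $\varepsilon$- and $T_0$-dependent constants can be chosen consistently is the delicate bookkeeping step.
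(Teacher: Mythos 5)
Your proposal is correct and follows essentially the same route as the paper: the Doeblin-type lower bound (Propositions \ref{prop:Doeblin 1}, \ref{prop:Doeblin mag}) combined with the weighted $L^1$-energy estimate of Lemma \ref{lemma:energy} for $\varphi(\tau)=e^{\delta\tau}$, the composite norm $\vertiii{\cdot}$ of \eqref{|||i}, the contraction \eqref{est|||} with $\mathfrak{R}=e^{-\Lambda T_0}$ under the choice \eqref{cond:T0}, and the final bound $\vertiii{f_0}\lesssim\|e^{\theta'(|v|^2+2\Phi)}f_0\|_{L^\infty}+\|e^{\delta(|v|^2+2\Phi)^{1/2}}f_0\|_{L^1}$ via \eqref{est:varphis tf}. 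Your treatment of the magnetic case (the $\mathcal{V}^{G_\varepsilon}/\mathcal{V}^{B_\varepsilon}$ split, pinning $\tb^2\in[\pi/(2B_3),3\pi/(2B_3)]$, and the change of variables $(\tb^1,\tb^2)\mapsto(\tb^1,\tb^1+\tb^2)$) is likewise what the paper does in Proposition \ref{prop:Doeblin mag}.
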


%


\subsection{\texorpdfstring{$f (t, x, v)$}{f(t,x,v)} via Stochastic Cycles}

\begin{lemma}
\label{sto_cycle}
 Suppose $f$ solve \eqref{eqtn_f}-\eqref{diff_f} for either \eqref{field property} or \eqref{field property mag}, and $t_* \leq t$, then we have
\begin{align}
    f (t, x, v) = & \mathbf{1}_{t^1 < t_*}
    f (t_*, X(t_*; t, x, v), V(t_*; t, x, v)) \label{expand_h1} 
    \\&  + \mu_{\Theta} (x^1, \vb) \int_{\prod_{j=1}^{i} \mathcal{V}_j}   
     \sum\limits^{k-1}_{i=1} 
     \Big\{   \mathbf{1}_{t^{i+1}<t_* \leq t^{i }} f (t_*, X(t_*; t^i, x^i, v^i), V(t_*; t^i, x^i, v^i))  \Big\}
      \dd  \Sigma_{i}
\label{expand_h2}
    \\& + \mu_{\Theta} (x^1, \vb) \int_{\prod_{j=1}^{k } \mathcal{V}_j}   
    \mathbf{1}_{t^{k } \geq t_* }
    f (t^{k}, x^{k }, v^{k })
     \dd  \Sigma_{k}
, \label{expand_h3}
\end{align} 
where 
$\dd  {\Sigma}_{i} 
:= \frac{ \dd \sigma_{i}}{ \mu_{\Theta} (x^{i+1}, v^{i})} \dd \sigma_{i-1} \cdots  \dd \sigma_1$, with $\dd \sigma_j = \mu_{\Theta} (x^{j+1}, v^{j}) \{ n(x^j) \cdot v^j \} \dd v^j$ in \eqref{def:sigma measure}. Here, $(X,V)$ in \eqref{characteristics} (resp. \eqref{characteristics mag}).
\end{lemma}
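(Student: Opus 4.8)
The plan is to prove Lemma \ref{sto_cycle} by induction on $k$, mirroring exactly the argument already given for Lemma \ref{sto_cycle_1} in the stationary setting, the only new feature being that $f$ carries time-dependence along characteristics and that we must keep track of the stopping time $t_*$. First I would set up the base case $k=1$: for fixed $(t,x,v)$ with $t_* \le t$, either the backward characteristic reaches time $t_*$ before hitting the boundary, i.e. $t^1 = t - \tb(x,v) < t_*$, in which case $f(t,x,v) = f(t_*, X(t_*;t,x,v), V(t_*;t,x,v))$ by constancy of $f$ along the flow of \eqref{characteristics} (resp. \eqref{characteristics mag}); or $t_* \le t^1 \le t$, in which case the characteristic hits $\gamma_-$ at $(t^1, x^1, \vb)$ and the diffuse boundary condition \eqref{diff_f} gives $f(t,x,v) = f(t^1, x^1, \vb) = \mu_\Theta(x^1,\vb)\int_{\mathcal V_1} f(t^1, x^1, v^1)\{n(x^1)\cdot v^1\}\,\dd v^1$, which is exactly $\mu_\Theta(x^1,\vb)\int_{\mathcal V_1} f(t^1,x^1,v^1)\,\dd\Sigma_1$ since $\dd\Sigma_1 = \{n(x^1)\cdot v^1\}\,\dd v^1$ by the convention in the statement. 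Combining the two disjoint cases with the indicators $\mathbf 1_{t^1 < t_*}$ and $\mathbf 1_{t^1 \ge t_*}$ yields \eqref{expand_h1}--\eqref{expand_h3} for $k=1$.

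For the inductive step, assume the identity holds for some $k \ge 1$. The only term to be further expanded is the last one, \eqref{expand_h3}, in which the integrand is $f(t^k, x^k, v^k)$ evaluated at the $k$-th bounce point. Applying the $k=1$ analysis to $f(t^k, x^k, v^k)$ — treating $(t^k, x^k, v^k)$ as the new initial data and $t_*$ as the same stopping time — we split according to whether $t^{k+1} := t^k - \tb(x^k,v^k) < t_*$ or $t_* \le t^{k+1}$: in the former case $f(t^k,x^k,v^k) = f(t_*, X(t_*;t^k,x^k,v^k), V(t_*;t^k,x^k,v^k))$, contributing the $i=k$ summand to \eqref{expand_h2}; in the latter case the diffuse boundary condition inserts one more integration $\mu_\Theta(x^{k+1},\vb^k)\int_{\mathcal V_{k+1}} f(t^{k+1},x^{k+1},v^{k+1})\{n(x^{k+1})\cdot v^{k+1}\}\,\dd v^{k+1}$, which after multiplying by the accumulated prefactor $\mu_\Theta(x^1,\vb)\,\dd\Sigma_k$ and using $\dd\Sigma_{k+1} = \frac{\dd\sigma_{k+1}}{\mu_\Theta(x^{k+2},v^{k+1})}\dd\sigma_k\cdots\dd\sigma_1$ with $\dd\sigma_j = \mu_\Theta(x^{j+1},v^j)\{n(x^j)\cdot v^j\}\,\dd v^j$ produces the new last term \eqref{expand_h3} at level $k+1$. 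This is precisely the bookkeeping carried out in the proof of Lemma \ref{sto_cycle_1}, so I would only sketch it and refer to that proof for the measure identities; I should double-check that the prefactor $\mu_\Theta(x^1,\vb)$ stays outside and that the ratio $\dd\sigma_{i}/\mu_\Theta(x^{i+1},v^i)$ correctly cancels the wall-Maxwellian coming from the $i$-th boundary reflection (this cancellation is where Remark \ref{probability}, giving $\mu_\Theta(x^{j+1},\vb^j) = \mu_\Theta(x^{j+1},v^j)$, is used).

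The one genuinely new ingredient compared to the stationary lemma is that $f$ is \emph{not} constant in time, so along each characteristic segment between bounces the value of $f$ must be read off at the correct time ($t$ at the start, $t^1$ at the first bounce, $t^i$ at the $i$-th bounce, and $t_*$ if the segment is cut off), and the indicators $\mathbf 1_{t^{i+1} < t_* \le t^i}$ must be placed so that the $t_*$-truncation occurs on exactly the segment that straddles time $t_*$. Since $f$ solves the linear transport equation \eqref{eqtn_f} with the divergence-free force field ($\nabla_v\cdot(v\times B - \nabla\phi - g\mathbf e_3) = 0$), it is constant along $(X,V)$, so this is only a matter of correct time-labelling, not of any estimate. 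I do not anticipate a real obstacle: the proof is essentially identical to that of Lemma \ref{sto_cycle_1}, and the main thing to get right is the indicator placement and the measure conventions. I would therefore present the base case in full, indicate the inductive step briefly, and note that the identity holds simultaneously for \eqref{field property} with characteristics \eqref{characteristics} and for \eqref{field property mag} with characteristics \eqref{characteristics mag}, since both enjoy the Liouville property and the same boundary reflection structure.
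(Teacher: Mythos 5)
Your proposal is correct and follows exactly the route the paper takes: the paper's proof of Lemma \ref{sto_cycle} is a one-line reference to the induction in Lemma \ref{sto_cycle_1} (together with Remark \ref{probability} for $\mu_\Theta(x^{j+1},\vb^j)=\mu_\Theta(x^{j+1},v^j)$), and you have explicitly reconstructed that induction, correctly noting the only change is that $f$ must be read at the right time along each backward segment and cut off at $t_*$ via the straddling indicator. One small remark: constancy of $f$ along $(s,X(s),V(s))$ follows directly from the non-conservative form of the transport equation \eqref{eqtn_f} and does not actually require $\nabla_v\cdot(v\times B-\nabla\phi-g\mathbf e_3)=0$, so that parenthetical is harmless but unnecessary here.
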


\begin{proof}
Following the similar steps in Lemma \ref{sto_cycle_1} and Remark \ref{probability}, we can obtain this Lemma.
\end{proof}

\begin{lemma} \label{lemma:energy}
Consider $b$ in \eqref{def:Theta}, suppose $\varphi (\tau ) \geq0$, $\varphi^\prime \geq0$, and 
\Be \label{cond:varphi} 
\int_1^\infty e^{- \frac{1}{2b} \tau^2}\varphi(\tau) \dd \tau < \infty. 
\Ee
Suppose $f$ solves \eqref{eqtn_f}-\eqref{diff_f} for either \eqref{field property} or \eqref{field property mag}, there exists $C>0$ independent of $t_*$, $t$, such that for all $0 \leq t_* \leq t$, 
\Be \label{energy_varphi}
\begin{split}
& \ \ \ \ \| \varphi(\tf) f(t) \|_{L^1_{x,v}}
  + \int^{t}_{t_*}
  \| \varphi^\prime(\tf) f  \|_{L^1_{x,v}} \dd s
  +  \int_{t_*}^{t} | \varphi(\tf) f|_{L^1_{\gamma_+}} \dd s
\\& \leq \| \varphi(\tf) f(t_*) \|_{L^1_{x,v}} +
 C (t - t_* + 1) \| f(t_*) \|_{L^1_{x,v}} + \frac{1}{4}  \int^{t}_{t_*} |f |_{L^1_{\gamma_+}} \dd s.
\end{split}
\Ee 
\end{lemma}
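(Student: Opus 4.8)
\textbf{Proof proposal for Lemma \ref{lemma:energy}.}

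The plan is to run a weighted $L^1$ energy estimate for the transport equation \eqref{eqtn_f}, using the crucial identity that $\tf$ is constant along characteristics. Concretely, since $\tf(x,v)$ satisfies $[\partial_t + v\cdot\nabla_x + (v\times B - \nabla\Phi)\cdot\nabla_v]\tf = 0$ whenever $(x,v)$ stays inside $\Omega$ along the trajectory (this is just the definition of the forward exit time together with the fact that the flow has unit Jacobian, cf.\ Lemma \ref{lem:COV}), and since $\varphi$ is $C^1$ and nondecreasing, one computes for smooth $f$ solving \eqref{eqtn_f} that
\Be \notag
[\partial_t + v\cdot\nabla_x + (v\times B - \nabla\Phi)\cdot\nabla_v]\big(\varphi(\tf)|f|\big) = -\varphi'(\tf)|f|,
\Ee
interpreting $|f|$ in the renormalized (Di\-Perna--Lions) sense; the vector field $(v, v\times B - \nabla\Phi)$ is divergence-free in $(x,v)$, so integrating over $[t_*,t]\times\Omega\times\R^3$ and applying the Green/Ostrogradski identity picks up only the boundary flux through $\gamma = \gamma_+\cup\gamma_-$. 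This gives
\Be \notag
\| \varphi(\tf) f(t) \|_{L^1_{x,v}} + \int_{t_*}^t \| \varphi'(\tf) f\|_{L^1_{x,v}}\,\dd s + \int_{t_*}^t |\varphi(\tf) f|_{L^1_{\gamma_+}}\,\dd s = \| \varphi(\tf) f(t_*)\|_{L^1_{x,v}} + \int_{t_*}^t |\varphi(\tf) f|_{L^1_{\gamma_-}}\,\dd s.
\Ee
So everything reduces to controlling the incoming boundary term $\int_{t_*}^t |\varphi(\tf) f|_{L^1_{\gamma_-}}\,\dd s$ by the right-hand side of \eqref{energy_varphi}.

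For the incoming term, I would insert the diffuse reflection boundary condition \eqref{diff_f}: for $(x,v)\in\gamma_-$, $f(t,x,v) = \mu_\Theta(x,v)\int_{n(x)\cdot v^1>0} f(t,x,v^1)\{n(x)\cdot v^1\}\,\dd v^1$, hence
\Be \notag
|\varphi(\tf) f|_{L^1_{\gamma_-}} \leq \int_{\partial\Omega} \Big(\int_{n(x)\cdot v<0} \varphi(\tf(x,v))\mu_\Theta(x,v)|n(x)\cdot v|\,\dd v\Big)\Big(\int_{n(x)\cdot v^1>0}|f(t,x,v^1)|\{n(x)\cdot v^1\}\,\dd v^1\Big)\dd S_x.
\Ee
Now the key structural input under dominant gravity (both in case \eqref{field property} via the remark after the $t_m$-lemma, and in case \eqref{field property mag} via Lemma \ref{conservative field mag} where $\tf(x,v) = \tb$ of the reflected point $\lesssim |v_3|$): for $(x,v)\in\gamma_-$ one has $\tf(x,v)\lesssim |n(x)\cdot v| = |v_3| \leq |v|$. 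Combined with $\mu_\Theta(x,v)|n(x)\cdot v|\lesssim e^{-|v_3|^2/4b}\lesssim e^{-|v|^2/4b}\cdot(\text{bounded in }v_\parallel)$ — more precisely, after integrating out $v_\parallel$ and changing to the radial variable in $v_3$ — the condition \eqref{cond:varphi} gives
\Be \notag
\sup_{x\in\partial\Omega}\int_{n(x)\cdot v<0}\varphi(\tf(x,v))\mu_\Theta(x,v)|n(x)\cdot v|\,\dd v \;\leq\; C\Big(1 + \int_1^\infty e^{-\tau^2/2b}\varphi(C\tau)\,\dd\tau\Big) =: C_\varphi < \infty,
\Ee
so that $|\varphi(\tf) f|_{L^1_{\gamma_-}} \leq C_\varphi |f|_{L^1_{\gamma_+}}$. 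A harmless rescaling $\varphi(\tau)\mapsto\varphi(\tau/C)$ absorbs the constant inside $\varphi$; since $\varphi$ only enters \eqref{energy_varphi} through $\varphi(\tf)$, $\varphi'(\tf)$ and $\varphi(\tf)|_{\gamma_\pm}$ all evaluated at the same scale, this is cosmetic.

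The last point is that $|f|_{L^1_{\gamma_+}}$ itself is not on the right-hand side of \eqref{energy_varphi} with an unbounded coefficient, only with the coefficient $\frac14$; so after bounding $|\varphi(\tf)f|_{L^1_{\gamma_-}} \leq C_\varphi |f|_{L^1_{\gamma_+}}$ I would still need to reabsorb the bulk of $C_\varphi |f|_{L^1_{\gamma_+}}$. I would split $\gamma_+ = \{\tb \leq R\}\cup\{\tb > R\}$ for a large fixed $R$: on $\{\tb > R\}$, $|n(x)\cdot v|\gtrsim \tb > R$ forces $|v|\gtrsim R$, so $\mu_\Theta$-weighted integration over that set is exponentially small and can be made $\leq \frac{1}{4C_\varphi}$ after reflection — wait, more carefully, one uses the \emph{plain} (unweighted) energy identity $\|f(t)\|_{L^1_{x,v}} + \int_{t_*}^t|f|_{L^1_{\gamma_+}}\dd s = \|f(t_*)\|_{L^1_{x,v}} + \int_{t_*}^t|f|_{L^1_{\gamma_-}}\dd s$ together with $|f|_{L^1_{\gamma_-}}\leq|f|_{L^1_{\gamma_+}}$ (the diffuse operator is $L^1_{\gamma}$-contractive, since $\int\mu_\Theta|n\cdot v|=1$) to get $\int_{t_*}^t|f|_{L^1_{\gamma_+}}\dd s$ controlled by $\|f(t_*)\|_{L^1_{x,v}}$ plus $\int_{t_*}^t|f|_{L^1_{\gamma_+}}$ restricted away from the problematic set, which finally gives $\int_{t_*}^t |f|_{L^1_{\gamma_+}}\dd s \lesssim (t-t_*+1)\|f(t_*)\|_{L^1_{x,v}}$. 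Feeding this back, $C_\varphi\int_{t_*}^t|f|_{L^1_{\gamma_+}}\dd s \leq C(t-t_*+1)\|f(t_*)\|_{L^1_{x,v}}$, which is exactly the second term on the right of \eqref{energy_varphi}, and the leftover $\frac14\int_{t_*}^t|f|_{L^1_{\gamma_+}}\dd s$ matches the last term. The main obstacle is making the boundary-flux manipulations rigorous at the level of renormalized solutions (justifying the Green identity for $\varphi(\tf)|f|$ when $\tf$ is only Lipschitz away from grazing set $\gamma_0$, and $\varphi(\tf)$ may be unbounded) — this is handled by a standard approximation: truncate $\varphi$ at height $N$, run the estimate, and pass $N\to\infty$ by monotone convergence using that the right-hand side is already finite by \eqref{cond:varphi} and the assumed $L^1$ control on $f(t_*)$.
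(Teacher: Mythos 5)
Your proposal follows the paper's strategy closely and gets the main mechanisms right: the transport identity $[\p_t + v\cdot\nabla_x + (v\times B - \nabla\Phi)\cdot\nabla_v]\big(\varphi(\tf)|f|\big) = -\varphi'(\tf)|f|$, integration with Green's identity, insertion of the diffuse boundary condition to factor the incoming flux, and the key gravitational fact $\tf(x,v)\lesssim |v_3|$ on $\gamma_-$ combined with \eqref{cond:varphi} to bound $\sup_{x}\int_{n(x)\cdot v<0}\varphi(\tf)\mu_\Theta|n(x)\cdot v|\dd v$ uniformly. (The paper does the last step slightly differently, via the change of variables $\dd v\lesssim |\tf|^{-3}|n(\xf)\cdot\vf|\,\dd\tf\,\dd S_{\xf}$ of Proposition~\ref{prop:mapV}, but your direct $v_3$-integration argument is essentially equivalent modulo constants.)

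The genuine gap is the trace estimate $\int_{t_*}^t |f|_{L^1_{\gamma_+}}\dd s \lesssim (t-t_*+1)\|f(t_*)\|_{L^1_{x,v}}$, which you assert but whose hinted justification does not close. The plain $L^1$ energy identity together with contractivity of the diffuse operator only controls the \emph{difference} $\int_{t_*}^t(|f|_{L^1_{\gamma_+}}-|f|_{L^1_{\gamma_-}})\dd s$, not $\int_{t_*}^t|f|_{L^1_{\gamma_+}}\dd s$ itself; since $|f|_{L^1_{\gamma_-}}$ can be comparable to $|f|_{L^1_{\gamma_+}}$, that difference gives no information about the size of either term. Your alternative sketch (split $\gamma_+$ into $\{\tb\leq R\}$ and $\{\tb>R\}$) is not finished either. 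The paper devotes a separate lemma (Lemma~\ref{lem:energy estimate on f}, estimate \eqref{trace}) to this: it partitions the time interval into steps of length $\delta$, traces each outgoing boundary point back along characteristics, and distinguishes whether the particle last hit the boundary within time $\delta$ (which gives an $O(\delta^2)$ gain via grazing-set smallness after reflection) or came from an earlier time slice (which is bounded by a bulk $L^1$ norm via the change of variables \eqref{COV}); summing over roughly $(t-t_*)/\delta$ slices yields the linear-in-time factor, and choosing $\delta$ small lets one absorb the $O(\delta^2)$ term. This $\delta$-step decomposition is the substantive ingredient your proposal is missing, and it is what produces the $(t-t_*+1)$ coefficient and the leftover $\tfrac14\int_{t_*}^t|f|_{L^1_{\gamma_+}}\dd s$ in \eqref{energy_varphi}.
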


For the proof of Lemma \ref{lemma:energy}, We shall start it from Lemma \ref{lem:energy estimate on f}, the energy estimate. 

\begin{lemma} 
\label{lem:energy estimate on f}
Suppose $f$ solves \eqref{eqtn_f}-\eqref{diff_f} for either \eqref{field property} or \eqref{field property mag}. For $0 \leq t_* \leq t$ with $\delta \in (0, t-t_*)$,
\begin{align}
& \ \ \ \ \| f(t) \|_{L^1_{x,v}}  \leq \| f(t_*) \|_{L^1_{x,v}}, \label{maximum} 
\\& \int^{t}_{t_*}  |f(s )|_{L^1_{\gamma_+}} \dd s 
 \leq \Big \lceil \frac{t-t_*}{\delta}   \Big \rceil \| f(t_*) \|_{L^1_{x,v}}
+ O(\delta^2) \int^{t}_{t_*}  |f(s )|_{L^1_{\gamma_+}} \dd s, \label{trace}
\end{align}
and if $f_0$ is non-negative, so is $f(t, x, v)$ for all $(t, x, v) \in \R_{+} \times \Omega \times \R^3$.
\end{lemma}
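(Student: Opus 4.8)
\textbf{Strategy and the bound \eqref{maximum}.} All three claims reduce to Green's identity along the characteristics \eqref{characteristics}/\eqref{characteristics mag} together with the $L^1$-contractivity of the diffuse reflection operator; the argument is therefore insensitive to the choice between \eqref{field property} and \eqref{field property mag}, using only that the force field is divergence-free in $v$ and independent of $f$, the change-of-variables formulas of Lemma \ref{lem:COV}, and the grazing bounds $|n(x)\cdot v|\lesssim \tb(x,v)$ on $\gamma_+$ (resp. $|n(x)\cdot v|=5\tb(x,v)$ from \eqref{tb expression mag}) together with their forward analogues on $\gamma_-$. For \eqref{maximum}: since $v\times B-\nabla\phi-g\mathbf e_3$ is divergence-free in $v$, $|f|$ solves \eqref{eqtn_f} along characteristics, and integrating over $\Omega\times\R^3$ on $[t_*,t]$ yields the balance
\[
\|f(t)\|_{L^1_{x,v}}+\int_{t_*}^{t}|f(s)|_{L^1_{\gamma_+}}\,\dd s=\|f(t_*)\|_{L^1_{x,v}}+\int_{t_*}^{t}|f(s)|_{L^1_{\gamma_-}}\,\dd s .
\]
On $\gamma_-$, the boundary condition \eqref{diff_f} and the triangle inequality give $|f(s,x,v)|\le\mu_\Theta(x,v)\int_{n(x)\cdot v^1>0}|f(s,x,v^1)|\{n(x)\cdot v^1\}\dd v^1$; integrating this against $|n(x)\cdot v|\dd v\,\dd S_x$ and using the normalization $\int_{n(x)\cdot v<0}\mu_\Theta(x,v)|n(x)\cdot v|\dd v=1$ built into \eqref{wall_M} produces $|f(s)|_{L^1_{\gamma_-}}\le|f(s)|_{L^1_{\gamma_+}}$. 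Substituting this into the balance proves \eqref{maximum}, hence also $\|f(s)\|_{L^1_{x,v}}\le\|f(t_*)\|_{L^1_{x,v}}$ for all $s\in[t_*,t]$, which I will use repeatedly below.

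\textbf{The trace estimate \eqref{trace}.} I would fix a partition $t_*=s_0<s_1<\cdots<s_N=t$ with $s_{k+1}-s_k\le\delta$ and $N=\lceil(t-t_*)/\delta\rceil$, and bound $\int_{s_k}^{s_{k+1}}|f(s)|_{L^1_{\gamma_+}}\,\dd s$ for each $k$. Writing $s=s_k+\tau$ and tracing the backward characteristic from $(x,v)\in\gamma_+$, split into $\tau\le\tb(x,v)$ and $\tau>\tb(x,v)$. In the first case $f(s,x,v)=f(s_k,X(s_k;s,x,v),V(s_k;s,x,v))$, and applying \eqref{COV} of Lemma \ref{lem:COV} with base time $s_k$ (the admissible $\tau$-range being a sub-interval of $[0,\tb]$) bounds this part by $\|f(s_k)\|_{L^1_{x,v}}\le\|f(t_*)\|_{L^1_{x,v}}$. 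In the second case $\tb(x,v)<\tau\le\delta$, so $|n(x)\cdot v|\lesssim\tb(x,v)<\delta$ by \eqref{v3t estimate} (resp. \eqref{tb expression mag}), and $f(s,x,v)=f(s-\tb,\xb,\vb)$ with $(\xb,\vb)\in\gamma_-$; by \eqref{diff_f}, $|f(s,x,v)|\le\mu_\Theta(\xb,\vb)\,g(s-\tb(x,v),\xb)$ where $g(\sigma,y):=\int_{n(y)\cdot v^1>0}|f(\sigma,y,v^1)|\{n(y)\cdot v^1\}\dd v^1$ and $\int_{\partial\Omega}g(\sigma,y)\,\dd S_y=|f(\sigma)|_{L^1_{\gamma_+}}$. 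Carrying out the $s$-integral via $\sigma=s-\tb(x,v)$, then the boundary change of variables \eqref{COV_bdry} (using that $\tb(x,v)=\tf(\xb,\vb)$, so the grazing constraint becomes $\tf(y,w)<\delta$), and finally the elementary estimate $\int_{n(y)\cdot w<0}\mathbf 1_{\tf(y,w)<\delta}\,\mu_\Theta(y,w)|n(y)\cdot w|\,\dd w\lesssim\delta^2$, which is exactly \eqref{estimate on delta} from the proof of Lemma \ref{lem:small_largek} together with its magnetic analogue, bounds the second part by $C\delta^2\int_{s_k}^{s_{k+1}}|f(\sigma)|_{L^1_{\gamma_+}}\,\dd\sigma$. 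Summing over $k=0,\dots,N-1$ and telescoping the error terms gives \eqref{trace}.

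\textbf{Non-negativity, and the main obstacle.} If $f_0\ge0$, I would take $t_*=0$ in the stochastic-cycle representation \eqref{expand_h1}--\eqref{expand_h3} of Lemma \ref{sto_cycle}: every term there is an integral of $f_0$ (in \eqref{expand_h1}--\eqref{expand_h2}) or of $f(t^k,\cdot,\cdot)$ (in the remainder \eqref{expand_h3}) against the manifestly non-negative kernels $\mu_\Theta$ and $\{n(x^j)\cdot v^j\}\dd v^j$, and the remainder tends to $0$ as $k\to\infty$ by the decay estimate underlying Lemma \ref{lem:small_largek} combined with \eqref{maximum}; hence $f(t,x,v)$ is a pointwise limit of non-negative quantities and is $\ge0$. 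The delicate step is the second case of the trace estimate: one must simultaneously keep track of the extra $v^1$-integration forced by the boundary condition, the time shift $s\mapsto s-\tb(x,v)$, and the grazing set $\{\tb<\delta\}$, and verify that after \eqref{COV_bdry} they conspire to yield a gain of order $\delta^2$ rather than $\delta$ — this quadratic gain is precisely what allows the error term in \eqref{trace} to be absorbed when it is fed, via Lemma \ref{lemma:energy}, into the weighted energy estimate of Theorem \ref{theorem_1}.
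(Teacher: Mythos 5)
Your proofs of \eqref{maximum} and \eqref{trace} are correct and follow the paper's approach. For \eqref{maximum} the argument is identical. For \eqref{trace} you use a slightly different decomposition — a uniform partition of $[t_*,t]$ and a per-subinterval split by $\tau \gtrless \tb(x,v)$, rather than the paper's four-case split in \eqref{|f|1} (which uses the global threshold $\tb \gtrless \delta$ plus a separate treatment of the initial subinterval) — but the engine is the same: \eqref{COV} to pull back to an anchor time on the non-grazing part, and \eqref{COV_bdry} combined with the grazing bound \eqref{estimate on delta} (and its magnetic analogue $\tb = |v_3|/5$) to get $O(\delta^2)$ on the other part. Your version is arguably a bit cleaner since it avoids a separate case for the first subinterval, and the bookkeeping $\sigma = s-\tb(x,v) \in (s_k,s_{k+1}]$ you set up is correct.

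The non-negativity claim is where you depart from the paper, and where there is a genuine gap. You invoke the stochastic-cycle expansion \eqref{expand_h1}--\eqref{expand_h3} with $t_*=0$ and assert that the remainder \eqref{expand_h3} vanishes as $k\to\infty$ "by the decay estimate underlying Lemma \ref{lem:small_largek} combined with \eqref{maximum}." This does not close. Lemma \ref{lem:small_largek} controls the $\sigma$-measure of the set $\{t^k\ge 0\}$, but the remainder is that measure paired against $f(t^k,x^k,v^k)$; to make the product small you need \emph{pointwise} (i.e., $L^\infty$) control of $f$, and the $L^1$ bound from \eqref{maximum} is not the right dual object against a small-measure set. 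Within the paper's logical order this would in fact be circular: the $L^\infty$ control \eqref{theorem_infty_1} is proved in Section~\ref{sec: exponential moments}, which in turn relies on the $L^1$ machinery of Section~\ref{sec: L1 estimate} that uses this very lemma. The paper's route avoids this entirely and is the one you should take: the diffuse boundary gives null flux, hence mass conservation $\int f(t)=\int f_0$, and then with $f_-=\tfrac{1}{2}(|f|-f)\ge0$ one has $\|f_-(t)\|_{L^1}=\tfrac12\big(\|f(t)\|_{L^1}-\int f(t)\big)\le\tfrac12\big(\|f_0\|_{L^1}-\int f_0\big)=\|f_{0,-}\|_{L^1}=0$, directly from \eqref{maximum} and mass conservation, with no pointwise input about $f$ required.
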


\begin{proof}
From \eqref{eqtn_f}, \eqref{diff_f} and taking integration over $(t_*, t) \times \Omega \times \R^3$, 
we derive that
\Be \notag
\| f(t) \|_{L^1_{x,v}} + \int^t_{t*} \iint_{\gamma_+}|f| \dd s - \int^t_{t*} \iint_{\gamma_-}|f| \dd s = \| f(t_*) \|_{L^1_{x,v}}.
\Ee
Due to the choice of $\mu_{\Theta} (x, v)$ in \eqref{diff_F}, for $\forall t > 0$,
\Be \notag
\int_{\gamma_-} |f (t, x, v)| |n(x) \cdot v| \dd S_x \dd v 
= \Big|\int_{\gamma_+} f (t, x, v) \{n(x) \cdot v\} \dd S_x \dd v 
\Big|.
\Ee
Therefore, we have
\Be \notag
\begin{split}
& \ \ \ \ \int^t_{t*} \iint_{\gamma_+}|f| \dd s - \int^t_{t*} \iint_{\gamma_-}|f| \dd s
\\& = \int^t_{t*} \iint_{\gamma_+}|f| \dd s -\int^t_{t*} \Big|\iint_{\gamma_+}f\Big| \dd s
\geq  \int^t_{t*} \iint_{\gamma_+}|f| \dd s -  \int^t_{t*} \iint_{\gamma_+}|f| \dd s =0,
\end{split}
\Ee 
therefore we prove \eqref{maximum}.

Next we work on \eqref{trace}. For $\delta \in (0, t-t_*)$ and $(x,v) \in \gamma_+$,  
\Be \label{|f|1}
\begin{split}
|f(s,x,v)|
& \leq \underbrace{ \sum\limits^{ \lceil \frac{t-t_*-\delta}{\delta} \rceil }_{k=1} \mathbf{1}_{t_* + k \delta \leq s \leq t_* + (k+1) \delta, \ \delta < \tb(x,v)}
|f(t_* + k \delta, X(t_* + k \delta, s, x, v), V(t_* + k \delta, s, x, v))| }_{\eqref{|f|1}_1} 
\\& \ \ \ \ + \underbrace{ \mathbf{1}_{t_* + \delta \leq s, \ \delta \geq \tb(x,v)}
|f(s- \tb (x,v), \xb (x,v), \vb (x,v))|}_{\eqref{|f|1}_2}
\\& \ \ \ \ + \underbrace{ \mathbf{1}_{s - t_* < \delta, \ s - t_* < \tb(x,v)}
|f(t_*, X(t_*, s, x, v), V(t_*, s, x, v))| }_{\eqref{|f|1}_3} 
\\& \ \ \ \ + \underbrace{ \mathbf{1}_{\tb(x,v) \leq s - t_* < \delta}
|f(s- \tb (x,v), \xb (x,v), \vb (x,v))|}_{\eqref{|f|1}_4},
\end{split} 
\Ee
where $s \in [t_*,t]$. 

First we do estimate on $\eqref{|f|1}_1$.
From \eqref{COV}, \eqref{maximum} and $t_* + \delta \leq s \leq t$ with $\delta < \tb (x, v)$,
\begin{align} 
& \ \ \ \ \int^{t}_{t_*} \int_{\gamma_+} \eqref{|f|1}_1 \dd s \notag
\\& \leq \sum\limits^{ \lceil \frac{t-t_*-\delta}{\delta} \rceil }_{k=1} \int_{\gamma_+}
\int^{ t_* + k \delta + \tb (x, v)}_{ t_* + k \delta}
|f(t_* + k \delta, X(t_* + k \delta, s, x, v), V(t_* + k \delta, s, x, v))| \dd s 
\{n(x) \cdot v\} \dd S_x \dd v \notag 
\\& \leq \sum\limits^{ \lceil \frac{t-t_*-\delta}{\delta} \rceil  }_{k=1} \| f(t_* + k \delta) \|_{L^1_{x,v}}  
\leq \Big \lceil \frac{t-t_*-\delta}{\delta}   \Big \rceil \times \| f(t_*) \|_{L^1_{x,v}}.
\end{align}

Now we consider $\eqref{|f|1}_2$. For $y= \xb(x,v)$ and $s \in [t_*,t]$, we have 
\Be \label{delta tf}
\mathbf{1}_{\delta \geq \tb(x,v)}
= \mathbf{1}_{\delta \geq \tf(y, \vb)}.
\Ee
From \eqref{COV_bdry}, \eqref{delta tf} and using the Fubini's theorem, we derive
\begin{align} 
\int^{t}_{t_*} \int_{\gamma_+} \eqref{|f|1}_2 \dd s
& = \int_{\gamma_+}
\int^t_{ t_* + \delta}
\mathbf{1}_{\delta \geq \tb(x,v)}
|f(s- \tb(x, v), \xb, \vb)| \dd s 
\{n(x) \cdot v\} \dd S_x \dd v \notag \\
& \leq \int_{\p\O} \int_{n(y) \cdot v<0}
\mathbf{1} _{\delta> \tf(y, v)}
\int^t_{t_*}|f(s, y, v)| \dd s 
|n(y) \cdot v| \dd S_{y} \dd v \notag \\
& \leq \int_{\p\O}
\underbrace{\Big( \int_{n(y) \cdot v<0}
\mathbf{1} _{\delta> \tf(y, v)} \mu_{\Theta} (y, v) |n(y) \cdot v|\dd v\Big)}_{\eqref{est:|f|2}_*}
\int^t_{t_*}
\int_{n(y) \cdot v^1>0} 
|f(s,y, v^1)| \{n(y) \cdot v^1\} \dd v^1
 \dd s 
 \dd S_{y} .
 \label{est:|f|2}
\end{align}
From \eqref{estimate on delta}, we derive
\be \notag
\eqref{est:|f|2} \leq O(\delta^2) \int^t_{t_*} \int_{\gamma_+} |f| \dd s.
\ee 
From \eqref{COV}, \eqref{maximum} and $s < t_* + \tb(x,v)$, we have
\Be \notag
\int^{t}_{t_*} \int_{\gamma_+} \eqref{|f|1}_3 \dd s
\leq \int^{t_* + \tb(x,v)}_{t_*} \int_{\gamma_+} \eqref{|f|1}_3 \dd s 
= \| f(t_*) \|_{L^1_{x,v}}
\leq \| f(t_*) \|_{L^1_{x,v}}.
\Ee
Again setting $y= \xb(x,v)$ and $s \in [t_*,t]$, we have 
\Be \label{delta tf 2}
\mathbf{1}_{\tb(x,v) \leq s - t_* < \delta}
\leq \mathbf{1}_{\delta \geq \tf(y, \vb)}.
\Ee
From \eqref{COV_bdry}, \eqref{delta tf 2} and using the Fubini's theorem, we derive
\begin{align} 
\int^{t}_{t_*} \int_{\gamma_+} \eqref{|f|1}_4 \dd s
& = \int_{\gamma_+}
\int^{t_* + \delta}_{ t_* + \tb(x,v)}
\mathbf{1}_{\delta \geq \tb(x,v)}
|f(s- \tb(x, v), \xb, \vb)| \dd s 
\{n(x) \cdot v\} \dd S_x \dd v \notag \\
& \leq \int_{\p\O} \int_{n(y) \cdot v<0}
\mathbf{1} _{\delta> \tf(y, v)}
\int^{t_* + \delta}_{ t_*} |f(s, y, v)| \dd s |n(y) \cdot v| \dd S_{y} \dd v \notag \\
&\leq \int_{\p\O}
\underbrace{\Big( \int_{n(y) \cdot v<0}
\mathbf{1} _{\delta> \tf(y, v)} \mu_{\Theta} (y, v) |n(y) \cdot v|\dd v\Big)}_{\eqref{est:|f|4}_*}
\int^{t_* + \delta}_{ t_*} \int_{\gamma_+} |f| \dd s.
 \label{est:|f|4}
\end{align}
Then we conclude $\eqref{est:|f|4} \leq O(\delta^2) \int^t_{t_*} \int_{\gamma_+} |f| \dd s$, therefore we prove \eqref{trace}.

To prove the positivity property, we write 
$$
f_{-} = \frac{|f| - f}{2}.$$
From \eqref{maximum} and the assumption $f_0 \geq 0$, we have
\Be \label{positivity}
\| f_{-} (t) \|_{L^1_{x,v}} 
= \big\| \frac{|f| (t) - f (t)}{2} \big\|_{L^1_{x,v}} 
= \Big\| \frac{ \big(|f| - f \big) (t)}{2} \Big\|_{L^1_{x,v}}
\leq \big\| \frac{|f_0| - f_{0} }{2} \big\|_{L^1_{x,v}} = 0,
\Ee
then we conclude $f_{-} (t, x, v) = 0$ on $\Omega \times \R^3$.

For the magnet field case, we consider $f$ solves \eqref{eqtn_f}-\eqref{diff_f} for \eqref{field property mag}.
By direct computation, we have
\be \notag
\nabla_{v} \cdot (v \times B - \nabla \Phi (x))
= \nabla_{v} \cdot (B_3 v_2, -B_3 v_1, 0) 
= 0.
\ee
By taking integration over $(t_*, t) \times \Omega \times \R^3$, 
we derive \eqref{maximum}.
The rest of proof is similar as the gravitational case and so the proof is omitted.
\end{proof}

Now we are ready to prove Lemma \ref{lemma:energy}, which will be used frequently in this paper.

\begin{proof}[\textbf{Proof of Lemma \ref{lemma:energy}}] 

Considering the characteristics trajectory
$\big(s, X(s; t, x, v), V(s; t, x, v) \big)$ determined by $(t, x, v)$, then we apply this characteristics on $\tf (x, v)$, 
\Be \notag
\begin{split}
    -1 & = \frac{d}{d s} \tf (s, X(s; t, x, v), V(s; t, x, v))
    \\& = \frac{\partial}{\partial s} \tf (s, X, V) + \frac{\partial}{\partial X} \tf (s, X, V) \cdot \frac{d}{ds} X
    + \frac{\partial}{\partial V} \tf (s, X, V) \cdot \frac{d}{ds} V,
\end{split}
\Ee
By setting $s = t$, we have
\Be \notag
\begin{split}
    & \frac{\partial}{\partial t} \tf (t, x, v) + \frac{\partial}{\partial x} \tf (t, x, v) \cdot v
    + \frac{\partial}{\partial v} \tf (t, x, v) \cdot - \nabla \Phi (x) = -1.
\end{split}
\Ee
Then, in the sense of distribution 
\Be \label{function for varphi f}
[\p_t + v\cdot \nabla_x - \nabla \Phi \cdot \nabla_{v}] \big(\varphi(\tf) |f| \big) 
= \varphi^\prime(\tf) [\p_t + v\cdot \nabla_x - \nabla \Phi \cdot \nabla_{v}](\tf) |f|
= -\varphi^\prime(\tf)| f|. 
\Ee
From \eqref{diff_f}, \eqref{function for varphi f}, $\varphi (\tau ) \geq0$, $\varphi^\prime \geq0$ and taking integration over $(t_*, t) \times \Omega \times \R^3$, 
we derive
\begin{align}
& \ \ \ \ \| \varphi(\tf) f (t) \|_{L^1_{x,v}} + \int^t_{t_*} \| \varphi^\prime(\tf) f (s) \|_{L^1_{x,v}} \dd s
+ \int^t_{t_*} \int_{\gamma_+} \varphi(\tf) |f| \dd s \notag
\\&
\leq \| \varphi(\tf) f (t_*) \|_{L^1_{x,v}}
+ \int^t_{t_*} 
\int_{\p\O}
\int_{n(x) \cdot v<0}
 \varphi(\tf) |f| |n(x) \cdot v|
\dd v
\dd S_x \dd s \notag
\\& = \| \varphi(\tf) f (t_*) \|_{L^1_{x,v}}
+ \int^t_{t_*} 
\int_{\p\O} 
\int_{n(x) \cdot v<0}
 \varphi(\tf) \mu_{\Theta} (x, v) |n(x) \cdot v| \dd v \dd S_x \dd s \notag
\\& \hspace{6cm} \times
\int_{n(x)\cdot v^1>0}| f(s,x,v^1)| 
\{n(x^1) \cdot v^1\}\dd v^1. \label{rho.f}
\end{align}

Now we prove the following claim: 
If \eqref{cond:varphi} holds, then 
\Be \label{claim in lemma 3}
\sup_{x \in \p\O} \int_{n(x) \cdot v<0}
 \varphi(\tf)(x,v) \mu_{\Theta} (x, v) |n(x) \cdot v| \dd v\lesssim 1.
\Ee
We split $\int_{n(x) \cdot v<0} \varphi(\tf)(x,v) \mu_{\Theta} (x, v) |n(x) \cdot v| \dd v$ into two parts: integration over the regimes of $\tf \leq 1$ and $\tf > 1$. 

For $\tf \leq 1$, since $|n(x) \cdot v| \lesssim \tf \leq 1$, we bound 
\Be \label{int:rho_1}
\begin{split}
 \int_{n(x) \cdot v<0} \mathbf{1}_{\tf \leq 1} \
\varphi(\tf) \mu_{\Theta} (x, v) |n(x) \cdot v| \dd v
\lesssim \varphi(1) \int_{\R^3} 
 e^{-|v|^2 / 2b} \dd v \lesssim 1. 
\end{split} 
\Ee

Next we focus on the integration over the regimes of $\tf > 1$. 
From \eqref{v3t estimate} and \eqref{t>1,i-2}, we obtain
\Be \label{int:rho_2} 
\int_{n (x) \cdot v<0} \varphi(\tf) \mu_{\Theta} (x, v) |n (x) \cdot v| \dd v
\lesssim \int_{\p\O} \int_{1}^{\infty}
\varphi(\tf) e^{- |\tf|^2/2b } |n (x) \cdot v| \frac{|n (\xf) \cdot \vf |}{|\tf|^3}  
\dd \tf \dd S_{\xf}. 
\Ee
From \eqref{cond:varphi}, we derive that 
\Be \notag
\eqref{int:rho_2}   \lesssim 
\int_{1}^\infty
\frac{\varphi(\tf)}{|\tf|} e^{- |\tf|^2 / 2b}
\dd \tf
\lesssim \int_{1}^\infty \varphi(\tf) e^{- |\tf|^2 / 2b} \dd \tf\lesssim 1.
\Ee
Combining the above bound with \eqref{int:rho_1}, we prove \eqref{claim in lemma 3}. 
Then picking sufficiently small $\delta$ in \eqref{trace} and using \eqref{claim in lemma 3}, we conclude \eqref{energy_varphi}, through, for $C>1$,   
\Be \notag
\begin{split}
\eqref{rho.f} 
& \lesssim \int^t_{t_*} 
\int_{\gamma_+} |f(s,x,v^1)| \{n(x^1) \cdot v^1\} \dd v^1 \dd S_x
\dd s 
\\& \leq C (t - t_* + 1) \| f(t_*) \|_{L^1_{x,v}} + \frac{1}{4}  \int^{t}_{t_*} |f(s)|_{L^1(\gamma_+)}.
\end{split}
\Ee  

For the magnet field case, from
$\nabla_{v} \cdot (v \times B - \nabla \Phi (x))
= 0$ and $\frac{d}{d s} \tf (s, X(s; t, x, v), V(s; t, x, v)) = -1$, we get
\Be \label{function for varphi f mag}
[\p_t + v\cdot \nabla_x + (v \times B - \nabla \Phi ) \cdot \nabla_{v}] \big(\varphi(\tf) |f| \big) 
= -\varphi^\prime(\tf)| f|. 
\Ee
The rest of proof follows from the gravitational case.
\end{proof}

\subsection{Doeblin condition}

In this section, we will prove two of the key cornerstones, Proposition \ref{prop:Doeblin 1} and \ref{prop:Doeblin mag}, the lower bound with the unreachable defect.
 
\begin{proposition} \label{prop:Doeblin 1}
Suppose $f$ solves \eqref{eqtn_f}-\eqref{diff_f}. Assume $f_0(x,v) \geq 0$. For any $T_0\gg1$ and $N\in \mathbb{N}$ there exists $\mathfrak{m}(x,v)\geq 0$, which only depends on $\O$ and $T_0$ (see \eqref{def:m} for the precise form), such that 
\Be \label{est:Doeblin}
f(NT_0,x,v)\geq  
\mathfrak{m}(x,v) \Big\{
\iint_{\O \times \R^3}f((N-1)T_0,x,v) \dd v \dd x 
-  \iint_{\O \times \R^3} \mathbf{1}_{t_\mathbf{f}(x,v)\geq \frac{T_0}{4}} f((N-1)T_0,x,v)  \dd v \dd x 
\Big\}.
\Ee
\end{proposition}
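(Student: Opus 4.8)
The plan is to construct $\mathfrak{m}(x,v)$ explicitly by running the stochastic cycle backward exactly two bounces from time $NT_0$, picking up a lower bound on $f(NT_0,x,v)$ in terms of the boundary outgoing flux of $f$ at time $(N-1)T_0$, and then converting the flux lower bound into a mass lower bound via the change-of-variables formulae from Lemma \ref{lem:COV}. Concretely, for $(x,v)$ with $\tb(x,v)\le \frac{T_0}{4}$, I would drop $f$ along its characteristic to its first bounce point $(x^1,\vb)\in\gamma_-$, apply the boundary condition \eqref{diff_f} to write $f(NT_0-\tb(x,v),x^1,\vb)=\mu_\Theta(x^1,\vb)\int_{n(x^1)\cdot v^1>0} f(\cdot,x^1,v^1)\{n(x^1)\cdot v^1\}\,\dd v^1$, then restrict the $v^1$-integral to a good set $\mathcal{V}_1$ where the second backward exit time $\tb^1$ is controlled (say $\tb^1\le \frac{T_0}{4}$) so that the second bounce time $t^2$ is still $\ge (N-1)T_0$. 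Using positivity of $f$ (which holds by Lemma \ref{lem:energy estimate on f}, since $f_0\ge 0$), all discarded terms are nonnegative, so each restriction only weakens the lower bound. This yields
\Be \notag
f(NT_0,x,v)\ \geq\ \mathbf{1}_{\tb(x,v)\le \frac{T_0}{4}}\,\mu_\Theta(x^1,\vb)\int_{\mathcal{V}_1}\mathbf{1}_{\tb^1\le \frac{T_0}{4}}\,\mu_\Theta(x^2,\vb^1)\,\Big(\int_{\mathcal{V}_2}\mathbf{1}_{t^2\ge (N-1)T_0}f((N-1)T_0,\cdot)\{n(x^2)\cdot v^2\}\,\dd v^2\Big)\{n(x^1)\cdot v^1\}\,\dd v^1.
\Ee

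The second step is to turn the innermost integral, which is essentially $\int_{\gamma_+}$ of $f$ at time $(N-1)T_0$ over a subset of the boundary with controlled parameters, into a genuine volume integral of $f$ over $\O\times\R^3$. Here I would use \eqref{COV}: for $(x,v)\in\gamma_+$ with short backward exit time, $\int_{\gamma_+}\int_0^{\tb} g(X(\cdot),V(\cdot))|n\cdot v|\,\dd s\,\dd v\,\dd S_x = \iint_{\O\times\R^3} g$. The subtlety is that tracing $f$ at time $(N-1)T_0$ backward from $\gamma_+$ only reaches those phase points $(y,w)\in\O\times\R^3$ whose \emph{forward} exit time $\tf(y,w)$ is at most $\frac{T_0}{4}$ (roughly; the precise threshold depends on how the $T_0$-budget was split between the two bounces), i.e. exactly the points that will have exited the domain within the allotted time. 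Therefore the mass one recovers from the flux is not $\|f((N-1)T_0)\|$ in full but rather $\iint f((N-1)T_0,y,w)\dd y\,\dd w$ minus $\iint \mathbf{1}_{\tf(y,w)\ge \frac{T_0}{4}} f((N-1)T_0,y,w)\dd y\,\dd w$ — which is precisely the unreachable-defect structure on the right side of \eqref{est:Doeblin}. The downward gravity is what makes this work quantitatively: by the estimate $\tb(x,v),\tf(x,v)\lesssim |v_3|$ (see \eqref{v3t estimate}), controlling exit times is equivalent to controlling $v_3$, so the good velocity sets $\mathcal{V}_1,\mathcal{V}_2$ have comparable-to-full measure and the geometric factors (Jacobians $|\tb|^{-3}|n\cdot\vb|$ from Proposition \ref{prop:mapV}, wall Maxwellians) are all bounded below on these sets.

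The third step is bookkeeping: collect all the lower bounds on $\mu_\Theta$, the Jacobian, the indicator constraints, and the integration over the ``good'' boundary patch and the intermediate velocity $v^1$ into a single nonnegative function $\mathfrak{m}(x,v)$ depending only on $\O$ and $T_0$ (it will be supported on $\{\tb(x,v)\le \frac{T_0}{4}\}$, hence not everywhere positive, which is fine — the strict positivity a.e.\ of $\mathfrak{J}$ is a separate statement proved later). I would define $\mathfrak{m}(x,v)$ by the displayed formula referenced as \eqref{def:m}, essentially
\Be \notag
\mathfrak{m}(x,v) := c_\O\, \mathbf{1}_{\tb(x,v)\le \frac{T_0}{4}}\, \mu_\Theta(x^1,\vb)\, \inf_{\text{good set}}\big(\text{lower bounds}\big),
\Ee
and verify \eqref{est:Doeblin} by chaining the inequalities above. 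The main obstacle, I expect, is the careful matching of the time budgets across the two bounces so that the constraint $t^2\ge (N-1)T_0$ is implied by $\tb(x,v)\le \frac{T_0}{4}$ and $\tb^1\le \frac{T_0}{4}$ while simultaneously ensuring that the set of phase points reached by the change of variables is exactly $\{\tf\le \frac{T_0}{4}\}$ (up to constants absorbed into $\mathfrak{m}$) — this is where one must be most careful, and where using $T_0\gg1$ together with the gravity-controlled bound $\tb,\tf\lesssim |v_3|$ is essential. For the magnetic-field case the analogous statement is Proposition \ref{prop:Doeblin mag}, where the extra difficulty is that the change of variables $v\mapsto(\tb,S_{\xb})$ degenerates when $B_3\tb$ is near $2k\pi$, forcing one to additionally restrict $\tb^1$ away from those values (e.g. $\frac{\pi}{2B_3}\le \tb^1\le\frac{3\pi}{2B_3}$) so the Jacobian $5B_3^2(2-2\cos(B_3\tb^1))^{-1}$ stays bounded and $|\vb^1|$ admits a pointwise lower bound; that case is handled by the same two-bounce scheme with this additional geometric restriction built into $\mathfrak{m}$.
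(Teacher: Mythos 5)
Your outline has the right architecture — a two-bounce stochastic cycle (Lemma~\ref{sto_cycle} with $k=2$, $t_*=(N-1)T_0$), positivity of $f$, the change of variables $v^1\mapsto(\tb^1,x^2)$ from Proposition~\ref{prop:mapV}, conversion to a volume integral via \eqref{COV}, gravity entering through $\tb,\tf\lesssim|v_3|$, and a final $\mathfrak{m}(x,v)$ essentially matching \eqref{def:m}. However, the restriction you impose on $\tb^1$ is in the wrong direction, and this is not a bookkeeping detail: it breaks the argument.

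You restrict $\tb^1\le T_0/4$, but the quantity that must be small is the \emph{residual} time $\alpha=t^2-(N-1)T_0=T_0-\tb(x,v)-\tb^1$. After the two bounces you trace $f(t^2,x^2,v^2)$ backward along characteristics by exactly time $\alpha$ to a volume point $(y_*,v_*)$ at the slice $t=(N-1)T_0$, and this point satisfies $\tf(y_*,v_*)=\alpha$, since its forward trajectory hits $x^2\in\gamma_+$ after time $\alpha$. With your constraints $\tb(x,v)\le T_0/4$ and $\tb^1\le T_0/4$ you force $\alpha\ge T_0/2$, so \eqref{COV} recovers only $\iint_{\O\times\R^3}\mathbf{1}_{\tf\ge T_0/2}\,f((N-1)T_0)$ — a quantity sitting inside the unreachable defect rather than in its complement, which directly contradicts your own claim that the recovered phase points have $\tf\le T_0/4$. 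The correct restriction, the paper's $\mathfrak{T}^{T_0}$ in \eqref{def:T+}, is $T_0-\tb(x,v)-\min(\tb(x^2,v^2),\tfrac{T_0}{4})\le\tb^1\le T_0-\tb(x,v)$, which forces $\tb^1$ to be \emph{large} (roughly $\tb^1\in[T_0/2,T_0]$, see \eqref{min:tb1}) so that $\alpha\le\min(\tb(x^2,v^2),T_0/4)$; the $\min$ with $\tb(x^2,v^2)$ is also essential because it guarantees that the backward characteristic from $(x^2,v^2)$ stays in $\O$ for the whole time $\alpha$ — a point your outline leaves implicit. Only with this restriction does \eqref{COV} produce $\iint\mathbf{1}_{\tf\le T_0/4}f=\iint f-\iint\mathbf{1}_{\tf\ge T_0/4}f$, matching \eqref{est:Doeblin}. (A smaller correction: in the magnetic case, Proposition~\ref{prop:Doeblin mag} uses $k=3$ rather than $k=2$, and the non-degeneracy window $\pi/(2B_3)\le\cdot\le 3\pi/(2B_3)$ is imposed on the \emph{third} exit time $\tb^2$, with the $\mathfrak{T}^{T_0}$-type constraint placed on the sum $\tb^1+\tb^2$.)
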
 

\begin{proof}
\textbf{Step 1.} 
From \eqref{positivity}, we have $f(t,x,v) \geq 0$ from assumption $f_0 (x, v) \geq 0$. 
From \eqref{expand_h1}-\eqref{expand_h3} and setting $t = NT_0$, $t_* = (N-1)T_0$, $k=2$, we can derive that 

\begin{align}
f(NT_0, x,v) 
& \geq \mathbf{1}_{ \tb(x,v) \leq \frac{T_0}{4}} \mu_{\Theta} (x^{1}, \vb) \int_{\mathcal{V}_1} 
 \int_{\mathcal{V}_2}  
 \mathbf{1}_{t^2 \geq (N-1)T_0}
 f(t^2, x^2, v^2)  \{n(x^2) \cdot v^2\} \dd v^2 \dd \sigma_1.\label{Doeblin_1}
\end{align} 

Now we apply Proposition \ref{prop:mapV}  on $v^1 \in \mathcal{V}_1$ with \eqref{mapV} and \eqref{jacob:mapV}. 
In order to have the bijective mapping with \eqref{mapV}, we restrict the range of $\vb^1$ as 
\Be \label{def:x^2}
\mathcal{V}_{1,b} := \{ \vb^1 \in \R^3:  x^2 + \int^{t}_{t - \tb} \big( \vb +  \int^{s}_{t - \tb}  - \nabla \Phi ( X(\alpha; t^1,x^1, v^1)) \dd \alpha \big) \dd s = x^1 \}.
\Ee
This implies all characteristic trajectories $X(\alpha; t^1,x^1, v^1)$ between $x^1$ and $x^2$ under $\vb^1 \in \mathcal{V}_{1,b}$ don't cross the periodic boundary.

Therefore, we derive 
\Be \label{Doeblin_2}
\begin{split}
\eqref{Doeblin_1} 
& \gtrsim
\mathbf{1}_{ \tb(x,v) \leq \frac{T_0}{4}}  \mu_{\Theta} (x^{1}, \vb) 
 \int_{0}^{T_0 - \tb (x, v)}
  \int_{\p\O} 
  \underbrace{ \frac{|n(x_{1_\textbf{b}}) \cdot \vb^1|}{|\tb^1|^3}  
   \{n(x^1) \cdot v^1\}
\mu_{\Theta} (x^2, \vb^1)}_{\eqref{Doeblin_2}_*}
\\& \ \ \ \ \ \ \times 
   \int_{n(x^2) \cdot v^2>0}
   f(t^2, x^2, v^2)  \{n(x^2) \cdot v^2\} \dd v^2
    \dd S_{x^2}\dd \tb^1,
\end{split}
\Ee
where $t^2 = NT_0 - \tb(x,v) - \tb^1$.
  
\smallskip 
  
\textbf{Step 2.} In order to bound the integrands of the first line of \eqref{Doeblin_2}, we will further restrict integration regimes. Note that $x^1 = \xb(x,v)$ is given, $x^2$ is free variables and $t^2 \geq (N-1) T_0$.

Now we restrict the integral regimes of the variable $\tb^1$ as 
  
\Be \label{def:T+}
\mathfrak{T}^{T_0} :=
 \Big\{
 \tb^1 \in  [0, \infty):
 T_0- \tb(x,v) - \min\Big(\tb(x^2, v^2) ,
 \frac{T_0}{4}
\Big)
  \leq \tb^1 \leq 
 T_0- \tb(x,v)
 \Big\}.
\Ee
As a consequence of \eqref{def:T+} 
and $\tb(x,v) \leq \frac{T_0}{4}$ in \eqref{Doeblin_2}, we will derive  \eqref{min:tb1} and \eqref{cond:tf},
\Be \label{min:tb1}
\frac{T_0}{2} \leq T_0 - \tb(x,v) - \frac{T_0}{4}
\leq \tb^1 \leq T_0.
\Ee
Secondly, we prove \eqref{cond:tf}. Note that if $\tb^1 \in \mathfrak{T}^{T_0}$, we have
\Be \notag
(N-1)T_0
\leq t^2 =
 NT_0 - \tb(x,v) - \tb^1 \leq 
 (N-1)T_0 
 +\min \{\tb(x^2, v^2) ,
 \frac{T_0}{4}
\}.
\Ee
This implies that, for $y_* = X((N-1)T_0; t^2, x^2, v^2)$ and $v_* = V((N-1)T_0; t^2, x^2, v^2)$, we have 
\Be \label{cond:tf}
\tf(y_*, v_*)= t^2 - (N-1)T_0 = T_0 - \tb(x,v) - \tb^1 \in 
 \Big[0, \frac{T_0}{4}\Big],
\Ee
where we use $\tf (y_*, v_*) \leq \tb (x^2, v^2)$ since $x^2 = \xf(y_*, v_*)$. 
 
\smallskip

\textbf{Step 3.} For \eqref{Doeblin_2}, we apply the restriction of integral regimes in \eqref{def:x^2} and \eqref{def:T+}.
Using \eqref{v3t estimate}, \eqref{min:tb1} and the assumption $T_0 \gg 1$, we derive that 

\Be \notag
\begin{split}
\eqref{Doeblin_2}_*
& \gtrsim \frac{|\tb^1|}{|\tb^1|^3} \tb^1 \mu_{\Theta} (x^2, \vb^1)
\gtrsim \frac{1}{|\tb^1|}  
\mu^{1/b} \big( |\vb^1- n(x^2) \cdot \vb^1| + |n(x^2) \cdot \vb^1| \big)
\\& \gtrsim \frac{1}{|\tb^1|}  
\mu^{1/b} \big(|n(x^2) \cdot \vb^1| \big) 
\mu^{1/b} \big( |\vb^1- n(x^2) \cdot \vb^1|\big) 
\\& \gtrsim \frac{1}{T_0} \mu^{1/b} ( \frac{g \tb^1}{2}) 
\geq e^{- 25 T_0^2/b}.
\end{split}
\Ee
Finally we get 
 \begin{align}
 \eqref{Doeblin_2} 
& \geq  \mathbf{1}_{ \tb(x,v) \leq \frac{T_0}{4}} 
e^{- 25 T_0^2/b}  
  \mu_{\Theta} (x^1, \vb) \int_{\p\O} \dd S_{x^2} \int_{n(x^2) \cdot v^2 > 0}
 \dd v^2 \{n(x^2) \cdot v^2\} \notag \\
&  \ \ \ \ \ \ \times 
	\int_{\mathfrak{T}^{T_0}} \dd \tb^1
   f(
   NT_0 -\tb(x,v) - \tb^1
   ,x^2, v^2)  \notag \\
& \gtrsim  \mathbf{1}_{ \tb(x,v) \leq \frac{T_0}{4}} 
e^{- 25 T_0^2/b} 
  \mu_{\Theta} (x^1, \vb)
  \int_{\p\O} \dd S_{x^2} \int_{n(x^2) \cdot v^2 > 0}\dd v^2 \{n(x^2) \cdot v^2\} 
  \notag
  \\
& \ \ \ \ \times
  \int^{ T_0- \tb(x,v)}_{   
 T_0- \tb(x,v) - \min\big(\tb(x^2, v^2) ,
 \frac{T_0}{4}
\big)} \dd \tb^1  f(
   NT_0 -\tb(x,v) - \tb^1
   ,x^2, v^2).\label{lower1}
 \end{align} 
 
Now we focus on the integrand of \eqref{lower1}. Recall \eqref{def:T+}, we have  
\Be \notag
(NT_0 - \tb(x,v) - \tb^1)- (N-1)T_0= T_0 - \tb(x,v) - \tb^1
\in \Big[0, \min\Big(\tb(x^2, v^2) ,
 \frac{T_0}{4}
\Big)\Big]. 
\Ee
Now Setting $y_* = X((N-1)T_0;t^2, x^2, v^2)$, $v_* = V((N-1)T_0;t^2, x^2, v^2)$ and $\alpha = T_0 - \tb(x,v) - \tb^1$, 
we have
\Be \label{lower2}
\eqref{lower1}
= \int^{\min\big(\tb(x^2, v^2) ,
 \frac{T_0}{4}
\big)}_{0}
 f
\big( (N-1)T_0, y_*, v_* \big) \dd \alpha.
\Ee
From \eqref{cond:tf}, we have $\tf(y_*, v_*) \in \big[0, \frac{T_0}{4}\big]$. Now applying \eqref{COV}, we conclude that 
 \Be \notag
\eqref{Doeblin_2} 
\geq \mathbf{1}_{ \tb(x,v) \leq \frac{T_0}{4}} 
e^{- 25 T_0^2/b} 
  \mu_{\Theta} (x^1, \vb)
  \iint_{\O \times \R^3} \mathbf{1}_{\tf(y,v)  \in [0, \frac{T_0}{4}]} 
  f((N-1)T_0, y,v) \dd v \dd y.  \notag
 \Ee
We conclude \eqref{est:Doeblin} by setting 
 \Be \label{def:m}
 \mathfrak{m} (x,v):=  \mathbf{1}_{ \tb(x,v) \leq \frac{T_0}{4}} 
e^{- 25 T_0^2/b} \mu_{\Theta} (x^1, \vb).
\Ee 
\end{proof} 

An immediate consequence of Proposition \ref{prop:Doeblin 1}, follows.

\begin{proposition} \label{prop:Doeblin}
 Suppose $f$ solves \eqref{eqtn_f}-\eqref{diff_f}, and satisfies \eqref{cons_mass_f}. Then for all $T_0\gg1$ and $N \in \mathbb{N}$, 
\Be \label{L1_coerc}
   \|f(NT_0)\|_{L^1_{x,v}}  \leq  (1-\| \mathfrak{m}\|_{L^1_{x,v}} )   \|f((N-1)T_0)\|_{L^1_{x,v}} 
   + 2 \| \mathfrak{m}\|_{L^1_{x,v}}  \| \mathbf{1}_{\tf\geq \frac{T_0}{4}} f((N-1)T_0)\|_{L^1_{x,v}}.
 \Ee 
Moreover, we have
\Be \label{est:m}
 \| \mathfrak{m} \|_{L^1_{x,v}} := \mathfrak{m}_{T_0} \lesssim e^{-25 T_0^2/b} T_0 |\p\O|. 
\Ee
\end{proposition}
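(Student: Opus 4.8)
The plan is to derive Proposition~\ref{prop:Doeblin} as a direct consequence of the pointwise lower bound in Proposition~\ref{prop:Doeblin 1} together with the mass conservation \eqref{cons_mass_f} and the maximum principle \eqref{maximum}. First I would integrate the inequality \eqref{est:Doeblin} over $\O \times \R^3$ in $(x,v)$. Since $f(t,x,v)\geq 0$ for all $t$ by the positivity assertion in Lemma~\ref{lem:energy estimate on f} (using $f_0\geq 0$), the left-hand side becomes $\|f(NT_0)\|_{L^1_{x,v}}$, and the right-hand side factors as $\|\mathfrak{m}\|_{L^1_{x,v}}$ times the quantity $\iint f((N-1)T_0) - \iint \mathbf{1}_{\tf\geq T_0/4} f((N-1)T_0)$ (the bracket in \eqref{est:Doeblin} is independent of $(x,v)$, so it pulls out of the $(x,v)$-integral against $\mathfrak{m}$).

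Next, I would observe that \eqref{est:Doeblin} alone only gives a \emph{lower} bound on $f(NT_0)$, which after integration gives a lower bound on $\|f(NT_0)\|_{L^1}$; to get the coercivity-type \emph{upper} bound \eqref{L1_coerc} I need to exploit the zero-mass condition $\iint f(t,x,v)\,\dd x\,\dd v = 0$. Write $f = f_+ - f_-$ with $f_\pm\geq 0$; since $f\geq 0$ everywhere when $f_0\geq 0$, actually $f_-=0$, but the statement of Proposition~\ref{prop:Doeblin} does not assume $f_0\geq 0$, so I should instead run the standard argument: apply Proposition~\ref{prop:Doeblin 1} to each of $f_+$ and $f_-$ separately (both solve the linear problem with nonnegative data, as the equation and boundary condition are linear and the splitting is preserved), subtract, and use $\iint f_+ = \iint f_-$ (which follows from $\iint f = 0$ together with $\|f_+\|_{L^1}+\|f_-\|_{L^1}=\|f\|_{L^1}$... ). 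Then $\|f(NT_0)\|_{L^1} = \|f_+(NT_0) - f_-(NT_0)\|_{L^1}$, and the common main term $\|\mathfrak{m}\|_{L^1}\iint f_+((N-1)T_0) = \|\mathfrak{m}\|_{L^1}\iint f_-((N-1)T_0)$ cancels; the leftover is controlled by $\|\mathfrak{m}\|_{L^1}\big(\|\mathbf{1}_{\tf\geq T_0/4} f_+((N-1)T_0)\|_{L^1} + \|\mathbf{1}_{\tf\geq T_0/4} f_-((N-1)T_0)\|_{L^1}\big) \leq 2\|\mathfrak{m}\|_{L^1}\|\mathbf{1}_{\tf\geq T_0/4} f((N-1)T_0)\|_{L^1}$, while the $(1-\|\mathfrak{m}\|_{L^1})$ factor on $\|f((N-1)T_0)\|_{L^1}$ appears because $\|f_\pm(NT_0)\|_{L^1}\leq \|f_\pm((N-1)T_0)\|_{L^1}$ by \eqref{maximum}, minus the mass that gets absorbed into the Doeblin lower bound. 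This is precisely the classical Doeblin/Harris splitting; I would present it carefully since it is the conceptual heart of the estimate, but it involves no hard analysis.

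Finally, for \eqref{est:m} I would simply integrate the explicit form \eqref{def:m}, namely $\mathfrak{m}(x,v) = \mathbf{1}_{\tb(x,v)\leq T_0/4}\, e^{-25T_0^2/b}\,\mu_{\Theta}(x^1,\vb)$, over $\O\times\R^3$. Using the change of variables from Lemma~\ref{lem:COV} (the identity \eqref{COV}) to pass from the $(x,v)$-integral over the phase space to an integral over $\gamma_+$ with the exit-time weight, and then the Gaussian decay of $\mu_{\Theta}$ plus $0<a\leq\Theta\leq b$, the factor $e^{-25T_0^2/b}$ comes out front, the $\tb\leq T_0/4$ constraint contributes a factor $\lesssim T_0$ from the time integration, and the surface integral contributes $|\p\O|$; together these give $\mathfrak{m}_{T_0}\lesssim e^{-25T_0^2/b}T_0|\p\O|$. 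The main obstacle — to the extent there is one — is bookkeeping the positive/negative-part splitting so that the mass-conservation cancellation is rigorous without circular reasoning; once that is set up, both \eqref{L1_coerc} and \eqref{est:m} follow by direct substitution.
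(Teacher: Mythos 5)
Your proposal is correct and follows essentially the same Doeblin/Harris splitting used in the paper: decompose $f((N-1)T_0)=f_{N-1,+}-f_{N-1,-}$, evolve each part as an independent nonnegative solution on $[(N-1)T_0,NT_0]$, apply Proposition~\ref{prop:Doeblin 1} to each, use $\iint f_{N-1,+}=\iint f_{N-1,-}=\tfrac12\|f((N-1)T_0)\|_{L^1}$, and then subtract the common pointwise ``core'' $\mathfrak{l}(x,v)$ from both to bound $|f(NT_0)|\leq (f_{+}(NT_0)-\mathfrak{l})+(f_{-}(NT_0)-\mathfrak{l})$ before integrating; the estimate \eqref{est:m} is the same direct computation via \eqref{COV}. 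One wording caveat: when you say the splitting is ``preserved,'' note that $f_{\pm}(s)$ for $s>(N-1)T_0$ are \emph{not} the positive/negative parts of $f(s)$ — they are merely nonnegative solutions whose difference is $f(s)$, which is all the argument needs — and the ``cancellation'' is really the observation that the two main masses are equal (so a common lower bound $\mathfrak{l}$ can be subtracted from both), rather than a literal subtraction of the two Doeblin inequalities, which by itself would only give a one-sided bound.
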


\begin{proof}

We decompose 
\begin{align*}
 f((N-1)T_0,x,v) 
 = f_{ N-1 ,+}(x,v) - f_{ N-1 ,-}(x,v),\end{align*}
where 
\Be \notag
\begin{split}
& f_{N-1,+} (x,v ) = \mathbf{1}_{f((N-1)T_0,x,v)  \geq 0} f \big( (N-1)T_0,x,v \big),
\\& f_{N-1,-} (x,v ) = \mathbf{1}_{f((N-1)T_0,x,v)  < 0} |f\big( (N-1)T_0,x,v \big)|.
\end{split}
\Ee
Let $f_{\pm}(s,x,v)$ solve \eqref{eqtn_f} for $s \in [ (N-1)T_0,NT_0]$ with the initial data $f_{N-1,+}$ and $f_{N-1,-}$ at $s=(N-1)T_0$, respectively.
 
Now we apply Proposition \ref{prop:Doeblin 1} on $f_{\pm}(t,x,v)$ and conclude \eqref{est:Doeblin} for $f= f_+$ and $f=f_-$ respectively. We also note that 
\Be \notag 
\iint_{\O \times \R^3} f((N-1)T_0,x,v) \dd x \dd v=\iint_{\O \times \R^3} f_{N-1,+}(x,v) \dd x \dd v- \iint_{\O \times \R^3} f_{N-1,-}(x,v) \dd x \dd v =0.
\Ee
This implies,
\Be \label{pm equality}
\iint_{\O \times \R^3} f_{N-1, \pm}( x,v)  \dd x \dd v =\frac{1}{2} \iint_{\O \times \R^3} |f((N-1)T_0,x,v)| \dd x \dd v.
\Ee
From \eqref{est:Doeblin},
\Be \notag
f_{N-1, \pm}( x,v) \geq \mathfrak{m}(x,v) \iint f_{N-1,\pm}(x,v) \dd x \dd v -  
  \mathfrak{m}(x,v) \iint_{\O \times \R^3} \mathbf{1}_{\tf(x,v) \geq \frac{T_0}{4}} f_{N-1,\pm} (x,v ) \dd x \dd v
\Ee
Using \eqref{pm equality}, we have
\Be \label{lowerB:f}
f_{N-1, \pm}( x,v) \geq 
\underbrace{ 
\mathfrak{m}(x,v) \Big(
\frac{1}{2} \|f((N-1)T_0)\|_{L^1_{x,v}}  
  - \| \mathbf{1}_{\tf\geq \frac{T_0}{4}} f((N-1)T_0)\|_{L^1_{x,v}}
  \Big)
}_{\mathfrak{l}(x,v)}. 
\Ee 
Then we deduce
\Be \notag
\begin{split}
|f(NT_0,x,v)| 
& = |f_{N-1, +}( x,v) - f_{N-1, -}( x,v) + \mathfrak{l}(x,v) - \mathfrak{l}(x,v)|
\\& \leq  |f_{N-1, +}( x,v) - \mathfrak{l}(x,v)| +|f_{N-1, -}( x,v) - \mathfrak{l}(x,v)|.
\end{split}
\Ee 
From \eqref{lowerB:f},
\Be \label{upperNT:f}
|f(NT_0,x,v)| \leq f_{N-1, +}( x,v) + f_{N-1, -}( x,v) - 2 \mathfrak{l}(x,v).
\Ee
Note that $f_+(NT_0,x,v)+f_-(NT_0,x,v)$ solves \eqref{eqtn_f} with the initial datum $$f_{N-1,+} + f_{N-1,-}
= \big| f \big((N-1)T_0,x,v \big) \big|.$$ 
Using \eqref{pm equality}, \eqref{upperNT:f} and taking the integration on \eqref{lowerB:f} over $\O \times \R^3$, we derive
\Be \notag
\begin{split}
\|f(NT_0)\|_{L^1_{x,v}} 
& \leq \iint_{\O \times \R^3} f_{N-1, +}( x,v)  \dd x \dd v
+ \iint_{\O \times \R^3} f_{N-1, -}( x,v)  \dd x \dd v - \iint_{\O \times \R^3} 2 \mathfrak{l}(x,v)  \dd x \dd v
\\& = (1-\| \mathfrak{m}\|_{L^1_{x,v}} )   \|f((N-1)T_0)\|_{L^1_{x,v}} 
   + 2 \| \mathfrak{m}\|_{L^1_{x,v}}  \| \mathbf{1}_{\tf\geq \frac{T_0}{4}} f((N-1)T_0)\|_{L^1_{x,v}},
\end{split}
\Ee
therefore we prove \eqref{L1_coerc}.

To derive \eqref{est:m}, it suffices to bound $\| \mathbf{1}_{\tb(x,v) \leq \frac{T_0}{4}}  
\mu_{\Theta} (x^1, \vb)\|_{L^1_{x,v}}$. 
From Lemma \ref{conservative field},  \eqref{def:Theta}, \eqref{COV} and the fact that $\tb (t-s, X(t-s,t,x,v),V(t-s,t,x,v)) = \tb(t,x,v)- s$, we have
\Be \notag
\begin{split}
& \ \ \ \ \| \mathbf{1}_{\tb(x,v) \leq \frac{T_0}{4}} \mu_{\Theta} (x^1, \vb) \|_{L^1_{x,v}} 
 = \int_{\gamma_{+}} \int_{ \max\{0,\tb(x,v)- \frac{T_0}{4}\}}  ^{\tb(x,v)} 
\mu_{\Theta} (x^1, v) \{n(x) \cdot v\}\dd s \dd v \dd S_x \\
\\& \lesssim \int_{\gamma_{+}}  
\Big(  \mathbf{1}_{\tb(x,v) \leq \frac{T_0}{4}} \int_0^{\tb(x,v)} \dd s
+\mathbf{1}_{\tb(x,v) \geq \frac{T_0}{4}}\int^{\tb(x,v)}_{\tb(x,v)- \frac{T_0}{4}}  \dd s \Big) 
e^{-|v|^2 / 2b} \{n(x) \cdot v\} \dd v \dd S_x
\\& \leq \frac{T_0}{4} \int_{\p\O } \dd S_x \int_{n(x) \cdot v>0} 
e^{-|v|^2 / 2b} \{n(x) \cdot v\} \dd v \lesssim T_0 |\p\O|.  
\end{split}
\Ee
Combining the above bound with \eqref{def:m}, we conclude \eqref{est:m}.
\end{proof}

\begin{proposition} \label{prop:Doeblin mag}
Suppose $f$ solves \eqref{eqtn_f}-\eqref{diff_f} for \eqref{field property mag} and assume $f_0(x,v) \geq 0$. For any $T_0\gg1$ and $N\in \mathbb{N}$ there exists $\mathfrak{m}(x,v)\geq 0$, which only depends on $\O$ and $T_0$ (see \eqref{def:m mag} for the precise form), such that 
\Be \label{est:Doeblin mag}
f(NT_0,x,v)\geq  
\mathfrak{m}(x,v) \Big\{
\iint_{\O \times \R^3}f((N-1)T_0,x,v) \dd v \dd x 
-  \iint_{\O \times \R^3} \mathbf{1}_{t_\mathbf{f}(x,v)\geq \frac{T_0}{4}} f((N-1)T_0,x,v)  \dd v \dd x 
\Big\}.
\Ee
And if $f$ also satisfies \eqref{cons_mass_f}, then for any $T_0\gg1$ and $N \in \mathbb{N}$
\Be \label{L1_coerc mag}
   \|f(NT_0)\|_{L^1_{x,v}}  \leq  (1-\| \mathfrak{m}\|_{L^1_{x,v}} )   \|f((N-1)T_0)\|_{L^1_{x,v}} 
   + 2 \| \mathfrak{m}\|_{L^1_{x,v}}  \| \mathbf{1}_{\tf\geq \frac{T_0}{4}} f((N-1)T_0)\|_{L^1_{x,v}}.
 \Ee 
Moreover, we have
\Be \label{est:m mag}
 \| \mathfrak{m} \|_{L^1_{x,v}} := \mathfrak{m}_{T_0} \lesssim e^{-25 T_0^2/b} T_0 |\p\O|,
\Ee
where $b$ was defined in \eqref{def:Theta}.
\end{proposition}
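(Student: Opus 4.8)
The plan is to run the three-bounce stochastic cycle sketched in the introduction, paralleling the two-bounce argument of Proposition \ref{prop:Doeblin 1} but using the magnetic change of variables of Proposition \ref{prop:mapV mag} in place of Proposition \ref{prop:mapV}. Since $f_0\geq0$, Lemma \ref{lem:energy estimate on f} gives $f\geq0$; applying Lemma \ref{sto_cycle} with $t=NT_0$, $t_*=(N-1)T_0$, $k=3$, discarding the nonnegative contributions \eqref{expand_h1}--\eqref{expand_h2} and inserting $\mathbf{1}_{\tb(x,v)\leq T_0/4}$, we obtain
\[
f(NT_0,x,v)\geq \mathbf{1}_{\tb(x,v)\leq \frac{T_0}{4}}\,\mu_\Theta(x^1,\vb)\int_{\mathcal{V}_1}\int_{\mathcal{V}_2}\int_{\mathcal{V}_3}\mathbf{1}_{t^3\geq(N-1)T_0}\,f(t^3,x^3,v^3)\,\{n(x^3)\cdot v^3\}\,\dd v^3\,\dd\sigma_2\,\dd\sigma_1 .
\]
Then I would change variables $v^1\mapsto(x^2,\tb^1)$ and $v^2\mapsto(x^3,\tb^2)$ by Proposition \ref{prop:mapV mag}, restricting to the branch along which the trajectories $x^1\to x^2$ and $x^2\to x^3$ do not cross the periodic boundary (the analogue of \eqref{def:x^2}); this produces the Jacobian factors $5B_3^2(2-2\cos(B_3\tb^1))^{-1}$ and $5B_3^2(2-2\cos(B_3\tb^2))^{-1}$ and lets $x^2,x^3$ range over $\p\O$.

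The degeneracy of these Jacobians at $\tb^j\in\frac{2\pi}{B_3}\Z$ is neutralized by the restrictions indicated in the introduction. First, I would impose $\frac{\pi}{2B_3}\leq\tb^2\leq\frac{3\pi}{2B_3}$: there $2-2\cos(B_3\tb^2)\in[2,4]$, so the second Jacobian is of order $B_3^2$, and by Lemma \ref{conservative field mag} one has the pointwise lower bound $|v^2|\geq|v^2_3|=5\tb^2\gtrsim B_3^{-1}$, while keeping only the $m=n=0$ term of the periodic sum in \eqref{vmnb12 first estimate mag} bounds $|v^2_\parallel|$ by a constant depending only on $B_3$ and $\O$; hence $\mu_\Theta(x^3,\vb^2)$ is bounded below by a constant. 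Second, I would restrict $x^2$ to the neighborhood $|x^2-x^1|\lesssim B_3^{-1}\sqrt{2-2\cos(B_3\tb^1)}$ of $x^1$, so that \eqref{vmnb12 first estimate mag} forces $|v^1_\parallel|\lesssim1$ regardless of how small $2-2\cos(B_3\tb^1)$ is (whereas $|v^1_3|=5\tb^1\leq 5T_0$ is automatic), whence $\mu_\Theta(x^2,\vb^1)\gtrsim e^{-25T_0^2/b}$; crucially the area of this neighborhood is of order $(2-2\cos(B_3\tb^1))/B_3^2$, which exactly cancels the first Jacobian and leaves a harmless $\dd\tb^1$-integral. No further trimming of $\tb^1$ near $\frac{2\pi}{B_3}\Z$ is then required: $\tb^1$ may run over an appropriate window (the analogue of $\mathfrak{T}^{T_0}$ in \eqref{def:T+}).

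With these restrictions in force I would apply the change of variables $(\tb^1,\tb^2)\mapsto(\tb^1,s)$ with $s=\tb^1+\tb^2$, so that $t^3=NT_0-\tb(x,v)-s$ and $\{t^3\geq(N-1)T_0\}$ reads $s\leq T_0-\tb(x,v)$, choosing the $\tb^1$-window so that, for the admissible range, $s$ sweeps a full interval and $\tf:=T_0-\tb(x,v)-s$ sweeps $[0,\min(\,\cdot\,,T_0/4)]$. Finally, writing $y=X((N-1)T_0;t^3,x^3,v^3)$, $v=V((N-1)T_0;t^3,x^3,v^3)$ so that $\tf(y,v)=t^3-(N-1)T_0=T_0-\tb(x,v)-s$, I would invoke \eqref{COV} — valid for the magnetic characteristics \eqref{characteristics mag} by Lemma \ref{lem:COV} — in the variables $(x^3,v^3,s)$ to rewrite the remaining integral as
\[
\iint_{\O\times\R^3}\mathbf{1}_{\tf(y,v)\in[0,\frac{T_0}{4}]}f((N-1)T_0,y,v)\,\dd v\,\dd y=\iint_{\O\times\R^3} f((N-1)T_0)-\iint_{\O\times\R^3}\mathbf{1}_{\tf\geq\frac{T_0}{4}}f((N-1)T_0),
\]
which yields \eqref{est:Doeblin mag} with $\mathfrak{m}(x,v)=\mathbf{1}_{\tb(x,v)\leq T_0/4}\,e^{-25T_0^2/b}\,\mu_\Theta(x^1,\vb)$ (the analogue of \eqref{def:m}).

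The estimates \eqref{L1_coerc mag} and \eqref{est:m mag} then follow verbatim from the proof of Proposition \ref{prop:Doeblin}: decompose $f=f_+-f_-$ over $s\in[(N-1)T_0,NT_0]$, apply \eqref{est:Doeblin mag} to each piece, use $\iint f((N-1)T_0)=0$ to get \eqref{L1_coerc mag}, and bound $\|\mathfrak{m}\|_{L^1_{x,v}}$ by estimating $\|\mathbf{1}_{\tb(x,v)\leq T_0/4}\mu_\Theta(x^1,\vb)\|_{L^1_{x,v}}$ through Lemma \ref{conservative field mag}, \eqref{def:Theta} and \eqref{COV}, exactly as in the gravitational case. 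I expect the genuine difficulty to be exactly the bookkeeping of the degenerate $B_3$-Jacobian: one must arrange the coupled restrictions on $\tb^2$, on the spatial separation $|x^2-x^1|$ (against $\sqrt{2-2\cos(B_3\tb^1)}$), and on the $\tb^1$-window so that the degeneracy is absorbed while $\mathfrak{m}$ stays strictly positive and of the claimed size $\sim e^{-25T_0^2/b}$; once this is set up, everything downstream is a transcription of the gravitational argument.
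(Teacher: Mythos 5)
Your proposal is correct and follows the same three--bounce blueprint as the paper: apply Lemma~\ref{sto_cycle} with $k=3$, insert $\mathbf{1}_{\tb\leq T_0/4}$, change variables $v^j\mapsto(x^{j+1},\tb^j)$ via Proposition~\ref{prop:mapV mag} on the branch with no periodic wrap (cf.~\eqref{def:x^2 mag}), pin $\tb^2\in[\tfrac{\pi}{2B_3},\tfrac{3\pi}{2B_3}]$ so that $2-2\cos(B_3\tb^2)\in[2,4]$ and \eqref{vmnb12 first estimate mag} with $m=n=0$ gives $\mu_\Theta(x^3,\vb^2)\gtrsim_{B_3}1$, set $s=\tb^1+\tb^2$, identify $\tf(y_*,v_*)=T_0-\tb(x,v)-s$, and finish with \eqref{COV}; the estimates \eqref{L1_coerc mag}, \eqref{est:m mag} indeed transfer verbatim from Proposition~\ref{prop:Doeblin} because $\mathfrak{m}$ has the same form. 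The one place you depart from the paper is the treatment of the residual $\tb^1$--degeneracy: you restrict $x^2$ to a $\tb^1$--dependent neighborhood of $x^1$ of area $\sim(2-2\cos(B_3\tb^1))/B_3^2$, which cancels the Jacobian $5B_3^2/(2-2\cos(B_3\tb^1))$ outright and forces $|\vb^1_\parallel|\lesssim 1$, hence $\mu_\Theta(x^2,\vb^1)\gtrsim e^{-25T_0^2/b}$ uniformly in $\tb^1$; the paper instead lets $x^2$ range over all of $\p\O$, bounds $\mu_\Theta(x^2,\vb^1)\geq e^{-\frac{1}{2b}(\frac{2B_3^2}{2-2\cos(B_3\tb^1)}+(5\tb^1)^2)}$ using only $|x^2-x^1|\leq\sqrt{2}$, and observes that since the $\tb^1$--window has length $\pi/B_3$ (half a period of $2-2\cos(B_3\cdot)$), there is a subinterval where the degeneracy is harmless, so the $\tb^1$--integral is still $\gtrsim e^{-25T_0^2/b}$. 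Your device is a bit cleaner at the integrand level (the $\tb^1$--integrand is uniformly positive and no half--period argument is needed); the paper's is more economical (one fewer domain restriction). Both produce the same $\mathfrak{m}(x,v)=\mathbf{1}_{\tb(x,v)\leq T_0/4}\,e^{-25T_0^2/b}\,\mu_\Theta(x^1,\vb)$ up to $B_3$--dependent constants absorbed in $\gtrsim$, so the difference is cosmetic.
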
 

\begin{proof}
\textbf{Step 1.} 
From Lemma \ref{lem:energy estimate on f}, we have $f(t,x,v) \geq 0$ from assumption $f_0 (x, v) \geq 0$. 
From \eqref{expand_h1}-\eqref{expand_h3} and setting $t = NT_0$, $t_* = (N-1)T_0$, $k=2$, we can derive that 
\be \label{Doeblin_1 mag}
f(NT_0, x,v) 
 \geq \mathbf{1}_{ \tb(x,v) \leq \frac{T_0}{4}} \mu_{\Theta} (x^{1}, \vb) \int_{\mathcal{V}_1} 
\int_{\mathcal{V}_2}  
\int_{\mathcal{V}_3} 
\mathbf{1}_{t^3 \geq (N-1)T_0}
 f(t^3, x^3, v^3)  \{n(x^3) \cdot v^3\} \dd v^3 \dd \sigma_2 \dd \sigma_1.
\ee 

Now we apply Proposition \ref{prop:mapV mag} on $v^1 \in \mathcal{V}_1$ and $v^2 \in \mathcal{V}_2$ with \eqref{mapV mag} and \eqref{jacob:mapV mag}. 
In order to have the bijective mapping with \eqref{mapV mag}, we restrict the range of $\vb^1$ and $\vb^2$ as 
\Be \label{def:x^2 mag}
\begin{split}
& \mathcal{V}_{1,b} := \{ \vb^1 \in \R^3:  x^2 + \int^{t}_{t - \tb} \big( \vb^1 +  \int^{s}_{t - \tb}  - \nabla \Phi ( X(\alpha; t^1,x^1, v^1)) \dd \alpha \big) \dd s = x^1 \},
\\& \mathcal{V}_{2,b} := \{ \vb^2 \in \R^3:  x^3 + \int^{t - \tb}_{t - \tb - \tb^1} \big( \vb^2 +  \int^{s}_{t - \tb - \tb^1}  - \nabla \Phi ( X(\alpha; t^2,x^2, v^2)) \dd \alpha \big) \dd s = x^2 \}.
\end{split}
\Ee 
This implies two characteristic trajectories $X(\alpha; t^1,x^1, v^1)$ between $x^1$ and $x^2$ under $\vb^1 \in \mathcal{V}_{1,b}$, and $X(\alpha; t^2,x^2, v^2)$ between $x^2$ and $x^3$ under $\vb^2 \in \mathcal{V}_{2,b}$ don't cross the periodic boundary.

Therefore, we derive 
\Be \label{Doeblin_2 mag}
\begin{split}
\eqref{Doeblin_1 mag} 
& \geq
\mathbf{1}_{ \tb(x,v) \leq \frac{T_0}{4}}  \mu_{\Theta} (x^{1}, \vb) 
 \int_{0}^{T_0 - \tb }
\int_{\p\O} 
\underbrace{ \frac{5 B^2_3}{2 - 2 \cos (B_3 \tb^1)}
   \{n(x^1) \cdot v^1\}
\mu_{\Theta} (x^2, \vb^1)}
_{\eqref{Doeblin_2 mag}_{**}} \dd S_{x^2} \dd \tb^1
\\& \ \ \ \ \ \ \times 
\int_{0}^{T_0 - \tb - \tb^1}
  \int_{\p\O} 
\underbrace{ \frac{5 B^2_3}{2 - 2 \cos (B_3 \tb^2)}
   \{n(x^2) \cdot v^2\}
\mu_{\Theta} (x^3, \vb^2)}_{\eqref{Doeblin_2 mag}_{*}} \dd S_{x^3} \dd \tb^2
\\& \ \ \ \ \ \ \times 
   \int_{n(x^3) \cdot v^3>0}
   f(t^3, x^3, v^3)  \{n(x^3) \cdot v^3\} \dd v^3,
\end{split}
\Ee
where $t^3 = NT_0 - \tb(x,v) - \tb^1 - \tb^2$.
  
\textbf{Step 2.} In order to bound the integrands of the first line of \eqref{Doeblin_2 mag}, we will further restrict integration regimes. Note that $x^1 = \xb(x,v)$ is given, $x^2$ is free variables and $t^2 \geq (N-1) T_0$.

Now we restrict the integral regimes of the variable $\tb^1$ and $\tb^2$ as 
\Be \label{def:T+ mag}
\begin{split}
& \mathfrak{T}^{T_0} 
:= \Big\{
 \tb^1, \tb^2 \geq 0:
 T_0 - \tb(x,v) - \min\Big(\tb(x^3, v^3) ,
 \frac{T_0}{4}
\Big)
  \leq \tb^1 + \tb^2 \leq 
 T_0- \tb(x,v)
 \Big\},
\\& \mathfrak{T}_2^{T_0} 
:= \Big\{
 \tb^2 \in  [0, \infty):
 \frac{\pi}{2B_3}
  \leq \tb^2 \leq 
 \frac{3\pi}{2B_3}
 \Big\},
\end{split}
\Ee
As a consequence of \eqref{def:T+ mag} 
and $\tb(x,v) \leq \frac{T_0}{4}$ in \eqref{Doeblin_2 mag}, we will derive  
\Be \label{min:tb1 mag}
\frac{T_0}{2} \leq T_0 - \tb(x,v) - \frac{T_0}{4}
\leq \tb^1 + \tb^2 \leq T_0,
\Ee
and since $B_3, T_0 \geq 1$, the restriction on $\tb^2$ is reasonable.

Secondly, note that if $\tb^1, \tb^2 \in \mathfrak{T}^{T_0}$, we have
\Be \notag
(N-1)T_0
\leq t^3 =
 NT_0 - \tb(x,v) - \tb^1 - \tb^2 \leq 
 (N-1)T_0 
 +\min \{\tb(x^3, v^3),
 \frac{T_0}{4}
\}.
\Ee
This implies that, for $y_* = X((N-1)T_0; t^3, x^3, v^3)$ and $v_* = V((N-1)T_0; t^3, x^3, v^3)$, we have 
\Be \label{cond:tf mag}
\tf(y_*, v_*)= t^3 - (N-1)T_0 = T_0 - \tb(x,v) - \tb^1 - \tb^2 \in 
 \Big[0, \frac{T_0}{4}\Big],
\Ee
where we use $\tf (y_*, v_*) \leq \tb (x^3, v^3)$ since $x^3 = \xf(y_*, v_*)$. 

\textbf{Step 3.} For \eqref{Doeblin_2 mag}, we apply the restriction of integral regimes in \eqref{def:x^2 mag} and \eqref{def:T+ mag}, and obtain
$\pi/2 \leq B_3 \tb^2 \leq 3\pi/2$ and $\|x^3 - x^2\| \leq \sqrt{2}$.
Then using  \eqref{def:Theta}, \eqref{tb expression mag}, \eqref{vmnb12 first estimate mag} with $m = n = 0$, we derive that 
\Be \notag
\begin{split}
\eqref{Doeblin_2 mag}_*
&  = \frac{5 B^2_3}{2 - 2 \cos (B_3 \tb^2)}
   \{n(x^2) \cdot v^2\}
\mu_{\Theta} (x^3, \vb^2)
\\& \geq \frac{5 B^2_3}{4} \times \frac{5 \pi}{2 B_3} \times e^{- \frac{1}{2b} \big( B^2_3 + (\frac{15 \pi}{2 B_3})^2 \big)}
\gtrsim B_3 e^{- \frac{1}{2b} B^2_3}.
\end{split}
\Ee
Similarly, we get
\Be \notag
\begin{split}
\eqref{Doeblin_2 mag}_{**}
& = \frac{5 B^2_3}{2 - 2 \cos (B_3 \tb^1)}
   \{n(x^1) \cdot v^1\}
\mu_{\Theta} (x^2, \vb^1)
\\& \geq \frac{25 B^2_3 \tb^1}{2 - 2 \cos (B_3 \tb^1)} \times e^{- \frac{1}{2b} \big( \frac{2 B^2_3 }{2 - 2 \cos (B_3 \tb^1)} + (5 \tb^1)^2 \big)}.
\end{split}
\ee
Finally, applying the change of variable $(\tb^1, \tb^2) 
\mapsto (\tb^1, \tb^1 + \tb^2)$, we get 
\begin{align}
 \eqref{Doeblin_2 mag}
& \geq \mathbf{1}_{ \tb(x,v) \leq \frac{T_0}{4}} B_3 e^{- \frac{1}{2b} B^2_3} \mu_{\Theta} (x^{1}, \vb) 
\int_{\p\O} \dd S_{x^3} \int_{n(x^3) \cdot v^3>0} \{n(x^3) \cdot v^3\} \dd v^3 \notag
\\& \ \ \ \ \ \ \times 
\int^{ T_0- \tb(x,v)}_{   
 T_0- \tb(x,v) - \min\big(\tb(x^3, v^3) ,
 \frac{T_0}{4}
\big)} \dd (\tb^1 + \tb^2)  f(NT_0 - \tb(x,v) - \tb^1 - \tb^2, x^3, v^3) \label{lower1 mag}
\\& \ \ \ \ \ \ \times 
 \int_{\tb^1 + \tb^2 - \frac{3\pi}{2B_3}}^{\tb^1 + \tb^2 - \frac{\pi}{2B_3}}
\int_{\p\O} 
\frac{25 B^2_3 \tb^1}{2 - 2 \cos (B_3 \tb^1)} \times e^{- \frac{1}{2b} \big( \frac{2 B^2_3 }{2 - 2 \cos (B_3 \tb^1)} + (5 \tb^1)^2 \big)} \dd S_{x^2} \dd \tb^1.
\label{lower12 mag}
\end{align} 
 
Now we focus on the integrand of \eqref{lower1 mag}. Recall \eqref{def:T+ mag}, we have  
\Be \notag
NT_0 - \tb(x,v) - \tb^1 - \tb^2 - (N-1)T_0= T_0 - \tb(x,v) - \tb^1 - \tb^2
\in \Big[0, \min\Big(\tb(x^3, v^3),
 \frac{T_0}{4}
\Big)\Big]. 
\Ee
Setting $y_* = X((N-1)T_0;t^3, x^3, v^3)$, $v_* = V((N-1)T_0;t^3, x^3, v^3)$ and $\alpha = T_0 - \tb(x,v) - \tb^1 - \tb^2$, 
we have
\Be \label{lower2 mag}
\eqref{lower1 mag}
= \int^{\min\big(\tb(x^3, v^3) ,
 \frac{T_0}{4}
\big)}_{0}
 f
\big( (N-1)T_0, y_*, v_* \big) \dd \alpha.
\Ee
From \eqref{cond:tf mag}, we have $\tf(y_*, v_*) \in \big[0, \frac{T_0}{4}\big]$.
Next, from \eqref{min:tb1 mag} we can deduce that
\Be \notag
\frac{T_0}{2} - \frac{3\pi}{2B_3} 
\leq \tb^1 \leq T_0 - \frac{\pi}{2B_3}.
\ee
Picking $T_0 > 10$ and $B_3 \geq 1$, we get $T_0/5 \leq \tb^1\leq T_0$. Note that the time gap for $\tb^1$ is $\pi/B_3$ and $2 - 2 \cos (B_3 \tb^1)$ is periodic, we derive
\Be \notag
\eqref{lower12 mag} 
\gtrsim e^{- \frac{1}{2b} (5 \tb^1)^2 }
\geq e^{- 25T_0^2/b  }.
\ee
Now applying \eqref{COV}, we conclude that 
 \Be \notag
\eqref{Doeblin_2 mag} 
\gtrsim \mathbf{1}_{ \tb(x,v) \leq \frac{T_0}{4}} e^{- 25T_0^2/b  }
 \mu_{\Theta} (x^1, \vb)
  \iint_{\O \times \R^3} \mathbf{1}_{\tf(y,v)  \in [0, \frac{T_0}{4}]} 
  f((N-1)T_0, y,v) \dd v \dd y.  \notag
 \Ee
We conclude \eqref{est:Doeblin mag} by setting 
 \Be \label{def:m mag}
 \mathfrak{m} (x,v):=  \mathbf{1}_{ \tb(x,v) \leq \frac{T_0}{4}} 
 e^{- 25T_0^2/b  }
\mu_{\Theta} (x^1, \vb).
\Ee 
Since $\mathfrak{m} (x,v)$ is equal to \eqref{def:m}  used in the gravitational case,
the proof for \eqref{L1_coerc mag} and \eqref{est:m mag} follows from Proposition \ref{prop:Doeblin} and so the proof is omitted.
\end{proof} 

\subsection{Proof of weighted \texorpdfstring{$L^1$}{L1}-Estimates}

Now we establish the uniform estimate of the following energy:
\Be \label{|||i}
\vertiii{f} := \|f  \|_{L^1_{x,v}}
    +    \frac{ 4 \mathfrak{m}_{T_0} }{ \varphi (\frac{T_0}{4})}
  \Big(  {1 + \frac{1}{\delta T_0}} \Big) \times
 \| \varphi (\tf) f\|_{L^1_{x,v}},
\Ee
with $\| \mathfrak{m} \|_{L^1_{x,v}} := \mathfrak{m}_{T_0}$ (see \eqref{est:m}) and $\varphi$ defined in \eqref{varphis}.

Here we first introduce the weight function $\varphi$.

\begin{definition} \label{def:varphis}
For $0 < \delta < \infty$, we set
\Be \label{varphis}
\begin{split}  
& \varphi (\tau) := e^{\delta \tau}.
\end{split}
\Ee 
First, we check $\varphi$ satisfies \eqref{cond:varphi}:
\Be \notag
\begin{split} 
& \int_1^\infty e^{- \frac{1}{2b} \tau^2} e^{\delta \tau} \dd \tau
\lesssim \int_1^\infty e^{- \frac{1}{2b} (\tau - b \delta )^2} \dd \tau < \infty.
\end{split}
\Ee
Second, we notice that,
\Be \label{varphi|0}
\varphi (0) = 1.
\Ee 
Finally, we check, for $\tau \geq 0$,
\Be \label{phi'}
\begin{split}
& \varphi^{\prime} (\tau)   
= \delta \varphi (\tau) .
\end{split}   
\Ee 
\end{definition}

\begin{lemma} \label{lem:energy}
Choose $T_0 > 1$ such that 
\Be \label{cond:T0}
\delta T_0 > 1, \ \
4 C (1+T_0) (1 + \delta T_0) (\delta T_0)^{-1}   \Big( \varphi (\frac{T_0}{4})\Big)^{-1}
\leq \frac{1}{2}.
\Ee
Suppose $f$ solves \eqref{eqtn_f}-\eqref{diff_f} for either \eqref{field property} or \eqref{field property mag}, for all $N \in \mathbb{N}$,
\Be \label{energyi}
\begin{split}   
& \ \ \ \ \| f(NT_0) \|_{L^1_{x,v}}
 + \frac{4 \mathfrak{m}_{T_0} }{ \varphi (\frac{T_0}{4})}
 \Big\{
 \| \varphi (\tf) f (NT_0)\|_{L^1_{x,v}}  +   
\frac{1}{\delta T_0} \| \varphi (\tf) f(NT_0) \|_{L^1_{x,v}} 
 \Big\}
\\& \leq 
\big\{ 1- \mathfrak{m}_{T_0} 
+ \mathfrak{m}_{T_0} \frac{ 4 C (1+T_0) (1+ \delta T_0) }{\delta T_0 \varphi (\frac{T_0}{4})}
\big\} 
\times \|f((N-1)T_0)\|_{L^1_{x,v}}  
\\& \hspace{3cm} + \frac{4 \mathfrak{m}_{T_0}}{ \varphi (\frac{T_0}{4})}
 \Big\{ 
\frac{1}{2} \|   \varphi (\tf)f((N-1)T_0)\|_{L^1_{x,v}} 
+ \frac{1 }{\delta T_0 }
\| \varphi (\tf) f((N-1)T_0) \|_{L^1_{x,v}}
 \Big\}.   
\end{split}
\Ee    
\end{lemma}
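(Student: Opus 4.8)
The plan is to combine the Doeblin-type lower bound (Proposition \ref{prop:Doeblin}, resp.\ \ref{prop:Doeblin mag}) with the weighted energy inequality of Lemma \ref{lemma:energy}, applied on the slab $[(N-1)T_0, NT_0]$. First I would reduce to the case $f_0\geq 0$ is \emph{not} assumed: the $L^1$-coercivity estimate \eqref{L1_coerc} (resp.\ \eqref{L1_coerc mag}) already holds for sign-changing $f$ satisfying \eqref{cons_mass_f}, so
\Be \notag
\| f(NT_0) \|_{L^1_{x,v}}  \leq  (1-\mathfrak{m}_{T_0} )   \|f((N-1)T_0)\|_{L^1_{x,v}}
   + 2 \mathfrak{m}_{T_0}  \| \mathbf{1}_{\tf\geq \frac{T_0}{4}} f((N-1)T_0)\|_{L^1_{x,v}}
\Ee
is the starting point. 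The defect term $\| \mathbf{1}_{\tf\geq \frac{T_0}{4}} f((N-1)T_0)\|_{L^1_{x,v}}$ is controlled by observing $\mathbf 1_{\tf\geq T_0/4}\leq \varphi(\tf)/\varphi(T_0/4)$ since $\varphi$ is increasing (Definition \ref{def:varphis}), giving
\Be \notag
2 \mathfrak{m}_{T_0}  \| \mathbf{1}_{\tf\geq \frac{T_0}{4}} f((N-1)T_0)\|_{L^1_{x,v}}
\leq \frac{2\mathfrak{m}_{T_0}}{\varphi(\frac{T_0}{4})}\| \varphi(\tf) f((N-1)T_0)\|_{L^1_{x,v}}.
\Ee

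Next I would run Lemma \ref{lemma:energy} with $t_*=(N-1)T_0$, $t=NT_0$ and $\varphi$ as in \eqref{varphis}. Since $\varphi'=\delta\varphi$ by \eqref{phi'}, the dissipation term $\int_{t_*}^{t}\|\varphi'(\tf)f\|_{L^1_{x,v}}\dd s = \delta\int_{t_*}^{t}\|\varphi(\tf)f\|_{L^1_{x,v}}\dd s$ is a genuine gain; together with $\|f(s)\|_{L^1_{x,v}}\leq\|f(t_*)\|_{L^1_{x,v}}$ (from \eqref{maximum}) and absorbing the $\tfrac14\int_{t_*}^t |f|_{L^1_{\gamma_+}}$ term using \eqref{trace} with small $\delta$-parameter, one gets
\Be \notag
\| \varphi(\tf) f(NT_0) \|_{L^1_{x,v}} + \delta \int_{(N-1)T_0}^{NT_0}\| \varphi(\tf) f(s)\|_{L^1_{x,v}}\dd s
\leq \| \varphi(\tf) f((N-1)T_0) \|_{L^1_{x,v}} + C(T_0+1)\| f((N-1)T_0) \|_{L^1_{x,v}}.
\Ee
To convert the time-integral into a pointwise-in-$N$ bound on $\|\varphi(\tf)f(NT_0)\|_{L^1_{x,v}}$, I would apply Lemma \ref{lemma:energy} again on the sub-slab $[s,NT_0]$ for $s\in[(N-1)T_0,NT_0]$ to get $\|\varphi(\tf)f(NT_0)\|_{L^1_{x,v}}\leq \|\varphi(\tf)f(s)\|_{L^1_{x,v}}+C(T_0+1)\|f(s)\|_{L^1_{x,v}}\leq \|\varphi(\tf)f(s)\|_{L^1_{x,v}}+C(T_0+1)\|f((N-1)T_0)\|_{L^1_{x,v}}$, then integrate in $s$ over the slab of length $T_0$ and divide by $T_0$:
\Be \notag
\|\varphi(\tf)f(NT_0)\|_{L^1_{x,v}}
\leq \frac{1}{T_0}\int_{(N-1)T_0}^{NT_0}\|\varphi(\tf)f(s)\|_{L^1_{x,v}}\dd s + C(T_0+1)\|f((N-1)T_0)\|_{L^1_{x,v}}.
\Ee
Feeding the first energy inequality into this bounds $\frac{1}{T_0}\int\|\varphi(\tf)f\|$ by $\frac{1}{\delta T_0}\big(\|\varphi(\tf)f((N-1)T_0)\|_{L^1_{x,v}}+C(T_0+1)\|f((N-1)T_0)\|_{L^1_{x,v}}\big)$, so altogether
\Be \notag
\|\varphi(\tf)f(NT_0)\|_{L^1_{x,v}}
\leq \Big(1+\frac{1}{\delta T_0}\Big)\|\varphi(\tf)f((N-1)T_0)\|_{L^1_{x,v}} + C(1+T_0)\Big(1+\frac{1}{\delta T_0}\Big)\|f((N-1)T_0)\|_{L^1_{x,v}}.
\Ee

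Finally I would assemble \eqref{energyi}: multiply the last display by $\frac{4\mathfrak{m}_{T_0}}{\varphi(T_0/4)}$, add the $L^1$-coercivity estimate with its defect term bounded as above, and organize the coefficients. The $\|f((N-1)T_0)\|_{L^1_{x,v}}$ coefficient becomes $1-\mathfrak{m}_{T_0}+\mathfrak{m}_{T_0}\frac{4C(1+T_0)(1+\delta T_0)}{\delta T_0\varphi(T_0/4)}$, exactly as stated; the $\|\varphi(\tf)f((N-1)T_0)\|_{L^1_{x,v}}$ coefficient on the right comes out $\frac{4\mathfrak m_{T_0}}{\varphi(T_0/4)}\big(\frac12+\frac{1}{\delta T_0}\big)$ once one uses $\frac{2\mathfrak m_{T_0}}{\varphi(T_0/4)}\leq\frac12\cdot\frac{4\mathfrak m_{T_0}}{\varphi(T_0/4)}$ and note that the $\frac{1}{\delta T_0}\big(1+\frac{1}{\delta T_0}\big)$ term is absorbed into $\frac{1}{\delta T_0}$ using $\delta T_0>1$ from \eqref{cond:T0}. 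The magnetic case is identical after replacing Lemma \ref{lemma:energy}'s identity \eqref{function for varphi f} by \eqref{function for varphi f mag} and Proposition \ref{prop:Doeblin} by Proposition \ref{prop:Doeblin mag}. The main obstacle I anticipate is bookkeeping: making the constants line up so that the coefficient of $\|\varphi(\tf)f((N-1)T_0)\|_{L^1_{x,v}}$ is genuinely $\frac12+\frac{1}{\delta T_0}$ and not merely $O(1)+\frac{1}{\delta T_0}$ — this is precisely what the second smallness condition in \eqref{cond:T0} is designed to guarantee, by forcing the stray $O(1)$ contributions through the $\varphi(T_0/4)^{-1}$ factor to be $\leq\frac12$ — together with checking that the $\tfrac14\int|f|_{L^1_{\gamma_+}}$ trace terms appearing in repeated applications of Lemma \ref{lemma:energy} on nested slabs are absorbed without generating an uncontrolled constant.
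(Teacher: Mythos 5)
Your overall route — combine the Doeblin $L^1$-coercivity \eqref{L1_coerc} (resp.\ \eqref{L1_coerc mag}) with the weighted energy estimate of Lemma~\ref{lemma:energy} applied on the slab and on sub-slabs — is the same as the paper's, but the bookkeeping breaks in two places and does not deliver \eqref{energyi} with the stated constants.

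First, your final displayed weighted inequality has coefficient $1+\frac{1}{\delta T_0}$ in front of $\|\varphi(\tf)f((N-1)T_0)\|_{L^1_{x,v}}$, which contradicts your own derivation. Feeding $\frac{1}{T_0}\int\|\varphi(\tf)f\|\dd s \le \frac{1}{\delta T_0}\big(\|\varphi(\tf)f((N-1)T_0)\|_{L^1_{x,v}}+C(T_0+1)\|f((N-1)T_0)\|_{L^1_{x,v}}\big)$ into the averaged sub-slab bound produces coefficient $\frac{1}{\delta T_0}$, not $1+\frac{1}{\delta T_0}$; the extra $+1$ has no source. (As written, $1+\frac{1}{\delta T_0}>1$ and the bound would be expansive, useless for the iteration.) Second, even with the corrected coefficient, your plan to multiply by $\frac{4\mathfrak{m}_{T_0}}{\varphi(T_0/4)}\big(1+\frac{1}{\delta T_0}\big)$ and add to the coercivity produces the $\|\varphi(\tf)f((N-1)T_0)\|_{L^1_{x,v}}$-coefficient
\Be \notag
\frac{4\mathfrak{m}_{T_0}}{\varphi(T_0/4)}\Big(\frac12 + \frac{1}{\delta T_0}\big(1+\frac{1}{\delta T_0}\big)\Big)
= \frac{4\mathfrak{m}_{T_0}}{\varphi(T_0/4)}\Big(\frac12 + \frac{1}{\delta T_0} + \frac{1}{(\delta T_0)^2}\Big),
\Ee
which is \emph{strictly larger} than the target $\frac{4\mathfrak{m}_{T_0}}{\varphi(T_0/4)}\big(\frac12+\frac{1}{\delta T_0}\big)$ of \eqref{energyi}. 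Your claim that ``the $\frac{1}{\delta T_0}\big(1+\frac{1}{\delta T_0}\big)$ term is absorbed into $\frac{1}{\delta T_0}$ using $\delta T_0>1$'' is simply false: $\frac{1}{\delta T_0}\big(1+\frac{1}{\delta T_0}\big)>\frac{1}{\delta T_0}$ always, and $\delta T_0>1$ only gives it is $\le\frac{2}{\delta T_0}$, which is not good enough.

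The loss comes from dividing the integrated sub-slab bound by $T_0$ and then substituting. The paper avoids it by \emph{adding} rather than averaging: multiply the integrated sub-slab bound by $\delta$ and add it to the full-slab energy estimate \eqref{energy:phii}, so that $\delta\int\|\varphi(\tf)f\|\dd s$ cancels and the prefactor $(1+\delta T_0)$ stays on $\|\varphi(\tf)f(NT_0)\|_{L^1_{x,v}}$ on the left-hand side. This yields the paper's \eqref{energy:phi1},
\Be \notag
(1+\delta T_0)\|\varphi(\tf)f(NT_0)\|_{L^1_{x,v}} \le \|\varphi(\tf)f((N-1)T_0)\|_{L^1_{x,v}} + C(1+T_0)(1+\delta T_0)\|f((N-1)T_0)\|_{L^1_{x,v}},
\Ee
whose effective contraction ratio is $\frac{1}{1+\delta T_0}$ (strictly better than your $\frac{1}{\delta T_0}$), and multiplying this by $\frac{4\mathfrak{m}_{T_0}}{\delta T_0\varphi(T_0/4)}$ and adding \eqref{L1_coerc'} reproduces \eqref{energyi} with exactly the stated constants. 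So the underlying idea in your proposal is sound and can be repaired, but your constants are off and the stated absorption argument is invalid; use the ``lower-bound the dissipation $\int\|\varphi'(\tf)f\|$'' (equivalently, add rather than average) version instead.
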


\begin{proof} 
Applying Lemma \ref{lemma:energy} on $f(t,x,v)$ which solves \eqref{eqtn_f}-\eqref{diff_f}, with $\varphi$ in \eqref{varphis}.
From \eqref{varphi|0} and \eqref{phi'}, we derive that, for $(N-1)T_0 \leq t_* \leq t = NT_0$,
\Be \label{energy:log}
\| \varphi (\tf) f(NT_0) \|_{L^1_{x,v}} +   \frac{3}{4} \int^{NT_0}_{t_*} |f (s)|_{L^1_{\gamma_+}} \dd s
   \leq  \|  \varphi (\tf)f(t_*) \|_{L^1_{x,v}}  + C (1+T_0) \| f(t_*) \|_{L^1_{x,v}}.
\Ee  
For $(N-1)T_0 = t_* \leq t = NT_0$, 
\Be \label{energy:phii}
\begin{split}
& \ \ \ \ \| \varphi (\tf) f(NT_0) \|_{L^1_{x,v}} + \int^{NT_0}_{(N-1)T_0} 
\big\{ \| \varphi^\prime(\tf) f (s) \|_{L^1_{x,v}}
 +  \frac{3}{4}  |f (s)|_{L^1_{\gamma_+}}
\big\} \dd s
\\& \leq \| \varphi (\tf) f((N-1)T_0) \|_{L^1_{x,v}}  + C (1+T_0) \| f((N-1)T_0) \|_{L^1_{x,v}}.
\end{split}
\Ee  
From \eqref{maximum}, \eqref{phi'} and \eqref{energy:log}, we derive that,
\Be \notag
\begin{split}
 \int^{NT_0}_{(N-1)T_0}
 \| \varphi^\prime(\tf) f (t_*) \|_{L^1_{x,v}} \dd t_* 
& = \delta \int^{NT_0}_{(N-1)T_0}
 \| \varphi (\tf) f (t_*) \|_{L^1_{x,v}} \dd t_* 
\\& \geq \delta \int^{NT_0}_{(N-1)T_0}
\big\{ \| \varphi (\tf) f(NT_0) \|_{L^1_{x,v}} 
 - C (1+T_0) \| f(t_*) \|_{L^1_{x,v}}
\big\}  \dd t_*
\\& \geq \delta T_0 \| \varphi (\tf) f(NT_0) \|_{L^1_{x,v}} - \delta C (1+T_0) T_0 \| f((N-1)T_0) \|_{L^1_{x,v}}.
\end{split}
\Ee
Applying the above bound on \eqref{energy:phii}, we have
 \Be \label{energy:phi1}
 \begin{split}
& \ \ \ \ \| \varphi (\tf) f(NT_0) \|_{L^1_{x,v}} 
+ \delta T_0
 \| \varphi (\tf) f  (NT_0)\|_{L^1_{x,v}}
 + \frac{3}{4} \int^{NT_0}_{(N-1)T_0} |f (s)|_{L^1_{\gamma_+}} \dd s
\\& \leq  \| \varphi (\tf) f((N-1)T_0) \|_{L^1_{x,v}}  + C (1+T_0) (1+ \delta T_0) \| f((N-1)T_0) \|_{L^1_{x,v}}.
\end{split} 
\Ee  
From \eqref{varphi|0} and \eqref{phi'}, we obtain
\be \notag
\mathbf{1}_{\tf \geq \frac{T_0}{4}}
  \leq \big( \varphi (\frac{T_0}{4}) \big)^{-1} \varphi (\tf).
\ee
Combining the above bound with \eqref{L1_coerc}, we have
\Be \label{L1_coerc'}
\begin{split}
& \|f(NT_0)\|_{L^1_{x,v}}  
\leq  (1-\mathfrak{m}_{T_0} )   \|f((N-1)T_0)\|_{L^1_{x,v}} 
   +  2 \mathfrak{m}_{T_0} \big( \varphi (\frac{T_0}{4}) \big)^{-1} \| \varphi (\tf)f((N-1)T_0)\|_{L^1_{x,v}}.
\end{split}
\Ee  
For $T_0>0$ in \eqref{cond:T0} and considering $\eqref{L1_coerc'} + \frac{4 \mathfrak{m}_{T_0} }{\delta T_0 \varphi (\frac{T_0}{4})} \eqref{energy:phi1}$, we deduce \eqref{energyi}.

For the magnet field case, since we have proved Proposition \ref{prop:Doeblin mag} and Lemma \ref{lem:energy}, the rest of proof follows from the gravitational case.
\end{proof}

Now we are well equipped to prove Theorem \ref{theorem_1} and \ref{theorem_1}.

\begin{proof}[\textbf{Proof of Theorem \ref{theorem_1}}]

We first note that $\tf (x, v) \leq \tf (\xb, \vb) \lesssim |n(\xb) \cdot \vb| \leq |\vb|$ and $|v|^2 + 2 \Phi(x) = |\vb|^2$ shown in Lemma \ref{conservative field}. Thus, we deduce
\be \notag
\tf \lesssim (|v|^2 + 2 \Phi(x))^{1/2}.
\ee
Since $\varphi (\tau) = e^{\delta \tau}$ with $0 < \delta < \infty$, we derive that
\be \label{est:varphis tf}
| \varphi (\tf) |
\lesssim \big| \varphi \big( (|v|^2 + 2 \Phi(x))^{1/2} \big) \big| 
= e^{\delta (|v|^2 + 2 \Phi(x))^{1/2}}.
\ee

Now fix $T_0$ in \eqref{cond:T0} and recall norms of $\vertiii{\cdot}$ in \eqref{|||i},
\Be \notag
\vertiii{f} = \|f  \|_{L^1_{x,v}}
    +    \frac{ 4 \mathfrak{m}_{T_0} }{ \varphi (\frac{T_0}{4})}
  \Big(  {1 + \frac{1}{\delta T_0}} \Big) \times
 \| \varphi (\tf) f\|_{L^1_{x,v}},
\Ee
From \eqref{energyi}, and setting 
$\mathfrak{R} :=   
\max \big\{ 
1 - \mathfrak{m}_{T_0} (1- \frac{ 4 C (1+T_0) (1+ \delta T_0) }{\delta T_0 \varphi (\frac{T_0}{4})}), \
\frac{1 + \delta T_0 / 2}{1 + \delta T_0}
\big\}$, 
we derive for all $N \in \mathbb{N}$,
\Be \label{est|||}
\begin{split} 
\vertiii{ f(NT_0 )} 
\leq \mathfrak{R} \times \vertiii{ f((N-1)T_0 )}.
\end{split}
\Ee
Moreover, from $T_0$ in \eqref{cond:T0}, we have 
\be \label{est:R}
\begin{split}
& \frac{1 + \delta T_0 / 2}{1 + \delta T_0} 
= 1 - \frac{\delta T_0}{2 (1 + \delta T_0)}
< 1, 
\\& 1 - \mathfrak{m}_{T_0} (1- \frac{ 4 C (1+T_0) (1+ \delta T_0) }{\delta T_0 \varphi (\frac{T_0}{4})})
\leq 1 - \frac{1}{2} \mathfrak{m}_{T_0} < 1.
\end{split}
\ee
For $t\geq 0$, we can choose $N_* \in \mathbb{N}$ such that $t \in [N_* T_0, (N_* + 1) T_0]$.
Then applying \eqref{energy_varphi}, \eqref{maximum}  first, and using \eqref{est|||} successively, we conclude that 
\Be \label{est:|||1}
\begin{split}
\vertiii{f(t)}
& \lesssim \vertiii{f(N_*T_0)} 
\leq \mathfrak{R} \vertiii{f((N_*-1)T_0)} 
\leq \cdots  \leq  \mathfrak{R}^{N_*}\vertiii{f(0)}. 
\end{split}
\Ee
Note that $N_* T_0 \leq t \leq (N_* + 1) T_0$ and $\mathfrak{R} < 1$ in \eqref{est:R}, we deduce
\be
\mathfrak{R}^{ N_*} 
\leq \mathfrak{R}^{ \frac{t}{T_0} -1 } 
\lesssim \big( \mathfrak{R}^{\frac{1}{T_0}} \big)^t. 
\ee
Since $T_0$ is fixed, we have 
\Be \label{est:|||1'}
 \vertiii{f(t)}
\leq \eqref{est:|||1} 
\lesssim 
\big( \mathfrak{R}^{\frac{1}{T_0}} \big)^t
\times \vertiii{ f(0) }.
\Ee

Now we choose the following constant $\Lambda$, such that
\Be \label{def:M}
\Lambda = - \frac{1}{T_0} \ln (\mathfrak{R}).
\Ee
Therefore, we derive that
\Be \notag
e^{- \Lambda} = \mathfrak{R}^{\frac{1}{T_0}}.
\Ee
From \eqref{COV} and the assumption
$\| e^{\theta^\prime (|v|^2+ 2\Phi (x))} f_0\|_{L^\infty_{x,v}}<\infty$, we have 
\Be \notag
\vertiii{ f(0) }  
\lesssim \| e^{\theta^\prime (|v|^2+ 2\Phi (x))} f_0 \|_{L^\infty_{x,v}} + \| \varphi (\tf) f_0 \|_1.
\Ee 
Using \eqref{est:|||1'}, \eqref{def:M}, \eqref{est:varphis tf}, and the assumption $\| e^{\delta (|v|^2 + 2 \Phi(x))^{1/2} } f_0 \|_{L^1_{x,v}} <\infty$, we finally prove Theorem \ref{theorem_1} via
\Be \notag
\| f(t) \|_{L^1_{x,v}} \leq \vertiii{f (t)} 
\lesssim e^{- \Lambda t}
 \{ \| e^{\theta^\prime (|v|^2+ 2\Phi (x))} f_0 \|_{L^\infty_{x,v}}+ \| e^{\delta (|v|^2 + 2 \Phi(x))^{1/2} } f_0 \|_{L^1_{x,v}}
 \}.
\Ee 
\end{proof}

\begin{proof}[\textbf{Proof of Theorem \ref{theorem_1}}]

Recall from Lemma \ref{lem:energy estimate on f} and a null flux condition of the boundary, we conclude \eqref{cons_mass_f} by taking integration over $(t_*, t) \times \Omega \times \R^3$.

Next, to show \eqref{est:theorem_1} , we fix $T_0$ in \eqref{cond:T0} and recall norm of $\vertiii{\cdot}$ in \eqref{|||i}.
From \eqref{energyi}, for all $N \in \mathbb{N}$, we obtain
\Be \notag
     \vertiii{f(NT_0)} \leq  \vertiii{f((N-1)T_0)} \leq \cdots \leq \vertiii{f(0)}.   
\Ee 
The rest of proof is similar to the proof of Theorem \ref{theorem_1}.
\end{proof}


\section{Continuity on \texorpdfstring{$\J (x)$}{J(x)}}
\label{sec: continuity}

In this section, we first show the flux function $\J (x)$ is continuous. Then we use the continuity to obtain the uniform upper bound of the residual measures in the stochastic cycles. We outline the strategy here: We begin by showing boundary outgoing flux $\J (x)$ for any steady state is continuous on the boundary $\p\O$.
Assume any sequence $x^i \rightarrow x \in \p\O$, we expand $\J (x^i)$ and $\J (x)$ into the integration forms:
\be \notag
\J (x) = \int_{n(x) \cdot v >0} F(x, v) \{ n(x) \cdot v \} \dd v, \ \
\J (x^i) = \int_{n(x^i) \cdot v >0} F(x^i, v) \{ n(x^i) \cdot v \} \dd v.
\ee
We introduce the following set:
\Be
\Gamma^{\delta} = \{ (x, v) \in \gamma: |n (x) \cdot v| \leq \delta \ \text{or} \ |v| \geq \frac{1}{\delta} \},
\Ee
and write $\J (x)$ as 
\Be \notag
\J (x) 
= \int_{v \in \gamma_{+} \cap \Gamma^{\delta}} F(x, v) \{ n(x) \cdot v \} \dd v
+ \int_{v \in \gamma_{+} \backslash \Gamma^{\delta}} F(x, v) \{ n(x) \cdot v \} \dd v.
\Ee

We can control the first part by
$\int_{\Gamma^{\delta}} \mu_{\Theta} (\xb, v) \{ n(\xb) \cdot v \} \dd v \lesssim \delta^2$ with $0 < \delta \ll 1$ (see Lemma \ref{gamma set}).
For the second part, we first apply the the change of variable $v \mapsto (\tb, S_{\xb})$ on the integration forms.
Then we compare and obtain the pointwise convergence on the integrands 
where the key is that $v$ and $\vb$ are continuous.
Applying the dominated convergence theorem, we derive 
$\J (x) \rightarrow \J (x^{i})$ and prove the continuity in Lemma \ref{lem: j continue}.

Then we consider the boundary outgoing flux $\J (x)$ on non-negative steady state $F_s (x, v)$ with unit mass.
Since the boundary is compact and $\J (x)$ is continuous, $\J (x)$ achieves an infimum $L \geq 0$ on $\p\O$.
Applying the change of variable $v \mapsto (\tb, S_{\xb})$ on the integration form of $\J (x)$, we can show that if $L = 0$, $\J (x)$ has to be zero almost everywhere. This will contradict with the assumption that $F_s (x, v)$ has unit mass. Thus, we deduce that $L > 0$.

Now we consider the residual measure as part of the stochastic cycle on $\J (x)$ and obtain
\Be 
\J (x) 
\geq \int_{\prod_{j=0}^{i-1} \mathcal{V}_j}   
\mathbf{1}_{0 < t_{i } \leq t} \
\dd \sigma_{i-1} \cdots \dd \sigma_1 \dd \sigma_0 \times \J (x^i).
\Ee
Using the $L^\infty$-Estimates for $\J (x)$ on the left side, we write $\J (x) \leq U$.
On the right hand, we apply the lower bound on $\J (x^i)$ as $\J (x^i) \geq L$.
Therefore, we establish the uniform estimate on residual measure
\Be 
\begin{split}
	\int_{\prod_{j=0}^{i-1} \mathcal{V}_j}   
	\mathbf{1}_{0 < t_{i } \leq t} \
	\dd \sigma_{i-1} \cdots \dd \sigma_1 \dd \sigma_0 \leq L^{-1} U.
\end{split}
\Ee


For the magnet field case, we also start with showing boundary outgoing flux $\J (x)$ for any steady state is continuous on the boundary $\p\O$. 
For any sequence $x^i \rightarrow x \in \p\O$, we expand $\J (x^i)$ and $\J (x)$ into the integration forms and apply the change of variable $v \mapsto (\tb, S_{\xb})$ on them.

The main difficulty still comes from the Jacobian term $\frac{5 B^2_3}{2 - 2 \cos (B_3 \tib)}$ in the integrands. Thus, we need to introduce the following set:
\Be
\Gamma^{\delta} = \{ (x, v) \in \gamma: v \in \mathcal{V}^{B_{\delta}} (x) \ \text{or} \ |v| \geq \frac{1}{\delta} \}.
\Ee
Now we split $(x, v)$ into two parts $\Gamma^{\delta}$ and $\gamma_{+} \backslash \Gamma^{\delta}$. 
Though the Jacobian term can be large when $(x, v) \in \Gamma^{\delta}$, we use 
$\int_{\Gamma^{\delta}} \mu_{\Theta} (\xb, v) \{ n(\xb) \cdot v \} \dd v \lesssim \delta$ (see Lemma \ref{gamma set mag}) to control $\int_{v \in \gamma_{+} \cap \Gamma^{\delta}} F(x, v) \{ n(x) \cdot v \} \dd v$. 
For $(x, v) \in \gamma_{+} \backslash \Gamma^{\delta}$, we compute and derive the pointwise convergence on the integrands in forms of $\J (x)$ and $\J (x^i)$.
Then we apply the dominated convergence theorem to deduce the continuity in Lemma \ref{lem: j continue mag}.

\subsection{Gravitational case}
\label{subsec: continuity}


For the sake of simplicity, we have abused the notations temporarily in section \ref{subsec: continuity} and \ref{subsec: continuity mag}:
all superscript represent some sequences instead of the notations in stochastic cycles.

Suppose $F (x, v)$ solves \eqref{equation for F_infty} and \eqref{diff_F} and $\J (x)$ defined in \eqref{diff_J}, for $x \in \p\O$,
\Be \label{jx_tbxb}
\begin{split}
\J (x) 
& = \int_{n(x) \cdot v >0} F (x, v) \{ n(x) \cdot v \} \dd v
\\& = \sum_{m, n \in \Z} \int_{v \in v^{m, n}} F (x, v) \{ n(x) \cdot v \} \dd v
\\& = \sum_{m, n \in \Z} \int_{\p\O} \int^{\infty}_0 
\underbrace{\mu_{\Theta} (\xb, v) \J (\xb) \{ n(x) \cdot v \} \Big\{ \frac{|n (\xb ) \cdot \vb |}{(\tb)^{3}} d_0 \Big\}}_{\J^{m,n} (x)} \dd \tb \dd \xb,
\end{split}
\Ee
where $(d_0)^{-1} = 
	\det 
    \big[
    -{Id}_{3 \times 3} - \frac{1}{t_{\mathbf{b}}^1} \int^{t^1 - t_{\mathbf{b}}^1}_{t^1}
    \int^{s}_{t^1} - \nabla_{v^1} X \cdot \nabla \nabla \phi \dd \alpha \dd s
    \big]$, and $v^{m, n}$ defined in \eqref{def:vmn}.
    
Now we build the sequence $\{x^{i}\}_{i \in \N}$ with $\| x^{i} - x \| \rightarrow 0$ as $i \rightarrow \infty$. Similar, we have 
\Be \label{jxi_tbxb}
\J (x^{i}) = \sum_{m, n \in \Z} \int_{\p\O} \int^{\infty}_0 
\underbrace{\mu_{\Theta} (\xib, v^{i}) \J (\xib) \{ n(x^i) \cdot v^i \} \Big\{ \frac{|n (\xib) \cdot \vib |}{(\tib)^{3}} d_{i, 0} \Big\}}_{\J^{m,n} (x^{i})}
\dd \tib \dd \xib,
\Ee
where $\tib = \tb (x^{i}, v^{i})$, $\vib = \vb (x^{i}, v^{i})$ and $\xib = \xb (x^{i}, v^{i})$.

\begin{lemma} \label{viv_vibvb approach estimate}

Consider $(x, v), (x^i, v^i) \in \p\O \times \R^3$, and their backward stochastic cycles with $(X,V)$ in \eqref{characteristics}:
\be \notag
(\tb, \xb + (m, n, 0), \vb), \ \ (\tb^i, \xb^i + (m, n, 0), \vb^i).
\ee
Suppose $m, n \in \Z$ are fixed, $\tib = \tb$, $\xib =\xb$, and $| x^{i} - x | \rightarrow 0$ as $i \rightarrow \infty$, then
\begin{enumerate}
\item $v^{i} \rightarrow v$ as $i \rightarrow \infty$,
\item $\vib (x^i, v^i) \rightarrow \vb (x, v)$ as $i \rightarrow \infty$,
\item $\J^{m,n} (x^{i}) \rightarrow \J^{m,n} (x)$ as $i \rightarrow \infty$.
\end{enumerate}
\end{lemma}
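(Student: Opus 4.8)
The plan is to derive all three convergences from two facts already in hand: (i) the Hamiltonian flow \eqref{characteristics} depends continuously on its data — the force $-\nabla\Phi=-g\mathbf{e}_3-\nabla\phi$ is Lipschitz by \eqref{field property}, hence $(s,x,v)\mapsto(X(s;t,x,v),V(s;t,x,v))$ is continuous and the variational matrix $\nabla_v X(\cdot;t,x,v)$ is well defined and continuous in $(x,v)$; and (ii) the non-degeneracy of the change of variables $v\mapsto(\xb(x,v),\tb(x,v))$ from Proposition \ref{prop:mapV}. Throughout we keep $m,n$ fixed, work in the branch $v^{m,n}$ of \eqref{def:vmn}, and note that on $\gamma_+$ we are uniformly away from grazing: $|n(x)\cdot v|=|v_3|\gtrsim\tb>0$ and $|n(\xb)\cdot\vb|=v_{\mathbf{b},3}\gtrsim\tb>0$ by \eqref{v3t estimate}.

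First I would prove $(1)$ by the inverse/implicit function theorem applied to $G(x,v):=(\xb(x,v)-\xb,\ \tb(x,v)-\tb)$ near the reference point, with the basepoint $x$ treated as a parameter. The proof of Proposition \ref{prop:mapV} gives
\[
\det\Big(\frac{\p x^1_\parallel}{\p v},\frac{\p\tb}{\p v}\Big)=\frac{(\tb)^3}{v_{\mathbf{b},3}}\,d_0,
\]
which is finite and nonzero: $\tb>0$, $v_{\mathbf{b},3}\gtrsim\tb$, and $d_0$ is bounded above and below by the uniform-in-time estimate established there, since $\big|\frac{1}{\tb}\int_{t}^{t-\tb}\int_{t}^{s}-\nabla_v X\cdot\nabla^2\phi\,\dd\alpha\,\dd s\big|\le e^{\varrho_2/\varrho_1}\varrho_2/\varrho_1\ll1$. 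Hence there is a local solution map $x'\mapsto\Xi(x',\xb,\tb)$, continuous at $x'=x$, with $\Xi(x,\xb,\tb)=v$ and $n(x')\cdot\Xi(x',\xb,\tb)>0$ for $x'$ near $x$. Since $\xib=\xb$ and $\tib=\tb$ are held fixed and the branch condition $n(x^i)\cdot v^i>0$ selects this solution, we get $v^i=\Xi(x^i,\xb,\tb)$, whence $v^i\to v$ as $x^i\to x$.

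Claim $(2)$ then follows at once: $\vib=V(t-\tb;t,x^i,v^i)$ and $\vb=V(t-\tb;t,x,v)$, the elapsed time $\tb$ is fixed, and $(x^i,v^i)\to(x,v)$, so continuous dependence of the flow gives $\vib\to\vb$. For $(3)$, using $\xib=\xb$ and $\tib=\tb$ the integrand reads
\[
\J^{m,n}(x^i)=\mu_{\Theta}(\xb,v^i)\,\J(\xb)\,\{n(x^i)\cdot v^i\}\,\frac{|n(\xb)\cdot\vib|}{(\tb)^3}\,d_{i,0},
\]
and each factor converges to the corresponding factor of $\J^{m,n}(x)$: $\mu_{\Theta}(\xb,\cdot)$ is continuous because $\Theta$ is continuous with $\Theta\ge a>0$; $\J(\xb)$ and $(\tb)^3$ are held fixed; $\{n(x^i)\cdot v^i\}\to\{n(x)\cdot v\}$ and $|n(\xb)\cdot\vib|\to|n(\xb)\cdot\vb|$ by $(1)$--$(2)$; and $d_{i,0}^{-1}=\det\!\big[-\mathrm{Id}-\frac{1}{\tb}\int_{t-\tb}^{t}\int_{t}^{s}-\nabla_v X\cdot\nabla^2\phi\,\dd\alpha\,\dd s\big]$ along the characteristic through $(x^i,v^i)$ converges to $d_0^{-1}$ because $X(\cdot;t,x^i,v^i)$ and $\nabla_v X(\cdot;t,x^i,v^i)$ converge uniformly on the fixed interval $[t-\tb,t]$ to their counterparts through $(x,v)$. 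Multiplying, $\J^{m,n}(x^i)\to\J^{m,n}(x)$.

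The step I expect to be the main obstacle is the one inside $(1)$: promoting the change-of-variables non-degeneracy of Proposition \ref{prop:mapV} — stated for a fixed basepoint — to a statement uniform enough in the basepoint that the inverse map $\Xi$ is continuous in $x$. This is precisely where the quantitative content of that proposition is used (the two-sided bound on $d_0$ and the lower bound $v_{\mathbf{b},3}\gtrsim\tb$), together with the continuous dependence of the flow and of $\nabla_v X$ on the basepoint $x$; once this is secured, the rest is a routine composition-of-continuous-maps argument, and it feeds directly into the dominated convergence proof of Lemma \ref{lem: j continue}.
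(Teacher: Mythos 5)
Your overall structure is sound and your conclusions (2) and (3) are proved essentially as in the paper (continuous dependence of the flow on $(x,v)$ over the fixed interval $[t-\tb,t]$, then factor-by-factor convergence of the integrand, with $d_{i,0}\to d_0$ coming from uniform convergence of $X$ and $\nabla_v X$). For (1), though, your route differs from the paper's and has a gap worth naming. You invoke the implicit function theorem on $G(x',v)=(\xb(x',v)-\xb,\tb(x',v)-\tb)$ to produce a local branch $\Xi(x')$ with $\Xi(x)=v$, and then assert $v^i=\Xi(x^i,\xb,\tb)$ because ``the branch condition $n(x^i)\cdot v^i>0$ selects this solution.'' But $n(x^i)\cdot v^i>0$ only says $v^i\in\gamma_+$; it does not identify $v^i$ with the IFT branch. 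The IFT gives uniqueness only inside a small neighborhood $V$ of $v$, so you need to know that the velocity $v^i$ defined by having backward data $(\tb,\xb,m,n)$ from basepoint $x^i$ actually lies in $V$ once $x^i$ is near $x$. That requires either global injectivity of $v\mapsto(\xb(x',v),\tb(x',v))$ restricted to the $(m,n)$ winding class (plausible under dominant gravity and $\varrho_2\ll1$, but you have not argued it), or a compactness argument showing any limit point of $v^i$ solves the same implicit equation and therefore equals $v$. Separately, the ``main obstacle'' you flag — uniformity in the basepoint — is not actually an obstacle: the IFT delivers continuity of $\Xi$ in $x'$ automatically from non-degeneracy at the single point $(x,v)$; no uniform-in-$x$ version is needed for a pointwise limit.

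The paper sidesteps this branch-identification issue entirely: it subtracts the two trajectory identities \eqref{xxb} and \eqref{xixib}, writes the remainder via the Mean Value Theorem with \emph{global} sup-norm bounds on $\nabla_v$ and $\nabla_x$ of the double integral $\int_{t-\tb}^t\int_s^t\nabla\Phi(X)\,\dd\alpha\,\dd s$ (established from \eqref{estimate on X_v}, \eqref{estimate on X_x} via Gronwall), absorbs the small coefficient $e^{\varrho_2/\varrho_1}\varrho_2/\varrho_1\ll1$, and lands on the explicit Lipschitz bound $|v-v^i|\lesssim|x-x^i|/\tb$ in \eqref{vvi}. Because the MVT bounds are global, this estimate holds for \emph{any} $v^i$ satisfying the defining relations, so no branch selection is needed; it also upgrades mere continuity to a quantitative rate. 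If you want to keep the soft IFT packaging, you should add the global injectivity (or subsequence) step; otherwise the paper's direct computation is cleaner and strictly stronger here.
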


\begin{proof}

From fixed $\xb, \tb$ and $t$ with $\tb = \tib$ and $\xb = \xib$, we have
\Be \label{xxb}
\xb + \int^{t}_{t - \tb} (v - \int^{t}_{s} - \nabla \Phi(X(\alpha, t, x, v)) \dd  \alpha) \dd s = x.
\Ee
\Be \label{xixib}
\xb + \int^{t}_{t - \tb} (v^{i} - \int^{t}_{s} - \nabla \Phi(X(\alpha, t, x^{i}, v^{i})) \dd  \alpha) \dd s = x^{i}. 
\Ee
Subtracting \eqref{xxb} from \eqref{xixib}, we deduce
\Be \label{xvxivi}
\tb (v - v^{i}) + 
\underbrace{\int^{t}_{t - \tb} \int^{t}_{s} \nabla \Phi(X(\alpha, t, x, v)) \dd  \alpha \dd s - \int^{t}_{t - \tb} \int^{t}_{s} \nabla \Phi(X(\alpha, t, x^{i}, v^{i})) \dd \alpha \dd s}_{\eqref{xvxivi}^*} = x - x^{i}. 
\Ee
To control $\eqref{xvxivi}^*$, we need to estimate $\big| \nabla_{x}
    \int^{t}_{t - \tb} \int^{s}_{t} \nabla \Phi(X(\alpha, t, x, v)) \dd \alpha \dd s \big|$. 
From \eqref{characteristics}, we have 
\Be \notag
\begin{split}
\frac{\dd}{\dd s}|\nabla_x X(s;t, x, v)| 
& \lesssim |\nabla_x V(s;t, x, v)|,
\\ \frac{\dd}{\dd s}|\nabla_x V(s;t, x, v)| 
& \lesssim |\nabla \nabla_X \phi (X(s;t, x, v))||\nabla_x X(s;t, x, v)|
    \\& \lesssim \varrho_2 e^{- \varrho_1 X_3 (s;t, x, v)} |\nabla_x X(s;t, x, v)|.
\end{split}
\Ee
Since $
\nabla_x X(t;t, x, v) = \nabla_x x = Id_{3 \times 3}$, $\nabla_x V(t;t, x, v) = \nabla_x v = 0$ and $t - \tb \leq s \leq t$, we get
\Be \label{Xx first} 
\begin{split} 
|\nabla_x X(s;t, x, v)| 
& \lesssim 1 + \int^{t}_{s} |\nabla_x V(s_1;t, x, v)| \dd s_1
\\& \lesssim 1 + \int^{t}_{s} \int^{t}_{s_1}
    \varrho_2 e^{- \varrho_1 X_3 (s_2;t, x, v)} |\nabla_x X(s_2;t, x, v)| \dd s_2 \dd s_1.
\end{split}
\Ee
Using the Fubini's theorem, we derive
\Be \notag
\begin{split}
\eqref{Xx first} 
& \lesssim 1 + \int^{t}_{s} \int^{s_2}_{s}
    \varrho_2 e^{- \varrho_1 X_3 (s_2;t, x, v)} |\nabla_x X(s_2;t, x, v)| \dd s_1 \dd s_2
\\& \lesssim 1 + \int^{t}_{s} \varrho_2 
(s_2-s) e^{- \varrho_1 X_3 (s_2;t, x, v)}
|\nabla_x X(s_2;t, x, v)| \dd s_2.
\end{split}
\Ee
Using the Gronwall’s inequality, for $t-\tb \leq s \leq t$,
\Be \label{first estimate on X_x}
\begin{split} 
    & \ \ \ \ |\nabla_x X(s;t, x, v)| 
    \\& \lesssim \exp 
    \big(
    \int^{t}_{s} \varrho_2 
(s_2-s) e^{- \varrho_1 X_3 (s_2;t, x, v)} \dd s_2 
	\big)
	\\& \leq \exp 
    \Big(
    \int^{t}_{t-\tb} \varrho_2 
	\big(
	s_2- (t-\tb)
	\big)
 	e^{- \varrho_1 X_3 (s_2;t, x, v)} \dd s_2 
	\Big).
\end{split}
\Ee
Using \eqref{estimate on X_v}, we derive
\Be \label{estimate on X_x}
|\nabla_x X(s;t, x, v)| \leq e^{\varrho_2/\varrho_1}.
\Ee
From direct computation, we have,
\Be \notag 
\nabla_{x} 
\big[ \nabla\Phi(X(\alpha, t, x, v) \big]
= \nabla_{x} X(\alpha, t, x, v) \cdot \nabla \nabla\Phi(X) =  \nabla_{x} X(\alpha, t, x, v) \cdot \nabla \nabla \phi,
\Ee    
and from \eqref{estimate on X_x} and Fubini's theorem, we have 
\Be \label{d_1 estimate}
\begin{split}
    & \ \ \ \ 
    \Big|
    \int^{t - \tb}_{t}
\int^{s}_{t} - \nabla_{x} X \cdot \nabla^2 \phi (X(\alpha; t, x, v)) \dd \alpha \dd s
	\Big|
\\& \leq \int^{t}_{t - \tb}
\int^{t}_{s} |\nabla^2 \phi(X)| |\nabla_{x} X(\alpha; t, x, v)| \dd \alpha \dd s
\\& \leq \int^{t}_{t - \tb} \int^{t}_{s} \varrho_2 e^{- \varrho_1 X_3} e^{\varrho_2/\varrho_1} \dd \alpha \dd s
\\& \leq \int^{t}_{t - \tb} 
\varrho_2 
\big(
\alpha - (t - \tb)
\big)
e^{- \varrho_1 X_3} e^{\varrho_2/\varrho_1} \dd \alpha 
\leq e^{\varrho_2/\varrho_1} \frac{\varrho_2}{\varrho_1},
\end{split}
\Ee
where the last inequality holds from \eqref{estimate on X_v}. 
Applying Mean value theorem, we get
\Be \notag
\begin{split}
| \eqref{xvxivi}^* | 
& \leq \| \nabla_v \int^{t}_{t - \tb} \int^{t}_{s} \nabla \Phi(X(\alpha, t, x, v)) \dd  \alpha \dd s \|_{\infty} \times |v - v^{i}|
\\& + \| \nabla_x \int^{t}_{t - \tb} \int^{t}_{s} \nabla \Phi(X(\alpha, t, x^{i}, v^{i})) \dd \alpha \dd s \|_{\infty} \times |x - x^{i}|.
\end{split}
\Ee
Using \eqref{d_1 estimate}, we deduce
\Be \notag
| \eqref{xvxivi}^* | 
\leq e^{\varrho_2/\varrho_1} \frac{\varrho_2}{\varrho_1} \tb \times |v - v^{i}|
+ e^{\varrho_2/\varrho_1} \frac{\varrho_2}{\varrho_1} \times |x - x^{i}|.
\Ee
Combining the above with \eqref{xvxivi}, we derive
\Be \notag 
| \tb (v - v^{i}) | 
\leq e^{\varrho_2/\varrho_1} \frac{\varrho_2}{\varrho_1} \tb \times |v - v^{i}|
+ e^{\varrho_2/\varrho_1} \frac{\varrho_2}{\varrho_1} \times |x - x^{i}| + | x - x^{i}|. 
\Ee
Since $e^{\varrho_2/\varrho_1} \frac{\varrho_2}{\varrho_1} \ll 1$, we conclude that
\Be \label{vvi}
|v - v^{i}| 
\lesssim \frac{1}{\tb} | x - x^{i} |, 
\Ee
which implies, for fixed $\tb \neq 0$,
$|v - v^{i}| \rightarrow 0$ as $i \rightarrow \infty$.

Now, we begin to prove the second part. From fixed $\tb$, we have
\Be \label{vvb}
\vb + \int^{t}_{t - \tb} - \nabla \Phi(X(s, t, x, v)) \dd s = v.
\Ee
\Be \label{vivib}
\vib + \int^{t}_{t - \tb} - \nabla \Phi(X(s, t, x^{i}, v^{i})) \dd s = v^{i}. 
\Ee
Subtracting \eqref{vvb} from \eqref{vivib}, we deduce
\Be \label{vvivbvib}
(\vb - \vib) -
\underbrace{\int^{t}_{t - \tb} \nabla \Phi(X(s, t, x, v)) \dd s + \int^{t}_{t - \tb} \nabla \Phi(X(s, t, x^{i}, v^{i})) \dd s}_{\eqref{vvivbvib}^*} = v - v^{i}. 
\Ee
To control $\eqref{vvivbvib}^*$, we need to estimate $\big| \nabla_{x, v} \int^{t}_{t - \tb} \nabla \Phi(X(s, t, x, v)) \dd s \big|$. 
From direct computation, we have
\Be \notag 
\big| \nabla_{x, v} \int^{t}_{t - \tb} \nabla \Phi(X (s, t, x, v)) \dd s \big|
= \big| \int^{t}_{t - \tb} \nabla \nabla \phi \cdot  \nabla_{x, v} X (s, t, x, v) \dd s \big|.
\Ee 
Using \eqref{x3 estimate}, \eqref{estimate on X_v} and \eqref{estimate on X_x}, we get
\Be \label{d_2 estimate}
\begin{split}
& \big| \int^{t}_{t - \tb} \nabla \nabla \phi \cdot  \nabla_{x} X (s, t, x, v) \dd s \big|
\lesssim \big| \int^{t}_{t - \tb} \varrho_2 e^{- \varrho_1 X_3} e^{\varrho_2/\varrho_1} \dd s \big|
\lesssim \varrho_2 e^{\varrho_2/\varrho_1},
\\& \big| \int^{t}_{t - \tb} \nabla \nabla \phi \cdot  \nabla_{v} X (s, t, x, v) \dd s \big|
\lesssim \big| \int^{t}_{t - \tb} \varrho_2 e^{- \varrho_1 X_3} |t-s| e^{\varrho_2/\varrho_1} \dd s \big|
\lesssim e^{\varrho_2/\varrho_1} \frac{\varrho_2}{\varrho_1}.
\end{split}
\Ee
Applying Mean value theorem, we get
\Be \notag
\begin{split}
| \eqref{vvivbvib}^* | 
& \leq \| \nabla_v \int^{t}_{t - \tb} \nabla \Phi(X(s, t, x, v)) \dd s \|_{\infty} \times |v - v^{i}|
\\& + \| \nabla_x \int^{t}_{t - \tb} \nabla \Phi(X(s, t, x, v)) \dd s \|_{\infty} \times |x - x^{i}|.
\end{split}
\Ee
Using \eqref{d_2 estimate}, we deduce
\Be \notag
| \eqref{xvxivi}^* | 
\leq e^{\varrho_2/\varrho_1} \frac{\varrho_2}{\varrho_1} \times |v - v^{i}|
+ \varrho_2 e^{\varrho_2/\varrho_1} \times |x - x^{i}|.
\Ee
Combining the above with \eqref{vvivbvib}, we derive
\Be \notag 
| \vb - \vib | 
\leq e^{\varrho_2/\varrho_1} \frac{\varrho_2}{\varrho_1} \times |v - v^{i}|
+ \varrho_2 e^{\varrho_2/\varrho_1} \times |x - x^{i}| + | v - v^{i} |. 
\Ee
Using $\varrho_2 \ll 1$, $\varrho_1 > 1$ and $(x^{i}, v^{i}) \rightarrow (x, v)$ as $i \rightarrow \infty$, we conclude, for fixed $\tb \neq 0$,
\Be \notag
| \vb - \vib | \rightarrow 0 \ \text{as} \ i \rightarrow \infty.
\Ee

Finally, from the implicit function theorem, $X(s,t,x,v)$ is a smooth function. 
Since $m, n$ are fixed, $\tib = \tb$ and $\xib =\xb$,
we deduce that $d_{i, 0} \rightarrow d_0$ as $i \rightarrow \infty$ from the first two parts.
Consider \eqref{jx_tbxb} and \eqref{jxi_tbxb}, we derive
\Be \label{ptwise converge on jmn}
\J^{m,n} (x^{i}) \rightarrow \J^{m,n} (x) \ \text{as} \ |x^{i} - x | \rightarrow 0.
\Ee
\end{proof}

\begin{lemma}
\label{gamma set}
Define
\Be \notag
\Gamma^{\delta} = \{ (x, v) \in \gamma: |n (x) \cdot v| \leq \delta \ \text{or} \ |v| \geq \frac{1}{\delta} \}.
\Ee
Consider $(X,V)$ in \eqref{characteristics},
then for $0 < \delta \ll 1$ and $x \in \p\O$,
\Be \label{gamma int estimate}
\int_{\Gamma^{\delta}} \mu_{\Theta} (\xb, v) \{ n(\xb) \cdot v \} \dd v \lesssim \delta^2.
\Ee
\end{lemma}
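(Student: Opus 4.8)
The plan is to pass to polar coordinates in the velocity variable and reduce \eqref{gamma int estimate} to a one-dimensional Gaussian integral in the speed $r=|v|$, then estimate separately the two pieces into which $\Gamma^{\delta}$ naturally splits. First I would record the elementary pointwise bounds that drive everything: by \eqref{def:Theta} one has $\Theta(\xb)\le b$, hence $\mu_\Theta(\xb,v)\le \frac{1}{2\pi a^2}\,e^{-|v|^2/2b}$; and on the outgoing cone $n(\xb)\cdot v>0$ one has $\{n(\xb)\cdot v\}=n(\xb)\cdot v\le |v|$. Since $n(x)$ is the constant normal of $\p\O=\mathbb{T}^2\times\{0\}$, no dependence on the base point $x$ will enter and the resulting bound is uniform in $x\in\p\O$. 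With these reductions, \eqref{gamma int estimate} follows from two independent estimates corresponding to the decomposition
\[
\Gamma^{\delta}\cap\{\,n(\xb)\cdot v>0\,\}\ =\ \{\,|n(\xb)\cdot v|\le\delta\,\}\ \cup\ \{\,|v|\ge \tfrac1\delta\,\}.
\]

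For the grazing part I would repeat the computation already carried out inside \eqref{estimate on delta}: writing $r=|v|$ and $\vartheta$ for the angle between $v$ and $n(\xb)$, the constraint $|n(\xb)\cdot v|\le\delta$ forces $|\cos\vartheta|\le\delta/r$, so the angular integral over that band of the sphere is $O(\min(1,\delta/r))$; together with the extra factor $n(\xb)\cdot v\le\delta$ this gives
\[
\int_{\{|n(\xb)\cdot v|\le\delta\}}\mu_\Theta(\xb,v)\,\{n(\xb)\cdot v\}\,\dd v\ \lesssim\ \delta\int_0^\infty e^{-r^2/2b}\,r^2\,\min\!\Big(1,\tfrac{\delta}{r}\Big)\,\dd r\ \lesssim\ \delta^2,
\]
the last step splitting the $r$-integral at $r=\delta$ and using $\int_\delta^\infty e^{-r^2/2b}r\,\dd r\le b$. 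For the high-energy part the Gaussian weight does all the work: on $\{r\ge\frac1\delta\}$ we bound $e^{-r^2/2b}\le e^{-1/(4b\delta^2)}e^{-r^2/4b}$, so
\[
\int_{\{|v|\ge 1/\delta\}}\mu_\Theta(\xb,v)\,|v|\,\dd v\ \lesssim\ \int_{1/\delta}^{\infty}e^{-r^2/2b}\,r^3\,\dd r\ \le\ e^{-1/(4b\delta^2)}\int_0^{\infty}e^{-r^2/4b}\,r^3\,\dd r\ \lesssim\ e^{-1/(4b\delta^2)}\ \lesssim\ \delta^2
\]
for $0<\delta\ll 1$, since $e^{-1/(4b\delta^2)}$ decays faster than any power of $\delta$ (the smallness threshold depending only on $b$). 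Adding the two contributions yields \eqref{gamma int estimate}.

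There is no serious obstacle in this lemma; the only point worth flagging is that the estimate must be genuinely $O(\delta^2)$ rather than $O(\delta)$, and this happens precisely because on the grazing set one gains one power of $\delta$ from the weight $\{n(\xb)\cdot v\}$ and a second from the thinness of the angular band, while on the far tail the superpolynomially small $e^{-1/(4b\delta^2)}$ is comfortably absorbed into $\delta^2$ once $\delta$ is small — which is exactly where the hypothesis $0<\delta\ll 1$ is used.
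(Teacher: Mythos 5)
Your argument is correct and follows essentially the same route as the paper: the identical two-piece decomposition into the grazing band $\{|n(\xb)\cdot v|\le\delta\}$ (handled in polar coordinates exactly as in \eqref{estimate on delta}) and the high-speed tail $\{|v|\ge 1/\delta\}$ (handled by the superpolynomial Gaussian decay of $\mu_{\Theta}$). Your tail estimate is marginally more careful about the radial power $r^3$, but this only changes the exponential constant and the conclusion $\lesssim\delta^2$ is the same.
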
 

\begin{proof}

From \eqref{estimate on delta}, we can obtain 
\be \label{gamma int estimate 1}
\int_{ |n(x) \cdot v| \lesssim \delta} \mu_{\Theta} (\xb, v) | n(\xb) \cdot v | \dd v  \lesssim \delta^2.
\ee
On the other hand, from $|n(\xb) \cdot v| \leq |v|$ and  \eqref{def:Theta}, we have
\Be \label{gamma int estimate 2}
\begin{split} 
 \int_{|v| \geq \frac{1}{\delta}}
\mu_{\Theta} (\xb, v) |n(\xb) \cdot v| \dd v 
& < \int_{|v| \geq \frac{1}{\delta}} \mu_{\Theta} (\xb, v) | v | \dd v  
\\& \lesssim \int^{\infty}_{\frac{1}{\delta}} e^{- \frac{r^2}{2b}} r \dd r 
\leq e^{- \frac{1}{2b \delta^2}} \lesssim \delta^2.
\end{split}
\Ee
Combining \eqref{gamma int estimate 1} and \eqref{gamma int estimate 2}, we conclude \eqref{gamma int estimate}.
\end{proof}

Now we are ready to prove the continuity on $\J (x)$.

\begin{lemma} \label{lem: j continue}
Suppose $F (x, v)$ solves \eqref{equation for F_infty} and \eqref{diff_F}, then $\J (x) = \int_{n(x)\cdot v >0} F (x, v) \{ n(x) \cdot v \} \dd v$ is continuous on $\p\O$.
\end{lemma}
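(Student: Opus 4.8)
The plan is to show that for any sequence $x^i \to x$ in $\p\O$, we have $\J(x^i) \to \J(x)$, using the representation \eqref{jx_tbxb}--\eqref{jxi_tbxb} in terms of the change of variables $v \mapsto (\tb, \xb)$ together with the dominated convergence theorem. The essential point is that the gravitational change of variables \eqref{jacob:mapV} makes $\xb$ and $\tb$ genuine coordinates, so after re-expressing both $\J(x^i)$ and $\J(x)$ as integrals over $(\tb,\xb) \in \p\O \times \R_+$ (summed over periodic translates $m,n \in \Z$) with a common domain, the comparison reduces to pointwise convergence of integrands plus a uniform integrable majorant.

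First I would handle the contribution from $\Gamma^\delta$: using \eqref{FJ relation} to write $F(x,v) = \mu_\Theta(x,v)\J(x) = \mu_\Theta(\xb,\vb)\J(\xb)$ via Lemma \ref{conservative field}, together with the $L^\infty$-bound $\|\J\|_{L^\infty_x} \lesssim \mathfrak m$ from Proposition \ref{prop:j linfty bound}, and Lemma \ref{gamma set}, we get that $\big|\int_{v: (x,v) \in \gamma_+ \cap \Gamma^\delta} F(x,v)\{n(x)\cdot v\}\dd v\big| \lesssim \delta^2$ uniformly in $x$; the same bound holds for each $x^i$. So given $\e > 0$, choose $\delta$ small enough that these two tail pieces are each $< \e/3$, for all $i$ simultaneously. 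Second, on the complementary region $\gamma_+ \setminus \Gamma^\delta$, the velocities satisfy $|n(x)\cdot v| \geq \delta$ and $|v| \leq 1/\delta$, which by \eqref{v3t estimate} forces $\tb$ to lie in a compact interval $[c_\delta, C_\delta] \subset (0,\infty)$; moreover only finitely many $(m,n)$ contribute since the spatial displacement $|\xb + (m,n,0) - x|$ is controlled by $|v|\tb \leq C_\delta/\delta$. On this finite collection of bounded pieces I would invoke Lemma \ref{viv_vibvb approach estimate}: for fixed $(\tb,\xb)$ and $m,n$, we have $v^i \to v$, $\vb^i \to \vb$, $d_{i,0}\to d_0$, and hence $\J^{m,n}(x^i) \to \J^{m,n}(x)$ pointwise (using also the continuity of $\J$ restricted to the previously handled $L^\infty$ framework — but note $\J$ need not yet be known continuous, so I would instead use that $\xib \to \xb$ and bound $|\J(\xib) - \J(\xb)|$ is \emph{not} available; rather, the cleaner route is to keep $\J(\xb)$ inside and apply dominated convergence in the $(\tb,\xb)$ variables where $\xb$ is the integration variable, so $\J(\xb)$ is a fixed bounded measurable factor and only $\mu_\Theta(\xb, v^i)$, $\{n(x^i)\cdot v^i\}$, $|n(\xb)\cdot\vb^i|$, and $d_{i,0}$ vary with $i$, each converging by Lemma \ref{viv_vibvb approach estimate}). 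A dominating function is furnished by $\mu_\Theta(\xb, v)$ times a power of $\tb$ restricted to the compact $\tb$-range, which is integrable; apply the dominated convergence theorem on each of the finitely many pieces to get convergence of the $\gamma_+\setminus\Gamma^\delta$ part, hence $|\J(x^i) - \J(x)| < \e$ for $i$ large.

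The main obstacle is the bookkeeping around the periodic translates and ensuring the two integral representations \eqref{jx_tbxb} and \eqref{jxi_tbxb} really live over a common domain so that dominated convergence applies term-by-term: one must verify that the map $v \mapsto (\tb,\xb)$ (locally bijective, by Proposition \ref{prop:mapV}) has image and Jacobian that depend continuously on the base point $x$, and that the cutoff to $\gamma_+ \setminus \Gamma^\delta$ translates into a cutoff in $(\tb,\xb)$-space that is stable as $x^i \to x$. Once the region is localized to $|v| \leq 1/\delta$, $|n(x)\cdot v| \geq \delta$, this stability follows from \eqref{v3t estimate} and \eqref{vvi}, but the argument needs to be stated carefully since $\tb$ and $\xb$ are the new integration variables while $v^i = v^i(\tb,\xb)$ is the dependent quantity. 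After that, the pointwise convergence and the majorant are routine consequences of Lemmas \ref{viv_vibvb approach estimate} and \ref{gamma set}, and the proof concludes by the standard $\e/3$ argument.
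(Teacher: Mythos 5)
Your proposal is correct and follows essentially the same route as the paper: split off $\Gamma^\delta$ via Lemma \ref{gamma set} and the $L^\infty$-bound on $\J$, reduce the remainder to finitely many periodic copies with $\tb$ in a compact interval, and apply dominated convergence using the pointwise convergence from Lemma \ref{viv_vibvb approach estimate}. You also correctly spot and resolve the potential circularity — that one cannot yet use continuity of $\J$ to compare $\J(\xib)$ with $\J(\xb)$ — which the paper handles exactly as you do, by fixing $\tib = \tb$, $\xib = \xb$ in the $(\tb,\xb)$-integral so that $\J(\xb)$ is a frozen bounded factor and only $v^i$, $\vb^i$, $d_{i,0}$ vary.
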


\begin{proof}

For $(x, v) \in \gamma_{+} \backslash \Gamma^{\delta}$, we have $|n(x) \cdot v| \geq \delta$ and  $|v| \leq \frac{1}{\delta}$.
Using \eqref{v3t estimate}, we have
\begin{align}
& \tb \gtrsim |v_3| = |n(x) \cdot v| \geq \delta, \label{Gamma1 estimate}
\\& \tb \lesssim |v_3| = |n(x) \cdot v| \leq |v|\leq \frac{1}{\delta}. \label{Gamma2 estimate}
\end{align}
Using the above and \eqref{first cov first part}, for $x, \xb \in \partial \Omega$ and $m, n \in \Z$, we have
\Be \notag 
\begin{split}
|x - \xb + (m, n) | 
& = |\int^{t}_{t - \tb}  V(s; t, x, v) \dd s| 
\\& \leq \tb \times \| V(s; t, x, v) \|_{\infty} \lesssim \delta^{-2}.
\end{split}
\Ee
This implies 
\Be \label{mn bound}
|m| \lesssim \delta^{-2}, |n| \lesssim \delta^{-2}.
\Ee
For any $0 < \epsilon$, we can find sufficient small and fixed $\delta = \delta (\epsilon)$ such that $\delta^2 \leq \frac{\epsilon}{3}$.
Recall \eqref{jx_tbxb}, we split $\J (x)$ into two following parts:
\Be \label{j_split}
\begin{split}
\J (x) 
& = \int_{n(x)\cdot v >0} F(x, v) \{ n(x) \cdot v \} \dd v
\\& = \underbrace{\int_{v \in \gamma_{+} \cap \Gamma^{\delta}} F(x, v) \{ n(x) \cdot v \} \dd v}_{\eqref{j_split}_1}
+ \underbrace{\int_{v \in \gamma_{+} \backslash \Gamma^{\delta}} F(x, v) \{ n(x) \cdot v \} \dd v}_{\eqref{j_split}_2}.
\end{split}
\Ee
Using the uniform boundness on $\J (x)$, Lemma \ref{conservative field} and Lemma \ref{gamma set}, we get
\Be \label{j_split_1 estimate}
\eqref{j_split}_1 
\lesssim \| \J \|_{L^\infty} \int_{v \in \Gamma^{\delta}} \mu_{\Theta} (\xb, v) \{ n(\xb) \cdot v \} \dd v \lesssim \delta^2 \leq \frac{\epsilon}{3}.
\Ee
Using \eqref{Gamma1 estimate}, \eqref{Gamma2 estimate} and \eqref{mn bound}, we find that for $v \in \gamma_{+} \backslash \Gamma^{\delta}$, 
\Be \notag
\delta \lesssim \tb \lesssim \frac{1}{\delta} \ \text{and} \ 0 \leq |m|, |n| \lesssim \delta^{-2},
\Ee
which implies
\Be \label{j_split^2 first estimate}
\begin{split}
\eqref{j_split}_2
& \leq \sum_{|m|, |n| \lesssim \delta^{-2}} \int_{\p\O} \int^{\frac{1}{\delta}}_{\delta} \mu_{\Theta} (\xb, v) \J (\xb) \{ n(x) \cdot v \} \Big\{ \frac{|n (\xb ) \cdot \vb |}{(\tb)^{3}} d_0 \Big\} \dd \tb \dd \xb,
\end{split}
\Ee
Thus, we have
\Be \notag
\J (x) = \sum_{|m|, |n| \lesssim \delta^{-2}} \int_{\p\O} \int^{\frac{1}{\delta}}_{\delta} \mu_{\Theta} (\xb, v) \J (\xb) \{ n(x) \cdot v \} \Big\{ \frac{|n (\xb ) \cdot \vb |}{(\tb)^{3}} d_0 \Big\} \dd \tb \dd \xb + \Delta_{\J} (x),
\Ee
where $|\Delta_{\J} (x)| = \eqref{j_split}_1 \leq \frac{\epsilon}{3}$. 

We can do the similar estimate on $\J (x^{i})$, and due to the uniform boundness on $\J (x)$, and the pointwise convergence shown in \eqref{ptwise converge on jmn} as $\delta \leq \tb$. We apply the dominated convergence theorem in finitely many terms in \eqref{j_split^2 first estimate}, and derive that
\Be \notag
\sum_{|m|, |n| \lesssim \delta^{-2}} \int_{\p\O} \int^{\frac{1}{\delta}}_{\delta} 
\J^{m,n} (x^{i})
\dd \tib \dd \xib
\rightarrow 
\sum_{|m|, |n| \lesssim \delta^{-2}} \int_{\p\O} \int^{\frac{1}{\delta}}_{\delta} 
\J^{m,n} (x)
\dd \tb \dd \xb
 \ \text{as} \ x^{i} \rightarrow x.
\Ee
Then there exists sufficient large $N$ such that for any $i \geq N$, we have
\Be \label{j_split^2 second estimate}
\Big|
\sum_{|m|, |n| \lesssim \delta^{-2}} \int_{\p\O} \int^{\frac{1}{\delta}}_{\delta} 
\J^{m,n} (x^{i})
\dd \tib \dd \xib
-
\sum_{|m|, |n| \lesssim \delta^{-2}} \int_{\p\O} \int^{\frac{1}{\delta}}_{\delta} 
\J^{m,n} (x)
\dd \tb \dd \xb
\Big|
\leq \frac{\epsilon}{3}.
\Ee

Now we compute the difference between $\J (x)$ and $\J (x^{i})$ with $\delta$ and $i$ defined above,
\Be \label{j_x_j_xi estimate}
\begin{split}
& \ \ \ \ |\J (x) - \J (x^{i})|
\\& = \Big|
\sum_{|m|, |n| \lesssim \delta^{-2}} \int_{\p\O} \int^{\frac{1}{\delta}}_{\delta} 
\J^{m,n} (x)
\dd \tb \dd \xb + \Delta_{\J} (x)
-
\sum_{|m|, |n| \lesssim \delta^{-2}} \int_{\p\O} \int^{\frac{1}{\delta}}_{\delta} 
\J^{m,n} (x^{i})
\dd \tib \dd \xib - \Delta_{\J} (x^{i})
\Big|
\\& \leq 
\Big|
\sum_{|m|, |n| \lesssim \delta^{-2}} \int_{\p\O} \int^{\frac{1}{\delta}}_{\delta} 
\J^{m,n} (x^{i})
\dd \tib \dd \xib
-
\sum_{|m|, |n| \lesssim \delta^{-2}} \int_{\p\O} \int^{\frac{1}{\delta}}_{\delta} 
\J^{m,n} (x)
\dd \tb \dd \xb
\Big|
+ |\Delta_{\J} (x)| + |\Delta_{\J} (x^{i})|
\\& \leq \frac{\epsilon}{3} + \frac{\epsilon}{3} + \frac{\epsilon}{3} = \epsilon.
\end{split}
\Ee
Therefore, we conclude that $\J (x)$ is continuous on $\p\O$.
\end{proof}

\begin{remark}
Suppose $F (x, v)$ solves \eqref{equation for F_infty} and \eqref{diff_F}.
Since $F(x, v) = \mu_{\Theta} (\xb, \vb) \J (x)$ and $\xb, \vb$ are continuous for $n(x) \cdot v \neq 0$. Therefore, $F(x, v)$ is continuous if $\mu_{\Theta} (x, v)$ is a continuous function and $n(x) \cdot v \neq 0$.
A similar argument can be made for the case with magnet field after we conclude Lemma \ref{lem: j continue mag}.
\end{remark}

\subsection{With Magnet field}
\label{subsec: continuity mag}


Now consider $F (x, v)$ solves \eqref{def:f} and \eqref{diff_F}, with $\J (x)$ defined in \eqref{diff_J}, for $x \in \p\O$,
\Be \label{jx_tbxb mag}
\begin{split}
\J (x) 
& = \int_{n(x) \cdot v >0} F(x, v) \{ n(x) \cdot v \} \dd v
\\& = \sum_{m, n \in \Z} \int_{v \in v^{m, n}} F (x, v) \{ n(x) \cdot v \} \dd v
\\& = \sum_{m, n \in \Z} \int_{\p\O} \int^{\infty}_0 
\underbrace{\mu_{\Theta} (\xb, v) \J (\xb) \{ n(x) \cdot v \} \big\{ \frac{5 B^2_3}{2 - 2 \cos (B_3 \tb)} \big\}}_{\J^{m,n} (x)} \dd \tb \dd \xb,
\end{split}
\Ee
where $v^{m, n}$ defined in \eqref{def:vmn mag}.
    
Now we build the sequence $\{x^{i}\}_{i \in \N}$ with $| x^{i} - x | \rightarrow 0$ as $i \rightarrow \infty$. Similar, we have 
\Be \label{jxi_tbxb mag}
\J (x^{i}) = \sum_{m, n \in \Z} \int_{\p\O} \int^{\infty}_0 
\underbrace{\mu_{\Theta} (\xib, v^{i}) \J (\xib) \{ n(x^i) \cdot v^i \} \big\{ \frac{5 B^2_3}{2 - 2 \cos (B_3 \tib)} \big\}}_{\J^{m,n} (x^{i})}
\dd \tib \dd \xib,
\Ee
where $\tib = \tb (x^{i}, v^{i})$, $\vib = \vb (x^{i}, v^{i})$ and $\xib = \xb (x^{i}, v^{i})$.

\begin{lemma} 
\label{viv_vibvb approach estimate mag}

Consider $(x, v), (x^i, v^i) \in \p\O \times \R^3$, and their backward stochastic cycles with $(X,V)$ in \eqref{characteristics mag}:
\be \notag
(\tb, \xb + (m, n, 0), \vb), \ \ (\tb^i, \xb^i + (m, n, 0), \vb^i).
\ee
Suppose $m, n \in \Z$ are fixed, $\tib = \tb$, $\xib =\xb$ and $| x^{i} - x | \rightarrow 0$ as $i \rightarrow \infty$, then
\be \label{converge on vmn mag}
| v^{i} | \rightarrow | v | \ \text{as} \ i \rightarrow \infty.
\ee
Moreover, we have
\Be \label{ptwise converge on jmn mag}
\J^{m,n} (x^{i}) \rightarrow \J^{m,n} (x) \ \text{as} \ i \rightarrow \infty.
\Ee
\end{lemma}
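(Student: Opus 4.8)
The plan is to follow the architecture of Lemma \ref{viv_vibvb approach estimate}, but to exploit that the characteristics \eqref{characteristics mag} are \emph{explicit}, so none of the Gronwall-type perturbative estimates of the gravitational case are needed. Two inputs do all the work. First, $\tb$ alone pins down the vertical velocity: by Lemma \ref{conservative field mag}, $\tb(x^i,v^i)=|v^i_3|/5$, and since $n\equiv -\mathbf e_3$ on $\p\O$ the constraint $n(x^i)\cdot v^i>0$ forces $v^i_3<0$, hence $v^i_3=-5\tib=-5\tb=v_3$ for every $i$. Second, squaring and adding \eqref{xb1 expression mag}--\eqref{xb2 expression mag} (equivalently, reading off \eqref{vmnb12 first estimate mag}) applied to $(x^i,v^i)$ with backward data $(\tib,\xib+(m,n,0))=(\tb,\xb+(m,n,0))$ gives
\[
|v^i_1|^2+|v^i_2|^2=\frac{B_3^2\big(|\xb_1+m-x^i_1|^2+|\xb_2+n-x^i_2|^2\big)}{2-2\cos (B_3\tb)}.
\]
Here $\tb\notin\frac{2\pi}{B_3}\mathbb{Z}$ (otherwise $\J^{m,n}$ in \eqref{jx_tbxb mag} is not defined), so the denominator is a fixed positive number and the coefficient matrix of the linear system \eqref{xb1 expression mag}--\eqref{xb2 expression mag} for $(v^i_1,v^i_2)$ has determinant $2-2\cos(B_3\tb)\neq0$; this is precisely the local bijectivity recorded in Proposition \ref{prop:mapV mag}, which guarantees that for all $x^i$ sufficiently close to $x$ and a fixed branch there is a unique $v^i$ with the prescribed $(\tib,\xib)$.

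First I would prove \eqref{converge on vmn mag}. Since $v^i_3=v_3$ for all $i$, it suffices to pass to the limit in the displayed identity: with $\tb,\xb,m,n$ held fixed and $x^i\to x$, the right-hand side converges to $\frac{B_3^2(|\xb_1+m-x_1|^2+|\xb_2+n-x_2|^2)}{2-2\cos(B_3\tb)}=|v_1|^2+|v_2|^2$, again by \eqref{vmnb12 first estimate mag} applied to $(x,v)$. Therefore $|v^i|^2=25\tb^2+|v^i_1|^2+|v^i_2|^2\to 25\tb^2+|v_1|^2+|v_2|^2=|v|^2$, which is \eqref{converge on vmn mag}. (In fact the linear system even gives $v^i\to v$ componentwise, but only $|v^i|\to|v|$ is needed below.)

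Then I would deduce \eqref{ptwise converge on jmn mag} by checking convergence factor by factor in the definitions of $\J^{m,n}(x)$ in \eqref{jx_tbxb mag} and $\J^{m,n}(x^i)$ in \eqref{jxi_tbxb mag}. Because $\tib=\tb$, the Jacobian factor $\frac{5B_3^2}{2-2\cos(B_3\tb)}$ is independent of $i$; because $v^i_3=v_3$, the factor $\{n(x^i)\cdot v^i\}=|v^i_3|=5\tb$ is independent of $i$; and $\J(\xib)=\J(\xb)$ is likewise fixed. Finally, writing $\mu_\Theta(\xib,v^i)=\frac{1}{2\pi\Theta(\xb)^2}e^{-|v^i|^2/(2\Theta(\xb))}$ and using \eqref{def:Theta} together with \eqref{converge on vmn mag} and continuity of $t\mapsto e^{-t}$, we obtain $\mu_\Theta(\xib,v^i)\to\mu_\Theta(\xb,v)$. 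Multiplying the four limits yields $\J^{m,n}(x^i)\to\J^{m,n}(x)$.

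The only delicate point — and the reason the lemma carries the hypotheses $\tib=\tb$, $\xib=\xb$ — is the passage through the change of variables $v\mapsto(\tb,\xb)$ of \eqref{jacob:mapV mag}: one must remain on a single local branch and keep $2-2\cos(B_3\tb)$ bounded away from $0$, i.e.\ stay off the degenerate set $\mathcal V^{B_{\e}}$ of Definition \ref{def:BG mag}. Within this lemma $\tb$ is fixed and non-degenerate, so there is no genuine obstacle here; the degeneracy is handled separately, when this lemma is fed into the proof of continuity of $\J$ (Lemma \ref{lem: j continue mag}), in exact analogy with how the set $\Gamma^{\delta}$ is controlled in the gravitational case via Lemma \ref{gamma set} and Lemma \ref{lem: Be mag}.
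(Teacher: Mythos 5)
Your argument follows exactly the same route as the paper's: exploit the explicit formulas $\tb=|v_3|/5$ and \eqref{vmnb12 first estimate mag} to get $v_3^i=v_3$ and convergence of $|v^i_\parallel|$, then pass to the limit factor by factor in the integrand of \eqref{jxi_tbxb mag}. Your version is somewhat more explicit in the last step (the paper just invokes smoothness via the implicit function theorem), and your side remark that one must be off $\frac{2\pi}{B_3}\Z$ for the Jacobian to make sense is correct but implicit in the paper; otherwise the two proofs coincide.
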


\begin{proof}

From \eqref{tb expression mag} and the assumption $\tib = \tb$, we obtain that
\Be \notag 
5 \tb (x, v) = | v_3 | = | v^i_3 |.
\ee
Using \eqref{vmnb12 first estimate mag} and the assumption $\xib = \xb$, we get
\Be \notag 
\begin{split}
& \big| v_{\parallel} \big|^2 
 = \frac{B^2_3 \times | x - \xb + (m, n, 0) |^2 }{2 - 2 \cos (B_3 \tb)},
\ \ \big| v^{i}_{\parallel} \big|^2 
= \frac{B^2_3 \times | x^i - \xb + (m, n, 0) |^2 }{2 - 2 \cos (B_3 \tb)},
\end{split}
\Ee
where $v_{\parallel} = (v_1, v_2)$ and $v^i_{\parallel} = (v^i_1, v^i_2)$.
From $| x^{i} - x | \rightarrow 0$, we derive $\big| v_{\parallel} \big| \rightarrow \big| v^i_{\parallel} \big|$, and obtain \eqref{converge on vmn mag}.

Next, from the implicit function theorem, the characteristic trajectory $X(s,t,x,v)$ is smooth.
Using \eqref{tb expression mag}, \eqref{converge on vmn mag}, and for fixed $m, n$ with $0 < \tib = \tb$ and $\xib =\xb$, we conclude \eqref{ptwise converge on jmn mag}.
\end{proof}

Following Lemma \ref{lem: Be mag} and Lemma \ref{gamma set}, we derive the following Lemma:

\begin{lemma}
\label{gamma set mag}
Define
\Be \notag
\Gamma^{\delta} = \{ (x, v) \in \gamma: v \in \mathcal{V}^{B_{\delta}} (x) \ \text{or} \ |v| \geq \frac{1}{\delta} \}.
\Ee
Consider $(X,V)$ in \eqref{characteristics mag},
then for $0 < \delta \ll 1$ and $x \in \p\O$,
\Be \notag
\int_{\Gamma^{\delta}} \mu_{\Theta} (\xb, v) \{ n(\xb) \cdot v \} \dd v \lesssim \delta.
\Ee
\end{lemma}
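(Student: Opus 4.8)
The plan is to split $\Gamma^{\delta}$ into the two pieces appearing in its definition and estimate each separately, using the two cited lemmas. For fixed $x\in\p\O$ one writes $\Gamma^{\delta}\cap\gamma_+ = \{(x,v)\in\gamma_+: v\in\mathcal{V}^{B_{\delta}}(x)\}\cup\{(x,v)\in\gamma_+: |v|\ge 1/\delta\}$, so that
\Be \notag
\int_{\Gamma^{\delta}} \mu_{\Theta}(\xb,v)\{n(\xb)\cdot v\}\dd v \le \int_{v\in\mathcal{V}^{B_{\delta}}(x)} \mu_{\Theta}(\xb,v)\{n(\xb)\cdot v\}\dd v + \int_{|v|\ge 1/\delta} \mu_{\Theta}(\xb,v)\{n(\xb)\cdot v\}\dd v.
\Ee
Since the boundary $\p\O=\mathbb{T}^2\times\{0\}$ is flat the outward normal is constant, hence $n(\xb)=n(x)$ and $|n(\xb)\cdot v| = |v_3| = 5\tb(x,v)$ by \eqref{tb expression mag}. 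Therefore the first integral is exactly the quantity bounded in Lemma \ref{lem: Be mag} (with $\e=\delta$), which gives the bound $\lesssim\delta$.

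For the second integral I would repeat the Gaussian tail computation from the proof of Lemma \ref{gamma set}: using $|n(\xb)\cdot v|\le|v|$, \eqref{def:Theta} and polar coordinates in $v$,
\Be \notag
\int_{|v|\ge 1/\delta} \mu_{\Theta}(\xb,v)|n(\xb)\cdot v|\dd v < \int_{|v|\ge 1/\delta} \mu_{\Theta}(\xb,v)|v|\dd v \lesssim \int_{1/\delta}^{\infty} e^{-r^2/2b}\, r \dd r \le e^{-1/(2b\delta^2)} \lesssim \delta^2.
\Ee
Combining the two displays and using $0<\delta\ll 1$ (so that $\delta^2\le\delta$) yields $\int_{\Gamma^{\delta}} \mu_{\Theta}(\xb,v)\{n(\xb)\cdot v\}\dd v \lesssim \delta$, which is the assertion.

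I do not expect any real obstacle here; the only point needing a moment's care is identifying the measure $\mu_{\Theta}(\xb,v)\{n(\xb)\cdot v\}\dd v$ restricted to $\mathcal{V}^{B_{\delta}}(x)$ with the object $\int_{\mathcal{V}^{B_{\e}}}\dd\sigma$ of Lemma \ref{lem: Be mag}, which is legitimate because $n$ is constant on the flat bottom and $|v_3|$ controls $\tb$. It is worth noting that, in contrast with the gravitational Lemma \ref{gamma set} where the analogous thin slab contributes $O(\delta^2)$, here the $\mathcal{V}^{B_{\delta}}$ part is only $O(\delta)$; this is precisely why the magnetic bound is linear rather than quadratic in $\delta$, and it is the reason the continuity argument of Lemma \ref{lem: j continue mag} has to be organized with the weaker modulus $\delta$ in place of $\delta^2$.
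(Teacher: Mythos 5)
Your proof is correct and follows precisely the route the paper intends: the paper states this lemma with no written proof beyond the remark that it ``follows from Lemma \ref{lem: Be mag} and Lemma \ref{gamma set},'' and your decomposition into the $\mathcal{V}^{B_{\delta}}$ piece (handled by Lemma \ref{lem: Be mag}, identifying $\mu_{\Theta}(\xb,v)\{n(\xb)\cdot v\}\dd v$ with $\dd\sigma$ via the flat boundary and $|\vb|=|v|$) and the $|v|\ge 1/\delta$ tail (handled by the Gaussian computation from Lemma \ref{gamma set}) is exactly that. Your closing observation — that the $\mathcal{V}^{B_{\delta}}$ contribution is only $O(\delta)$ rather than $O(\delta^2)$, which is why Lemma \ref{lem: j continue mag} uses the weaker modulus — is correct and worth noting.
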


Now we are able to prove the continuity on $\J (x)$ under magnet field.

\begin{lemma} \label{lem: j continue mag}
Suppose $F (x, v)$ solves \eqref{def:f} and \eqref{diff_F}, with $\J (x)$ defined in \eqref{diff_J}, then $\J (x)$ is continuous on $\p\O$.
\end{lemma}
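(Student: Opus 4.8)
The plan is to mirror the structure of the gravitational continuity proof (Lemma \ref{lem: j continue}), replacing the Jacobian bound from Proposition \ref{prop:mapV} by the exact magnetic Jacobian $\frac{5B_3^2}{2-2\cos(B_3\tb)}$ from Proposition \ref{prop:mapV mag} and using the decomposition of velocities into $\mathcal{V}^{B_\e}$ and $\mathcal{V}^{G_\e}$. First I would fix an arbitrary $x\in\p\O$ and a sequence $x^i\to x$, and express $\J(x)$ and $\J(x^i)$ via the change of variables $v\mapsto(\tb,\xb)$ as in \eqref{jx_tbxb mag}--\eqref{jxi_tbxb mag}. Given $\epsilon>0$, choose $\delta>0$ small (with $\delta\le \epsilon/C$ for a suitable constant) and split $\J(x)=\eqref{j_split}_1+\eqref{j_split}_2$ where the first piece integrates over $v\in\gamma_+\cap\Gamma^\delta$ with $\Gamma^\delta$ as in Lemma \ref{gamma set mag}, and the second over $v\in\gamma_+\setminus\Gamma^\delta$. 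By the a priori $L^\infty$-bound on $\J$ (Proposition \ref{prop:j linfty bound}), the representation $F(x,v)=\mu_\Theta(\xb,\vb)\J(\xb)$, and Lemma \ref{gamma set mag}, the first piece is $\lesssim \|\J\|_{L^\infty}\,\delta\le\epsilon/3$, uniformly in $x$; the same bound applies to $\J(x^i)$.

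Next I would analyze the second piece. For $v\in\gamma_+\setminus\Gamma^\delta$ we have $v\notin\mathcal{V}^{B_\delta}(x)$, so $\tb$ stays a definite distance from the multiples $2k\pi/B_3$, forcing $2-2\cos(B_3\tb)\gtrsim\delta^2$ and hence the Jacobian $\lesssim\delta^{-2}$; together with $|v|\le 1/\delta$ and $\tb=v_3/5\le 1/(5\delta)$ this confines $\tb$ to a compact interval $[\,c\delta,\,1/(5\delta)\,]$ (more precisely to finitely many periods), and via \eqref{first cov first part} for the magnetic characteristics it confines $|m|,|n|$ to a finite range $\lesssim\delta^{-2}$ (using $|x-\xb+(m,n,0)|=|\int V\,\dd s|\le \tb\|V\|_\infty\lesssim\delta^{-2}$, with $\|V\|_\infty=|v|\le1/\delta$). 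Thus $\eqref{j_split}_2$ reduces to a finite sum of integrals of the bounded integrand $\J^{m,n}(x)$ over a fixed compact set in $(\tb,\xb)$. On this set Lemma \ref{viv_vibvb approach estimate mag} gives the pointwise convergence $\J^{m,n}(x^i)\to\J^{m,n}(x)$ (the hypotheses $\tib=\tb$, $\xib=\xb$ are arranged by the change of variables, and $|v^i|\to|v|$ plus smoothness of the characteristic flow and continuity of $\mu_\Theta$, $\Theta$, $n(\cdot)$ handle the rest), and the integrands are dominated uniformly by $\|\J\|_{L^\infty}\cdot\delta^{-2}$ times a Gaussian in $\tb$. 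Applying the dominated convergence theorem to the finitely many terms yields an $N$ so that for $i\ge N$ the difference of the finite sums is $\le\epsilon/3$.

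Finally I would assemble the three estimates exactly as in \eqref{j_x_j_xi estimate}: $|\J(x)-\J(x^i)|$ is bounded by the difference of the finite sums plus the two $\Gamma^\delta$-remainders, hence $\le\epsilon/3+\epsilon/3+\epsilon/3=\epsilon$ for $i\ge N$, proving continuity of $\J$ at $x$; since $x$ was arbitrary, $\J$ is continuous on $\p\O$. The main obstacle, compared to the gravitational case, is the degeneracy of the magnetic Jacobian near $B_3\tb\in 2\pi\Z$: one must be careful that the cutoff set $\Gamma^\delta$ both controls the singular region (via Lemma \ref{gamma set mag}, where the bound is only $O(\delta)$ rather than $O(\delta^2)$) and, on its complement, keeps the Jacobian bounded and the number of lattice shifts $(m,n)$ finite, so that dominated convergence is legitimately applicable on a compact parameter domain. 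I do not expect the constants to require any genuinely new estimate beyond Lemmas \ref{lem: Be mag}, \ref{viv_vibvb approach estimate mag}, \ref{gamma set mag} and Proposition \ref{prop:mapV mag}.
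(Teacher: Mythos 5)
Your proposal is correct and follows essentially the same route as the paper's proof: split via $\Gamma^\delta$ with Lemma \ref{gamma set mag} controlling the singular/far piece by $O(\delta)$, note that on the complement $\mathcal{V}^{G_\delta}$ the Jacobian $5B_3^2/(2-2\cos(B_3\tb))\lesssim\delta^{-2}$ so $\tb$ and the lattice shifts $(m,n)$ are confined to a fixed compact/finite range, then apply Lemma \ref{viv_vibvb approach estimate mag} and dominated convergence on the resulting finite sum, and close by the triangle inequality as in \eqref{j_x_j_xi estimate mag}. Your remark that the $\Gamma^\delta$ bound degrades to $O(\delta)$ (versus $O(\delta^2)$ in the gravitational case) is an accurate observation and does not affect the argument, since any $o(1)$ bound as $\delta\to0$ suffices.
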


\begin{proof}

For $(x, v) \in \gamma_{+} \backslash \Gamma^{\delta}$ with $0 < \delta \ll 1$, we have $v \in \mathcal{V}^{G_{\delta}} (x)$. This shows that 
\Be \label{Gamma1 estimate mag}
\frac{\delta}{B_3} + \frac{2 k \pi}{B_3}
  \leq \tb (x, v) \leq 
 \frac{2 \pi - \delta}{B_3} + \frac{2 k \pi}{B_3} \ \text{for} \ k \in \Z_{+}.
\Ee
On the other hand, for $|v| \leq \frac{1}{\delta}$, we have 
\Be \label{Gamma2 estimate mag}
\tb \lesssim |n(x) \cdot v| \leq |v|\leq \frac{1}{\delta}.
\Ee
Using the above estimate, for $x, \xb \in \partial \Omega$ and $m, n \in \Z$, we have
\Be \notag 
\begin{split}
|x - \xb + (m, n) | 
& = |\int^{t}_{t - \tb}  V(s; t, x, v) \dd s| 
\\& \leq \tb \times \| V(s; t, x, v) \|_{\infty} \lesssim \delta^{-2}.
\end{split}
\Ee
This implies 
\Be \label{mn bound mag}
|m| \lesssim \delta^{-2}, |n| \lesssim \delta^{-2}.
\Ee
For any $\epsilon > 0$, we can find sufficient small and fixed $\delta = \delta (\epsilon)$ such that $\delta \leq \frac{\epsilon}{3}$.
Recall \eqref{jx_tbxb mag}, we split $\J (x)$ into two parts:
\Be \label{j_split mag}
\begin{split}
\J (x) 
& = \int_{n(x)\cdot v >0} F(x, v) \{ n(x) \cdot v \} \dd v
\\& = \underbrace{\int_{v \in \gamma_{+} \cap \Gamma^{\delta}} F(x, v) \{ n(x) \cdot v \} \dd v}_{\eqref{j_split mag}_1}
+ \underbrace{\int_{v \in \gamma_{+} \backslash \Gamma^{\delta}} F(x, v) \{ n(x) \cdot v \} \dd v}_{\eqref{j_split mag}_2}.
\end{split}
\Ee
Using the uniform boundness on $\J (x)$, Lemma \ref{conservative field mag} and Lemma \ref{gamma set mag}, we get
\Be \label{j_split_1 estimate mag}
\eqref{j_split mag}_1 
\lesssim \| \J \|_{L^\infty} \int_{v \in \Gamma^{\delta}} \mu_{\Theta} (\xb, v) \{ n(\xb) \cdot v \} \dd v \lesssim \delta \leq \frac{\epsilon}{3}.
\Ee
Using \eqref{Gamma1 estimate mag}, \eqref{Gamma2 estimate mag} and \eqref{mn bound mag}, we find that for $v \in \gamma_{+} \backslash \Gamma^{\delta}$, 
\Be \notag
v \in B_{\delta}, \ \tb \lesssim \frac{1}{\delta} \ \text{and} \ 0 \leq |m|, |n| \lesssim \delta^{-2},
\Ee
which implies
\Be \label{j_split^2 first estimate mag}
\begin{split}
\eqref{j_split mag}_2
& \leq \sum_{|m|, |n| \lesssim \delta^{-2}} \int_{\p\O} \int^{\frac{1}{\delta}}_0 \mathbf{1}_{v \in B_{\delta}} \times \mu_{\Theta} (\xb, v) \J (\xb) \{ n(x) \cdot v \} \big\{ \frac{5 B^2_3}{2 - 2 \cos (B_3 \tb)} \big\} \dd \tb \dd \xb,
\end{split}
\Ee
Thus, we have
\Be \notag
\J (x) = \sum_{|m|, |n| \lesssim \delta^{-2}} \int_{\p\O} \int^{\frac{1}{\delta}}_0 \mathbf{1}_{v \in B_{\delta}} \times \J^{m,n} (x) \dd \tb \dd \xb + \Delta_{\J} (x),
\Ee
where $\Delta_{\J} (x) = \eqref{j_split mag}_1 \leq \frac{\epsilon}{3}$. 

Recall \eqref{jxi_tbxb mag}, given a sequence $\{x^{i}\}_{i \in \N}$ with $| x^{i} - x | \rightarrow 0$ as $i \rightarrow \infty$, we can do the similar estimate on $\J (x^{i})$. 
Due to the uniform boundness and the pointwise convergence shown in \eqref{ptwise converge on jmn mag}. We apply the dominated convergence theorem in finitely many terms in \eqref{j_split^2 first estimate mag}, and derive that
\Be \notag
\sum_{|m|, |n| \lesssim \delta^{-2}}\int^{\frac{1}{\delta}}_0 \mathbf{1}_{v \in B_{\delta}} \times 
\J^{m,n} (x^{i})
\dd \tib \dd \xib
\rightarrow 
\sum_{|m|, |n| \lesssim \delta^{-2}} \int^{\frac{1}{\delta}}_0 \mathbf{1}_{v \in B_{\delta}} \times 
\J^{m,n} (x)
\dd \tb \dd \xb
\ \ \text{as} \ x^{i} \rightarrow x.
\Ee
Then there exists sufficient large $N$ such that for any $i \geq N$, we have
\Be \label{j_split^2 second estimate mag}
\Big|
\sum_{|m|, |n| \lesssim \delta^{-2}} \int^{\frac{1}{\delta}}_0 \mathbf{1}_{v \in B_{\delta}} \times
\J^{m,n} (x^{i})
\dd \tib \dd \xib
-
\sum_{|m|, |n| \lesssim \delta^{-2}} \int^{\frac{1}{\delta}}_0 \mathbf{1}_{v \in B_{\delta}} \times 
\J^{m,n} (x)
\dd \tb \dd \xb
\Big|
\leq \frac{\epsilon}{3}.
\Ee

Now we compute the difference between $\J (x)$ and $\J (x^{i})$ with $\delta$ and $i$ defined above,
\Be \label{j_x_j_xi estimate mag}
\begin{split}
& \ \ \ \ |\J (x) - \J (x^{i})|
\\& = \Big|
\sum_{|m|, |n| \lesssim \delta^{-2}}\int^{\frac{1}{\delta}}_0 \mathbf{1}_{v \in B_{\delta}} \times 
\J^{m,n} (x)
\dd \tb \dd \xb + \Delta_{\J} (x)
-
\sum_{|m|, |n| \lesssim \delta^{-2}} \int^{\frac{1}{\delta}}_0 \mathbf{1}_{v \in B_{\delta}} \times
\J^{m,n} (x^{i})
\dd \tib \dd \xib - \Delta_{\J} (x^{i})
\Big|
\\& \leq \eqref{j_split^2 second estimate mag} + |\Delta_{\J} (x)| + |\Delta_{\J} (x^{i})| 
\leq
\frac{\epsilon}{3} + \frac{\epsilon}{3} + \frac{\epsilon}{3} = \epsilon.
\end{split}
\Ee
Therefore, we conclude that $\J (x)$ is continuous on $\p\O$.
\end{proof}

\subsection{Uniform \texorpdfstring{$L^1$}{L1} bound of Residual Measure}

Recall that in \eqref{est_a:forcing} and \eqref{est_1:forcing mag}, we bound the residual measure terms as 
\Be \notag
\int_{\mathcal{V}_0} \dd \sigma_0 \cdots \int_{\mathcal{V}_{i-1}} \dd \sigma_{i-1} \leq 2^{i}, \ \text{with} \ i = 2, \cdots , k-1,
\Ee
where $\dd \sigma_j = \mu_{\Theta} (x^{j+1}, v^{j}) \{ n(x^j) \cdot v^j \} \dd v^j$ in \eqref{def:sigma measure}.
To derive the decay of exponential moments, we show the following uniform $L^1$ bound of the residual measure:

\begin{proposition} \label{prop: est:measure}
Consider 
$(X,V)$ in \eqref{characteristics} (resp. \eqref{characteristics mag}),
then for $i = 1, \cdots , k-1$,
there exists $C > 0$ such that
\be \label{est:measure}
\int_{\prod_{j=0}^{i-1} \mathcal{V}_j}   
\mathbf{1}_{0 < t_{i } \leq t} \
\dd \sigma_{i-1} \cdots \dd \sigma_1 \dd \sigma_0 \leq C,
\ee
where $\dd \sigma_j = \mu_{\Theta} (x^{j+1}, v^{j}) \{ n(x^j) \cdot v^j \} \dd v^j$ in \eqref{def:sigma measure}, and
$C = C (\O, \Theta)$ is independent of both $i$ and $t$.
\end{proposition}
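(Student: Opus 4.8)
The plan is to interpret the residual measure as a \emph{sub-part} of the stochastic-cycle representation of the outgoing flux $\J(x)$ of the (unique, non-negative, unit-mass) stationary solution $F_s$, and then squeeze it between a pointwise lower bound and a uniform $L^\infty$ upper bound of $\J$, both of which are available from the preceding sections. First I would fix $F = F_s$, the non-negative stationary solution with $\iint F_s = 1$ from Theorem \ref{existence on F} and Remark \ref{positivity on F_infty}. Applying Lemma \ref{sto_cycle J} (resp. its magnetic analogue) to $F_s$ with $k = i+1$ and keeping only the single term $\eqref{expand_G2}$ — i.e. discarding all the other manifestly non-negative terms in the expansion, which is legitimate since $F_s \geq 0$ — gives
\[
\J(x) \;\geq\; \int_{\prod_{j=0}^{i} \mathcal{V}_j}
    \mathbf{1}_{t^{i } \geq 0 }\, F_s (x^{i }, v^{i }) \,\dd \tilde{\Sigma}_{i}
\;=\; \int_{\prod_{j=0}^{i-1} \mathcal{V}_j}  \mathbf{1}_{0 < t^{i}\leq t}\, \dd\sigma_{i-1}\cdots\dd\sigma_0 \int_{\mathcal{V}_i} \mu_\Theta(x^{i+1},v^i) \J(x^{i+1}) \{n(x^i)\cdot v^i\}\dd v^i,
\]
using $F_s(x^{i+1},v^i) = \mu_\Theta(x^{i+1},v^i)\J(x^{i+1})$ as in \eqref{FJ relation} and Remark \ref{probability}; here $t$ is the large but fixed parameter and $0 < t^i \le t$ encodes the indicator in \eqref{est:measure} (with a harmless relabelling of indices, since $F_s$ is time-independent the choice of $t$ is immaterial — we may simply choose $t$ large enough that the constraint $t^i \le t$ is automatic, or keep it as in the statement).

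\textbf{The two bounds.} The second ingredient is that $\J(x) = \int_{n(x)\cdot v>0} F_s(x,v)\{n(x)\cdot v\}\dd v$ has a strictly positive infimum on the compact boundary $\p\O$. By Lemma \ref{lem: j continue} (resp. Lemma \ref{lem: j continue mag}) $\J$ is continuous on $\p\O$, so it attains an infimum $L \geq 0$. If $L = 0$ then $\J(x_0) = 0$ for some $x_0$; writing $\J(x_0)$ via the change of variables $v \mapsto (\tb, S_{\xb})$ of Proposition \ref{prop:mapV} (resp. Proposition \ref{prop:mapV mag}) and using $F_s = \mu_\Theta(\xb,\vb)\J(\xb) \geq 0$ with $\mu_\Theta > 0$, one forces $\J(\xb(x_0,v)) = 0$ for a.e.\ $v$, and then a connectedness/propagation argument along characteristics spreads $\J \equiv 0$ over $\p\O$, hence $F_s \equiv 0$, contradicting $\iint F_s = 1$. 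Therefore $L > 0$. For the upper bound, Proposition \ref{prop:j linfty bound} applied to $F_s$ gives $\|\J\|_{L^\infty_x} \lesssim 2^{k+1}\|F_s\|_{L^1_{x,v}} = 2^{k+1} =: U < \infty$, with $k$ fixed (depending only on $\O$ and the fixed $t$). Combining: on the right-hand side of the displayed inequality, $\J(x^{i+1}) \geq L$, and $\int_{\mathcal V_i}\mu_\Theta(x^{i+1},v^i)\{n(x^i)\cdot v^i\}\dd v^i \le 2$ by Remark \ref{sigma measure estiamte} is \emph{not} what we want — rather we bound below by replacing $\J(x^{i+1})$ by $L$ and noting $\int_{\mathcal V_i}\mu_\Theta\{n\cdot v\}\dd v^i$ is also bounded \emph{below} by a positive constant $c_\O$ (it equals a fixed positive quantity depending only on $\Theta$ through $a,b$ and on $\O$). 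Hence
\[
U \;\geq\; \|\J\|_{L^\infty} \;\geq\; \J(x) \;\geq\; c_\O\, L \int_{\prod_{j=0}^{i-1} \mathcal{V}_j}  \mathbf{1}_{0 < t^{i}\leq t}\, \dd\sigma_{i-1}\cdots\dd\sigma_0,
\]
which yields \eqref{est:measure} with $C = U/(c_\O L)$, a constant depending only on $\O$ and $\Theta$ and independent of both $i$ and $t$.

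\textbf{Main obstacle.} The delicate point is the strict positivity $L > 0$: ruling out that $\J$ vanishes somewhere. The continuity of $\J$ (which itself relies crucially on downward gravity, via Proposition \ref{prop:mapV}) reduces it to showing that $\J$ cannot vanish at an interior minimum without vanishing identically. I would argue this by the integral representation of $\J(x_0)$: since the integrand is non-negative and $\mu_\Theta>0$, $\J(x_0)=0$ forces $\J(\xb(x_0,v))=0$ for a.e.\ admissible $v$; varying $v$ over the full admissible set and using that the map $v\mapsto \xb(x_0,v)$ covers $\p\O$ (modulo the periodicity, it is surjective onto $\p\O$ by the change-of-variable formula having full-measure image), we get $\J\equiv 0$ a.e.\ on $\p\O$, hence $F_s\equiv 0$ in $L^1$ by $\eqref{FJ relation}$ and \eqref{COV+}, contradicting $\iint F_s = 1$. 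The magnetic case is handled identically, using Proposition \ref{prop:mapV mag} and Lemma \ref{lem: j continue mag} in place of their gravitational counterparts, together with Lemma \ref{conservative field mag}. The one routine check I would still need to supply is the positive lower bound $c_\O$ for $\int_{\mathcal V_i}\mu_\Theta(x^{i+1},v^i)\{n(x^i)\cdot v^i\}\dd v^i$, which follows from $\mu_\Theta(x,v) \ge \frac{1}{2\pi b^2} e^{-|v|^2/2a}$ and a direct Gaussian computation; this is independent of $i$ and $t$.
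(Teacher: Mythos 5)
Your proof is correct and follows essentially the same route as the paper's: treat the residual measure as a sub-part of the stochastic-cycle expansion of $\J(x)$ for the unit-mass, non-negative stationary $F_s$, then squeeze between the strictly positive infimum $L>0$ (from continuity of $\J$ on the compact $\p\O$ plus the non-vanishing argument) and the $L^\infty$ upper bound $U$ from Proposition \ref{prop:j linfty bound}. One minor inefficiency: where you expand $F_s(x^i,v^i)=\mu_\Theta(x^{i+1},v^i)\J(x^{i+1})$ and then need a separate positive lower bound $c_\O$ on $\int_{\mathcal V_i}\mu_\Theta(x^{i+1},v^i)\{n(x^i)\cdot v^i\}\dd v^i$, the paper simply recognizes $\int_{\mathcal{V}_i}F_s(x^i,v^i)\{n(x^i)\cdot v^i\}\dd v^i=\J(x^i)\geq L$ directly, which avoids the extra step and the auxiliary constant.
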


\begin{proof}

Recall from Lemma \ref{lem: j continue}, we have the continuity on the following:
	\Be \notag
	\J (x) = \int_{n(x)\cdot v >0} F_s (x, v) \{ n(x) \cdot v \} \dd v, \ \text{for} \ x \in \p\O,
	\Ee
where we consider $F_s (x, v) \geq 0$ solving \eqref{equation for F_infty} and \eqref{diff_F} with $\iint_{\O \times \R^3} F_s (x, v) \dd x \dd v = 1$.
	
From the $L^{\infty}_x$-estimate on $\J (x)$ in Proposition \ref{prop:j linfty bound}, there exists $U > 0$ such that 
	\be \notag
	\| \J (x)\|_{L^{\infty}_x} \leq U.
	\ee 
	Since $\p\O$ is compact and $\J (x)$ is continuous, $\J (x)$ must have the infimum, and we define 
	\Be \notag
	L := \inf\limits_{x \in \p\O} \J (x).
	\Ee
	
	It is easy to see that $L \geq 0$. Moreover, from \eqref{diff_J}, \eqref{FJ relation} and Lemma \ref{conservative field}, we have
	\Be \notag
	\J (x) = \int_{n(x)\cdot v >0} \mu_{\Theta} (\xb, v) \J (\xb) \{ n(x) \cdot v \} \dd v, \ \text{for} \ x \in \p\O.
	\Ee
	Now we claim that $L > 0$.
	Suppose not, if $\J (x) = 0$ for some $x \in \p\O$, we consider the map:
	\be \notag
	v^{0, 0}
	\in \mathcal{V}
	\mapsto (\xb, \tb),
	\ee
	with $1 \leq \tb \leq 10$.
	Then the integrand has to be zero almost everywhere since it is non-negative. This implies
	$\J (x)$ has to be zero almost everywhere, which contradicts with $\iint_{\O \times \R^3} F_s (x, v) \dd x \dd v > 0$.
	Therefore, we derive that $L > 0$ and show the claim.
	
	From the stochastic cycles in Lemma \ref{sto_cycle}, for $x \in \p\O$, we have
	\Be \notag 
	\begin{split}
		U \geq \J (x) 
		& \geq \int_{\prod_{j=0}^{i-1} \mathcal{V}_j}   
		\mathbf{1}_{0 < t_{i } \leq t} \
		\dd \sigma_{i-1} \cdots \dd \sigma_1 \dd \sigma_0 
		\int_{\mathcal{V}_i} F_s (x^{i}, v^{i}) 
		\{ n(x^i) \cdot v^i \} \dd v^i
		\\& \geq \int_{\prod_{j=0}^{i-1} \mathcal{V}_j}   
		\mathbf{1}_{0 < t_{i } \leq t} \
		\dd \sigma_{i-1} \cdots \dd \sigma_1 \dd \sigma_0 \times L.
	\end{split}
	\Ee
	Therefore, for $i = 2, \cdots , k-1$, we derive
	\Be \notag
	\int_{\prod_{j=0}^{i-1} \mathcal{V}_j}   
	\mathbf{1}_{0 < t_{i } \leq t} \
	\dd \sigma_{i-1} \cdots \dd \sigma_1 \dd \sigma_0 \leq L^{-1} U.
	\Ee
	
For the magnet field case, since we have proved $\J (x) $ is continuous in Lemma \ref{lem: j continue mag}. Following the steps in gravitational case, we can conclude \eqref{est:measure}.
\end{proof}

From Proposition \ref{prop: est:measure} and
following the similar idea in Lemma \ref{lem:bound1}, we obtain:

\begin{lemma} \label{lem:bound1_2}
Suppose $f(t,x,v)$ solves \eqref{eqtn_f}-\eqref{diff_f} and $(X,V)$ in \eqref{characteristics}, for $0 \leq t^i \leq t$, $i = 2, \cdots , k-1$, 
\Be \label{est:bound1_2}
	\int_{\prod_{j=0}^{i} \mathcal{V}_j}      
	\mathbf{1}_{t^{i+1} < 0 \leq t^{i }}
	\int^{t^{i}}_{0} \varrho^\prime(s) f(s, X(s; t^i, x^i, v^i), V(s; t^i, x^i, v^i)) \dd s 
	\dd \tilde{\Sigma}_{i}
	\lesssim  \int^t_0 \| \varrho^\prime (s) f(s) \|_{L^1_{x,v}} \dd s.
\Ee
where 
$\dd \tilde{\Sigma}_{i} := \frac{ \dd \sigma_{i}}{\mu_{\Theta} (x^{i+1}, v^{i})} \dd \sigma_{i-1} \cdots \dd \sigma_1 \dd \sigma_0$ with $\dd \sigma_j = \mu_{\Theta} (x^{j+1}, v^{j}) \{ n(x^j) \cdot v^j \} \dd v^j$ in \eqref{def:sigma measure}.
\end{lemma}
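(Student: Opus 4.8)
\textbf{Proof proposal for Lemma \ref{lem:bound1_2}.}
The plan is to mimic the structure of the proof of Lemma \ref{lem:bound1}, replacing the pointwise bound of the stationary solution $F(x,v)$ by the $L^1$-in-time quantity $\int_0^t \|\varrho'(s) f(s)\|_{L^1_{x,v}}\,\dd s$, and replacing the crude estimate $\int_{\mathcal V_0}\dd\sigma_0\cdots\int_{\mathcal V_{i-1}}\dd\sigma_{i-1}\le 2^i$ by the uniform-in-$i$ residual-measure bound of Proposition \ref{prop: est:measure}. Concretely, I would first reorganize the left-hand side of \eqref{est:bound1_2} so that the innermost integration against $\dd\sigma_i$ (i.e.\ in $v^i$) is isolated; on the support of $\mathbf 1_{t^{i+1}<0\le t^i}$, the backward trajectory starting from $(t^i,x^i,v^i)$ reaches time $0$ before hitting the boundary again, so $\int_0^{t^i}\varrho'(s)f(s,X(s;t^i,x^i,v^i),V(s;t^i,x^i,v^i))\,\dd s$ together with the weight $\{n(x^i)\cdot v^i\}\dd v^i$ is exactly in the form to which the change-of-variables \eqref{COV} (Lemma \ref{lem:COV}) applies, turning $\int_{\mathcal V_i}(\cdots)\{n(x^i)\cdot v^i\}\dd v^i$ into $\iint_{\O\times\R^3}\int_0^{t^i}\varrho'(s)|f(s,y,w)|\,\dd s\,\dd y\,\dd w$, which is $\le \int_0^t\|\varrho'(s)f(s)\|_{L^1_{x,v}}\dd s$ uniformly in the remaining variables $(x,v,v^1,\dots,v^{i-1})$.

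Having pulled that factor out, the remaining integral is precisely $\int_{\prod_{j=0}^{i-1}\mathcal V_j}\mathbf 1_{0<t_i\le t}\,\dd\sigma_{i-1}\cdots\dd\sigma_0$, since the indicator $\mathbf 1_{t^{i+1}<0\le t^i}$ forces $0<t^i\le t$ and the $\dd\sigma_j$'s are probability-like measures of the type appearing in \eqref{est:measure}. Applying Proposition \ref{prop: est:measure} bounds this by a constant $C=C(\O,\Theta)$ independent of $i$ and $t$. Combining the two pieces yields \eqref{est:bound1_2} with an implicit constant independent of $i$, $k$, and $t$. The second bullet of the excerpt's remark about not fixing $t$ or $k$ is exactly why Proposition \ref{prop: est:measure} (rather than the $2^i$ bound) is essential here: an $i$-dependent residual-measure bound would destroy the estimate when $i$ is taken proportional to $t$.

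The main obstacle is a bookkeeping/measurability one rather than a deep analytic one: one must verify that the change of variables \eqref{COV} can legitimately be applied \emph{inside} the iterated integral, i.e.\ for fixed $(x^1,v^1,\dots,x^{i-1},v^{i-1})$ — hence fixed $t^i$ and $x^i$ — the map $(x^i \text{ fixed},\,v^i)\mapsto$ (backward exit data) behaves exactly as in the stationary setting, and the time integral $\int_0^{t^i}\varrho'(s)f(s,\cdot)\,\dd s$ is a genuine function of $(s,y,w)$ so that Tonelli/Fubini lets us swap the $s$-integration with the phase-space integration and with the outer $\dd\sigma_j$-integrations. Since $\varrho'\ge 0$ (the weights $\varrho$ in this paper are nondecreasing, e.g.\ $\varrho(t)=t+1$ or $\varrho(t)=e^{\Lambda t}$) and $f$ is bounded in the relevant weighted norms by Theorem \ref{theorem_infty_1}, all the integrands are nonnegative and absolutely integrable, so these manipulations are justified and the argument goes through as in Lemma \ref{lem:bound1} with the $L^1_t L^1_{x,v}$ norm playing the role of $\|F\|_{L^1_{x,v}}$.
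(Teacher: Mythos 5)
Your overall strategy (split into an inner $L^1_{x,v}$ piece and an outer residual--measure piece controlled by Proposition~\ref{prop: est:measure}) captures the right philosophy, and the final allocation of $\int_0^t\|\varrho'(s)f(s)\|_{L^1_{x,v}}\dd s$ times a uniform constant matches the target bound. However, the key step --- applying \eqref{COV} to the innermost $v^i$ and $s$ integrals alone, for \emph{fixed} $x^i$ --- does not go through, and this is a genuine gap, not a bookkeeping issue. Lemma~\ref{lem:COV}, \eqref{COV}, identifies
\[
\int_{\gamma_+}\int_0^{\tb} g\,|n(x)\cdot v|\,\dd s\,\dd v\,\dd S_x
= \iint_{\O\times\R^3} g\,\dd y\,\dd v,
\]
so the boundary--surface integration $\dd S_x$ is an essential part of the change of variables: the source is $2+3+1=6$-dimensional, matching $\O\times\R^3$. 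In your reorganization $x^i$ is determined by $(x,v,v^1,\dots,v^{i-1})$ and hence frozen, so the only free variables at the innermost level are $(v^i,s)$, a $4$-dimensional set. The map $(v^i,s)\mapsto(X(s;t^i,x^i,v^i),V(s;t^i,x^i,v^i))$ therefore has a measure-zero image in $\O\times\R^3$; there is no identity of the claimed type, and the asserted bound $\int_{\mathcal V_i}(\cdots)\{n(x^i)\cdot v^i\}\dd v^i \le \iint_{\O\times\R^3}\int_0^{t^i}\varrho'(s)|f(s,y,w)|\,\dd s\,\dd y\,\dd w$ does not follow from \eqref{COV}.

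The fix is to first spend two of the outer $\dd\sigma_j$ integrals to liberate the missing degrees of freedom, exactly as in Lemma~\ref{lem:bound1}: use Proposition~\ref{prop:mapV} on $v^{i-2}$ and $v^{i-1}$ to pass to $(\tb^{i-2},S_{x^{i-1}})$ and $(\tb^{i-1},S_{x^i})$, use Lemma~\ref{lem: sum of mu} to sum over periods and produce the $\langle\tb\rangle^{-5}$ weight as in \eqref{est2:forcing}--\eqref{est3:forcing} (so that one of the two $\tb$ integrals converges on its own), and only \emph{then} perform the bulk change of variables --- now on the genuinely free triple $(x^i,\tb^{i-2},v^i)$ (resp.\ $(x^i,\tb^{i-1},v^i)$), which is $2+1+3=6$-dimensional --- to obtain $\iint_{\O\times\R^3}|f(s,y,w)|\,\dd y\,\dd w$ for each fixed $s$. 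The remaining residual measure is then only $\int_{\prod_{j=0}^{i-3}\mathcal V_j}\dd\sigma_j$ (not up to $j=i-1$ as in your plan), and it is this truncated product that Proposition~\ref{prop: est:measure} controls uniformly in $i$ and $t$. Your instinct that Proposition~\ref{prop: est:measure} must replace the $2^i$ bound is correct and is indeed what the paper does; what is missing is the intermediate Lemma~\ref{lem:bound1}-type conversion step, without which the dimensions do not close.
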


\begin{proof}
For \eqref{est:bound1_2}, it suffices to prove this upper bound for $i = 2,...,k-1$, 
\Be \label{forcing_2}
\int_{\mathcal{V}_0} \dd \sigma_0 \cdots \int_{\mathcal{V}_{i-1}} \dd \sigma_{i-1} 
\underbrace{ \int_{\mathcal{V}_i} 
\mathbf{1}_{t^{i+1} < 0 \leq t^i}
\int^{t^{i}}_{0} \varrho^\prime(s) | f(s, X(s; t^i, x^i, v^i), V(s; t^i, x^i, v^i)) | \dd s
\{n(x^i) \cdot v^i\} \dd v^i}_{\eqref{forcing_2}^*}.
\Ee  
Following Lemma \ref{lem:bound1}, \eqref{est_a:forcing} and \eqref{est_b:forcing}, we have
\begin{align}
\eqref{forcing_2} \lesssim 
& \int_{\mathcal{V}_0} \dd \sigma_0 \cdots \int_{\mathcal{V}_{i-3}} \dd \sigma_{i-3}  
\int^{t^{i-2}}_0 
\frac{\dd \tb^{i-1}}{\langle \tb^{i-1} \rangle^{5}}
\int_0^{\min\{t^{i-2}- \tb^{i-1},  \tb^{i-1}  \}} \dd \tb^{i-2} \int_{\p\O} \dd S_{x^i} \eqref{forcing_2}^* \label{est_a:forcing_2}
\\& + \int_{\mathcal{V}_0} \dd \sigma_0 \cdots \int_{\mathcal{V}_{i-3}} \dd \sigma_{i-3} 
\int^{t^{i-2}}_0 \frac{\dd \tb^{i-2}}{\langle  \tb^{i-2}\rangle^{5}}
\int_0^{ \min\{t^{i-2}- \tb^{i-2},\tb^{i-2}  \}} \dd \tb^{i-1} \int_{\p\O} \dd S_{x^i} \eqref{forcing_2}^*. \label{est_b:forcing_2}
\end{align} 

For \eqref{est_a:forcing_2}, we employ the change of variables 
$$
(x^{i}, \tb^{i-2}, v^{i}) 
\mapsto (y, w) = (X(s; t^{i-2} -\tb^{i-2} - \tb^{i-1}, x^{i}, v^{i}), V(s; t^{i-2} -\tb^{i-2} - \tb^{i-1}, x^{i}, v^{i})) \in \O \times\R^3, $$
with $
|n(x^i) \cdot v^i| \dd S_{x^i} \dd \tb^{i-2} \dd v^i \lesssim \dd y \dd w$. Applying this change of variables, Proposition \ref{prop: est:measure} and $0 \leq t^i \leq t$, we derive
\Be \notag
\begin{split}
\eqref{est_a:forcing_2}
& \leq \int_{\mathcal{V}_0} \dd \sigma_0 \cdots \int_{\mathcal{V}_{i-3}} \dd \sigma_{i-3}
\int^{t^{i-2}}_0 \dd \tb^{i-1}\langle \tb^{i-1} \rangle^{-5} 
\int^{t^{i}}_{0} \varrho^\prime(s)
\iint_{\O \times\R^3 } 
|f (s, y, w)|  \dd y \dd w \dd s
\\& \lesssim \int^t_0 \| \varrho^\prime (s) f(s) \|_{L^1_{x,v}} \dd s.
\end{split}
\Ee

A bound of \eqref{est_b:forcing_2} can be derived, using the change of variables 
$$
(x^{i}, \tb^{i-1}, v^{i}) 
\mapsto (y, w) = (X(s; t^{i-2} -\tb^{i-2} - \tb^{i-1}, x^{i}, v^{i}), V(s; t_{i-2} -\tb^{i-2} - \tb^{i-1}, x^{i}, v^{i})) \in \O \times\R^3, $$
with $
|n(x^i) \cdot v^i| \dd S_{x^i} \dd \tb^{i-1} \dd v^i \lesssim \dd y \dd w$.
\end{proof}


For the magnetic case, we need Proposition \ref{prop: est:measure} and Lemma \ref{lem:bound1 mag} to derive the following:

\begin{lemma} \label{lem:bound1_2 mag}
Suppose $f(t,x,v)$ solves \eqref{eqtn_f}-\eqref{diff_f} for \eqref{field property mag}  and $(X,V)$ in \eqref{characteristics mag},
for $0 \leq t^i \leq t$, $i = 2, \cdots , k-1$ 
and $0 < \e \ll 1$, 
\Be 
\begin{split}
& \ \ \ \  \int_{\prod_{j=0}^{i} \mathcal{V}_j} \mathbf{1}_{t^{i+1} < 0 \leq t^{i }}
 \int^{t^{i}}_{0} \varrho^\prime(s) f(s, X(s; t^i, x^i, v^i), V(s; t^i, x^i, v^i)) \dd s 
 \dd \tilde{\Sigma}_{i} 
\\&  \lesssim  \e^{i-1} \int_{\mathcal{V}_i} \mathbf{1}_{t^{i+1} < 0 \leq t^{i}} \ \varrho (t^i) f(t^i, x^i, v^i) \{ n(x^i) \cdot v^i \}  \dd v^i
 +  (i-1) \e^{-2} \int^t_0 \| \varrho^\prime (s) f(s) \|_{L^1_{x,v}} \dd s.
\end{split}
\Ee
where 
$\dd \tilde{\Sigma}_{i} := \frac{ \dd \sigma_{i}}{\mu_{\Theta} (x^{i+1}, v^{i})} \dd \sigma_{i-1} \cdots \dd \sigma_1 \dd \sigma_0$ with $\dd \sigma_j = \mu_{\Theta} (x^{j+1}, v^{j}) \{ n(x^j) \cdot v^j \} \dd v^j$ in \eqref{def:sigma measure}.
\end{lemma}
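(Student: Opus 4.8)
The plan is to mimic the proof of Lemma \ref{lem:bound1 mag}, but now carrying along the time-integral factor $\int_0^{t^i}\varrho'(s)f(s,X(s;t^i,x^i,v^i),V(s;t^i,x^i,v^i))\,\dd s$ instead of a pointwise value of $F$. As in that lemma, the first step is to split the variables $\{v^j\}_{j=0}^{i-2}$ according to Definition \ref{def:BG mag}: either $v^j\in\mathcal{V}^{B_\e}_j$ for all $0\le j\le i-2$, or there is some $0\le j\le i-2$ with $v^j\in\mathcal{V}^{G_\e}_j$. In the first case, each integration $\int_{\mathcal{V}^{B_\e}_j}\dd\sigma_j\lesssim\e$ by Lemma \ref{lem: Be mag}, so the factors over $j=0,\dots,i-2$ contribute $\e^{i-1}$; what is left is the single integration in $v^i$ against the integrand $\int_0^{t^i}\varrho'(s)f\,\dd s$. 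But along the characteristic, $\int_0^{t^i}\varrho'(s)f(s,X(s),V(s))\,\dd s=\varrho(t^i)f(t^i,x^i,v^i)-\varrho(0)f(0,\dots)$ only after integrating the transport equation; more precisely one uses $\tfrac{\dd}{\dd s}\big(\varrho(s)f(s,X,V)\big)=\varrho'(s)f(s,X,V)$ together with $\mathbf{1}_{t^{i+1}<0\le t^i}$ (so the trajectory from $0$ to $t^i$ stays interior) to bound this term by the first term on the right-hand side of the claimed inequality, $\e^{i-1}\int_{\mathcal{V}_i}\mathbf{1}_{t^{i+1}<0\le t^i}\varrho(t^i)f(t^i,x^i,v^i)\{n(x^i)\cdot v^i\}\,\dd v^i$, after discarding the nonpositive/irrelevant boundary contribution at $s=0$.

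For the second case, exactly as in the proof of Lemma \ref{lem:bound1 mag} I would fix the first index $j$ with $v^j\in\mathcal{V}^{G_\e}_j$, apply Proposition \ref{prop:mapV mag} to change variables $v^k\mapsto(\tb^k,S_{x^{k+1}})$ for $k\ge j$, and regroup the Jacobian factors $5B_3^2/(2-2\cos(B_3\tb^k))$ with the sums $\sum_{m,n}\mu_\Theta(x^{k+1},v^{m,n}_{k,\mathbf{b}})$ exactly as in \eqref{est2:forcing mag}. The key estimates are reused verbatim: the pair of consecutive factors at level $j,j+1$ is bounded by $\e^{-2}$ via \eqref{est3:forcing mag} (using $(2-2\cos(B_3\tb^j))^{-1}\lesssim\e^{-2}$ on $\mathcal{V}^{G_\e}_j$ together with Lemma \ref{lem: sum of mu mag} and $te^{-t^2}\lesssim1$), and the remaining factors at levels $k=j+2,\dots,i-1$ each contribute $\lesssim1$ via \eqref{bound3_6 mag}. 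After these reductions the $v^i$-integration carries the factor $\int_0^{t^i}\varrho'(s)|f(s,X(s),V(s))|\,\dd s$, and I would perform the change of variables $(x^i,\tb^j,v^i)\mapsto(y,w)=(X(s;\ast,x^i,v^i),V(s;\ast,x^i,v^i))$ as in \eqref{est_1:forcing mag}, using \eqref{COV} to absorb $|n(x^i)\cdot v^i|\,\dd S_{x^i}\,\dd\tb^j\,\dd v^i\lesssim\dd y\,\dd w$. This bounds the $j$-th contribution by $\e^{-2}\int_0^t\|\varrho'(s)f(s)\|_{L^1_{x,v}}\,\dd s$ (the residual measure over $\sigma_0,\dots,\sigma_{j-1}$ being uniformly bounded by Proposition \ref{prop: est:measure}, which is what replaces the crude $2^j$ bound of Lemma \ref{lem:bound1 mag}). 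Summing over the $i-1$ possible choices of $j$ gives the second term $(i-1)\e^{-2}\int_0^t\|\varrho'(s)f(s)\|_{L^1_{x,v}}\,\dd s$.

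The main obstacle I anticipate is the bookkeeping in the first (all-$\mathcal{V}^{B_\e}$) case: one must convert the time-integral $\int_0^{t^i}\varrho'(s)f\,\dd s$ into the boundary value $\varrho(t^i)f(t^i,x^i,v^i)$ cleanly, which requires the indicator $\mathbf{1}_{t^{i+1}<0\le t^i}$ to guarantee the characteristic from time $0$ to time $t^i$ does not hit $\p\O$, so that $\tfrac{\dd}{\dd s}(\varrho f)=\varrho'f$ holds along it and the fundamental theorem of calculus applies; one then has to argue the remaining endpoint term (at $s=0$) is either absorbed or of the right sign/smallness. A secondary subtlety is that Proposition \ref{prop: est:measure} as stated controls $\int\mathbf{1}_{0<t_i\le t}\,\dd\sigma_{i-1}\cdots\dd\sigma_0$, and here we need the residual measure over only the first $j$ indices $\sigma_0,\dots,\sigma_{j-1}$ (with the $j$-th through $(i-1)$-th handled by the change of variables and the $\e^{-2}$, $\lesssim1$ Jacobian estimates); one checks this is legitimate because dropping the later constraints and integrating out $v^j,\dots,v^{i-1}$ either produces the bounded Jacobian factors or, after the $(y,w)$ change of variables, the $L^1$ norm — so the residual measure one actually invokes Proposition \ref{prop: est:measure} for is indeed of the form covered there. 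Everything else is a direct transcription of the already-established estimates in Lemmas \ref{lem: Be mag}, \ref{lem: sum of mu mag}, \ref{lem:bound1 mag}, and Proposition \ref{prop:mapV mag}.
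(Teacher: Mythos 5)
Your proposal is correct and follows essentially the same route as the paper: Case~1 ($v^j\in\mathcal{V}^{B_\e}_j$ for all $j\le i-2$) uses Lemma~\ref{lem: Be mag} to extract $\e^{i-1}$ and then the constancy of $f$ along the interior characteristic (guaranteed by $\mathbf{1}_{t^{i+1}<0\le t^i}$) to collapse $\int_0^{t^i}\varrho'(s)f\,\dd s$ to $\varrho(t^i)f(t^i,x^i,v^i)$, while Case~2 ($v^j\in\mathcal{V}^{G_\e}_j$ for some $j$) repeats the change-of-variables machinery of Lemma~\ref{lem:bound1 mag} with Proposition~\ref{prop: est:measure} replacing the crude $2^j$ bound on $\int\dd\sigma_0\cdots\dd\sigma_{j-1}$, and the summation over the $i-1$ choices of $j$ yields the $(i-1)\e^{-2}$ prefactor. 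The two ``subtleties'' you flag are handled in the paper exactly as you anticipate, so no gap remains.
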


\begin{proof}
It suffices to prove this upper bound for $i = 2,...,k-1$, 
\Be \label{forcing_2 mag}
\int_{\prod_{j=0}^{i-1} \mathcal{V}_j}
\int_{\mathcal{V}_i} 
\mathbf{1}_{t^{i+1} < 0 \leq t^i}
 \int^{t^{i}}_{0}
|\varrho^\prime(s) f(s, X(s; t^i, x^i, v^i), V(s; t^i, x^i, v^i))| \dd s \dd \tilde{\Sigma}_{i} .
\Ee  
Again recall Definition \ref{def:BG mag}, we obtain for $j = 0, \cdots, i-1$ and $0 < \e \ll 1$,
\be \notag
v^{j} \in \mathcal{V}^{B_{\e}}_j, \ \text{or} \ v^{j} \in \mathcal{V}^{G_{\e}}_j.
\ee
Now we split $\{v^{j}\}^{i-1}_{j=0}$ into two cases: 

\textbf{Case 1:} For all $0 \leq j \leq i-2$, $v^{j} \in \mathcal{V}^{B_{\e}}_j$.
From Lemma \ref{lem: Be mag} and \ref{lem:bound1 mag}, we derive that
\Be \notag
\begin{split}
\eqref{forcing_2 mag}
& \lesssim \e^{i-1} 
\int_{\mathcal{V}_i} 
\mathbf{1}_{t^{i+1} < 0 \leq t^i}
 \int^{t^{i}}_{0}
|\varrho^\prime(s) f(s, X(s; t^i, x^i, v^i), V(s; t^i, x^i, v^i))| \dd s \{n(x^i) \cdot v^i\} \dd v^i 
\\& = \e^{i-1} \int_{\mathcal{V}_i} \mathbf{1}_{t^{i+1} < 0 \leq t^{i}}
\int^{t^{i}}_{0}
\varrho^\prime(s) \dd s 
|f(t^i, x^i, v^i)| \{ n(x^i) \cdot v^i \} \dd v^i 
\\& \lesssim \e^{i-1} \int_{\mathcal{V}_i} \mathbf{1}_{t^{i+1} < 0 \leq t^{i}} \ \varrho (t^i) f(t^i, x^i, v^i) \{ n(x^i) \cdot v^i \}  \dd v^i.
\end{split}
\ee

\textbf{Case 2:} There exists $0 \leq j \leq i-2$ such that $v^{j} \in \mathcal{V}^{G_{\e}}_j$.
Using Proposition \ref{prop:mapV mag}, we get 
\begin{align}
\eqref{forcing_2 mag} 
& \lesssim \int_{\mathcal{V}_0} \dd \sigma_0 \cdots \int_{\mathcal{V}_{j-1}} \dd \sigma_{j-1} \times 
\int_{v^j \in \mathcal{V}^{G_{\e}}_j} \frac{5 B^2_3 \sum\limits_{m, n \in \mathbb{Z}} \mu_{\Theta} (x^{j+1}, v^{m, n}_{j, \mathbf{b}})}{2 - 2 \cos (B_3 \tb^{j})} \{ n(x^j) \cdot v^j \} \dd S_{x^{j+1}} \dd \tb^{j}  \notag
\\& \ \ \ \ \times \cdots \times \int_0^{t^{i-2}} \int_{\p\O} \frac{5 B^2_3 \sum\limits_{m, n \in \mathbb{Z}} \mu_{\Theta} (x^{i-1}, v^{m, n}_{i-2, \mathbf{b}})}{2 - 2 \cos (B_3 \tb^{i-2})} \{ n(x^{i-2}) \cdot v^{i-2} \}  \dd \tb^{i-2} \dd S_{x^{i-1}} \notag
\\& \ \ \ \ \times \int^{t^{i-1}}_0 \int_{\p\O}  
\frac{5 B^2_3 \sum\limits_{m, n \in \mathbb{Z}} \mu_{\Theta} (x^{i}, v^{m, n}_{i-1, \mathbf{b}})}{2 - 2 \cos (B_3 \tb^{i-1})} \{ n(x^{i-1}) \cdot v^{i-1} \} \dd \tb^{i-1} \dd S_{x^{i}} \notag
\\& \ \ \ \ \times
\int_{\mathcal{V}_i} 
\mathbf{1}_{t^{i+1} < 0 \leq t^i} \
 \int^{t^{i}}_{0}
|\varrho^\prime(s) f(s, X(s; t^i, x^i, v^i), V(s; t^i, x^i, v^i))| \dd s  \{n(x^i) \cdot v^i\} \dd v^i. \label{est1:forcing_2 mag}
\end{align}
Following the similar steps in Lemma \ref{lem:bound1 mag}, we derive that
\Be \label{est_1:forcing_2 mag}
\begin{split} 
\eqref{forcing_2 mag} 
\lesssim 
& \int_{\mathcal{V}_0} \dd \sigma_0 \cdots \int_{\mathcal{V}_{j-1}} \dd \sigma_{j-1} \times 
\e^{-2} 
 \times \int_{v^j \in \mathcal{V}^{G_{\e}}_j} \dd \tb^{j} \int_{\p\O} \dd S_{x^{i}} \eqref{est1:forcing_2 mag}. 
\end{split}
\ee

For \eqref{est_1:forcing_2 mag}, we employ the change of variables 
\be \notag
(x^{i}, \tb^{j}, v^{i}) 
\mapsto (y, w) = (X(s; t^{j} -\tb^{j} \cdots - \tb^{i-1}, x^{i}, v^{i}), V(s; t^{j} -\tb^{j} \cdots - \tb^{i-1}, x^{i}, v^{i})) \in \O \times\R^3, 
\ee
with 
$|n(x^i) \cdot v^i| \dd S_{x^i} \dd \tb^{j} \dd v^i \lesssim \dd y \dd w$. Applying the above and Proposition \ref{prop: est:measure}, we derive
\Be \notag
\begin{split}
\eqref{est_1:forcing_2 mag}
& \lesssim \e^{-2} \times
 \int^{t^{i}}_{0}
\varrho^\prime(s) \dd s
 \iint_{\O \times\R^3 } 
|f (s, y, w)|  \dd w \dd y
\\& \lesssim \e^{-2} \times \int^t_0 \| \varrho^\prime (s) f(s) \|_{L^1_{x,v}} \dd s.
\end{split}
\Ee
Applying the above on all parts when $j = 0, \cdots, i-2$ in second cases, we deduce
\Be \notag
\eqref{forcing_2 mag} 
\lesssim \sum\limits^{i-2}_{j = 0}  \e^{-2} \times \int^t_0 \| \varrho^\prime (s) f(s) \|_{L^1_{x,v}} \dd s 
\leq (i-1) \e^{-2} \times \int^t_0 \| \varrho^\prime (s) f(s) \|_{L^1_{x,v}} \dd s.
\ee
Collecting the estimates from two cases, we prove this Lemma.
\end{proof}


\section{Estimates on Exponential Moments}
\label{sec: exponential moments}

Using Proposition \ref{prop: est:measure}, now we are able to show the asymptotic behaviour of the exponential moments. The main purpose of this section to prove Theorem \ref{theorem}.

We first set $w (x, v) := e^{\theta (|v|^2+ 2\Phi (x))}$, $w^\prime (x, v) := e^{\theta^\prime (|v|^2+ 2\Phi (x))}$ for $0< \theta < \theta^\prime< \frac{1}{2b}$ with $b$ defined in \eqref{def:Theta}.
Then we include a time dependent weight function $\varrho (t)$ and consider the stochastic cycle representation of $\varrho (t) w^\prime (x, v) f(t, x, v)$:
\begin{align}
   & \varrho (t) w^\prime (x, v) f(t, x, v) 
   = \mathbf{1}_{t^1 < 0} \
   \varrho (t) w^\prime (X(0; t, x, v), V(0; t, x, v)) f(0, X, V) 
\label{expand_k1}
\\& + w^\prime \mu_{\Theta} (x^1, \vb) \int_{\prod_{j=1}^{i} \mathcal{V}_j}   
     \sum\limits^{k-1}_{i=1} 
     \Big\{ \mathbf{1}_{t^{i+1} < 0 \leq t^{i}} 
     \varrho(0) w^\prime (X(0; t^i, x^i, v^i), V(0; t^i, x^i, v^i)) f (0, X, V) \Big\}
      \dd \tilde{\Sigma}_{i}
\label{expand_k2}
    \\& + w^\prime \mu_{\Theta} (x^1, \vb) \int_{\prod_{j=1}^{i} \mathcal{V}_j}   
     \sum\limits^{k-1}_{i=1} 
      \mathbf{1}_{0 \leq t^{i}}
     \Big\{ \int^{t^{i}}_{ \max(0, t^{i+1})}  
     \varrho^\prime (s) w^\prime (X(s; t^i, x^i, v^i), V(s; t^i, x^i, v^i))
\notag     
\\& \hspace{8cm} \times f(s, X(s; t^i, x^i, v^i), V(s; t^i, x^i, v^i)) \dd s 
      \Big\} \dd \tilde{\Sigma}_{i}
\label{expand_k3}
    \\& + w^\prime \mu_{\Theta} (x^1, \vb) \int_{\prod_{j=1}^{k } \mathcal{V}_j}   
    \mathbf{1}_{t^{k} \geq 0} \
    \varrho (t^k) w^\prime f (t^{k}, x^{k}, v^{k})
     \dd \tilde{\Sigma}_{k}, 
\label{expand_k4}
\end{align} 
where 
$\dd \tilde{\Sigma}_{i} 
:= \frac{ \dd \sigma_{i}}{\mu_{\Theta} (x^{i+1}, v^{i}) w^\prime (x^i, v^{i})} \dd \sigma_{i-1} \cdots \dd \sigma_1$.
Here, $(X,V)$ in \eqref{characteristics} (resp. \eqref{characteristics mag}).

\subsection{Gravitational case}

\begin{proof}[\textbf{Proof of Theorem \ref{theorem}}]

We start to prove \eqref{theorem_infty_1}, and pick $\varrho (t) = t + 1$ to utilize the $L^1$-decay of Theorem \ref{theorem_1}. Then we work on the stochastic cycle representation of $\varrho (t) w^\prime (x, v) f(t, x, v)$ in \eqref{expand_k1}-\eqref{expand_k4}.

For the contribution of \eqref{expand_k1}, since $t^1 < 0$, and $w^\prime, f$ are constant along the characteristic trajectory. Thus we deduce that
\be \label{est:expand_k1}
w^\prime (x, v) f(t, x, v) 
= w^\prime (X(0; t, x, v), V(0; t, x, v)) f(0, X, V)
\leq \| w^\prime f(0) \|_{L^\infty_{x,v}}.
\ee

Now we bound the contribution of \eqref{expand_k2}. Since
$0 < n(x) \cdot v \lesssim w^\prime (x, v) < \mu^{-1} (x, v)$ and applying the uniform bound on residual measure in Proposition \ref{prop: est:measure}, we derive
\Be \label{est:expand_k2}
\begin{split}
\frac{1}{\varrho (t)} | \eqref{expand_k2} |
& \lesssim \frac{k}{\varrho (t)}  \bigg( \sup_{i }    \int_{\prod_{j=1}^{k} \mathcal{V}_j}   
     \mathbf{1}_{t^{i+1} < 0 \leq t^{i }} 
      \dd \tilde{\Sigma}_{i}\bigg)
      \varrho(0)
   \| w^\prime f(0) \|_{L^\infty_{x,v}}   
\\&  \lesssim  \frac{k}{\varrho (t)}  \bigg(   \int_{n(x^{j}) \cdot v^{j} >0}
        \frac{ |n(x^{j}) \cdot v^{j}| }{w^\prime (x^j, v^{j})}\dd v^j \bigg)   \| w^\prime f(0) \|_{L^\infty_{x,v}}
\\& \lesssim  \frac{k}{\varrho (t)}  \| w^\prime f(0) \|_{L^\infty_{x,v}}.
\end{split}
\Ee

Applying Lemma \ref{lem:bound1_2}, Proposition \ref{prop: est:measure} and Theorem \ref{theorem_1}, we bound the contribution of \eqref{expand_k3}. Since
$0 < n(x) \cdot v \lesssim w^\prime (x, v) < \mu_{\Theta} ^{-1} (x, v)$ and $\varrho^\prime = 1$, we have
\be \label{est:expand_k3}
\begin{split}
\frac{1}{\varrho (t)} |\eqref{expand_k3}|     
& \lesssim \frac{k}{\varrho (t)} \sup_{i }  \int_{\prod_{j=1}^{i} \mathcal{V}_j}  \mathbf{1}_{0 \leq t^{i}}
\int^{t^{i}}_{ \max(0, t^{i+1})} 
w^\prime (X(s; t^i, x^i, v^i), V(s; t^i, x^i, v^i)) f(s, X, V) \dd s \dd \tilde{\Sigma}_{i}    
\\& \lesssim \frac{k}{\varrho (t)} \int^t_0 \| f(s) \|_{L^1_{x,v}} \dd s   
\lesssim \frac{k}{\varrho (t)} \times \| w^\prime f (0) \|_{L^\infty_{x,v}}.
\end{split}
\ee
where the last inequality holds from $w^\prime (x, v) = w^\prime (\xb, \vb)$ and $|\vb| \gtrsim \tf$.

Lastly we bound the contribution of \eqref{expand_k4}. From Lemma \ref{lem:small_largek}, Proposition \ref{prop: est:measure} and $0 < n(x^2) \cdot v^2 \lesssim w (x^2, v^2) < w^\prime (x^2, v^2)$, we get
\Be \label{est:expand_k4}
\begin{split}
 \frac{1}{\varrho (t)} |\eqref{expand_k4}| 
& \lesssim \frac{\varrho (t^k)}{\varrho (t)} \sup_{(x,v) \in \bar{\O} \times \R^3}  \Big(\int_{\prod_{j=1}^{k -1} \mathcal{V}_j}   
    \mathbf{1}_{t_{k }(t,x,v,v^1,\cdots, v^{k-1}) \geq 0 }
\dd \sigma_1 \cdots \dd \sigma_{k-1}\Big)
 \| w^\prime f(t_k) \|_{L^\infty_{x,v}}
\\&  \lesssim e^{-t} \sup_{t \geq s \geq 0} \| w^\prime f(s) \|_{L^\infty_{x,v}}.
\end{split}
\Ee

Collecting estimates from \eqref{est:expand_k1}-\eqref{est:expand_k4} and using $k \lesssim t$, we derive 
\Be \label{k estimate}
(1 -  e^{-t}) \sup_{t \geq 0} \| w^\prime f(t) \|_{L^\infty_{x,v}}
\lesssim 
(1 + \frac{k}{\varrho (t)}) \times
\| w^\prime f (0) \|_{L^\infty_{x,v}}. 
\Ee
Therefore, we prove \eqref{theorem_infty_1}.

Next, to prove \eqref{theorem_infty} and show the decay of exponential moments and again utilize the $L^1$-decay, we set a new weight function
\Be \label{varrho}
\varrho(t):=  e^{\Lambda t}.
\Ee
Clearly we have $\varrho^\prime(t) \lesssim   e^{\Lambda t}$.

From Lemma \ref{sto_cycle}, we derive the form of $\int_{\R^3} w(x, v) |f (t, x, v)| \dd v$. First we split $|v| \geq t/2$ and $t_1 \leq 3t/4$ case to get \eqref{v<t/2 term in wf}-\eqref{first team in f_exp}. Next, for $t_1 \geq 3t/4$ case, we follow along the stochastic cycles twice with $k=2$ and $t_* = t/2$ and get \eqref{second team in f_exp} and \eqref{f_exp2}.
\begin{align}
\int_{\R^3} w(x, v) & |f(t,x,v)| \dd v 
\leq 
\int_{|v| \geq t/2} w(x, v) |f(t,x,v)| \dd v
\label{v<t/2 term in wf}
\\& + \int_{|v| \leq t/2} \mathbf{1}_{t^1 \leq 3t/4}  w(x, v) |f(3t/4, X(3t/4; t, x, v), V(3t/4; t, x, v))| \dd v
\label{first team in f_exp}
\\& + \int_{\R^3}  \mathbf{1}_{t^1 \geq 3t/4} w (x, v) \mu_{\Theta} (x^1, \vb)  \int_{\prod^2_{j=1} \mathcal{V}_j} \mathbf{1}_{t^2 < t/2 < t^1} w (x^1, v^1) |f(t^1, x^1, v^1)| \dd \Sigma_{1}^2 \dd v 
\label{second team in f_exp} 
\\& + \int_{\R^3} \mathbf{1}_{t^1 \geq 3t/4}  
   w(x, v) \mu_{\Theta} (x^1, \vb) 
  \Big| \int_{\prod^2_{j=1} \mathcal{V}_j} \mathbf{1}_{t^2 \geq t/2} w (x^2, v^2)
f(t^2, x^2, v^2) \dd \Sigma_{2}^2 \Big| \dd v, \label{f_exp2}
\end{align} 
where $\dd  {\Sigma}^{2}_{1} = \dd \sigma_{2}  \frac{ \dd \sigma_{1}}{ \mu_{\Theta} (x^2, v^1)w(x^1, v^1)} $ and $\dd  {\Sigma}^{2}_{2} = \frac{ \dd \sigma_{2}}{ \mu_{\Theta} (x^3, v^2) w(x^2, v^2)} \dd \sigma_{1}$,
and 
$\dd \sigma_j = \mu_{\Theta} (x^{j+1}, v^j) \{ n(x^j) \cdot v^j \} \dd v^j$ in \eqref{def:sigma measure}.

For \eqref{v<t/2 term in wf},
from $|v| \geq t/2$, the $L^\infty$-boundedness and $0 < w < w^\prime$, we derive that
\Be \label{v<t/2 part in wf}
\begin{split}
\int_{|v| \geq t/2} w(x, v) |f(t,x,v)| \dd v
& \leq \int_{|v| \geq t/2} \frac{w (x, v)}{w^\prime (x, v)} \dd v \| w^\prime f(0) \|_{L^\infty_{x,v}}
\\& \lesssim \frac{1}{\sqrt{\theta^\prime - \theta}} e^{- \frac{(\theta^\prime - \theta) t^2}{4}} \| w^\prime f(0) \|_{L^\infty_{x,v}}.
\end{split}
\Ee

For \eqref{first team in f_exp}, from $|v| \leq t/2$ and $t^1 \leq 3t/4$, we have
\Be \label{v=3t/4}
|V(3t/4; t, x, v)| \gtrsim |v_3 + g (t - 3t/4)| \geq 2t \geq |v| + t.
\Ee
Recall $\Phi (x)$ defined in \eqref{field property}, we derive
\Be \label{positive on Phi}
\begin{split}
\Phi (x) 
& = g x_3 + \int^{x_3}_{0} \Big\{ \int^{s}_{0}\nabla_{x_3} \nabla_{x_3} \phi (x_1, x_2, \alpha) \dd \alpha + \nabla_{x_3} \phi (x_1, x_2, 0) \Big\} \dd s + \phi(x_1, x_2, 0) 
\\& \geq g x_3 - \frac{1}{\varrho_1} \varrho_2 x_3 - \varrho_3 x_3 \gtrsim g x_3 \geq 0.
\end{split}
\Ee
From Lemma \ref{conservative field} and $\Phi(x)|_{x_3 = 0} = 0$, we have
\Be \label{vvb relation}
\frac{|V(s;t,x,v)|^2}{2} + \Phi(X(s;t,x,v)) 
= \frac{|v|^2}{2} = \frac{|\vb|^2}{2}.
\Ee
Then, from the $L^\infty$-boundedness, \eqref{v=3t/4} and $0 < w < w^\prime$, we deduce that 
\Be \label{first part in wf}
\begin{split}
\eqref{first team in f_exp} 
& \lesssim \int_{|v| \leq t/2} \frac{w (x, v)}{w^\prime (X(3t/4; t, x, v), V(3t/4; t, x, v))} \dd v \| w^\prime f(0) \|_{L^\infty_{x,v}}
\\& \leq \int_{|v| \leq t/2} 
e^{(\theta - \theta^\prime) (|V(3t/4; t, x, v)|^2+ \Phi (X))} \dd v \| w^\prime f(0) \|_{L^\infty_{x,v}}
\\& \leq \int_{|v| \leq t/2} 
e^{(\theta - \theta^\prime) (|v|^2 + t^2)} \dd v \| w^\prime f(0) \|_{L^\infty_{x,v}}
\lesssim \frac{1}{\sqrt{\theta^\prime - \theta}} e^{- (\theta^\prime - \theta) t^2} \| w^\prime f(0) \|_{L^\infty_{x,v}}.
\end{split}
\Ee
Next, we bound $\int_{\R^3} w(x, v) \mu_{\Theta} (x^1, \vb) \dd v$ shown in \eqref{second team in f_exp} and \eqref{f_exp2}.
Note that from \eqref{positive on Phi} and \eqref{vvb relation}, we have $|v| = |\vb| \geq |V(s;t,x,v)|$. Thus, we derive
\Be \label{vvb estimate}
\int_{\R^3} w(x, v) \mu_{\Theta} (x^1, \vb) \dd v 
\lesssim \int_{\R^3} \frac{w (x, v)}{w^\prime (x, v)} \dd v \lesssim 1.
\Ee

For \eqref{second team in f_exp}, from \eqref{vvb estimate} and $\int_{\mathcal{V}_2} \dd \sigma_2$ is bounded, we have 

\Be \label{first step in second part in wf}
\eqref{second team in f_exp} 
\lesssim \int_{\mathcal{V}_1} \mathbf{1}_{t^2 < t/2 < t^1} |f(t^1, x^1, v^1)| \{ n(x^1) \cdot v^1 \} \dd v^1.
\Ee
Since $t^1 \geq 3t/4$ and $t^2 < t/2$, we get $t/4 \leq t^1 - t^2$. From \eqref{v3t estimate}, we have $|v^1| \geq |n(x^1) \cdot v^1| \gtrsim g t/8$. Again, from the $L^\infty$-boundedness, $t \gg 1$ and $0 < n(x^1) \cdot v^1 \lesssim w (x^1, v^1) < w^\prime (x^1, v^1)$,
we derive
\Be \label{second part in wf}
\begin{split}
\eqref{first step in second part in wf} 
& \lesssim \int_{|v^1| \gtrsim g t/8} w (x^1, v^1) |f(t^1, x^1, v^1)| \dd v^1 
\\& \lesssim \int_{|v^1| \gtrsim g t/8} \frac{w (x^1, v^1)}{w^\prime (x^1, v^1)} \dd v^1 \| w^\prime f(0) \|_{L^\infty_{x,v}}
\lesssim \frac{1}{\sqrt{\theta^\prime - \theta}} e^{- \frac{(\theta^\prime - \theta) t^2}{4}} \| w^\prime f(0) \|_{L^\infty_{x,v}}.
\end{split}
\Ee

Now we only need to bound \eqref{f_exp2}. Since $\int_{\R^3} w(x, v) \mu_{\Theta} (x^1, \vb) \dd v \lesssim 1$ and $\int_{\mathcal{V}_1} \dd \sigma_1$ is bounded, it suffices to prove the decay of
\Be \label{decay of f_exp2}
\sup_{v \in \R^3, v^1 \in \mathcal{V}_1}
\Big| \int_{\mathcal{V}_2} 
\mathbf{1}_{t^2 \geq t/2} f(t^2,x^2,v^2) \{ n(x^{2}) \cdot v^{2} \} \dd v^{2} \Big|. 
\Ee 

Here we define $g (t, x, v) := \varrho (t) w (x, v) f (t, x, v)$, and note that
\Be \notag
\frac{1}{\varrho (t^2)} \int_{\mathcal{V}_2} \frac{|n(x^2) \cdot v^2|}{w (x^2, v^2)} g (t^2, x^2, v^2) \dd v^2 = \int_{\mathcal{V}_2} f(t^2,x^2,v^2) \{ n(x^2) \cdot v^2 \} \dd v^2.
\Ee 
Therefore, it suffices to show the decay of $\big| \frac{1}{\varrho (t^2)} \int_{\mathcal{V}_2} \mathbf{1}_{t^2 \geq t/2} \frac{|n(x^2) \cdot v^2|}{w (x^2, v^2)} g (t^2, x^2, v^2) \dd v^2 \big|$.

Applying Lemma \ref{sto_cycle} with $w(x, v) = e^{\theta (|v|^2+ 2\Phi (x))}$, and $\varrho(t)$ in \eqref{varrho}, and choosing $t_*=0$, $k \geq \mathfrak{C}t$ as in Lemma \ref{lem:small_largek}, we obtain the following stochastic cycle representation of 
$g (t^2, x^2, v^2) = \varrho (t^2) w (x^2, v^2) f (t^2, x^2, v^2)$:
\begin{align}
    g (t^2, x^2, v^2) 
   = & \mathbf{1}_{t^3 < 0} \
   \varrho (0) w(x^2, v^2) f (0, X(0; t^2, x^2, v^2), V(0; t^2, x^2, v^2))
\label{expand_g1}
    \\& + w(x^2, v^2) \int^{t^2}_{\max(0, t^3)} \varrho^\prime(s) f(s, X(s; t^2, x^2, v^2), V(s; t^2, x^2, v^2)) \dd s \label{expand_g2}
    \\& +  w \mu_{\Theta} (x^3, v^2_{\mathbf{b}}) \int_{\prod_{j=3}^{i} \mathcal{V}_j}   
     \sum\limits^{k-1}_{i=3} 
     \Big\{ \mathbf{1}_{t^{i+1} < 0 \leq t^{i}}  \varrho (0) w(x^i, v^{i}) \notag
     \\& \hspace{5cm} \times f (0, X(0; t^i, x^i, v^i), V(0; t^i, x^i, v^i)) \Big\}
      \dd \tilde{\Sigma}_{i}
\label{expand_g3}
    \\& + w \mu_{\Theta} (x^3, v^2_{\mathbf{b}}) \int_{\prod_{j=3}^{i} \mathcal{V}_j}   
     \sum\limits^{k-1}_{i=3} 
      \mathbf{1}_{0 \leq t^{i}}
     \Big\{ \int^{t^{i}}_{ \max(0, t^{i+1})} \varrho^\prime(s) w (x^i, v^{i}) 
     \notag
     \\& \hspace{5cm} \times f(s, X(s; t^i, x^i, v^i), V(s; t^i, x^i, v^i)) \dd s 
      \Big\} \dd \tilde{\Sigma}_{i}
\label{expand_g4}
    \\& + w \mu_{\Theta} (x^3, v^2_{\mathbf{b}}) \int_{\prod_{j=3}^{k } \mathcal{V}_j}   
    \mathbf{1}_{t^{k} \geq 0} \
    g (t^{k}, x^{k}, v^{k})
     \dd \tilde{\Sigma}_{k}, \label{expand_g5} 
\end{align} 
where 
$\dd \tilde{\Sigma}_{i} 
:= \frac{ \dd \sigma_{i}}{\mu_{\Theta} (x^{i+1}, v^{i}) w(x^i, v^{i})} \dd \sigma_{i-1} \cdots \dd \sigma_3$ with $3 \leq i \leq k$.
Here, we regard $t^2, x^2, v^2$ as free parameters and from Lemma \ref{conservative field}, we have 
$\mu_{\Theta} (x^3, v^2_{\mathbf{b}}) = \mu_{\Theta} (x^3, v^2)$.

Next we estimate the contribution of \eqref{expand_g1}-\eqref{expand_g5} in 
$\frac{1}{\varrho (t^2)} \int_{\mathcal{V}_2} \frac{ |n(x^2) \cdot v^2| }{w (x^2, v^2)} g (t^2, x^2, v^2) \dd v^2$
term by term. 

For the contribution of \eqref{expand_g1},
from $t^2 \geq t/2$, $t^3 \leq 0$ and \eqref{v3t estimate}, we have $n(x^2) \cdot v^2 \gtrsim \frac{gt}{4}$. From the $L^\infty$-boundedness, $t \gg 1$ and $0 < n(x^2) \cdot v^2 \lesssim w (x^2, v^2) < w^\prime (x^2, v^2)$,  
we deduce that 
\Be \label{est:expand_g1}
\begin{split}
\frac{1}{\varrho (t^2)} \int_{\mathcal{V}_2} \frac{|n(x^2) \cdot v^2|}{w (x^2, v^2)} |\eqref{expand_g1}| \dd v^2
& = \frac{1}{\varrho (t^2)} \int_{\mathcal{V}_2} |n(x^2) \cdot v^2| | \varrho (0) f (t^2, x^2, v^2) | \dd v^2 
\\& \lesssim \frac{1}{\varrho (t^2)} \int_{n(x^2) \cdot v^2 \gtrsim \frac{gt}{4}} \frac{|n(x^2) \cdot v^2|}{w (x^2, v^2)} \varrho(0) \dd v^2 \| w f(0) \|_{L^\infty_{x,v}}
\\& \lesssim \frac{1}{\varrho (t)} \varrho(0) \| w f(0) \|_{L^\infty_{x,v}}
\lesssim  \frac{1}{\varrho (t)} \| w^\prime f(0) \|_{L^\infty_{x,v}}.
\end{split}
\Ee

Now we bound the contribution of \eqref{expand_g2}. Recall Theorem \ref{theorem_1} with $\varrho^\prime(t) \lesssim e^{\Lambda t}$ and Lemma  \ref{lem:bound1_2}, we get 
\Be \label{est:expand_g2}
\begin{split}
\frac{1}{\varrho (t^2)} \int_{\mathcal{V}_2} \frac{|n(x^2) \cdot v^2|}{w (x^2, v^2)} |\eqref{expand_g2} | \dd v^2
& \lesssim \frac{1}{\varrho (t)} \int^t_0 \| \varrho^\prime (s) f(s) \|_{L^1_{x,v}} \dd s    
\\& \lesssim \frac{1}{\varrho (t)} \int^t_0 \|   e^{\Lambda s} f(s) \|_{L^1_{x,v}} \dd s
\\& \lesssim \frac{t}{\varrho (t)} \times \{ \| w^\prime f (0) \|_{L^\infty_{x,v}}
+ \| \varphi \big( (|v|^2 + 2 \Phi(x))^{1/2} \big) f_0 \|_{L^1_{x,v}}
 \}. 
\end{split}
\Ee

Next, we bound the contribution of \eqref{expand_g3}. From the $L^\infty$-boundedness,
$0 < n(x^2) \cdot v^2 \lesssim w (x^2, v^2) < w^\prime (x^2, v^2) < \mu_{\Theta}^{-1} (x^3, v^2)$  and $f (0, X(0; t^i, x^i, v^i), V(0; t^i, x^i, v^i)) = f (t^i, x^i, v^i)$, we derive 
\Be \label{est:expand_g3}
\begin{split}
\frac{1}{\varrho (t^2)} \int_{\R^3} \frac{|n(x^2) \cdot v^2|}{w (x^2, v^2)} |\eqref{expand_g3}| \dd v^2
& \lesssim \frac{k}{\varrho (t)}  \bigg( \sup_{i }    \int_{\prod_{j=3}^{i} \mathcal{V}_j}   
     \mathbf{1}_{t^{i+1} < 0 \leq t^{i }} 
      \dd \tilde{\Sigma}_{i} \bigg)
      \varrho(0)
   \| w f(0) \|_{L^\infty_{x,v}}   
\\&  \lesssim  \frac{k}{\varrho (t)}  \bigg(   \int_{n(x^{i}) \cdot v^{i} >0}
        \frac{ |n(x^{i}) \cdot v^{i}| }{w (x^i, v^{i})}\dd v^i \bigg)   \| w f(0) \|_{L^\infty_{x,v}}
\\& \lesssim  \frac{k}{\varrho (t)}  \| w^\prime f(0) \|_{L^\infty_{x,v}},
\end{split}
\Ee
where the second line follows from Proposition \ref{prop: est:measure}.

Again using Lemma \ref{lem:bound1_2}, Proposition \ref{prop: est:measure} and Theorem \ref{theorem_1}, we bound the contribution of \eqref{expand_g4}. From  $0 < n(x^2) \cdot v^2 \lesssim \mu_{\Theta}^{-1} (x^3, v^2)$ and $\varrho^\prime(t) \lesssim  e^{\Lambda t}$, we have
\Be \label{est:expand_g4}
\begin{split}
& \ \ \ \ \frac{1}{\varrho (t^2)} \int_{\R^3} \frac{|n(x^2) \cdot v^2|}{w (x^2, v^2)} |\eqref{expand_g4}| \dd v^2   \\
& \lesssim \frac{k}{\varrho (t)} \times  \sup_{i }  \int_{\prod_{j=3}^{i} \mathcal{V}_j}  \mathbf{1}_{0 \leq t^{i}}
\int^{t^{i}}_{ \max(0, t^{i+1})}w (x^i, v^{i}) \varrho^\prime(s) f(s, X(s; t^i, x^i, v^i), V(s; t^i, x^i, v^i)) \dd s \dd \tilde{\Sigma}_{i}    
\\& \lesssim \frac{k}{\varrho (t)} \int^t_0 \| \varrho^\prime (s) f(s) \|_{L^1_{x,v}} \dd s     
\lesssim \frac{k}{\varrho (t)} \int^t_0 
\| e^{\Lambda s} f(s) \|_{L^1_{x,v}} \dd s
\\& \lesssim \frac{k t}{\varrho (t)} \times \{ \| w^\prime f (0) \|_{L^\infty_{x,v}}
+ \| \varphi \big( (|v|^2 + 2 \Phi(x))^{1/2} \big) f_0 \|_{L^1_{x,v}}
 \}. 
\end{split}
\Ee

Lastly we bound the contribution of \eqref{expand_g5}. From Lemma \ref{lem:small_largek}, Proposition \ref{prop: est:measure} and $0 < n(x^2) \cdot v^2 \lesssim w (x^2, v^2) < w^\prime (x^2, v^2)$, we get
\Be \label{est:expand_g5}
\begin{split}
& \ \ \ \ \frac{1}{\varrho (t^2)} \int_{\R^3} \frac{|n(x^2) \cdot v^2|}{w (x^2, v^2)} |\eqref{expand_g5}| \dd v^2\\
& \lesssim \frac{\varrho (t^k)}{\varrho (t^2)} \sup_{(x,v) \in \bar{\O} \times \R^3}  \Big(\int_{\prod_{j=3}^{k -1} \mathcal{V}_j}   
    \mathbf{1}_{t^{k }(t^2,x^2,v^2,\cdots, v^{k-1}) \geq 0 }
\dd \sigma_3 \cdots \dd \sigma_{k-1}\Big)
\sup_{t_k \geq 0} \| w f(t^k) \|_{L^\infty_{x,v}}
\\&  \lesssim e^{-t} \| w f(0) \|_{L^\infty_{x,v}}
 \lesssim e^{-t} \| w^\prime f(0) \|_{L^\infty_{x,v}}.
\end{split}
\Ee

Collecting estimates from \eqref{est:expand_g1}-\eqref{est:expand_g5} and using $k \lesssim t$, we derive 
\Be \label{g estimate}
\big| \frac{1}{\varrho (t^2)} \int_{\mathcal{V}_2} \mathbf{1}_{t^2 \geq t/2} \frac{|n(x^2) \cdot v^2|}{w (x^2, v^2)} g (t^2, x^2, v^2) \dd v^2 \big|
\leq 
\max\{  \frac{1}{\varrho (t)}, \frac{(k+1) t}{\varrho (t)}  , e^{-t}
\}   
\lesssim \frac{\langle t\rangle^2}{\varrho (t)}.
\Ee
Using $\varrho(t)= e^{\Lambda t}$, $0 < w (x, v) < \mu_{\Theta}^{-1} (x, v)$ and \eqref{g estimate}, we conclude
\Be \notag
\eqref{f_exp2} 
\lesssim \frac{\langle t\rangle^2}{\varrho (t)}
\lesssim e^{- \Lambda t}.
\Ee
The above estimate, together with \eqref{v<t/2 part in wf}, \eqref{first part in wf} and \eqref{second part in wf}, we prove \eqref{theorem_infty}.
\end{proof}

\subsection{With Magnet field}

\begin{proof}[\textbf{Proof of Theorem \ref{theorem} of the case \eqref{field property mag}}]

We first show \eqref{theorem_infty_1}. Setting $\varrho (t) = t^6 + 1$ with $\varrho^\prime (t) \lesssim t^5 + 1$, we work on the stochastic cycle representation of $\varrho (t) w^\prime (x, v) f(t, x, v)$ in \eqref{expand_k1}-\eqref{expand_k4}.

For the contribution of \eqref{expand_k1}, since $t^1 < 0$, and $w^\prime, f$ are constant along the characteristic trajectory, we deduce
\be \label{est:expand_k1 mag}
\frac{1}{\varrho (t)} | \eqref{expand_k1} |= w^\prime (x, v) f(t, x, v) 
= w^\prime (X(0; t, x, v), V(0; t, x, v)) f(0, X, V)
\leq \| w^\prime f(0) \|_{L^\infty_{x,v}}.
\ee

Now we bound the contribution of \eqref{expand_k2}. Using
$0 < n(x) \cdot v \lesssim w^\prime (x, v) < \mu^{-1} (x, v)$ and Proposition \ref{prop: est:measure}, we derive
\Be \label{est:expand_k2 mag}
\begin{split}
\frac{1}{\varrho (t)} | \eqref{expand_k2} |
\lesssim \frac{k}{\varrho (t)}  \bigg( \sup_{i }    \int_{\prod_{j=1}^{k} \mathcal{V}_j}   
     \mathbf{1}_{t^{i+1} < 0 \leq t^{i }} 
      \dd \tilde{\Sigma}_{i}\bigg)
      \varrho(0)
   \| w^\prime f(0) \|_{L^\infty_{x,v}}   
\lesssim  \frac{k}{\varrho (t)}  \| w^\prime f(0) \|_{L^\infty_{x,v}}.
\end{split}
\Ee

Next we bound the contribution of \eqref{expand_k3}. 
Following \eqref{est:expand_k3} and Lemma \ref{lem:bound1_2 mag} with $\e = t^{-2}$, we have
\be \label{est:expand_k3 mag}
\begin{split}
& \ \ \ \ \frac{1}{\varrho (t)} | \eqref{expand_k3} |     
\\& \lesssim \frac{k}{\varrho (t)} \sup_{1 \leq i \leq k-1}  \int_{\prod_{j=1}^{i} \mathcal{V}_j}  \mathbf{1}_{0 \leq t^{i}}
\int^{t^{i}}_{ \max(0, t^{i+1})} 
 \varrho^\prime (s) w^\prime (X(s; t^i, x^i, v^i), V(s; t^i, x^i, v^i)) f(s, X, V) \dd s \dd \tilde{\Sigma}_{i}    
\\& \lesssim \frac{k}{\varrho (t)} 
 \big( \e^{i-1} \int_{\mathcal{V}_i} \mathbf{1}_{0 \leq t^{i}} \ \varrho (t^i) f(t^i, x^i, v^i) \{ n(x^i) \cdot v^i \}  \dd v^i
+  (i-1) \e^{-2} \int^t_0 \|  \varrho^\prime (s) f(s) \|_{L^1_{x,v}} \dd s \big)   
\\& \lesssim \frac{k}{\varrho (t)} \times 
\big( \e \sup_{t \geq 0} \| w^\prime f(t) \|_{L^\infty_{x,v}} \int_{\mathcal{V}_i} \mathbf{1}_{0 \leq t^{i}} \frac{\varrho (t^i)}{w^\prime (x^i, v^i)} \{ n(x^i) \cdot v^i \}  \dd v^i
+ k \e^{-2}
\| w^\prime f (0) \|_{L^\infty_{x,v}} \big) 
\\& \lesssim \frac{k}{t^2} \times 
\sup_{t \geq 0} \| w^\prime f(t) \|_{L^\infty_{x,v}} 
+   \frac{k^2 t^4}{\varrho (t)} \times 
\| w^\prime f (0) \|_{L^\infty_{x,v}}.
\end{split}
\ee

Lastly we bound the contribution of \eqref{expand_k4}. 
Following from \eqref{est:expand_k4}, Lemma \ref{lem:small_largek mag} and Proposition \ref{prop: est:measure}, we get
\Be \label{est:expand_k4 mag}
\begin{split}
 \frac{1}{\varrho (t)} |\eqref{expand_k4}| 
& \lesssim \frac{\varrho (t^k)}{\varrho (t)} \sup_{(x,v) \in \bar{\O} \times \R^3}  \Big(\int_{\prod_{j=1}^{k -1} \mathcal{V}_j}   
    \mathbf{1}_{t_{k }(t,x,v,v^1,\cdots, v^{k-1}) \geq 0 }
\dd \sigma_1 \cdots \dd \sigma_{k-1}\Big)
 \| w^\prime f(t_k) \|_{L^\infty_{x,v}}
\\&  \lesssim e^{-t} \sup_{t \geq s \geq 0} \| w^\prime f(s) \|_{L^\infty_{x,v}}.
\end{split}
\Ee

Collecting estimates from \eqref{est:expand_k1 mag}-\eqref{est:expand_k4 mag}, and picking $k \lesssim t$ and $0 < \e$, we derive 
\Be \label{k estimate mag}
(1 - \frac{k}{t^2} - e^{-t}) \sup_{t \geq 0} \| w^\prime f(t) \|_{L^\infty_{x,v}}
\lesssim 
(1 + \frac{k}{\varrho (t)} + \frac{k^2 t^4}{\varrho (t)}) \times
\| w^\prime f (0) \|_{L^\infty_{x,v}}. 
\Ee
Therefore, we prove \eqref{theorem_infty_1}.

Next, we prove \eqref{theorem_infty}. To show the decay of exponential moments and again utilize the $L^1$-decay, we set the weight function $\varrho(t):=  e^{\Lambda t}$ such that $\varrho^\prime(t) \lesssim e^{\Lambda t}$.

From Lemma \ref{sto_cycle}, we derive the form of $\int_{\R^3} w(x, v) |f (t, x, v)| \dd v$. First we split out the case when $|v| \geq t/2$ and $t_1 \leq 3t/4$, and get \eqref{v<t/2 term in wf mag}-\eqref{first team in f_exp mag}. Next, for $t_1 \geq 3t/4$, we follow along the stochastic cycles twice with $k=2$ and $t_* = t/2$ and get \eqref{second team in f_exp mag} and \eqref{f_exp2 mag}.
\begin{align}
\int_{\R^3} w(x, v) |f(t,x,v)| \dd v 
& \leq 
\int_{|v| \geq t/2} w(x, v) |f(t,x,v)| \dd v
\label{v<t/2 term in wf mag}
\\& + \int_{|v| \leq t/2} \mathbf{1}_{t^1 \leq 3t/4}  w(x, v) |f(3t/4, X(3t/4; t, x, v), V(3t/4; t, x, v))| \dd v
\label{first team in f_exp mag}
\\& + \int_{\R^3}  \mathbf{1}_{t^1 \geq 3t/4} w (x, v) \mu_{\Theta} (x^1, \vb)  \int_{\prod^2_{j=1} \mathcal{V}_j} \mathbf{1}_{t^2 < t/2 < t^1} w (x^1, v^1) |f(t^1, x^1, v^1)| \dd \Sigma_{1}^2 \dd v 
\label{second team in f_exp mag} 
\\& + \int_{\R^3} \mathbf{1}_{t^1 \geq 3t/4} w(x, v) \mu_{\Theta} (x^1, \vb) 
  \Big| \int_{\prod^2_{j=1} \mathcal{V}_j} \mathbf{1}_{t^2 \geq t/2} w (x^2, v^2)
f(t^2, x^2, v^2) \dd \Sigma_{2}^2 \Big| \dd v, \label{f_exp2 mag}
\end{align} 
where $\dd  {\Sigma}^{2}_{1} = \dd \sigma_{2}  \frac{ \dd \sigma_{1}}{ \mu_{\Theta} (x^2, v^1)w(x^1, v^1)}$ and $\dd  {\Sigma}^{2}_{2} = \frac{ \dd \sigma_{2}}{ \mu_{\Theta} (x^3, v^2) w(x^2, v^2)} \dd \sigma_{1}$ with
$\dd \sigma_j = \mu_{\Theta} (x^{j+1}, v^j) \{ n(x^j) \cdot v^j \} \dd v^j$ for $j =1, 2$.

For \eqref{v<t/2 term in wf mag},
following from $|v| \geq t/2$ and \eqref{v<t/2 part in wf} we derive that
\Be \label{v<t/2 part in wf mag}
\begin{split}
\int_{|v| \geq t/2} w(x, v) |f(t,x,v)| \dd v
& \lesssim \frac{1}{\sqrt{\theta^\prime - \theta}} e^{- \frac{(\theta^\prime - \theta) t^2}{4}} \| w^\prime f(0) \|_{L^\infty_{x,v}}.
\end{split}
\Ee

For \eqref{first team in f_exp mag}, from $|v| \leq t/2$ and $t^1 \leq 3t/4$, we have
\Be \label{v=3t/4 mag}
|V(3t/4; t, x, v)| \gtrsim |v_3 + g (t - 3t/4)| \geq 2t \geq |v| + t.
\Ee
Then, from the $L^\infty$-boundedness, \eqref{v=3t/4 mag} and $0 < w < w^\prime$, we deduce that 
\Be \label{first part in wf mag}
\begin{split}
\eqref{first team in f_exp mag} 
& \lesssim \int_{|v| \leq t/2}  \mathbf{1}_{t^1 \leq 3t/4} \frac{w (x, v)}{w^\prime (X(3t/4; t, x, v), V(3t/4; t, x, v))} \dd v \| w^\prime f(0) \|_{L^\infty_{x,v}}
\\& \leq \int_{|v| \leq t/2}  \mathbf{1}_{t^1 \leq 3t/4}
e^{(\theta - \theta^\prime) (|V(3t/4; t, x, v)|^2+ \Phi (X))} \dd v \| w^\prime f(0) \|_{L^\infty_{x,v}}
\\& \leq \int_{|v| \leq t/2} 
e^{(\theta - \theta^\prime) (|v|^2 + t^2)} \dd v \| w^\prime f(0) \|_{L^\infty_{x,v}}
\lesssim \frac{1}{\sqrt{\theta^\prime - \theta}} e^{- (\theta^\prime - \theta) t^2} \| w^\prime f(0) \|_{L^\infty_{x,v}}.
\end{split}
\Ee
Next, we bound $\int_{\R^3} w(x, v) \mu_{\Theta} (x^1, \vb) \dd v$ shown in \eqref{second team in f_exp mag} and \eqref{f_exp2 mag}.
From $|v| = |\vb| \geq |V(s;t,x,v)|$, we derive
\Be \label{vvb estimate mag}
\int_{\R^3} w(x, v) \mu_{\Theta} (x^1, \vb) \dd v 
\lesssim \int_{\R^3} \frac{w (x, v)}{w^\prime (x, v)} \dd v \lesssim 1.
\Ee

For \eqref{second team in f_exp mag}, from \eqref{vvb estimate mag}, \eqref{second part in wf} and $\int_{\mathcal{V}_2} \dd \sigma_2$ is bounded, we have 
\Be \label{second part in wf mag}
\begin{split}
\eqref{second team in f_exp mag} 
& \lesssim \int_{|v^1| \geq 5 t/4} w (x^1, v^1) |f(t^1, x^1, v^1)| \dd v^1 
\\& \lesssim \int_{|v^1| \geq 5 t/4} w (x^1, v^1) |f(t^1, x^1, v^1)| \dd v^1 
\\& \lesssim \int_{|v^1| \geq 5 t/4} \frac{w (x^1, v^1)}{w^\prime (x^1, v^1)} \dd v^1 \| w^\prime f(0) \|_{L^\infty_{x,v}}
\lesssim \frac{1}{\sqrt{\theta^\prime - \theta}} e^{- \frac{(\theta^\prime - \theta) t^2}{4}} \| w^\prime f(0) \|_{L^\infty_{x,v}}.
\end{split}
\Ee

Now we only need to bound \eqref{f_exp2 mag}. Since $\int_{\R^3} w(x, v) \mu_{\Theta} (x^1, \vb) \dd v \lesssim 1$ and $\int_{\mathcal{V}_1} \dd \sigma_1$ is bounded, it suffices to prove the decay of
\Be \label{decay of f_exp2 mag}
\sup_{v \in \R^3, v^1 \in \mathcal{V}_1}
\Big| \int_{\mathcal{V}_2} 
\mathbf{1}_{t^2 \geq t/2} f(t^2,x^2,v^2) \{ n(x^{2}) \cdot v^{2} \} \dd v^{2} \Big|. 
\Ee 

Here we define $g (t, x, v) := \varrho (t) w (x, v) f (t, x, v)$, and note that
\Be \notag
\frac{1}{\varrho (t^2)} \int_{\mathcal{V}_2} \frac{|n(x^2) \cdot v^2|}{w (x^2, v^2)} g (t^2, x^2, v^2) \dd v^2 = \int_{\mathcal{V}_2} f(t^2,x^2,v^2) \{ n(x^2) \cdot v^2 \} \dd v^2.
\Ee 
Therefore, it suffices to show the decay of $\big| \frac{1}{\varrho (t^2)} \int_{\mathcal{V}_2} \mathbf{1}_{t^2 \geq t/2} \frac{|n(x^2) \cdot v^2|}{w (x^2, v^2)} g (t^2, x^2, v^2) \dd v^2 \big|$.

Applying Lemma \ref{sto_cycle} with $w(x, v)$ and $\varrho(t)$, 
we choose $t_*=0$, $k \geq \mathfrak{C}t$, and obtain the following stochastic cycle representation of 
$g (t^2, x^2, v^2) = \varrho (t^2) w (x^2, v^2) f (t^2, x^2, v^2)$: 
\begin{align}
    g (t^2, x^2, v^2) 
   = & \mathbf{1}_{t^3 < 0} \
   \varrho (0) w(x^2, v^2) f (0, X(0; t^2, x^2, v^2), V(0; t^2, x^2, v^2))
\label{expand_g1 mag}
    \\& + w(x^2, v^2) \int^{t^2}_{\max(0, t^3)} \varrho^\prime(s) f(s, X(s; t^2, x^2, v^2), V(s; t^2, x^2, v^2)) \dd s \label{expand_g2 mag}
    \\& +  w \mu_{\Theta} (x^3, v^2_{\mathbf{b}}) \int_{\prod_{j=3}^{i} \mathcal{V}_j}   
     \sum\limits^{k-1}_{i=3} 
     \Big\{ \mathbf{1}_{t^{i+1} < 0 \leq t^{i}}  \varrho (0) w(x^i, v^{i}) \notag
     \\& \hspace{5cm} \times f (0, X(0; t^i, x^i, v^i), V(0; t^i, x^i, v^i)) \Big\}
      \dd \tilde{\Sigma}_{i}
\label{expand_g3 mag}
    \\& + w \mu_{\Theta} (x^3, v^2_{\mathbf{b}}) \int_{\prod_{j=3}^{i} \mathcal{V}_j}   
     \sum\limits^{k-1}_{i=3} 
      \mathbf{1}_{0 \leq t^{i}}
     \Big\{ \int^{t^{i}}_{ \max(0, t^{i+1})} \varrho^\prime(s) w (x^i, v^{i}) 
     \notag
     \\& \hspace{5cm} \times f(s, X(s; t^i, x^i, v^i), V(s; t^i, x^i, v^i)) \dd s 
      \Big\} \dd \tilde{\Sigma}_{i}
\label{expand_g4 mag}
    \\& + w \mu_{\Theta} (x^3, v^2_{\mathbf{b}}) \int_{\prod_{j=3}^{k } \mathcal{V}_j}   
    \mathbf{1}_{t^{k} \geq 0} \
    g (t^{k}, x^{k}, v^{k})
     \dd \tilde{\Sigma}_{k}, \label{expand_g5 mag} 
\end{align} 
where 
$\dd \tilde{\Sigma}_{i} 
:= \frac{ \dd \sigma_{i}}{\mu_{\Theta} (x^{i+1}, v^{i}) w(x^i, v^{i})} \dd \sigma_{i-1} \cdots \dd \sigma_3$ with $3 \leq i \leq k$.
Here, we regard $t^2, x^2, v^2$ as free parameters and from Lemma \ref{conservative field mag}, we have 
$\mu_{\Theta} (x^3, v^2_{\mathbf{b}}) = \mu_{\Theta} (x^3, v^2)$.

Next we estimate the contribution of \eqref{expand_g1 mag}-\eqref{expand_g5 mag} in 
$\frac{1}{\varrho (t^2)} \int_{\mathcal{V}_2} \frac{ |n(x^2) \cdot v^2| }{w (x^2, v^2)} g (t^2, x^2, v^2) \dd v^2$
term by term. 

For the contribution of \eqref{expand_g1 mag},
from $t^2 \geq t/2$, $t^3 \leq 0$, we have $n(x^2) \cdot v^2 \gtrsim \frac{gt}{4}$. From the $L^\infty$-boundedness, $t \gg 1$ and $0 < n(x^2) \cdot v^2 \lesssim w (x^2, v^2) < w^\prime (x^2, v^2)$, we deduce that 
\Be \label{est:expand_g1 mag}
\begin{split}
\frac{1}{\varrho (t^2)} \int_{\mathcal{V}_2} \frac{|n(x^2) \cdot v^2|}{w (x^2, v^2)} |\eqref{expand_g1 mag}| \dd v^2
& = \frac{1}{\varrho (t^2)} \int_{\mathcal{V}_2} |n(x^2) \cdot v^2| | \varrho (0) f (t^2, x^2, v^2) | \dd v^2 
\\& \lesssim \frac{\varrho(0)}{\varrho (t)} \| w f(0) \|_{L^\infty_{x,v}}
\lesssim  \frac{1}{\varrho (t)} \| w^\prime f(0) \|_{L^\infty_{x,v}}.
\end{split}
\Ee 
  
Now we bound the contribution of \eqref{expand_g2 mag}. Following \eqref{est:expand_g2} and Lemma \ref{lem:bound1_2 mag} with $\e = \varrho^{- \frac{1}{3}} (t^2)$, we get 
\Be \label{est:expand_g2 mag}
\begin{split}
& \ \ \ \ \frac{1}{\varrho (t^2)} \int_{\mathcal{V}_2} \frac{|n(x^2) \cdot v^2|}{w (x^2, v^2)} |\eqref{expand_g2 mag} | \dd v^2
\\& \lesssim \frac{1}{\varrho (t^2)} \int_{\mathcal{V}_2} \frac{|n(x^2) \cdot v^2|}{w (x^2, v^2)} w(x^2, v^2) \int^{t^2}_{\max(0, t^3)} \varrho^\prime(s)  f(t^2, x^2, v^2) \dd s \dd v^2
\\& \lesssim \frac{1}{\varrho (t^2)} \times
 \big( \e \int_{\mathcal{V}_i} \mathbf{1}_{0 \leq t^{2}} \ \varrho (t^2) f(t^2, x^2, v^2) \{ n(x^2) \cdot v^2 \}  \dd v^2
+  \e^{-2} \int^t_0 \|  \varrho^\prime (s) f(s) \|_{L^1_{x,v}} \dd s \big) 
\\& \lesssim \frac{1}{\varrho (t)} \times  \| w^\prime f (0) \|_{L^\infty_{x,v}}.
\end{split}
\Ee

Next, we bound the contribution of \eqref{expand_g3 mag}. 
Following from \eqref{est:expand_g3} and Proposition \ref{prop: est:measure}, we derive 
\Be \label{est:expand_g3 mag}
\begin{split}
\frac{1}{\varrho (t^2)} \int_{\R^3} \frac{|n(x^2) \cdot v^2|}{w (x^2, v^2)} |\eqref{expand_g3 mag}| \dd v^2
& \lesssim \frac{k}{\varrho (t)}  \bigg( \sup_{i }    \int_{\prod_{j=3}^{i} \mathcal{V}_j}   
     \mathbf{1}_{t^{i+1} < 0 \leq t^{i }} 
      \dd \tilde{\Sigma}_{i} \bigg)
      \varrho(0)
   \| w f(0) \|_{L^\infty_{x,v}}   
\\& \lesssim  \frac{k}{\varrho (t)}  \| w^\prime f(0) \|_{L^\infty_{x,v}}.
\end{split}
\Ee

Using Lemma \ref{lem:bound1_2 mag} with $\e = \varrho^{- \frac{1}{3}} (t^2)$, Proposition \ref{prop: est:measure} and Theorem \ref{theorem_1}, we bound the contribution of \eqref{expand_g4 mag} as
\Be \label{est:expand_g4 mag}
\begin{split}
& \ \ \ \ \frac{1}{\varrho (t^2)} \int_{\R^3} \frac{|n(x^2) \cdot v^2|}{w (x^2, v^2)} |\eqref{expand_g4 mag}| \dd v^2   \\& \lesssim \frac{k}{\varrho (t)} \times  \sup_{i }  \int_{\prod_{j=3}^{i} \mathcal{V}_j}  \mathbf{1}_{0 \leq t^{i}}
\int^{t^{i}}_{ \max(0, t^{i+1})}w (x^i, v^{i}) \varrho^\prime(s) f(s, X(s; t^i, x^i, v^i), V(s; t^i, x^i, v^i)) \dd s \dd \tilde{\Sigma}_{i} 
\\& \lesssim \frac{k}{\varrho (t)} \times
 \big( \e^{i-1} \int_{\mathcal{V}_i} \mathbf{1}_{0 \leq t^{i}} \ \varrho (t^i) f(t^i, x^i, v^i) \{ n(x^i) \cdot v^i \}  \dd v^i
+  (i-1) \e^{-2} \int^t_0 \|  \varrho^\prime (s) f(s) \|_{L^1_{x,v}} \dd s \big)   
\\& \lesssim \frac{k}{\varrho (t)} \times
\big( \e^{i-1} \| w^\prime f(t^i) \|_{L^\infty_{x,v}} \int_{\mathcal{V}_i} \mathbf{1}_{0 \leq t^{i}} \frac{\varrho (t^i)}{w^\prime (x^i, v^i)} \{ n(x^i) \cdot v^i \}  \dd v^i
+ k \e^{-2}
\int^t_0 \|  \varrho^\prime (s) f(s) \|_{L^1_{x,v}} \dd s \big)    
\\& \lesssim \frac{k}{\varrho (t)} \times \big( \e \| w^\prime f (0) \|_{L^\infty_{x,v}} + k \e^{-2}
\int^t_0 \| e^{\Lambda s} f(s) \|_{L^1_{x,v}} \dd s \big)      
\\& \lesssim \frac{k^2 t}{\varrho (t)} \times \{ \| w^\prime f (0) \|_{L^\infty_{x,v}}
+ \| \varphi \big( (|v|^2 + 2 \Phi(x))^{1/2} \big) f_0 \|_{L^1_{x,v}}
 \}. 
\end{split}
\Ee

Lastly we bound the contribution of \eqref{expand_g5 mag}. 
Following \eqref{est:expand_g5} and Lemma \ref{lem:small_largek mag}, we get
\Be \label{est:expand_g5 mag}
\begin{split}
& \ \ \ \ \frac{1}{\varrho (t^2)} \int_{\R^3} \frac{|n(x^2) \cdot v^2|}{w (x^2, v^2)} |\eqref{expand_g5 mag}| \dd v^2\\
& \lesssim \frac{\varrho (t^k)}{\varrho (t^2)} \sup_{(x,v) \in \bar{\O} \times \R^3}  \Big(\int_{\prod_{j=3}^{k -1} \mathcal{V}_j}   
    \mathbf{1}_{t^{k }(t^2,x^2,v^2,\cdots, v^{k-1}) \geq 0 }
\dd \sigma_3 \cdots \dd \sigma_{k-1}\Big)
\sup_{t_k \geq 0} \| w f(t^k) \|_{L^\infty_{x,v}}
\\&  
 \lesssim e^{-t} \| w^\prime f(0) \|_{L^\infty_{x,v}}.
\end{split}
\Ee

Collecting estimates from \eqref{est:expand_g1 mag}-\eqref{est:expand_g5 mag} and using $k \lesssim t$, we derive 
\Be \label{g estimate mag}
\big| \frac{1}{\varrho (t^2)} \int_{\mathcal{V}_2} \mathbf{1}_{t^2 \geq t/2} \frac{|n(x^2) \cdot v^2|}{w (x^2, v^2)} g (t^2, x^2, v^2) \dd v^2 \big|
\leq 
\max\{  \frac{1}{\varrho (t)}, \frac{(k^2 + 1) t}{\varrho (t)}  , e^{-t}
\}   
 \lesssim \frac{\langle t\rangle^3}{\varrho (t)}.
\Ee
Using $\varrho(t)= e^{\Lambda t}$, $0 < w (x, v) < \mu_{\Theta}^{-1} (x, v)$ and \eqref{g estimate mag}, we conclude $
\eqref{f_exp2 mag} 
\lesssim \frac{\langle t\rangle^3}{\varrho (t)}
\lesssim e^{- \Lambda t}.$ The above estimate, together with \eqref{v<t/2 part in wf mag}, \eqref{first part in wf mag} and \eqref{second part in wf mag}, we prove \eqref{theorem_infty}.
\end{proof}





\end{document}